\definecolor{ColBlack}{RGB}{0,0,0} 
\definecolor{ColWhite}{RGB}{255,255,255} 
\definecolor{Col1}{RGB}{133,6,6} 
\definecolor{Col2}{RGB}{198,8,0} 
\definecolor{Col3}{RGB}{174,74,52} 
\definecolor{Col4}{RGB}{103,113,121} 
\definecolor{Col5}{RGB}{90,94,107} 
\definecolor{Col6}{RGB}{70,63,50} 
\numberwithin{equation}{subsection}
\def\l@section{\@tocline{1}{3pt}{1pc}{5pc}{}}
\def\l@subsection{\@tocline{2}{2pt}{2pc}{5pc}{}}
\newtheorem{Theorem}{Theorem}[subsection]
\newtheorem{Proposition}[Theorem]{Proposition}
\newtheorem{Lemma}[Theorem]{Lemma}
\renewcommand{\leq}{\leqslant}
\renewcommand{\geq}{\geqslant}
\title[Operads of decorated cliques I]
    {Operads of decorated cliques I: \\ Construction and quotients}
\keywords{Configuration of chords; Graph; Operad.}
\subjclass[2010]{05C76, 18D50, 05E99.}
\date{\today}
\author{Samuele Giraudo}
\address{\scriptsize LIGM, Université Gustave Eiffel, CNRS, ESIEE Paris, F-$77454$
Marne-la-Vallée, France.}
\email{samuele.giraudo@u-pem.fr}
\newcommand{\Hide}[1]{\textcolor{Col4}{\tt [hidden]}}
\newcommand{\Def}[1]{\textcolor{Col3}{\em #1}}
\newcommand{\OEIS}[1]{\href{http://oeis.org/#1}{{\bf #1}}}
\tikzstyle{Centering}=[{baseline={([yshift=-0.5ex]current
\newcommand{\N}{\mathbb{N}}
\newcommand{\Z}{\mathbb{Z}}
\newcommand{\K}{\mathbb{K}}
\newcommand{\Oca}{\mathcal{O}}
\newcommand{\Mca}{\mathcal{M}}
\newcommand{\Hsf}{\mathsf{H}}
\newcommand{\Ksf}{\mathsf{K}}
\newcommand{\Dbb}{\mathbb{D}}
\newcommand{\Ebb}{\mathbb{E}}
\newcommand{\Ubb}{\mathbb{U}}
\newcommand{\Cfr}{\mathfrak{c}}
\newcommand{\Dfr}{\mathfrak{d}}
\newcommand{\Tfr}{\mathfrak{t}}
\newcommand{\Sfr}{\mathfrak{s}}
\newcommand{\Pfr}{\mathfrak{p}}
\newcommand{\Qfr}{\mathfrak{q}}
\newcommand{\Rfr}{\mathfrak{r}}
\newcommand{\Ufr}{\mathfrak{u}}
\newcommand{\Vfr}{\mathfrak{v}}
\newcommand{\Att}{\mathtt{a}}
\newcommand{\Btt}{\mathtt{b}}
\newcommand{\Ctt}{\mathtt{c}}
\newcommand{\Dtt}{\mathtt{d}}
\newcommand{\Ett}{\mathtt{e}}
\newcommand{\As}{\mathsf{As}}
\newcommand{\Cli}{\mathsf{C}}
\newcommand{\TDendr}{\mathsf{TDendr}}
\newcommand{\RatFct}{\mathsf{RatFct}}
\newcommand{\Mould}{\mathsf{Mould}}
\newcommand{\MT}{\mathsf{MT}}
\newcommand{\DMT}{\mathsf{DMT}}
\newcommand{\Grav}{\mathsf{Grav}}
\newcommand{\Lie}{\mathsf{Lie}}
\newcommand{\Lab}{\mathsf{Lab}}
\newcommand{\Bub}{\mathsf{Bub}}
\newcommand{\Deg}{\mathsf{Deg}}
\newcommand{\Cro}{\mathsf{Cro}}
\newcommand{\Acy}{\mathsf{Acy}}
\newcommand{\Whi}{\mathrm{Whi}}
\newcommand{\Nes}{\mathrm{Nes}}
\newcommand{\Paths}{\mathrm{Pat}}
\newcommand{\Forests}{\mathrm{For}}
\newcommand{\Motzkin}{\mathrm{Mot}}
\newcommand{\Diss}{\mathrm{Dis}}
\newcommand{\WNC}{\mathrm{WNC}}
\newcommand{\Luc}{\mathsf{Luc}}
\newcommand{\Angle}[1]{\left\langle#1\right\rangle}
\newcommand{\Unit}{\mathds{1}}
\newcommand{\Hilbert}{\mathcal{H}}
\newcommand{\Op}{\star}
\newcommand{\Nar}{\mathrm{nar}}
\newcommand{\Rel}{\mathfrak{R}}
\newcommand{\Arcs}{\mathcal{A}}
\newcommand{\Diagonals}{\mathcal{D}}
\newcommand{\Edges}{\mathcal{E}}
\newcommand{\Bubbles}{\mathcal{B}}
\newcommand{\Triangles}{\mathcal{T}}
\newcommand{\Primes}{\mathcal{P}}
\newcommand{\Cliques}{\mathcal{C}}
\newcommand{\Returned}{\mathrm{ref}}
\newcommand{\Shift}{\mathrm{sh}}
\newcommand{\Id}{\mathrm{Id}}
\newcommand{\Hamming}{\mathrm{h}}
\newcommand{\Cros}{\mathrm{cros}}
\newcommand{\Degr}{\mathrm{degr}}
\newcommand{\OrdBE}{\preceq_{\mathrm{be}}}
\newcommand{\OrdD}{\preceq_{\mathrm{d}}}
\newcommand{\Del}{\mathrm{d}}
\newcommand{\Skel}{\mathrm{skel}}
\newcommand{\Frac}{\mathrm{F}}
\DeclareMathOperator{\BinRel}{\mathfrak{R}}
\newcommand{\UnitClique}{
\begin{tikzpicture}[scale=.6,Centering]
    \node[CliquePoint](1)at(0,0){};
    \node[CliquePoint](2)at(.75,0){};
    \draw[CliqueEmptyEdge](1)edge[]node[CliqueLabel]{}(2);
\end{tikzpicture}}
\newcommand{\Triangle}[3]{
\begin{tikzpicture}[scale=.42,Centering]
    \node[CliquePoint](1)at(0,1){};
    \node[CliquePoint](2)at(0.87,-0.5){};
    \node[CliquePoint](3)at(-0.87,-0.5){};
    \draw[CliqueEdge](1)edge[]node[CliqueLabel]
        {\begin{math}#3\end{math}}(2);
    \draw[CliqueEdge](1)edge[]node[CliqueLabel]
        {\begin{math}#2\end{math}}(3);
    \draw[CliqueEdge](2)edge[]node[CliqueLabel]
        {\begin{math}#1\end{math}}(3);
\end{tikzpicture}}
\newcommand{\TriangleEXX}[3]{
\begin{tikzpicture}[scale=.42,Centering]
    \node[CliquePoint](1)at(0,1){};
    \node[CliquePoint](2)at(0.87,-0.5){};
    \node[CliquePoint](3)at(-0.87,-0.5){};
    \draw[CliqueEdge](1)edge[]node[CliqueLabel]
        {\begin{math}#3\end{math}}(2);
    \draw[CliqueEdge](1)edge[]node[CliqueLabel]
        {\begin{math}#2\end{math}}(3);
    \draw[CliqueEmptyEdge](2)edge[]node[CliqueLabel]{}(3);
\end{tikzpicture}}
\newcommand{\TriangleXEX}[3]{
\begin{tikzpicture}[scale=.42,Centering]
    \node[CliquePoint](1)at(0,1){};
    \node[CliquePoint](2)at(0.87,-0.5){};
    \node[CliquePoint](3)at(-0.87,-0.5){};
    \draw[CliqueEdge](1)edge[]node[CliqueLabel]
        {\begin{math}#3\end{math}}(2);
    \draw[CliqueEmptyEdge](1)edge[]node[CliqueLabel]{}(3);
    \draw[CliqueEdge](2)edge[]node[CliqueLabel]
        {\begin{math}#1\end{math}}(3);
\end{tikzpicture}}
\newcommand{\TriangleXXE}[3]{
\begin{tikzpicture}[scale=.42,Centering]
    \node[CliquePoint](1)at(0,1){};
    \node[CliquePoint](2)at(0.87,-0.5){};
    \node[CliquePoint](3)at(-0.87,-0.5){};
    \draw[CliqueEmptyEdge](1)edge[]node[CliqueLabel]{}(2);
    \draw[CliqueEdge](1)edge[]node[CliqueLabel]
        {\begin{math}#2\end{math}}(3);
    \draw[CliqueEdge](2)edge[]node[CliqueLabel]
        {\begin{math}#1\end{math}}(3);
\end{tikzpicture}}
\newcommand{\TriangleXEE}[3]{
\begin{tikzpicture}[scale=.42,Centering]
    \node[CliquePoint](1)at(0,1){};
    \node[CliquePoint](2)at(0.87,-0.5){};
    \node[CliquePoint](3)at(-0.87,-0.5){};
    \draw[CliqueEmptyEdge](1)edge[]node[CliqueLabel]{}(2);
    \draw[CliqueEmptyEdge](1)edge[]node[CliqueLabel]{}(3);
    \draw[CliqueEdge](2)edge[]node[CliqueLabel]
        {\begin{math}#1\end{math}}(3);
\end{tikzpicture}}
\newcommand{\TriangleEEX}[3]{
\begin{tikzpicture}[scale=.42,Centering]
    \node[CliquePoint](1)at(0,1){};
    \node[CliquePoint](2)at(0.87,-0.5){};
    \node[CliquePoint](3)at(-0.87,-0.5){};
    \draw[CliqueEdge](1)edge[]node[CliqueLabel]
        {\begin{math}#3\end{math}}(2);
    \draw[CliqueEmptyEdge](1)edge[]node[CliqueLabel]{}(3);
    \draw[CliqueEmptyEdge](2)edge[]node[CliqueLabel]{}(3);
\end{tikzpicture}}
\newcommand{\TriangleEXE}[3]{
\begin{tikzpicture}[scale=.42,Centering]
    \node[CliquePoint](1)at(0,1){};
    \node[CliquePoint](2)at(0.87,-0.5){};
    \node[CliquePoint](3)at(-0.87,-0.5){};
    \draw[CliqueEmptyEdge](1)edge[]node[CliqueLabel]{}(2);
    \draw[CliqueEdge](1)edge[]node[CliqueLabel]
        {\begin{math}#2\end{math}}(3);
    \draw[CliqueEmptyEdge](2)edge[]node[CliqueLabel]{}(3);
\end{tikzpicture}}
\newcommand{\TriangleEEE}[3]{
\begin{tikzpicture}[scale=.42,Centering]
    \node[CliquePoint](1)at(0,1){};
    \node[CliquePoint](2)at(0.87,-0.5){};
    \node[CliquePoint](3)at(-0.87,-0.5){};
    \draw[CliqueEmptyEdge](1)edge[]node[CliqueLabel]{}(2);
    \draw[CliqueEmptyEdge](1)edge[]node[CliqueLabel]{}(3);
    \draw[CliqueEmptyEdge](2)edge[]node[CliqueLabel]{}(3);
\end{tikzpicture}}
\newcommand{\SquareN}[4]{
\begin{tikzpicture}[scale=.6,Centering]
    \node[CliquePoint](1)at(-0.71,0.71){};
    \node[CliquePoint](2)at(0.71,0.71){};
    \node[CliquePoint](3)at(0.71,-0.71){};
    \node[CliquePoint](4)at(-0.71,-0.71){};
    \draw[CliqueEdge](1)edge[]node[CliqueLabel]
        {\begin{math}#2\end{math}}(2);
    \draw[CliqueEdge](1)edge[]node[CliqueLabel]
        {\begin{math}#1\end{math}}(4);
    \draw[CliqueEdge](2)edge[]node[CliqueLabel]
        {\begin{math}#3\end{math}}(3);
    \draw[CliqueEdge](3)edge[]node[CliqueLabel]
        {\begin{math}#4\end{math}}(4);
\end{tikzpicture}}
\newcommand{\SquareLeft}[5]{
\begin{tikzpicture}[scale=.6,Centering]
    \node[CliquePoint](1)at(-0.71,0.71){};
    \node[CliquePoint](2)at(0.71,0.71){};
    \node[CliquePoint](3)at(0.71,-0.71){};
    \node[CliquePoint](4)at(-0.71,-0.71){};
    \draw[CliqueEdge](1)edge[]node[CliqueLabel]
        {\begin{math}#2\end{math}}(2);
    \draw[CliqueEdge](1)edge[]node[CliqueLabel]
        {\begin{math}#1\end{math}}(4);
    \draw[CliqueEdge](2)edge[]node[CliqueLabel]
        {\begin{math}#3\end{math}}(3);
    \draw[CliqueEdge](3)edge[]node[CliqueLabel]
        {\begin{math}#4\end{math}}(4);
    \draw[CliqueEdge](1)edge[]node[CliqueLabel]
        {\begin{math}#5\end{math}}(3);
\end{tikzpicture}}
\newcommand{\SquareRight}[5]{
\begin{tikzpicture}[scale=.6,Centering]
    \node[CliquePoint](1)at(-0.71,0.71){};
    \node[CliquePoint](2)at(0.71,0.71){};
    \node[CliquePoint](3)at(0.71,-0.71){};
    \node[CliquePoint](4)at(-0.71,-0.71){};
    \draw[CliqueEdge](1)edge[]node[CliqueLabel]
        {\begin{math}#2\end{math}}(2);
    \draw[CliqueEdge](1)edge[]node[CliqueLabel]
        {\begin{math}#1\end{math}}(4);
    \draw[CliqueEdge](2)edge[]node[CliqueLabel]
        {\begin{math}#3\end{math}}(3);
    \draw[CliqueEdge](3)edge[]node[CliqueLabel]
        {\begin{math}#4\end{math}}(4);
    \draw[CliqueEdge](2)edge[]node[CliqueLabel]
        {\begin{math}#5\end{math}}(4);
\end{tikzpicture}}
\tikzstyle{CliqueEdge}=[draw=Col2!90,thick]
\tikzstyle{CliqueEdgeColorA}=[CliqueEdge,draw=Col4!80,fill=Col4!8]
\tikzstyle{CliqueEmptyEdge}=[draw=Col4!90,thick,densely dashed]
\tikzstyle{CliqueLabel}=[midway,inner sep=1pt,fill=ColWhite!0,
\tikzstyle{CliquePoint}=[circle,inner sep=1pt,fill=Col2!25,
\tikzstyle{CliqueEdgeGray}=[ColBlack!30,draw,cap=round]
\tikzstyle{CliqueEdgeBlue}=[Col1!80,thick,draw,cap=round]
\tikzstyle{CliqueEdgeRed}=[Col2!80,thick,draw,cap=round,dotted]
\tikzstyle{Node}=[circle,draw=Col1!80,fill=Col1!8,inner sep=1pt,
\tikzstyle{NodeST}=[font=\footnotesize]
\tikzstyle{Edge}=[draw=Col2!80,cap=round,thick]
\tikzstyle{Leaf}=[rectangle,draw=ColBlack!70,fill=ColBlack!16,
\tikzstyle{EdgeLabel}=[midway,inner sep=1pt,fill=ColWhite!0,
\tikzstyle{Subtree}=[regular polygon,regular polygon sides=3,
\tikzstyle{Injection}=[ColBlack!100,draw,>->]
\tikzstyle{Surjection}=[ColBlack!100,draw,->>]
\tikzstyle{Bijection}=[ColBlack!100,draw,<->]
\begin{document}

\begin{abstract}
    We introduce a functorial construction $\Cli$ which takes unitary
    magmas $\Mca$ as input and produces operads. The obtained operads
    involve configurations of chords labeled by elements of $\Mca$, called
    $\Mca$-decorated cliques and generalizing usual configurations of
    chords. By considering combinatorial subfamilies of
    $\Mca$-decorated cliques defined, for instance, by limiting the
    maximal number of crossing diagonals or the maximal degree of the
    vertices, we obtain suboperads and quotients of $\Cli\Mca$. This
    leads to a new hierarchy of operads containing, among others,
    operads on noncrossing configurations, Motzkin configurations,
    forests, dissections of polygons, and involutions. Besides, the
    construction $\Cli$ leads to alternative definitions of the operads
    of simple and double multi-tildes, and of the gravity operad.
\end{abstract}

\maketitle

\tableofcontents

\section*{Introduction}
Configurations of chords on regular polygons are very classical
combinatorial objects. Up to some restrictions or enrichments, sets of
these objects can be put in bijection with several combinatorial
families. For instance, it is well-known that
triangulations~\cite{DRS10}, forming a particular subset of the set
of all configurations of chords, are in one-to-one correspondence with
binary trees, and a lot of structures and operations on binary trees
translate nicely on triangulations. Indeed, among others, the rotation
operation on binary trees~\cite{Knu98} is the covering relation of the
Tamari order~\cite{HT72} and this operation translates as a diagonal
flip in triangulations. Also, noncrossing configurations~\cite{FN99}
form another interesting subfamily of such chord configurations. Natural
generalizations of noncrossing configurations consist in allowing, with
more or less restrictions, some crossing diagonals. One of these
families is formed by the multi-triangulations~\cite{CP92}
wherein the number of mutually crossing diagonals is bounded. In particular,
the class of combinatorial objects in bijection with
some configurations of chords is large enough in order to contain,
among others, dissections of polygons, noncrossing partitions,
permutations, and involutions.
\smallbreak

On the other hand, coming historically from algebraic
topology~\cite{May72,BV73}, operads provide an abstraction of the notion
of operators (of any arities) and their compositions. In more concrete
terms, operads are algebraic structures abstracting the notion of planar
rooted trees and their grafting operations (see~\cite{LV12} for a
complete exposition of the theory and~\cite{Men15} for an exposition
focused on symmetric set-operads). The modern treatment of operads in
algebraic combinatorics consists in regarding combinatorial objects like
operators endowed with gluing operations mimicking the composition of
operators. In the last years, a lot of combinatorial sets and
combinatorial spaces have been endowed fruitfully with the structure of an
operad (see for instance~\cite{Cha08} for an exposition of known
interactions between operads and combinatorics, focused on trees,
\cite{LMN13,GLMN16} where operads abstracting operations in language
theory are introduced, \cite{CG14} for the study of an operad involving
particular noncrossing configurations, \cite{Gir15} for a general
construction of operads on many combinatorial sets, \cite{Gir16b} where
operads are constructed from posets, and \cite{CHN16} where operads on
various species of trees are introduced). In most of the cases, this
approach brings results about enumeration, helps to discover new
statistics, and leads to establish new links (by morphisms) between
different combinatorial sets or spaces. We can observe that most of the
subfamilies of polygons endowed with configurations of chords discussed
above are stable for several natural composition operations. Even
better, some of these can be described as the closure with respect
to these composition operations of small sets of polygons. For this
reason, operads are very promising candidates, among the modern
algebraic structures, to study such objects under an algebraic and
combinatorial flavor.
\smallbreak

The purpose of this work is twofold. First, we are concerned with endowing
the linear span of the configurations of chords with the structure of an
operad. This leads to seeing these objects under a new light, stressing
some of their combinatorial and algebraic properties. Second, we would
provide a general construction of operads of configurations of chords
rich enough so that it includes some already known operads. As a
consequence, we obtain alternative definitions of existing operads and
new interpretations of these. For this aim, we work here with
$\Mca$-decorated cliques (or $\Mca$-cliques for short), that are
complete graphs whose arcs are labeled by elements of $\Mca$, where $\Mca$ is a
unitary magma. These objects are natural generalizations of
configurations of chords since the arcs of any $\Mca$-clique labeled
by the unit of $\Mca$ are considered as missing. The elements of $\Mca$
different from the unit allow moreover to handle chords of different
colors. For instance, each usual noncrossing configuration $\Cfr$ can
be encoded by an $\N_2$-clique $\Pfr$, where $\N_2$ is the cyclic
additive unitary magma $\Z / 2\Z$, wherein each arc labeled by
$1 \in \N_2$ in $\Pfr$ denotes the presence of the same arc in $\Cfr$,
and each arc labeled by $0 \in \N_2$ in $\Pfr$ denotes its absence
in~$\Cfr$. Our construction is materialized by a functor $\Cli$ from
the category of unitary magmas to the category of operads. It builds,
from any unitary magma $\Mca$, an operad $\Cli\Mca$ on $\Mca$-cliques.
The partial composition $\Pfr \circ_i \Qfr$ of two $\Mca$-cliques $\Pfr$
and $\Qfr$ of $\Cli\Mca$ consists in gluing the $i$th edge of $\Pfr$
(with respect to a precise indexation) and a special arc of $\Qfr$,
called the base, together to form a new $\Mca$-clique. The magmatic
operation of $\Mca$ explains how to relabel the two overlapping arcs.
\smallbreak

This operad $\Cli\Mca$ has a lot of properties, which can be apprehended
both under a combinatorial and an algebraic point of view. First, many
families of particular configurations of chords form quotients or
suboperads of $\Cli\Mca$. We can for instance control the degrees of the
vertices or the crossings between diagonals to obtain new operads. We
can also forbid all diagonals, or some labels for the diagonals or the
edges, or all nestings of diagonals, or even all cycles formed by arcs.
All these combinatorial particularities and restrictions on
$\Mca$-cliques behave well algebraically. Moreover, by using the fact
that the direct sum of two ideals of an operad $\Oca$ is still an ideal
of $\Oca$, these constructions can be mixed to get even more operads.
For instance, it is well-known that Motzkin configurations, that are
polygons with disjoint noncrossing diagonals, are enumerated by Motzkin
numbers~\cite{Mot48}. Since a Motzkin configuration can be encoded by
an $\Mca$-clique where all vertices are of degree at most $1$ and no
diagonal crosses another one, we obtain an operad $\Motzkin\Mca$ on
colored Motzkin configurations which is both a quotient of $\Deg_1\Mca$,
the quotient of $\Cli\Mca$ consisting in all $\Mca$-cliques such that
all vertices are of degree at most $1$, and of $\Cro_0\Mca$, the
quotient (and suboperad) of $\Cli\Mca$ consisting in all noncrossing
$\Mca$-cliques. We also get quotients of $\Cli\Mca$ involving, among
others, Schröder trees, forests of paths, forests of trees, dissections
of polygons, Lucas configurations, with colored versions for each of
these. This leads to a new hierarchy of operads, wherein links between
its components appear as surjective or injective operad morphisms. One
of the most notable of these is built by considering the
$\Dbb_0$-cliques that have vertices of degree at most $1$, where
$\Dbb_0$ is the multiplicative unitary magma on $\{0, 1\}$. This is in
fact the quotient $\Deg_1\Dbb_0$ of $\Cli\Dbb_0$ and involves
involutions (or equivalently, standard Young tableaux by the
Robinson-Schensted correspondence~\cite{Lot02}). To the best of our
knowledge, $\Deg_1\Dbb_0$ is the first nontrivial operad on these
objects.
\smallbreak

As an important remark at this stage, let us highlight that when $\Mca$
is nontrivial, $\Cli\Mca$ is not a binary operad. Indeed, all its
minimal generating sets are infinite and its generators have arbitrarily
high arities. Furthermore, the construction $\Cli$ maintains some links
with the operad $\RatFct$ of rational functions introduced by
Loday~\cite{Lod10}. In fact, provided that $\Mca$ satisfies some
conditions, each $\Mca$-clique encodes a rational function. This
defines an operad morphism from $\Cli\Mca$ to $\RatFct$. Moreover, the
construction $\Cli$ allows to construct already known operads in
original ways. For instance, for well-chosen unitary magmas $\Mca$, the
operads $\Cli\Mca$ contain $\MT$ and $\DMT$, two operads respectively
defined in~\cite{LMN13} and~\cite{GLMN16} that involve multi-tildes and
double multi-tildes, operators coming from formal language
theory~\cite{CCM11}. The operads $\Cli\Mca$ also contains $\Grav$, the
gravity operad, a symmetric operad introduced by Getzler~\cite{Get94},
seen here as a nonsymmetric one~\cite{AP15}.
\smallbreak

This text is organized as follows. Section~\ref{sec:definitions_tools}
sets our notations, general definitions, and tools about nonsymmetric
operads (since we deal only with nonsymmetric operads here, we call
these simply operads) and configurations of chords. In
Section~\ref{sec:construction_Cli}, we introduce $\Mca$-cliques, the
construction $\Cli$, and study some of its properties. Then,
Section~\ref{sec:quotients_suboperads} is devoted to define several
suboperads and quotients of $\Cli\Mca$. This leads to plenty of new
operads on particular $\Mca$-cliques. Finally, in
Section~\ref{sec:concrete_constructions}, we use the construction
$\Cli$ to provide alternative definitions of some known operads.
\medbreak

This paper is an extended version of~\cite{Gir17}, containing the
proofs of the presented results.
\medbreak

\subsubsection*{Acknowledgements}
The author would like to thank warmly Dan Petersen for introducing him to the gravity operad
and highlighting links between this operad and the current work.  The author also thanks the
anonymous reviewer for his time and his suggestions, which have greatly contributed to
improving the article.
\medbreak

\subsubsection*{General notations and conventions}
All the algebraic structures of this article have a field of
characteristic zero $\K$ as ground field. For any set $S$, $\K \Angle{S}$
denotes the linear span of the elements of $S$. For any integers $a$ and
$c$, $[a, c]$ denotes the set $\{b \in \N : a \leq b \leq c\}$ and $[n]$,
the set $[1, n]$. The cardinality of a finite set $S$ is denoted
by~$\# S$. If $u$ is a word, its letters are indexed from left to right
from $1$ to its length $|u|$. If $a$ is a letter, $|u|_a$ denotes the
number of occurrences of $a$ in~$u$.
\medbreak

\section{Elementary definitions and tools}
\label{sec:definitions_tools}
We set here our notations and recall some definitions about operads and
related structures. We also introduce some notations and definitions
about configurations of chords in polygons.
\medbreak

\subsection{Nonsymmetric operads} \label{subsec:ns_operads}
We adopt most of notations and conventions of~\cite{LV12} about operads.
For the sake of completeness, we recall here the elementary notions
about operads employed thereafter.
\medbreak

A \Def{nonsymmetric operad in the category of vector spaces}, or a
\Def{nonsymmetric operad} for short, is a graded vector space
\begin{equation}
    \Oca := \bigoplus_{n \geq 1} \Oca(n)
\end{equation}
together with linear maps
\begin{equation}
    \circ_i : \Oca(n) \otimes \Oca(m) \to \Oca(n + m - 1),
    \qquad n, m \geq 1, i \in [n],
\end{equation}
called \Def{partial compositions}, and a distinguished element
$\Unit \in \Oca(1)$, the \Def{unit} of $\Oca$. This data has to satisfy
the three relations
\begin{subequations}
\begin{equation} \label{equ:operad_axiom_1}
    (x \circ_i y) \circ_{i + j - 1} z = x \circ_i (y \circ_j z),
    \qquad x \in \Oca(n), y \in \Oca(m),
    z \in \Oca(k), i \in [n], j \in [m],
\end{equation}
\begin{equation} \label{equ:operad_axiom_2}
    (x \circ_i y) \circ_{j + m - 1} z = (x \circ_j z) \circ_i y,
    \qquad x \in \Oca(n), y \in \Oca(m),
    z \in \Oca(k), i < j \in [n],
\end{equation}
\begin{equation} \label{equ:operad_axiom_3}
    \Unit \circ_1 x = x = x \circ_i \Unit,
    \qquad x \in \Oca(n), i \in [n].
\end{equation}
\end{subequations}
Since we consider in this paper only nonsymmetric operads, we shall call
these simply \Def{operads}. Moreover, in this work, we shall only
consider operads $\Oca$ for which $\Oca(1)$ has dimension~$1$.
\medbreak

When $\Oca$ is such that all $\Oca(n)$ have finite dimensions for all
$n \geq 1$, the \Def{Hilbert series} of~$\Oca$ is the series
$\Hilbert_\Oca(t)$ defined by
\begin{equation}
    \Hilbert_\Oca(t) := \sum_{n \geq 1} \dim \Oca(n)\, t^n.
\end{equation}
If $x$ is an element of $\Oca$ such that $x \in \Oca(n)$ for an
$n \geq 1$, we say that $n$ is the \Def{arity} of $x$ and we denote it
by $|x|$. If $\Oca_1$ and $\Oca_2$ are two operads, a linear map
$\phi : \Oca_1 \to \Oca_2$ is an \Def{operad morphism} if it respects
arities, sends the unit of $\Oca_1$ to the unit of $\Oca_2$, and
commutes with partial composition maps. We say that $\Oca_2$ is a
\Def{suboperad} of $\Oca_1$ if $\Oca_2$ is a graded subspace of $\Oca_1$,
$\Oca_1$ and $\Oca_2$ have the same unit, and the partial compositions
of $\Oca_2$ are the ones of $\Oca_1$ restricted on $\Oca_2$. For any
subset $G$ of $\Oca$, the \Def{operad generated} by $G$ is the smallest
suboperad $\Oca^G$ of $\Oca$ containing $G$. When $\Oca^G = \Oca$ and
$G$ is minimal with respect to the inclusion among the subsets of $G$
satisfying this property, $G$ is a \Def{minimal generating set} of
$\Oca$ and its elements are \Def{generators} of $\Oca$. An \Def{operad
ideal} of $\Oca$ is a graded subspace $I$ of $\Oca$ such that, for any
$x \in \Oca$ and $y \in I$, $x \circ_i y$ and $y \circ_j x$ are in $I$
for all valid integers $i$ and $j$. Given an operad ideal $I$ of $\Oca$,
one can define the \Def{quotient operad} $\Oca / I$ of $\Oca$ by $I$ in
the usual way.
\medbreak

Let us recall and set some more definitions about operads. The
\Def{Hadamard product} between the two operads $\Oca_1$ and $\Oca_2$ is
the operad $\Oca_1 * \Oca_2$ satisfying
$(\Oca_1 * \Oca_2)(n) = \Oca_1(n) \otimes \Oca_2(n)$, and its partial
composition is defined component-wise from the partial compositions of
$\Oca_1$ and~$\Oca_2$. An element $x$ of $\Oca(2)$ is \Def{associative}
if $x \circ_1 x = x \circ_2 x$. An \Def{antiautomorphism} of $\Oca$ is
a graded vector space automorphism $\phi$ of $\Oca$ sending the unit of
$\Oca$ to the unit of $\Oca$ and such that for any $x \in \Oca(n)$,
$y \in \Oca$, and $i \in [n]$,
$\phi(x \circ_i y) = \phi(x) \circ_{n - i + 1} \phi(y)$. A
\Def{symmetry} of $\Oca$ is either an automorphism or an
antiautomorphism of $\Oca$. The set of all symmetries of $\Oca$ forms a
group for the map composition, called the \Def{group of symmetries}
of~$\Oca$. A basis $B := \sqcup_{n \geq 1} B(n)$ of $\Oca$ is a
\Def{set-operad basis} if all partial compositions of elements of $B$
belong to $B$. In this case, we say that $\Oca$ is a \Def{set-operad}
with respect to the basis $B$. Moreover, when all the maps
\begin{equation}
    \circ_i^y : B(n) \to B(n + m - 1),
    \qquad n, m \geq 1, i \in [n], y \in B(m),
\end{equation}
defined by
\begin{equation}
    \circ_i^y(x) = x \circ_i y,
    \qquad x \in B(n),
\end{equation}
are injective, we say that $B$ is a \Def{basic set-operad basis} of
$\Oca$. This notion is a slightly modified version of the original
notion of basic set-operads introduced by Vallette\cite{Val07}. Finally,
$\Oca$ is \Def{cyclic} (see~\cite{GK95}) if there is a map
\begin{equation} \label{equ:rotation_map_0}
    \rho : \Oca(n) \to \Oca(n), \qquad n \geq 1,
\end{equation}
satisfying, for all $x \in \Oca(n)$, $y \in \Oca(m)$, and $i \in [n]$,
\begin{subequations}
\begin{equation} \label{equ:rotation_map_1}
    \rho(\Unit) = \Unit,
\end{equation}
\begin{equation} \label{equ:rotation_map_2}
    \rho^{n + 1}(x) = x,
\end{equation}
\begin{equation} \label{equ:rotation_map_3}
    \rho(x \circ_i y) =
    \begin{cases}
        \rho(y) \circ_m \rho(x) & \mbox{if } i = 1, \\
        \rho(x) \circ_{i - 1} y & \mbox{otherwise}.
    \end{cases}
\end{equation}
\end{subequations}
We call such a map $\rho$ a \Def{rotation map}.
\medbreak

\subsection{Configurations of chords} \label{subsec:configurations}
Configurations of chords are very classical combinatorial objects defined as collections of
diagonals and edges in regular polygons. The literature abounds of studies of various kinds
of configurations. One can cite for instance~\cite{DRS10} about triangulations, \cite{FN99}
about noncrossing configurations, and~\cite{CP92} about multi-trian\-gulations.
Combinatorial properties related with crossings and nestings in configurations of chords
appear in~\cite{Jon05,CDDSY07,RS10,SS12}. We provide here definitions about these objects
and consider a generalization of configurations wherein the edges and diagonals are labeled
by a set.
\medbreak

\subsubsection{Polygons}
A \Def{polygon} of \Def{size} $n \geq 1$ is a directed graph $\Pfr$ on
the set of vertices $[n + 1]$. An \Def{arc} of $\Pfr$ is a pair of
integers $(x, y)$ with $1 \leq x < y \leq n + 1$, a \Def{diagonal} is
an arc $(x, y)$ different from $(x, x + 1)$ and $(1, n + 1)$, and an
\Def{edge} is an arc of the form $(x, x + 1)$ and different from
$(1, n + 1)$. We denote by $\Arcs_\Pfr$ (resp. $\Diagonals_\Pfr$,
$\Edges_\Pfr$) the set of all arcs (resp. diagonals, edges) of $\Pfr$.
For any $i \in [n]$, the \Def{$i$th edge} of $\Pfr$ is the edge
$(i, i + 1)$, and the arc $(1, n + 1)$ is the \Def{base} of~$\Pfr$.
\medbreak

In our graphical representations, each polygon is drawn so that its
base is the bottommost segment, vertices are implicitly numbered from
$1$ to $n + 1$ in clockwise direction, and the diagonals are not
drawn. For example,
\begin{equation}
    \Pfr :=
    \begin{tikzpicture}[scale=.85,Centering]
        \node[CliquePoint](1)at(-0.50,-0.87){};
        \node[CliquePoint](2)at(-1.00,-0.00){};
        \node[CliquePoint](3)at(-0.50,0.87){};
        \node[CliquePoint](4)at(0.50,0.87){};
        \node[CliquePoint](5)at(1.00,0.00){};
        \node[CliquePoint](6)at(0.50,-0.87){};
        \draw[CliqueEmptyEdge](1)edge[]node[CliqueLabel]{}(2);
        \draw[CliqueEmptyEdge](1)edge[]node[CliqueLabel]{}(6);
        \draw[CliqueEmptyEdge](2)edge[]node[CliqueLabel]{}(3);
        \draw[CliqueEmptyEdge](3)edge[]node[CliqueLabel]{}(4);
        \draw[CliqueEmptyEdge](4)edge[]node[CliqueLabel]{}(5);
        \draw[CliqueEmptyEdge](5)edge[]node[CliqueLabel]{}(6);
        \node[left of=1,node distance=3mm,font=\scriptsize]
            {\begin{math}1\end{math}};
        \node[left of=2,node distance=3mm,font=\scriptsize]
            {\begin{math}2\end{math}};
        \node[above of=3,node distance=3mm,font=\scriptsize]
            {\begin{math}3\end{math}};
        \node[above of=4,node distance=3mm,font=\scriptsize]
            {\begin{math}4\end{math}};
        \node[right of=5,node distance=3mm,font=\scriptsize]
            {\begin{math}5\end{math}};
        \node[right of=6,node distance=3mm,font=\scriptsize]
            {\begin{math}6\end{math}};
    \end{tikzpicture}
\end{equation}
is a polygon of size $5$. Its set of all diagonals is
\begin{equation}
    \Diagonals_\Pfr =
    \{(1, 3), (1, 4), (1, 5), (2, 4), (2, 5), (2, 6),
    (3, 5), (3, 6), (4, 6)\},
\end{equation}
its set of all edges is
\begin{equation}
    \Edges_\Pfr = \{(1, 2), (2, 3), (3, 4), (4, 5), (5, 6)\},
\end{equation}
and its set of all arcs is
\begin{equation}
    \Arcs_\Pfr = \Diagonals_\Pfr \sqcup \Edges_\Pfr \sqcup \{(1, 6)\}.
\end{equation}
\medbreak

\subsubsection{Configurations} \label{subsubsec:configurations}
For any set $S$, an \Def{$S$-configuration} (or a \Def{configuration}
when $S$ is known without ambiguity) is a polygon $\Pfr$ endowed with a
partial function
\begin{equation}
    \phi_\Pfr : \Arcs_\Pfr \to S.
\end{equation}
When $\phi_\Pfr((x, y))$ is defined, we say that the arc $(x, y)$ is \Def{labeled} and we
write simply $\Pfr(x, y)$ instead of $\phi_\Pfr((x, y))$.  When the base of $\Pfr$ is
labeled, we write simply $\Pfr_0$ for $\Pfr(1, n + 1)$, where $n$ is the size of $\Pfr$.
Finally, when the $i$th edge of $\Pfr$ is labeled, we write simply $\Pfr_i$ for $\Pfr(i, i +
1)$.
\medbreak

In our graphical representations, we shall represent any
$S$-configuration $\Pfr$ by drawing a polygon of the same size as the
one of $\Pfr$ following the conventions explained before, and by
labeling its arcs accordingly. For instance
\begin{equation}
    \Pfr :=
    \begin{tikzpicture}[scale=.85,Centering]
        \node[CliquePoint](1)at(-0.50,-0.87){};
        \node[CliquePoint](2)at(-1.00,-0.00){};
        \node[CliquePoint](3)at(-0.50,0.87){};
        \node[CliquePoint](4)at(0.50,0.87){};
        \node[CliquePoint](5)at(1.00,0.00){};
        \node[CliquePoint](6)at(0.50,-0.87){};
        \draw[CliqueEdge](1)edge[]node[CliqueLabel]
            {\begin{math}\Att\end{math}}(2);
        \draw[CliqueEmptyEdge](1)edge[]node[CliqueLabel]{}(6);
        \draw[CliqueEmptyEdge](2)edge[]node[CliqueLabel]{}(3);
        \draw[CliqueEmptyEdge](3)edge[]node[CliqueLabel]{}(4);
        \draw[CliqueEdge](4)edge[]node[CliqueLabel]
            {\begin{math}\Btt\end{math}}(5);
        \draw[CliqueEmptyEdge](5)edge[]node[CliqueLabel]{}(6);
        \draw[CliqueEdge](1)edge[bend right=30]node[CliqueLabel]
            {\begin{math}\Att\end{math}}(4);
        \draw[CliqueEdge](2)edge[bend left=30]node[CliqueLabel]
            {\begin{math}\Btt\end{math}}(5);
    \end{tikzpicture}
\end{equation}
is an $\{\Att, \Btt\}$-configuration. The arcs $(1, 2)$ and
$(1, 4)$ of $\Pfr$ are labeled by $\Att$, the arcs $(2, 5)$ and
$(4, 5)$ are labeled by $\Btt$, and the other arcs are unlabeled.
\medbreak

\subsubsection{Additional definitions}
Let us now provide some definitions and statistics on configurations.
Let $\Pfr$ be a configuration of size $n$. The \Def{skeleton} of $\Pfr$
is the undirected graph $\Skel(\Pfr)$ on the set of vertices $[n + 1]$
such that for any $x < y \in [n + 1]$, there is an arc $\{x, y\}$
in $\Skel(\Pfr)$ if $(x, y)$ is labeled in $\Pfr$. The \Def{degree} of
a vertex $x$ of $\Pfr$ is the number of vertices adjacent to $x$ in
$\Skel(\Pfr)$. The \Def{degree} $\Degr(\Pfr)$ of $\Pfr$ is the maximal
degree among its vertices. Two (non-necessarily labeled) diagonals
$(x, y)$ and $(x', y')$ of $\Pfr$ are \Def{crossing} if
$x < x' < y < y'$ or $x' < x < y' < y$. The \Def{crossing number} of a
labeled diagonal $(x, y)$ of $\Pfr$ is the number of labeled diagonals
$(x', y')$ such that $(x, y)$ and $(x', y')$ are crossing. The
\Def{crossing number} $\Cros(\Pfr)$ of $\Pfr$ is the maximal crossing
among its labeled diagonals. When $\Cros(\Pfr) = 0$, there are no
crossing diagonals in $\Pfr$ and in this case, $\Pfr$ is
\Def{noncrossing}. A (non-necessarily labeled) arc $(x', y')$ is
\Def{nested} in a (non-necessarily labeled) arc $(x, y)$ of $\Pfr$ if
$x \leq x' < y' \leq y$. We say that $\Pfr$ is \Def{nesting-free} if for
any labeled arcs $(x, y)$ and $(x', y')$ of $\Pfr$ such that $(x', y')$
is nested in $(x, y)$, $(x, y) = (x', y')$. Besides, $\Pfr$ is
\Def{acyclic} if $\Skel(\Pfr)$ is acyclic, that is  there is no subset
$\{x_1, \dots, x_k\}$ of $[n + 1]$ of cardinality $k \geq 3$ such that
$\{x_i, x_{i + 1}\}$ and $\{x_k, x_1\}$ are arcs in $\Skel(\Pfr)$ for
all $i \in [k - 1]$. When $\Pfr$ has no labeled edges nor labeled base,
$\Pfr$ is \Def{white}. If $\Pfr$ has no labeled diagonals, $\Pfr$ is a
\Def{bubble}. A \Def{triangle} is a configuration of size~$2$.
Obviously, all triangles are bubbles, and all bubbles are noncrossing.
\medbreak

\section{From unitary magmas to operads}\label{sec:construction_Cli}
We describe in this section our construction from unitary magmas to
operads and study its main algebraic and combinatorial properties.
\medbreak

\subsection{Operads of decorated cliques}%
\label{subsec:decorated_cliques}
We present here our main combinatorial objects, the decorated cliques.
The construction $\Cli$, which takes a unitary magma as input and
produces an operad, is defined.
\medbreak

\subsubsection{Unitary magmas}
Recall first that a unitary magma is a set endowed with a binary
operation $\Op$ admitting a left and right unit $\Unit_\Mca$. For
convenience, we denote by $\bar{\Mca}$ the set
$\Mca \setminus \{\Unit_\Mca\}$. To explore some examples in this
article, we shall mostly consider four sorts of unitary magmas: the
additive unitary magma on all integers denoted by $\Z$, the cyclic
additive unitary magma on $\Z / \ell \Z$ denoted by $\N_\ell$, the
unitary magma
\begin{equation}
    \Dbb_\ell := \{\Unit, 0, \Dtt_1, \dots, \Dtt_\ell\}
\end{equation}
where $\Unit$ is the unit of $\Dbb_\ell$, $0$ is absorbing, and
$\Dtt_i \Op \Dtt_j = 0$ for all $i, j \in [\ell]$, and the unitary magma
\begin{equation}
    \Ebb_\ell := \{\Unit, \Ett_1, \dots, \Ett_\ell\}
\end{equation}
where $\Unit$ is the unit of $\Ebb_\ell$ and $\Ett_i \Op \Ett_j = \Unit$
for all $i, j \in [\ell]$. Observe that since
\begin{equation}
    \Ett_1 \Op (\Ett_1 \Op \Ett_2) = \Ett_1 \Op \Unit = \Ett_1
    \ne
    \Ett_2 = \Unit \Op \Ett_2 = (\Ett_1 \Op \Ett_1) \Op \Ett_2,
\end{equation}
all unitary magmas $\Ebb_\ell$, $\ell \geq 2$, are not monoids.
\medbreak

\subsubsection{Decorated cliques}
An \Def{$\Mca$-decorated clique} (or an \Def{$\Mca$-clique} for short)
is an $\Mca$-configuration $\Pfr$ such that all arcs of $\Pfr$ have
labels. When the arc $(x, y)$ of $\Pfr$ is labeled by an element
different from $\Unit_\Mca$, we say that the arc $(x, y)$ is
\Def{solid}. By convention, we require that the $\Mca$-clique
$\UnitClique$ of size $1$ having its base labeled by $\Unit_\Mca$ is the
only such object of size $1$. The set of all $\Mca$-cliques is denoted
by~$\Cliques_\Mca$.
\medbreak

In our graphical representations, we shall represent any $\Mca$-clique
$\Pfr$ by following the drawing conventions of configurations explained
in Section~\ref{subsubsec:configurations} with the difference that
non-solid diagonals are not drawn. For instance,
\begin{equation}
    \Pfr :=
    \begin{tikzpicture}[scale=.85,Centering]
        \node[CliquePoint](1)at(-0.43,-0.90){};
        \node[CliquePoint](2)at(-0.97,-0.22){};
        \node[CliquePoint](3)at(-0.78,0.62){};
        \node[CliquePoint](4)at(-0.00,1.00){};
        \node[CliquePoint](5)at(0.78,0.62){};
        \node[CliquePoint](6)at(0.97,-0.22){};
        \node[CliquePoint](7)at(0.43,-0.90){};
        \draw[CliqueEdge](1)edge[]node[CliqueLabel]
            {\begin{math}-1\end{math}}(2);
        \draw[CliqueEdge](1)
            edge[bend left=30]node[CliqueLabel,near start]
            {\begin{math}2\end{math}}(5);
        \draw[CliqueEdge](1)edge[]node[CliqueLabel]
            {\begin{math}1\end{math}}(7);
        \draw[CliqueEmptyEdge](2)edge[]node[CliqueLabel]{}(3);
        \draw[CliqueEmptyEdge](3)edge[]node[CliqueLabel]{}(4);
        \draw[CliqueEdge](3)
            edge[bend left=30]node[CliqueLabel,near start]
            {\begin{math}-1\end{math}}(7);
        \draw[CliqueEdge](4)edge[]node[CliqueLabel]
            {\begin{math}3\end{math}}(5);
        \draw[CliqueEdge](5)edge[]node[CliqueLabel]
            {\begin{math}2\end{math}}(6);
        \draw[CliqueEdge](5)edge[]node[CliqueLabel]
            {\begin{math}1\end{math}}(7);
        \draw[CliqueEmptyEdge](6)edge[]node[CliqueLabel]{}(7);
    \end{tikzpicture}
\end{equation}
is a $\Z$-clique such that, among others $\Pfr(1, 2) = -1$,
$\Pfr(1, 5) = 2$, $\Pfr(3, 7) = -1$, $\Pfr(5, 7) = 1$, $\Pfr(2, 3) = 0$
(because $0$ is the unit of $\Z$), and $\Pfr(2, 6) = 0$ (for the same
reason).
\medbreak

Let us now provide some definitions and statistics on $\Mca$-cliques.
The \Def{underlying configuration} of $\Pfr$ is the
$\bar{\Mca}$-configuration $\bar{\Pfr}$ of the same size as the one of
$\Pfr$ and such that $\bar{\Pfr}(x, y) := \Pfr(x, y)$ for all solid arcs
$(x, y)$ of $\Pfr$, and all other arcs of $\bar{\Pfr}$ are unlabeled.
The \Def{skeleton}, (resp. \Def{degree}, \Def{crossing number}) of
$\Pfr$ is the skeleton (resp. the degree, the crossing number) of
$\bar{\Pfr}$. Moreover, $\Pfr$ is \Def{nesting-free}, (resp.
\Def{acyclic}, \Def{white}, an \Def{$\Mca$-bubble}, an
\Def{$\Mca$-triangle}), if $\bar{\Pfr}$ is nesting-free (resp. acyclic,
white, a bubble, a triangle). The set of all $\Mca$-bubbles (resp.
$\Mca$-triangles) is denoted by $\Bubbles_\Mca$ (resp.
$\Triangles_\Mca$).
\medbreak

\subsubsection{Partial composition of $\Mca$-cliques}
From now, the \Def{arity} of an $\Mca$-clique $\Pfr$ is its size and
is denoted by $|\Pfr|$. For any unitary magma $\Mca$, we define the
vector space
\begin{equation}
    \Cli\Mca := \bigoplus_{n \geq 1} \Cli\Mca(n)
    = \K \Angle{\Cliques_\Mca},
\end{equation}
where $\Cli\Mca(n)$ is the linear span of all $\Mca$-cliques of arity
$n$, $n \geq 1$. The set $\Cliques_\Mca$ forms hence a basis of
$\Cli\Mca$ called \Def{fundamental basis}. Observe that the space
$\Cli\Mca(1)$ has dimension $1$ since it is the linear span of the
$\Mca$-clique $\UnitClique$. We endow $\Cli\Mca$ with partial
composition maps
\begin{equation}
    \circ_i : \Cli\Mca(n) \otimes \Cli\Mca(m) \to
    \Cli\Mca(n + m - 1),
    \qquad n, m \geq 1, i \in [n],
\end{equation}
defined linearly, in the fundamental basis, in the following way. Let
$\Pfr$ and $\Qfr$ be two $\Mca$-cliques of respective arities $n$ and
$m$, and $i \in [n]$ be an integer. We set $\Pfr \circ_i \Qfr$ as the
$\Mca$-clique of arity $n + m - 1$ such that, for any arc $(x, y)$ where
$1 \leq x < y \leq n + m$,
\begin{equation} \label{equ:partial_composition_Cli_M}
    (\Pfr \circ_i \Qfr)(x, y) :=
    \begin{cases}
        \Pfr(x, y)
            & \mbox{if } y \leq i, \\
        \Pfr(x, y - m + 1)
            & \mbox{if } x \leq i < i + m \leq y
            \mbox{ and } (x, y) \ne (i, i + m), \\
        \Pfr(x - m + 1, y - m + 1)
            & \mbox{if } i + m \leq x, \\
        \Qfr(x - i + 1, y - i + 1)
            & \mbox{if } i \leq x < y \leq i + m
              \mbox{ and } (x, y) \ne (i, i + m), \\
        \Pfr_i \Op \Qfr_0
            & \mbox{if } (x, y) = (i, i + m), \\
        \Unit_\Mca
            & \mbox{otherwise}.
    \end{cases}
\end{equation}
We recall that $\Op$ denotes the operation of $\Mca$ and $\Unit_\Mca$
its unit. Graphically, $\Pfr \circ_i \Qfr$ is obtained by gluing
the base of $\Qfr$ onto the $i$th edge of $\Pfr$ and by labeling this arc by
$\Pfr_i \Op \Qfr_0$, and by adding all
required non solid diagonals on the graph thus obtained to become a
clique (see Figure~\ref{fig:composition_Cli_M}).
\begin{figure}[ht]
    \centering
    \begin{equation*}
        \begin{tikzpicture}[scale=.55,Centering]
            \node[shape=coordinate](1)at(-0.50,-0.87){};
            \node[shape=coordinate](2)at(-1.00,-0.00){};
            \node[CliquePoint](3)at(-0.50,0.87){};
            \node[CliquePoint](4)at(0.50,0.87){};
            \node[shape=coordinate](5)at(1.00,0.00){};
            \node[shape=coordinate](6)at(0.50,-0.87){};
            \draw[CliqueEdge](1)edge[]node[CliqueLabel]{}(2);
            \draw[CliqueEdge](1)edge[]node[CliqueLabel]{}(6);
            \draw[CliqueEdge](2)edge[]node[CliqueLabel]{}(3);
            \draw[CliqueEdge](3)edge[]node[CliqueLabel]
                {\begin{math}\Pfr_i\end{math}}(4);
            \draw[CliqueEdge](4)edge[]node[CliqueLabel]{}(5);
            \draw[CliqueEdge](5)edge[]node[CliqueLabel]{}(6);
            \node[left of=3,node distance=2mm,font=\scriptsize]
                {\begin{math}i\end{math}};
            \node[right of=4,node distance=4mm,font=\scriptsize]
                {\begin{math}i\!+\!1\end{math}};
            \node[font=\footnotesize](name)at(0,0)
                {\begin{math}\Pfr\end{math}};
        \end{tikzpicture}
        \enspace \circ_i \enspace\enspace
        \begin{tikzpicture}[scale=.55,Centering]
            \node[CliquePoint](1)at(-0.50,-0.87){};
            \node[shape=coordinate](2)at(-1.00,-0.00){};
            \node[shape=coordinate](3)at(-0.50,0.87){};
            \node[shape=coordinate](4)at(0.50,0.87){};
            \node[shape=coordinate](5)at(1.00,0.00){};
            \node[CliquePoint](6)at(0.50,-0.87){};
            \draw[CliqueEdge,Col6!80](1)edge[]node[CliqueLabel]{}(2);
            \draw[CliqueEdge,draw=Col6!80](1)edge[]node[CliqueLabel]
                {\begin{math}\Qfr_0\end{math}}(6);
            \draw[CliqueEdge,Col6!80](2)edge[]node[CliqueLabel]{}(3);
            \draw[CliqueEdge,Col6!80](3)edge[]node[CliqueLabel]{}(4);
            \draw[CliqueEdge,Col6!80](4)edge[]node[CliqueLabel]{}(5);
            \draw[CliqueEdge,Col6!80](5)edge[]node[CliqueLabel]{}(6);
            \node[font=\footnotesize](name)at(0,0)
                {\begin{math}\Qfr\end{math}};
        \end{tikzpicture}
        \quad = \quad
        \begin{tikzpicture}[scale=.55,Centering]
            \begin{scope}
            \node[shape=coordinate](1)at(-0.50,-0.87){};
            \node[shape=coordinate](2)at(-1.00,-0.00){};
            \node[CliquePoint](3)at(-0.50,0.87){};
            \node[CliquePoint](4)at(0.50,0.87){};
            \node[shape=coordinate](5)at(1.00,0.00){};
            \node[shape=coordinate](6)at(0.50,-0.87){};
            \draw[CliqueEdge](1)edge[]node[CliqueLabel]{}(2);
            \draw[CliqueEdge](1)edge[]node[CliqueLabel]{}(6);
            \draw[CliqueEdge](2)edge[]node[CliqueLabel]{}(3);
            \draw[CliqueEdge](3)edge[]node[CliqueLabel]
                {\begin{math}\Pfr_i\end{math}}(4);
            \draw[CliqueEdge](4)edge[]node[CliqueLabel]{}(5);
            \draw[CliqueEdge](5)edge[]node[CliqueLabel]{}(6);
            \node[left of=3,node distance=2mm,font=\scriptsize]
                {\begin{math}i\end{math}};
            \node[right of=4,node distance=4mm,font=\scriptsize]
                {\begin{math}i\!+\!1\end{math}};
            \node[font=\footnotesize](name)at(0,0)
                {\begin{math}\Pfr\end{math}};
            \end{scope}
            \begin{scope}[yshift=2.1cm]
            \node[CliquePoint](1)at(-0.50,-0.87){};
            \node[shape=coordinate](2)at(-1.00,-0.00){};
            \node[shape=coordinate](3)at(-0.50,0.87){};
            \node[shape=coordinate](4)at(0.50,0.87){};
            \node[shape=coordinate](5)at(1.00,0.00){};
            \node[CliquePoint](6)at(0.50,-0.87){};
            \draw[CliqueEdge,Col6!80](1)edge[]node[CliqueLabel]{}(2);
            \draw[CliqueEdge,draw=Col6!80](1)edge[]node[CliqueLabel]
                {\begin{math}\Qfr_0\end{math}}(6);
            \draw[CliqueEdge,Col6!80](2)edge[]node[CliqueLabel]{}(3);
            \draw[CliqueEdge,Col6!80](3)edge[]node[CliqueLabel]{}(4);
            \draw[CliqueEdge,Col6!80](4)edge[]node[CliqueLabel]{}(5);
            \draw[CliqueEdge,Col6!80](5)edge[]node[CliqueLabel]{}(6);
            \node[font=\footnotesize](name)at(0,0)
                {\begin{math}\Qfr\end{math}};
            \end{scope}
        \end{tikzpicture}
        \quad = \quad
        \begin{tikzpicture}[scale=.75,Centering]
            \node[shape=coordinate](1)at(-0.31,-0.95){};
            \node[shape=coordinate](2)at(-0.81,-0.59){};
            \node[CliquePoint](3)at(-1.00,-0.00){};
            \node[shape=coordinate](4)at(-0.81,0.59){};
            \node[shape=coordinate](5)at(-0.31,0.95){};
            \node[shape=coordinate](6)at(0.31,0.95){};
            \node[shape=coordinate](7)at(0.81,0.59){};
            \node[CliquePoint](8)at(1.00,0.00){};
            \node[shape=coordinate](9)at(0.81,-0.59){};
            \node[shape=coordinate](10)at(0.31,-0.95){};
            \draw[CliqueEdge](1)edge[]node[]{}(2);
            \draw[CliqueEdge](1)edge[]node[]{}(10);
            \draw[CliqueEdge](2)edge[]node[]{}(3);
            \draw[CliqueEdge,Col6!80](3)edge[]node[]{}(4);
            \draw[CliqueEdge,Col6!80](4)edge[]node[]{}(5);
            \draw[CliqueEdge,Col6!80](5)edge[]node[]{}(6);
            \draw[CliqueEdge,Col6!80](6)edge[]node[]{}(7);
            \draw[CliqueEdge,Col6!80](7)edge[]node[]{}(8);
            \draw[CliqueEdge](8)edge[]node[]{}(9);
            \draw[CliqueEdge](9)edge[]node[]{}(10);
            \node[left of=3,node distance=2mm,font=\scriptsize]
                {\begin{math}i\end{math}};
            \node[right of=8,node distance=5mm,font=\scriptsize]
                {\begin{math}i\!+\!m\end{math}};
            \draw[CliqueEdge,draw=Col6!90]
                (3)edge[]node[CliqueLabel]
                {\begin{math}\Pfr_i \Op \Qfr_0\end{math}}(8);
        \end{tikzpicture}
    \end{equation*}
    \caption{\footnotesize
    The partial composition of $\Cli\Mca$, described in graphical terms.
    Here, $\Pfr$ and $\Qfr$ are two $\Mca$-cliques. The arity of $\Qfr$
    is~$m$ and $i$ is an integer between $1$ and~$|\Pfr|$.}
    \label{fig:composition_Cli_M}
\end{figure}
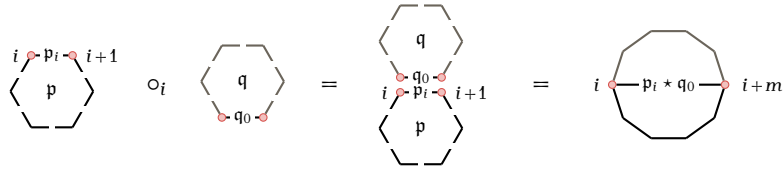
For example, in $\Cli\Z$, one has the two partial compositions
\begin{subequations}
\begin{equation}
    \begin{tikzpicture}[scale=.85,Centering]
        \node[CliquePoint](1)at(-0.50,-0.87){};
        \node[CliquePoint](2)at(-1.00,-0.00){};
        \node[CliquePoint](3)at(-0.50,0.87){};
        \node[CliquePoint](4)at(0.50,0.87){};
        \node[CliquePoint](5)at(1.00,0.00){};
        \node[CliquePoint](6)at(0.50,-0.87){};
        \draw[CliqueEdge](1)edge[]node[CliqueLabel]
            {\begin{math}1\end{math}}(2);
        \draw[CliqueEdge](1)edge[bend left=30]node[CliqueLabel]
            {\begin{math}-2\end{math}}(5);
        \draw[CliqueEmptyEdge](1)edge[]node[CliqueLabel]{}(6);
        \draw[CliqueEdge](2)edge[]node[CliqueLabel]
            {\begin{math}-2\end{math}}(3);
        \draw[CliqueEmptyEdge](3)edge[]node[CliqueLabel]{}(4);
        \draw[CliqueEdge](3)edge[bend right=30]node[CliqueLabel]
            {\begin{math}1\end{math}}(5);
        \draw[CliqueEmptyEdge](4)edge[]node[CliqueLabel]{}(5);
        \draw[CliqueEmptyEdge](5)edge[]node[CliqueLabel]{}(6);
    \end{tikzpicture}
    \enspace \circ_2 \enspace
    \begin{tikzpicture}[scale=.65,Centering]
        \node[CliquePoint](1)at(-0.71,-0.71){};
        \node[CliquePoint](2)at(-0.71,0.71){};
        \node[CliquePoint](3)at(0.71,0.71){};
        \node[CliquePoint](4)at(0.71,-0.71){};
        \draw[CliqueEmptyEdge](1)edge[]node[CliqueLabel]{}(2);
        \draw[CliqueEdge](1)edge[]node[CliqueLabel,near end]
            {\begin{math}1\end{math}}(3);
        \draw[CliqueEdge](1)edge[]node[CliqueLabel]
            {\begin{math}3\end{math}}(4);
        \draw[CliqueEmptyEdge](2)edge[]node[CliqueLabel]{}(3);
        \draw[CliqueEdge](2)edge[]node[CliqueLabel,near start]
            {\begin{math}1\end{math}}(4);
        \draw[CliqueEdge](3)edge[]node[CliqueLabel]
            {\begin{math}2\end{math}}(4);
    \end{tikzpicture}
    \enspace = \enspace
    \begin{tikzpicture}[scale=1.1,Centering]
        \node[CliquePoint](1)at(-0.38,-0.92){};
        \node[CliquePoint](2)at(-0.92,-0.38){};
        \node[CliquePoint](3)at(-0.92,0.38){};
        \node[CliquePoint](4)at(-0.38,0.92){};
        \node[CliquePoint](5)at(0.38,0.92){};
        \node[CliquePoint](6)at(0.92,0.38){};
        \node[CliquePoint](7)at(0.92,-0.38){};
        \node[CliquePoint](8)at(0.38,-0.92){};
        \draw[CliqueEdge](1)edge[]node[CliqueLabel]
            {\begin{math}1\end{math}}(2);
        \draw[CliqueEdge](1)edge[bend left=30]node[CliqueLabel]
            {\begin{math}-2\end{math}}(7);
        \draw[CliqueEmptyEdge](1)edge[]node[CliqueLabel]{}(8);
        \draw[CliqueEmptyEdge](2)edge[]node[CliqueLabel]{}(3);
        \draw[CliqueEdge](2)edge[bend right=30]node[CliqueLabel]
            {\begin{math}1\end{math}}(4);
        \draw[CliqueEdge](2)edge[bend right=30]node[CliqueLabel]
            {\begin{math}1\end{math}}(5);
        \draw[CliqueEmptyEdge](3)edge[]node[CliqueLabel]{}(4);
        \draw[CliqueEdge](3)edge[bend right=30]node[CliqueLabel]
            {\begin{math}1\end{math}}(5);
        \draw[CliqueEdge](4)edge[]node[CliqueLabel]
            {\begin{math}2\end{math}}(5);
        \draw[CliqueEmptyEdge](5)edge[]node[CliqueLabel]{}(6);
        \draw[CliqueEdge](5)edge[bend right=30]node[CliqueLabel]
            {\begin{math}1\end{math}}(7);
        \draw[CliqueEmptyEdge](6)edge[]node[CliqueLabel]{}(7);
        \draw[CliqueEmptyEdge](7)edge[]node[CliqueLabel]{}(8);
    \end{tikzpicture}\,,
\end{equation}
\begin{equation}
    \begin{tikzpicture}[scale=.85,Centering]
        \node[CliquePoint](1)at(-0.50,-0.87){};
        \node[CliquePoint](2)at(-1.00,-0.00){};
        \node[CliquePoint](3)at(-0.50,0.87){};
        \node[CliquePoint](4)at(0.50,0.87){};
        \node[CliquePoint](5)at(1.00,0.00){};
        \node[CliquePoint](6)at(0.50,-0.87){};
        \draw[CliqueEdge](1)edge[]node[CliqueLabel]
            {\begin{math}1\end{math}}(2);
        \draw[CliqueEdge](1)edge[bend left=30]node[CliqueLabel]
            {\begin{math}-2\end{math}}(5);
        \draw[CliqueEmptyEdge](1)edge[]node[CliqueLabel]{}(6);
        \draw[CliqueEdge](2)edge[]node[CliqueLabel]
            {\begin{math}-2\end{math}}(3);
        \draw[CliqueEmptyEdge](3)edge[]node[CliqueLabel]{}(4);
        \draw[CliqueEdge](3)edge[bend right=30]node[CliqueLabel]
            {\begin{math}1\end{math}}(5);
        \draw[CliqueEmptyEdge](4)edge[]node[CliqueLabel]{}(5);
        \draw[CliqueEmptyEdge](5)edge[]node[CliqueLabel]{}(6);
    \end{tikzpicture}
    \enspace \circ_2 \enspace
    \begin{tikzpicture}[scale=.65,Centering]
        \node[CliquePoint](1)at(-0.71,-0.71){};
        \node[CliquePoint](2)at(-0.71,0.71){};
        \node[CliquePoint](3)at(0.71,0.71){};
        \node[CliquePoint](4)at(0.71,-0.71){};
        \draw[CliqueEmptyEdge](1)edge[]node[CliqueLabel]{}(2);
        \draw[CliqueEdge](1)edge[]node[CliqueLabel,near end]
            {\begin{math}1\end{math}}(3);
        \draw[CliqueEdge](1)edge[]node[CliqueLabel]
            {\begin{math}2\end{math}}(4);
        \draw[CliqueEmptyEdge](2)edge[]node[CliqueLabel]{}(3);
        \draw[CliqueEdge](2)edge[]node[CliqueLabel,near start]
            {\begin{math}1\end{math}}(4);
        \draw[CliqueEdge](3)edge[]node[CliqueLabel]
            {\begin{math}2\end{math}}(4);
    \end{tikzpicture}
    \enspace = \enspace
    \begin{tikzpicture}[scale=1.1,Centering]
        \node[CliquePoint](1)at(-0.38,-0.92){};
        \node[CliquePoint](2)at(-0.92,-0.38){};
        \node[CliquePoint](3)at(-0.92,0.38){};
        \node[CliquePoint](4)at(-0.38,0.92){};
        \node[CliquePoint](5)at(0.38,0.92){};
        \node[CliquePoint](6)at(0.92,0.38){};
        \node[CliquePoint](7)at(0.92,-0.38){};
        \node[CliquePoint](8)at(0.38,-0.92){};
        \draw[CliqueEdge](1)edge[]node[CliqueLabel]
            {\begin{math}1\end{math}}(2);
        \draw[CliqueEdge](1)edge[bend left=30]node[CliqueLabel]
            {\begin{math}-2\end{math}}(7);
        \draw[CliqueEmptyEdge](1)edge[]node[CliqueLabel]{}(8);
        \draw[CliqueEmptyEdge](2)edge[]node[CliqueLabel]{}(3);
        \draw[CliqueEdge](2)edge[bend right=30]node[CliqueLabel]
            {\begin{math}1\end{math}}(4);
        \draw[CliqueEmptyEdge](3)edge[]node[CliqueLabel]{}(4);
        \draw[CliqueEdge](3)edge[bend right=30]node[CliqueLabel]
            {\begin{math}1\end{math}}(5);
        \draw[CliqueEdge](4)edge[]node[CliqueLabel]
            {\begin{math}2\end{math}}(5);
        \draw[CliqueEmptyEdge](5)edge[]node[CliqueLabel]{}(6);
        \draw[CliqueEdge](5)edge[bend right=30]node[CliqueLabel]
            {\begin{math}1\end{math}}(7);
        \draw[CliqueEmptyEdge](6)edge[]node[CliqueLabel]{}(7);
        \draw[CliqueEmptyEdge](7)edge[]node[CliqueLabel]{}(8);
    \end{tikzpicture}\,.
\end{equation}
\end{subequations}
\medbreak

\subsubsection{Functorial construction from unitary magmas to operads}
If $\Mca_1$ and $\Mca_2$ are two unitary magmas and
$\theta : \Mca_1 \to \Mca_2$ is a unitary magma morphism, we define
\begin{equation}
    \Cli\theta : \Cli\Mca_1 \to \Cli\Mca_2
\end{equation}
as the linear map sending any $\Mca_1$-clique $\Pfr$ of arity $n$ to the
$\Mca_2$-clique $(\Cli\theta)(\Pfr)$ of the same arity such that, for
any arc $(x, y)$ where $1 \leq x < y \leq  n + 1$,
\begin{equation} \label{equ:morphism_Cli_M}
    ((\Cli\theta)(\Pfr))(x, y) := \theta(\Pfr(x, y)).
\end{equation}
Graphically, $(\Cli\theta)(\Pfr)$ is the $\Mca_2$-clique obtained
by relabeling each arc of $\Pfr$ by the image of its label by~$\theta$.
\medbreak

\begin{Theorem} \label{thm:clique_construction}
    The construction $\Cli$ is a functor from the category of unitary
    magmas to the category of operads. Moreover, $\Cli$ respects
    injections and surjections.
\end{Theorem}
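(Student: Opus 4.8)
The plan is to reduce everything to the fundamental basis $\Cliques_\Mca$: all the maps involved are linear and determined by their values on $\Mca$-cliques, so it suffices to establish the operad axioms, the morphism property, and functoriality on $\Mca$-cliques and then to extend linearly. First I would check that, for $\Mca$-cliques $\Pfr$, $\Qfr$ of arities $n$, $m$ and $i \in [n]$, formula~\eqref{equ:partial_composition_Cli_M} really defines an $\Mca$-clique of arity $n + m - 1$: the six conditions listed partition the set of arcs $(x,y)$ with $1 \leq x < y \leq n + m$ --- a short case split according to the position of $(x,y)$ relative to the interval $[i, i + m]$ --- so $\Pfr \circ_i \Qfr$ is an unambiguous total labelling, hence a genuine $\Mca$-clique (there is no constraint on labellings in arity at least $2$, and the degenerate case $n = m = 1$ returns $\UnitClique$).

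The core of the proof is the verification of the three operad axioms on this basis. For~\eqref{equ:operad_axiom_1} and~\eqref{equ:operad_axiom_2} I would compare, for $\Mca$-cliques $\Pfr$, $\Qfr$, $\Rfr$, the two resulting $\Mca$-cliques arc by arc. On an arc that is not created by a gluing, both sides reduce, after the index shifts prescribed by~\eqref{equ:partial_composition_Cli_M}, either to a common label of $\Pfr$, $\Qfr$ or $\Rfr$, or to $\Unit_\Mca$; this is bookkeeping on nested index intervals. The one delicate point is the label carried by the glued arcs: in $(\Pfr \circ_i \Qfr) \circ_{i + j - 1} \Rfr$ and in $\Pfr \circ_i (\Qfr \circ_j \Rfr)$ these are exactly the two arcs carrying $\Pfr_i \Op \Qfr_0$ and $\Qfr_j \Op \Rfr_0$, while in $(\Pfr \circ_i \Qfr) \circ_{j + m - 1} \Rfr$ and $(\Pfr \circ_j \Rfr) \circ_i \Qfr$ they carry $\Pfr_i \Op \Qfr_0$ and $\Pfr_j \Op \Rfr_0$; in each case every such arc results from a \emph{single} application of $\Op$ to a pair (edge, base) coming from two distinct edges, with no nesting of the operations. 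I expect this to be the only genuinely subtle step, and its content is precisely that nothing is used about $\Mca$ beyond the unit axiom: the associativity of $\Op$ is never invoked. For~\eqref{equ:operad_axiom_3}, a direct computation from~\eqref{equ:partial_composition_Cli_M} gives $\UnitClique \circ_1 \Pfr = \Pfr$ (the one glued arc acquires the label $\Unit_\Mca \Op \Pfr_0 = \Pfr_0$) and $\Pfr \circ_i \UnitClique = \Pfr$ (that arc acquires $\Pfr_i \Op \Unit_\Mca = \Pfr_i$), using that $\Unit_\Mca$ is a two-sided unit; together with $\dim \Cli\Mca(1) = 1$, this shows that $\Cli\Mca$ is an operad.

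For a unitary magma morphism $\theta : \Mca_1 \to \Mca_2$, I would next check that $\Cli\theta$ is an operad morphism: it preserves arities by construction, it sends $\UnitClique$ to $\UnitClique$ since $\theta(\Unit_{\Mca_1}) = \Unit_{\Mca_2}$, and it commutes with the partial compositions by comparing~\eqref{equ:morphism_Cli_M} to~\eqref{equ:partial_composition_Cli_M} arc by arc --- immediate on arcs not created by a gluing, equal to $\theta(\Pfr_i \Op \Qfr_0) = \theta(\Pfr_i) \Op \theta(\Qfr_0)$ on a glued arc (here using that $\theta$ respects $\Op$), and equal to $\theta(\Unit_{\Mca_1}) = \Unit_{\Mca_2}$ on the arcs the composition adds with label $\Unit_{\Mca_1}$. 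Functoriality is then immediate from~\eqref{equ:morphism_Cli_M}: both $\Cli(\Id_\Mca) = \Id_{\Cli\Mca}$ and $\Cli(\theta' \circ \theta) = \Cli\theta' \circ \Cli\theta$ follow by evaluating arc by arc.

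Finally, for the last assertion: if $\theta$ is injective, then two distinct $\Mca_1$-cliques differ on some arc and hence have distinct images under $\Cli\theta$, so $\Cli\theta$ sends the basis $\Cliques_{\Mca_1}$ injectively into the basis $\Cliques_{\Mca_2}$ and is therefore an injective linear map; if $\theta$ is surjective, then any $\Mca_2$-clique $\Qfr$ of arity $n \geq 2$ is reached, by lifting each value $\Qfr(x,y)$ to an arbitrary $\theta$-preimage (and $\UnitClique \mapsto \UnitClique$ in arity $1$), so $\Cli\theta$ is onto on the fundamental basis and hence surjective. I expect the only real obstacle in the whole argument to be the index bookkeeping behind the two associativity-type axioms of the second step, the conceptual content being that the $\Op$-labelled arcs produced by a double composition arise from independent, non-nested gluings --- which is exactly why ``unitary magma'' is enough as a hypothesis.
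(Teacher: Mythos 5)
Your proposal is correct and follows exactly the route of the paper's proof, which simply asserts that the verification of axioms \eqref{equ:operad_axiom_1}--\eqref{equ:operad_axiom_3}, the morphism property of $\Cli\theta$, and the preservation of injections and surjections are technical but straightforward checks; you have supplied those checks, correctly identifying the one subtle point (the glued arcs in a double composition arise from independent, non-nested applications of $\Op$, so no associativity of $\Mca$ is needed). Nothing is missing.
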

\begin{proof}
    Let $\Mca$ be a unitary magma. The fact that $\Cli\Mca$ endowed with
    the partial composition~\eqref{equ:partial_composition_Cli_M} is an
    operad can be established by showing that the two associativity
    relations~\eqref{equ:operad_axiom_1} and~\eqref{equ:operad_axiom_2}
    of operads are satisfied. This is a technical but a simple
    verification. Since $\Cli\Mca(1)$ contains $\UnitClique$ and this
    element is the unit for this partial composition,
    \eqref{equ:operad_axiom_3} holds. Moreover, let $\Mca_1$ and
    $\Mca_2$ be two unitary magmas and $\theta : \Mca_1 \to \Mca_2$ be a
    unitary magma morphism. The fact that the map $\Cli\theta$ defined
    in~\eqref{equ:morphism_Cli_M} is an operad morphism is
    straightforward to check. All this implies that $\Cli$ is a functor.
    Finally, the fact that $\Cli$ respects injections and surjections is
    also straightforward to verify.
\end{proof}
\medbreak

We name the construction $\Cli$ as the \Def{clique construction} and
$\Cli\Mca$ as the \Def{$\Mca$-clique operad}. Observe that the
fundamental basis of $\Cli\Mca$ is a set-operad basis of $\Cli\Mca$.
Besides, when $\Mca$ is the trivial unitary magma $\{\Unit_\Mca\}$,
$\Cli\Mca$ is the linear span of all decorated cliques having only
non-solid arcs. Thus, each space $\Cli\Mca(n)$, $n \geq 1$, is of
dimension $1$ and it follows from the definition of the partial
composition of $\Cli\Mca$ that this operad is isomorphic to the
associative operad $\As$. The next result shows that the clique
construction is compatible with the Cartesian product of unitary magmas.
\medbreak

\begin{Proposition} \label{prop:Cli_M_Cartesian_product}
    Let $\Mca_1$ and $\Mca_2$ be two unitary magmas. Then,
    $\Cli(\Mca_1 \times \Mca_2)$ is isomorphic to the Hadamard product
    of operads $(\Cli\Mca_1) * (\Cli\Mca_2)$.
\end{Proposition}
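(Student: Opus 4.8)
The plan is to exhibit an explicit bijection between the fundamental bases and then to deduce that it is an operad morphism from the functoriality of $\Cli$ proved in Theorem~\ref{thm:clique_construction}. First I would introduce the two canonical projections $\pi_1 : \Mca_1 \times \Mca_2 \to \Mca_1$ and $\pi_2 : \Mca_1 \times \Mca_2 \to \Mca_2$, which are unitary magma morphisms; by Theorem~\ref{thm:clique_construction} the induced maps $\Cli\pi_1$ and $\Cli\pi_2$ are operad morphisms. I would then define the linear map
\[
    \phi : \Cli(\Mca_1 \times \Mca_2) \to (\Cli\Mca_1) * (\Cli\Mca_2)
\]
on the fundamental basis by $\phi(\Pfr) := (\Cli\pi_1)(\Pfr) \otimes (\Cli\pi_2)(\Pfr)$; in words, $\phi$ sends an $(\Mca_1 \times \Mca_2)$-clique to the tensor of the two cliques obtained by recording, on each arc, the first and second coordinate of its label separately.

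Next I would argue that $\phi$ is a graded linear isomorphism. On the fundamental basis, $\phi$ is the map induced by $\Pfr \mapsto \big((\Cli\pi_1)(\Pfr),(\Cli\pi_2)(\Pfr)\big)$. Since in a clique every arc is labeled, giving a labeling of the arcs of a fixed polygon by $\Mca_1 \times \Mca_2$ is the same datum as a pair of labelings, one by $\Mca_1$ and one by $\Mca_2$; hence this map is a bijection from $\Cliques_{\Mca_1 \times \Mca_2}(n)$ onto $\Cliques_{\Mca_1}(n) \times \Cliques_{\Mca_2}(n)$, the inverse sending a pair $(\Qfr_1, \Qfr_2)$ of cliques of arity $n$ to the $(\Mca_1 \times \Mca_2)$-clique whose arc $(x, y)$ carries the label $(\Qfr_1(x, y), \Qfr_2(x, y))$. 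Composing with the canonical identification $\K\Angle{S_1 \times S_2} \cong \K\Angle{S_1} \otimes \K\Angle{S_2}$ then shows that $\phi$ restricts, for each $n \geq 1$, to a linear isomorphism from $\Cli(\Mca_1 \times \Mca_2)(n)$ onto $(\Cli\Mca_1)(n) \otimes (\Cli\Mca_2)(n) = \big((\Cli\Mca_1) * (\Cli\Mca_2)\big)(n)$.

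Finally I would check that $\phi$ is an operad morphism, whence, being bijective, an operad isomorphism. It preserves arities by construction, and it sends the unit $\UnitClique$ of $\Cli(\Mca_1 \times \Mca_2)$, whose base is labeled by $\Unit_{\Mca_1 \times \Mca_2} = (\Unit_{\Mca_1}, \Unit_{\Mca_2})$, to $\UnitClique \otimes \UnitClique$, which is the unit of the Hadamard product. Compatibility with partial compositions reduces, by linearity, to the case of two $(\Mca_1 \times \Mca_2)$-cliques $\Pfr$ and $\Qfr$ and an admissible index $i$, where, using that each $\Cli\pi_k$ is an operad morphism,
\begin{align*}
    \phi(\Pfr \circ_i \Qfr)
    &= (\Cli\pi_1)(\Pfr \circ_i \Qfr) \otimes (\Cli\pi_2)(\Pfr \circ_i \Qfr) \\
    &= \big((\Cli\pi_1)(\Pfr) \circ_i (\Cli\pi_1)(\Qfr)\big)
       \otimes
       \big((\Cli\pi_2)(\Pfr) \circ_i (\Cli\pi_2)(\Qfr)\big),
\end{align*}
and this last expression equals $\phi(\Pfr) \circ_i \phi(\Qfr)$ because the partial composition of a Hadamard product is defined component-wise.

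I do not expect a genuine obstacle. The one point meriting a little care is the bijection between $(\Mca_1 \times \Mca_2)$-cliques and pairs of cliques over the same polygon, which relies essentially on the fact that cliques have all their arcs labeled; once the argument is organized around the functoriality of $\Cli$ and the component-wise definition of the Hadamard partial composition, the remaining verifications are routine bookkeeping.
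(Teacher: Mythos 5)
Your proof is correct and follows essentially the same route as the paper's: both rest on the observation that labeling the arcs of a polygon by $\Mca_1 \times \Mca_2$ is the same datum as a pair of labelings by $\Mca_1$ and $\Mca_2$, giving the bijection between fundamental bases (the paper writes the map in the opposite direction, from the Hadamard product to $\Cli(\Mca_1 \times \Mca_2)$, together with its explicit inverse). Your derivation of the morphism property from the functoriality of $\Cli$ applied to the projections $\pi_1, \pi_2$ is a slightly tidier justification than the paper's direct appeal to the definition of the partial composition, but it is not a different argument in substance.
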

\begin{proof}
    Let
    $\phi : (\Cli\Mca_1) * (\Cli\Mca_2) \to \Cli(\Mca_1 \times \Mca_2)$
    be the linear map defined as follows. For any $\Mca_1$-clique
    $\Pfr$ of $\Cli\Mca_1$ and any $\Mca_2$-clique $\Qfr$ of $\Cli\Mca_2$
    both of arity $n$, $\phi(\Pfr \otimes \Qfr)$ is the
    $\Mca_1 \times \Mca_2$-clique defined, for any
    $1 \leq x < y \leq n + 1$, by
    \begin{equation}
        \left(\phi(\Pfr \otimes \Qfr)\right)(x, y)
        := (\Pfr(x, y), \Qfr(x, y)).
    \end{equation}
    Let the linear map
    $\psi : \Cli(\Mca_1 \times \Mca_2) \to (\Cli\Mca_1) * (\Cli\Mca_2)$
    defined, for any $\Mca_1 \times \Mca_2$-clique $\Rfr$ of
    $\Cli(\Mca_1 \times \Mca_2)$ of arity $n$, as  follows. The
    $\Mca_1$-clique $\Pfr$ and the $\Mca_2$-clique $\Qfr$ of arity $n$
    of the tensor $\Pfr \otimes \Qfr := \psi(\Rfr)$ are defined, for any
    $1 \leq x < y \leq n + 1$, by $\Pfr(x, y) := a$ and
    $\Qfr(x, y) := b$ where $(a, b) = \Rfr(x, y)$. Since we observe
    immediately that $\psi$ is the inverse of $\phi$, $\phi$ is a
    bijection. Moreover, it follows from the definition of the partial
    composition of clique operads that $\phi$ is an operad morphism. The
    statement of the proposition follows.
\end{proof}
\medbreak

\subsection{General properties}
We investigate here some properties of clique operads, as their
dimensions, their minimal generating sets, the fact that they admit a
cyclic operad structure, and describe their partial compositions over
two alternative bases.
\medbreak

\subsubsection{Binary relations}
Let us start by remarking that, depending on the cardinality $m$ of
$\Mca$, the set of all $\Mca$-cliques can be interpreted as particular
binary relations. When $m \geq 4$, let us set
$\Mca = \{\Unit_\Mca, \Att, \Btt, \Ctt, \dots\}$ so that $\Att$, $\Btt$,
and $\Ctt$ are distinguished pairwise distinct elements of $\Mca$
different from $\Unit_\Mca$. Given an $\Mca$-clique $\Pfr$ of arity
$n \geq 2$, we build a binary relation $\BinRel$ on $[n + 1]$
satisfying, for all $x < y \in [n + 1]$,
\begin{equation}\begin{split}
    x \BinRel y & \quad \mbox{ if } \Pfr(x, y) = \Att, \\
    y \BinRel x & \quad \mbox{ if } \Pfr(x, y) = \Btt, \\
    x \BinRel y \mbox{ and } y \BinRel x
        & \quad \mbox{ if } \Pfr(x, y) = \Ctt.
\end{split}\end{equation}
In particular, when $m = 2$ (resp. $m = 3$, $m = 4$),
$\Mca = \{\Unit, \Ctt\}$ (resp. $\Mca = \{\Unit, \Att, \Btt\}$,
$\Mca = \{\Unit, \Att, \Btt, \Ctt\}$) and the set of all $\Mca$-cliques
of arities $n \geq 2$ is in one-to-one correspondence with the set of all
irreflexive and symmetric (resp. irreflexive and antisymmetric,
irreflexive) binary relations on $[n + 1]$. Therefore, the operads
$\Cli\Mca$ can be interpreted as operads involving binary relations with
more or less properties.
\medbreak

\subsubsection{Dimensions and minimal generating set}

\begin{Proposition} \label{prop:dimensions_Cli_M}
    Let $\Mca$ be a finite unitary magma. For all $n \geq 2$,
    \begin{equation} \label{equ:dimensions_Cli_M}
        \dim \Cli\Mca(n) = m^{\binom{n + 1}{2}},
    \end{equation}
    where $m := \# \Mca$.
\end{Proposition}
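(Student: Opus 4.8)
The plan is to reduce the statement to a plain enumeration of $\Mca$-cliques, using the fact that the fundamental basis is genuinely a basis. Since $\Cli\Mca$ is defined as the linear span $\K\Angle{\Cliques_\Mca}$, graded by arity, the set of $\Mca$-cliques of arity $n$ is by construction a basis of $\Cli\Mca(n)$. Consequently $\dim \Cli\Mca(n)$ equals the number of $\Mca$-cliques of arity $n$, and it remains only to count these objects.

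Next I would unfold the definition of an $\Mca$-clique of arity $n$: such an object is a polygon $\Pfr$ of size $n$, that is, a directed graph on the vertex set $[n + 1]$, together with a labeling defined on \emph{every} arc, i.e.\ a total function $\phi_\Pfr : \Arcs_\Pfr \to \Mca$. The underlying polygon carries no data beyond its size, and its set of arcs is $\Arcs_\Pfr = \{(x, y) : 1 \leq x < y \leq n + 1\}$, which depends only on $n$. Therefore, for fixed arity $n$, the set of $\Mca$-cliques of arity $n$ is in bijection with the set of functions from $\Arcs_\Pfr$ to $\Mca$.

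Then I would count: one has $\# \Arcs_\Pfr = \binom{n + 1}{2}$, and each arc may be labeled, independently of the others, by any of the $m$ elements of $\Mca$, with no compatibility condition between labels. Hence the number of functions $\Arcs_\Pfr \to \Mca$ is $m^{\binom{n + 1}{2}}$, which gives~\eqref{equ:dimensions_Cli_M}. The hypothesis $n \geq 2$ is used precisely at this point: the only constraint present in the definition of $\Mca$-cliques is the convention singling out $\UnitClique$ as the unique object of size $1$ (so that $\dim \Cli\Mca(1) = 1$ rather than $m$), and this convention is vacuous as soon as $n \geq 2$. There is no genuine obstacle in this argument; the only point requiring a little care is to observe that, in arity at least $2$, the labels of distinct arcs are chosen completely freely, which is immediate from the definitions of a configuration and of an $\Mca$-clique.
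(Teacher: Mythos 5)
Your proposal is correct and is essentially the paper's own argument: both reduce the dimension count to counting total labelings $\Arcs_\Pfr \to \Mca$, of which there are $m^{\binom{n+1}{2}}$, with the hypothesis $n \geq 2$ only serving to avoid the special convention in arity $1$. Your version just spells out these steps in slightly more detail.
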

\begin{proof}
    By definition of the clique construction and of $\Mca$-cliques, the
    dimension of $\Cli\Mca(n)$ is the number of maps from the set
    $\left\{(x, y) \in [n + 1]^2 : x < y\right\}$ to $\Mca$. Therefore,
    when $n \geq 2$, this implies~\eqref{equ:dimensions_Cli_M}.
\end{proof}
\medbreak

From Proposition~\ref{prop:dimensions_Cli_M}, the first dimensions of
$\Cli\Mca$ depending on $m := \# \Mca$ are
\begin{subequations}
\begin{equation}
    1, 1, 1, 1, 1, 1, 1, 1,
    \qquad m = 1,
\end{equation}
\begin{equation}
    1, 8, 64, 1024, 32768, 2097152, 268435456, 68719476736,
    \qquad m = 2,
\end{equation}
\begin{multline}
    1, 27, 729, 59049, 14348907, 10460353203, 22876792454961, \\
    150094635296999121,
    \qquad m = 3,
\end{multline}
\begin{multline}
    1, 64, 4096, 1048576, 1073741824, 4398046511104,
    72057594037927936, \\
    4722366482869645213696,
    \qquad m = 4.
\end{multline}
\end{subequations}
Except for the first terms, the second one forms
Sequence~\OEIS{A006125}, the third one forms Sequence~\OEIS{A047656},
and the last one forms Sequence~\OEIS{A053763} of~\cite{Slo}.
\medbreak

\begin{Lemma} \label{lem:decomposition_clique_diagonal}
    Let $\Mca$ be a unitary magma, $\Pfr$ be an $\Mca$-clique of arity
    $n \geq 2$, and $(x, y)$ be a diagonal of $\Pfr$. Then, the
    following two assertions are equivalent:
    \begin{enumerate}[fullwidth,label={(\it\roman*)}]
        \item \label{item:decomposition_clique_diagonal_1}
        the diagonal $(x, y)$ is solid and its crossing number is $0$,
        or $(x, y)$ is not solid;
        \item \label{item:decomposition_clique_diagonal_2}
        the $\Mca$-clique $\Pfr$ can be written as $\Pfr = \Qfr \circ_x \Rfr$,
        where $\Qfr$ is an $\Mca$-clique of arity $n + x - y + 1$ and
        $\Rfr$ is an $\Mca$-clique of arity $y - x$.
    \end{enumerate}
\end{Lemma}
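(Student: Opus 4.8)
The plan is to prove the two implications separately. First I would record that the only arcs of $\Pfr$ able to cross the diagonal $(x,y)$ are themselves diagonals, since an edge $(c,c+1)$ crossing $(x,y)$ would require an integer strictly between $c$ and $c+1$, and the base $(1,n+1)$ crossing $(x,y)$ would require $n+1<y$, both impossible; hence assertion~\ref{item:decomposition_clique_diagonal_1} amounts to the single condition that no solid diagonal of $\Pfr$ crosses $(x,y)$. Throughout, I would set $m := y-x$ and $n' := n+x-y+1$. Since $(x,y)$ is a diagonal, $m \geq 2$; since $(x,y) \ne (1,n+1)$, $n' \geq 2$; and since $y \leq n+1$, $x \in [n']$. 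Therefore a partial composition $\Qfr \circ_x \Rfr$ with $|\Qfr| = n'$ and $|\Rfr| = m$ is well defined, has arity $n'+m-1=n$, and its seam arc $(x,x+m)$ is precisely $(x,y)$.

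For~\ref{item:decomposition_clique_diagonal_2} $\Rightarrow$~\ref{item:decomposition_clique_diagonal_1}, I would assume $\Pfr = \Qfr \circ_x \Rfr$ with these arities and let $(a,b)$ be any arc of $\Pfr$ crossing $(x,y)$, that is $a<x<b<y$ or $x<a<y<b$. Substituting the data $\Qfr$, $\Rfr$, $x$, $m$ into~\eqref{equ:partial_composition_Cli_M}, one checks directly that such an $(a,b)$ belongs to none of the first five cases there --- it is neither an inner arc of the glued copy of $\Rfr$, nor an outer arc inherited from $\Qfr$, nor the seam --- hence it falls into the last case, so that $\Pfr(a,b) = (\Qfr \circ_x \Rfr)(a,b) = \Unit_\Mca$. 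Thus no solid diagonal of $\Pfr$ crosses $(x,y)$, which is~\ref{item:decomposition_clique_diagonal_1}.

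For the converse, assuming~\ref{item:decomposition_clique_diagonal_1}, I would exhibit $\Qfr$ and $\Rfr$ explicitly. Let $\Rfr$, of arity $m$, be the restriction of $\Pfr$ to the vertices $x,x+1,\dots,y$ renumbered $1,\dots,m+1$, that is, $\Rfr(a,b) := \Pfr(x+a-1,\,x+b-1)$ for $1 \leq a < b \leq m+1$; note $\Rfr_0 = \Pfr(x,y)$. Let $\Qfr$, of arity $n'$, be the restriction of $\Pfr$ to the vertices $1,\dots,x,y,\dots,n+1$ with the run $x+1,\dots,y-1$ contracted onto a new edge $(x,x+1)$ labeled by $\Unit_\Mca$; concretely, for $1 \leq a < b \leq n'+1$, one sets $\Qfr(a,b) := \Pfr(a,b)$ if $b \leq x$, $\Qfr(a,b) := \Pfr(a,\,b+m-1)$ if $a \leq x < b$ and $(a,b) \ne (x,x+1)$, $\Qfr(a,b) := \Pfr(a+m-1,\,b+m-1)$ if $x < a$, and $\Qfr_x := \Unit_\Mca$. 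I would then verify $\Qfr \circ_x \Rfr = \Pfr$ by going through the cases of~\eqref{equ:partial_composition_Cli_M}: the four restriction cases return the defining values of $\Qfr$ and $\Rfr$ after the evident shift of indices; the seam case returns $\Qfr_x \Op \Rfr_0 = \Unit_\Mca \Op \Pfr(x,y) = \Pfr(x,y)$; and the last case ranges over exactly the arcs crossing $(x,y)$, on which both sides equal $\Unit_\Mca$ --- the left side by definition of the partial composition, and the right side because~\ref{item:decomposition_clique_diagonal_1} forbids solid diagonals crossing $(x,y)$. This establishes~\ref{item:decomposition_clique_diagonal_2}.

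The whole argument is an exercise in index bookkeeping, and I do not expect a genuine obstacle; the steps requiring the most attention are the matching of the side conditions ``$(a,b) \ne (i,i+m)$'' appearing in~\eqref{equ:partial_composition_Cli_M} with the special value $\Qfr_x = \Unit_\Mca$, and the check that $\Qfr$ and $\Rfr$ are indeed $\Mca$-cliques of the announced arities (here $m \geq 2$ and $n' \geq 2$ both follow from $(x,y)$ being a diagonal distinct from the base).
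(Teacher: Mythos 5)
Your proof is correct and follows essentially the same route as the paper: the same explicit splitting of $\Pfr$ into $\Qfr$ and $\Rfr$ for \ref{item:decomposition_clique_diagonal_1}${}\Rightarrow{}$\ref{item:decomposition_clique_diagonal_2}, and the same observation that the ``otherwise'' case of~\eqref{equ:partial_composition_Cli_M} forces every arc crossing $(x,y)$ to be non-solid for the converse. The only (immaterial) difference is that you place the label $\Pfr(x,y)$ on the base of $\Rfr$ and put $\Unit_\Mca$ on the $x$th edge of $\Qfr$, whereas the paper makes the symmetric choice; both yield the seam label $\Pfr_x\Op\Qfr_0=\Pfr(x,y)$ since $\Unit_\Mca$ is a two-sided unit.
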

\begin{proof}
    Assume first that~\ref{item:decomposition_clique_diagonal_1} holds.
    Set $\Qfr$ as the $\Mca$-clique of arity $n + x - y + 1$ defined,
    for any arc $(z, t)$ where $1 \leq z < t \leq n + x - y + 2$, by
    \begin{equation}
        \Qfr(z, t) :=
        \begin{cases}
            \Pfr(z, t) & \mbox{if } t \leq x, \\
            \Pfr(z, t + y - x - 1) & \mbox{if } x + 1 \leq t, \\
            \Pfr(z + y - x - 1, t + y - x - 1) & \mbox{otherwise},
        \end{cases}
    \end{equation}
    and $\Rfr$ as the $\Mca$-clique of arity $y - x$ defined, for any
    arc $(z, t)$ where $1 \leq z < t \leq y - x + 1$, by
    \begin{equation}
        \Rfr(z, t) :=
        \begin{cases}
            \Pfr(z + x - 1, t + x - 1)
                & \mbox{if } (z, t) \ne (1, y - x + 1), \\
            \Unit_\Mca & \mbox{otherwise}.
        \end{cases}
    \end{equation}
    By following the definition of the partial composition of
    $\Cli\Mca$, one obtains $\Pfr = \Qfr \circ_x \Rfr$,
    hence~\ref{item:decomposition_clique_diagonal_2} holds.
    \smallbreak

    Assume conversely that~\ref{item:decomposition_clique_diagonal_2}
    holds. By definition of the partial composition of $\Cli\Mca$, the
    fact that $\Pfr = \Qfr \circ_x \Rfr$ implies that
    $\Pfr(x', y') = \Unit_\Mca$ for any arc $(x', y')$ such that
    $(x, y)$ and $(x', y')$ are crossing. Therefore,
    \ref{item:decomposition_clique_diagonal_1} holds.
\end{proof}
\medbreak

Let $\Primes_\Mca$ be the set of all $\Mca$-cliques $\Pfr$ of arity
$n \geq 2$ that do not satisfy the property of the statement of
Lemma~\ref{lem:decomposition_clique_diagonal}. In other words,
$\Primes_\Mca$ is the set of all $\Mca$-cliques such that, for any
(non-necessarily solid) diagonal $(x, y)$ of $\Pfr$, there is at least
one solid diagonal $(x', y')$ of $\Pfr$ such that $(x, y)$ and
$(x', y')$ are crossing. We call $\Primes_\Mca$ the set of all
\Def{prime $\Mca$-cliques}. Observe that, according to this description,
all $\Mca$-triangles are prime.
\medbreak

\begin{Proposition} \label{prop:generating_set_Cli_M}
    Let $\Mca$ be a unitary magma. The set $\Primes_\Mca$ is a minimal
    generating set of~$\Cli\Mca$.
\end{Proposition}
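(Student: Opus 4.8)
The plan is to prove two things: first, that $\Primes_\Mca$ generates $\Cli\Mca$, and second, that it is minimal, i.e. no proper subset of $\Primes_\Mca$ generates $\Cli\Mca$.

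For the \emph{generation} part, I would argue by induction on the arity $n$ of an $\Mca$-clique $\Pfr$. If $n = 1$, then $\Pfr = \UnitClique = \Unit$, which lies in every suboperad. If $n = 2$, then $\Pfr$ is an $\Mca$-triangle, hence prime, so $\Pfr \in \Primes_\Mca$. For $n \geq 3$, distinguish two cases. If $\Pfr$ is prime, there is nothing to do. Otherwise, $\Pfr$ satisfies assertion~\ref{item:decomposition_clique_diagonal_1} of Lemma~\ref{lem:decomposition_clique_diagonal} for some diagonal $(x, y)$, and hence by that lemma $\Pfr = \Qfr \circ_x \Rfr$ where $\Qfr$ has arity $n + x - y + 1$ and $\Rfr$ has arity $y - x$. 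Since $(x, y)$ is a genuine diagonal (not an edge, not the base), we have $2 \leq y - x \leq n - 1$, so both $\Qfr$ and $\Rfr$ have arity strictly between $1$ and $n$; by the induction hypothesis each lies in the suboperad generated by $\Primes_\Mca$, and therefore so does their partial composition $\Pfr$. This shows $\Cli\Mca = \Cli\Mca^{\Primes_\Mca}$.

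For \emph{minimality}, I would show that no prime $\Mca$-clique $\Pfr$ can be obtained as a partial composition $\Qfr \circ_i \Rfr$ of two $\Mca$-cliques of arities strictly smaller than $|\Pfr|$; equivalently, $\Pfr$ cannot be written as a nontrivial composition inside $\Cli\Mca$, so it cannot be removed from any generating set. Suppose $\Pfr = \Qfr \circ_i \Rfr$ with $|\Qfr|, |\Rfr| < |\Pfr|$ and both at least $2$ (if one of them has arity $1$ it equals $\Unit$ and the composite is the other factor, contradicting the arity drop). Looking at the graphical description of the partial composition (Figure~\ref{fig:composition_Cli_M}), the arc $(i, i + m)$ of $\Pfr$, where $m = |\Rfr|$, is a diagonal since $2 \leq m \leq |\Pfr| - 1$, and by the structure of~\eqref{equ:partial_composition_Cli_M} every arc $(x', y')$ crossing this diagonal receives the label $\Unit_\Mca$, i.e. is not solid. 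Hence $(i, i+m)$ is a diagonal of $\Pfr$ with crossing number $0$, so $\Pfr$ fails the defining property of primes — contradiction. Therefore each element of $\Primes_\Mca$ is indecomposable, and since the fundamental basis is a set-operad basis, an indecomposable basis element must belong to every generating set; thus $\Primes_\Mca$ is minimal.

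The main obstacle is the bookkeeping in the minimality argument: one must be careful that the decomposition $\Pfr = \Qfr \circ_i \Rfr$ is being read off correctly from~\eqref{equ:partial_composition_Cli_M}, in particular that the glued arc is indeed a diagonal and not an edge or the base (this is exactly where $|\Qfr| \geq 2$ and $|\Rfr| \geq 2$ are used), and that the ``otherwise'' clause of the partial composition forces non-solidity precisely on the arcs crossing $(i, i+m)$. Modulo this verification — which is essentially the converse direction of Lemma~\ref{lem:decomposition_clique_diagonal} applied with the roles of hypotheses and conclusions as stated — the proof is a routine induction plus an indecomposability observation.
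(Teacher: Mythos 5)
Your proof is correct and follows essentially the same route as the paper: generation by induction on the arity using the decomposition direction of Lemma~\ref{lem:decomposition_clique_diagonal}, and minimality via its converse direction, showing that a prime $\Mca$-clique admits no nontrivial decomposition $\Qfr \circ_i \Rfr$ because the glued arc $(i, i+m)$ would be a diagonal crossed by no solid diagonal. The extra bookkeeping you supply (the arity bounds $2 \leq |\Qfr|, |\Rfr| < |\Pfr|$ and the appeal to the set-operad basis to reduce membership in a generated suboperad to a single partial composition of basis elements) only makes explicit what the paper leaves implicit.
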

\begin{proof}
    We show by induction on the arity that $\Primes_\Mca$ is a
    generating set of $\Cli\Mca$. Let $\Pfr$ be an $\Mca$-clique. If
    $\Pfr$ is of arity $1$, $\Pfr = \UnitClique$ and hence $\Pfr$
    trivially belongs to $(\Cli\Mca)^{\Primes_\Mca}$ (recall that this notation stands for
    the suboperad of $\Cli\Mca$ generated by $\Primes_\Mca$). Let us assume that
    $\Pfr$ is of arity $n \geq 2$. First, if $\Pfr \in \Primes_\Mca$,
    then $\Pfr \in (\Cli\Mca)^{\Primes_\Mca}$. Otherwise, $\Pfr$ is an
    $\Mca$-clique which satisfies the description of the statement of
    Lemma~\ref{lem:decomposition_clique_diagonal}. Therefore, by this
    lemma, there are two $\Mca$-cliques $\Qfr$ and $\Rfr$ and an integer
    $x \in [|\Pfr|]$ such that $|\Qfr| < |\Pfr|$, $|\Rfr| < |\Pfr|$, and
    $\Pfr = \Qfr \circ_x \Rfr$. By induction hypothesis, $\Qfr$ and
    $\Rfr$ belong to $(\Cli\Mca)^{\Primes_\Mca}$ and hence, $\Pfr$
    also belongs to~$(\Cli\Mca)^{\Primes_\Mca}$.
    \smallbreak

    Finally, by Lemma~\ref{lem:decomposition_clique_diagonal}, if $\Pfr$
    is a prime $\Mca$-clique, $\Pfr$ cannot be expressed as a partial
    composition of prime $\Mca$-cliques. Moreover, since the space
    $\Cli\Mca(1)$ is trivial, these arguments imply that $\Primes_\Mca$
    is a minimal generating set of~$\Cli\Mca$.
\end{proof}
\medbreak

Computer experiments tell us that, when $m := \# \Mca = 2$, the first
numbers of prime $\Mca$-cliques are, size by size,
\begin{equation} \label{equ:prime_cliques_numbers_2}
    0, 8, 16, 352, 16448, 1380224.
\end{equation}
Moreover, remark that the $n$th term of this sequence is divisible
by $m^{n + 1}$ since the labels of the base and the edges of an
$\Mca$-clique $\Pfr$ have no influence on the fact that $\Pfr$ is prime.
This gives the sequence
\begin{equation} \label{equ:white_prime_cliques_numbers}
    0, 1, 1, 11, 257, 10783
\end{equation}
enumerating the first of them size by size. Besides, a prime
$\Mca$-clique $\Pfr$ is \Def{minimal} if any $\Mca$-clique obtained from
$\Pfr$ by replacing a solid arc by a non-solid one is not prime. Of
course, all minimal prime $\Mca$-cliques are white. Computer
experiments show us that when $m := \# \Mca = 2$, the numbers of
minimal prime $\Mca$-cliques begin by
\begin{equation} \label{equ:minimal_prime_cliques_numbers}
    0, 1, 1, 5, 22, 119.
\end{equation}
None of these sequences appear in~\cite{Slo} at this time.
\medbreak

\subsubsection{Associative elements}

\begin{Proposition} \label{prop:associative_elements_Cli_M}
    Let $\Mca$ be a unitary magma and $f$ be an element of $\Cli\Mca(2)$
    of the form
    \begin{equation} \label{equ:associative_elements_Cli_M_0}
        f :=
        \sum_{\Pfr \in \Triangles_\Mca}
        \lambda_\Pfr \Pfr,
    \end{equation}
    where the $\lambda_\Pfr$, $\Pfr \in \Triangles_\Mca$, are
    coefficients of $\K$. Then, $f$ is associative if and only if
    \begin{subequations}
    \begin{equation} \label{equ:associative_elements_Cli_M_1}
        \sum_{\substack{
            \Pfr_1, \Qfr_0 \in \Mca \\
            \delta =  \Pfr_1 \Op \Qfr_0
        }}
        \lambda_{\Triangle{\Pfr_0}{\Pfr_1}{\Pfr_2}}
        \lambda_{\Triangle{\Qfr_0}{\Qfr_1}{\Qfr_2}}
        = 0,
        \qquad
        \Pfr_0, \Pfr_2, \Qfr_1, \Qfr_2 \in \Mca,
        \delta \in \bar{\Mca},
    \end{equation}
    \begin{equation} \label{equ:associative_elements_Cli_M_2}
        \sum_{\substack{
            \Pfr_1, \Qfr_0 \in \Mca \\
            \Pfr_1 \Op \Qfr_0 = \Unit_\Mca
        }}
        \lambda_{\Triangle{\Pfr_0}{\Pfr_1}{\Pfr_2}}
        \lambda_{\Triangle{\Qfr_0}{\Qfr_1}{\Qfr_2}}
        -
        \lambda_{\Triangle{\Pfr_0}{\Qfr_1}{\Pfr_1}}
        \lambda_{\Triangle{\Qfr_0}{\Qfr_2}{\Pfr_2}}
        = 0,
        \qquad
        \Pfr_0, \Pfr_2, \Qfr_1, \Qfr_2 \in \Mca,
    \end{equation}
    \begin{equation} \label{equ:associative_elements_Cli_M_3}
        \sum_{\substack{
            \Pfr_2, \Qfr_0 \in \Mca \\
            \delta =  \Pfr_2 \Op \Qfr_0
        }}
        \lambda_{\Triangle{\Pfr_0}{\Pfr_1}{\Pfr_2}}
        \lambda_{\Triangle{\Qfr_0}{\Qfr_1}{\Qfr_2}}
        = 0,
        \qquad
        \Pfr_0, \Pfr_1, \Qfr_1, \Qfr_2 \in \Mca,
        \delta \in \bar{\Mca}.
    \end{equation}
    \end{subequations}
\end{Proposition}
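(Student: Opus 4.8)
The plan is to compute both sides of $f \circ_1 f = f \circ_2 f$ directly in the fundamental basis and match coefficients clique by clique. Since an $\Mca$-clique of arity $2$ is exactly an $\Mca$-triangle, $f$ ranging over the elements of the form~\eqref{equ:associative_elements_Cli_M_0} is the same as $f$ ranging over $\Cli\Mca(2)$, and a triangle $\Pfr$ is encoded by the triple $(\Pfr_0, \Pfr_1, \Pfr_2) \in \Mca^3$ of its base label and two edge labels; I write $\lambda_{(a,b,c)}$ for the coefficient $\lambda_\Pfr$ of the triangle $\Pfr$ with $\Pfr_0 = a$, $\Pfr_1 = b$, $\Pfr_2 = c$. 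An $\Mca$-clique $\Rfr$ of arity $3$ has vertex set $[4]$ and is determined by the six labels $\Rfr(1,2)$, $\Rfr(2,3)$, $\Rfr(3,4)$ (its edges), $\Rfr(1,3)$ and $\Rfr(2,4)$ (its two diagonals), and $\Rfr(1,4)$ (its base). First I would apply the definition~\eqref{equ:partial_composition_Cli_M} of the partial composition to two triangles $\Pfr$ and $\Qfr$: one gets that $\Pfr \circ_1 \Qfr$ has edges $\Qfr_1, \Qfr_2, \Pfr_2$ (in this order along the square), base $\Pfr_0$, diagonal $(1,3)$ labeled $\Pfr_1 \Op \Qfr_0$, and diagonal $(2,4)$ labeled $\Unit_\Mca$; symmetrically, $\Pfr \circ_2 \Qfr$ has edges $\Pfr_1, \Qfr_1, \Qfr_2$, base $\Pfr_0$, diagonal $(2,4)$ labeled $\Pfr_2 \Op \Qfr_0$, and diagonal $(1,3)$ labeled $\Unit_\Mca$. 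The structural fact to record is that the diagonal $(2,4)$ is non-solid in every $\Pfr \circ_1 \Qfr$ and the diagonal $(1,3)$ is non-solid in every $\Pfr \circ_2 \Qfr$.

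Expanding $f \circ_1 f = \sum_{\Pfr, \Qfr \in \Triangles_\Mca} \lambda_\Pfr \lambda_\Qfr\, (\Pfr \circ_1 \Qfr)$ and $f \circ_2 f = \sum_{\Pfr, \Qfr \in \Triangles_\Mca} \lambda_\Pfr \lambda_\Qfr\, (\Pfr \circ_2 \Qfr)$, I would read off, for a fixed arity-$3$ clique $\Rfr$, the coefficient of $\Rfr$ on each side. By the description above, this coefficient in $f \circ_1 f$ is $0$ unless $\Rfr(2,4) = \Unit_\Mca$, in which case it equals $\sum_{\Pfr_1 \Op \Qfr_0 = \Rfr(1,3)} \lambda_{(\Rfr(1,4), \Pfr_1, \Rfr(3,4))}\, \lambda_{(\Qfr_0, \Rfr(1,2), \Rfr(2,3))}$, the sum being over the pairs $(\Pfr_1, \Qfr_0) \in \Mca^2$ subject to that single constraint; symmetrically, the coefficient of $\Rfr$ in $f \circ_2 f$ is $0$ unless $\Rfr(1,3) = \Unit_\Mca$, in which case it equals $\sum_{\Pfr_2 \Op \Qfr_0 = \Rfr(2,4)} \lambda_{(\Rfr(1,4), \Rfr(1,2), \Pfr_2)}\, \lambda_{(\Qfr_0, \Rfr(2,3), \Rfr(3,4))}$.

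Now $f \circ_1 f = f \circ_2 f$ holds if and only if these two coefficients coincide for every arity-$3$ clique $\Rfr$, and I would conclude by distinguishing four cases according to the solidity of the two diagonals. If both $\Rfr(1,3)$ and $\Rfr(2,4)$ are solid, both coefficients vanish and no relation is imposed. If $\Rfr(1,3) \in \bar{\Mca}$ and $\Rfr(2,4) = \Unit_\Mca$, the $f \circ_2 f$ coefficient is $0$ and the $f \circ_1 f$ one must vanish; setting $\Pfr_0 := \Rfr(1,4)$, $\Pfr_2 := \Rfr(3,4)$, $\Qfr_1 := \Rfr(1,2)$, $\Qfr_2 := \Rfr(2,3)$, $\delta := \Rfr(1,3)$, this is exactly~\eqref{equ:associative_elements_Cli_M_1}, and as $\Rfr$ varies the parameters run over all of $\Mca$ with $\delta \in \bar{\Mca}$. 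The mirror case $\Rfr(1,3) = \Unit_\Mca$, $\Rfr(2,4) \in \bar{\Mca}$ yields~\eqref{equ:associative_elements_Cli_M_3} after the analogous renaming. Finally, when $\Rfr(1,3) = \Rfr(2,4) = \Unit_\Mca$ both coefficients are present and must be equal; with $\Pfr_0 := \Rfr(1,4)$, $\Pfr_2 := \Rfr(3,4)$, $\Qfr_1 := \Rfr(1,2)$, $\Qfr_2 := \Rfr(2,3)$ and after renaming the summation variable $\Pfr_2$ to $\Pfr_1$ on the $f \circ_2 f$ side, this equality is precisely~\eqref{equ:associative_elements_Cli_M_2}, whose (at first sight asymmetric) second product is simply the coefficient contributed by the $\circ_2$ composition. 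The computations of the two partial compositions and the bilinear expansion are routine; the one point requiring care is the bookkeeping of the summation variables, in particular the identification of the second term of~\eqref{equ:associative_elements_Cli_M_2} with the $\circ_2$ contribution.
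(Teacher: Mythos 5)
Your proposal is correct and follows essentially the same route as the paper: both expand $f \circ_1 f$ and $f \circ_2 f$ bilinearly in the fundamental basis, observe that $\circ_1$ produces squares whose diagonal $(2,4)$ is non-solid and $\circ_2$ squares whose diagonal $(1,3)$ is non-solid, and then identify the three equations by comparing coefficients on the three relevant families of arity-$3$ cliques (only $(1,3)$ solid, neither solid, only $(2,4)$ solid). Your identification of the second product in~\eqref{equ:associative_elements_Cli_M_2} as the $\circ_2$ contribution, after the variable renaming, matches the paper's computation exactly.
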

\begin{proof}
    The element $f$ defined in~\eqref{equ:associative_elements_Cli_M_0}
    is associative if and only if $f \circ_1 f - f \circ_2 f = 0$.
    Therefore, this property is equivalent to the fact that
    \begin{equation}\begin{split}
        \label{equ:associative_elements_Cli_M_demo}
        f \circ_1 f - f \circ_2 f
        & =
        \left(
        \sum_{\substack{
            \Pfr, \Qfr \in \Triangles_\Mca \\
            \delta := \Pfr_1 \Op \Qfr_0 \ne \Unit_\Mca
        }}
        \lambda_\Pfr \lambda_\Qfr
        \SquareRight{\Qfr_1}{\Qfr_2}{\Pfr_2}{\Pfr_0}{\delta}
        \right)
        +
        \left(
        \sum_{\substack{
            \Pfr, \Qfr \in \Triangles_\Mca \\
            \Pfr_1 \Op \Qfr_0 = \Unit_\Mca
        }}
        \lambda_\Pfr \lambda_\Qfr
        \SquareN{\Qfr_1}{\Qfr_2}{\Pfr_2}{\Pfr_0}
        \right) \\
        & \quad -
        \left(
        \sum_{\substack{
            \Pfr, \Qfr \in \Triangles_\Mca \\
            \delta := \Pfr_2 \Op \Qfr_0 \ne \Unit_\Mca
        }}
        \lambda_\Pfr \lambda_\Qfr
        \SquareLeft{\Pfr_1}{\Qfr_1}{\Qfr_2}{\Pfr_0}{\delta}
        \right)
        -
        \left(
        \sum_{\substack{
            \Pfr, \Qfr \in \Triangles_\Mca \\
            \Pfr_2 \Op \Qfr_0 = \Unit_\Mca
        }}
        \lambda_\Pfr \lambda_\Qfr
        \SquareN{\Pfr_1}{\Qfr_1}{\Qfr_2}{\Pfr_0}
        \right) \\
        & =
        \left(
        \sum_{\substack{
            \Pfr_0, \Pfr_2, \Qfr_1, \Qfr_2 \in \Mca \\
            \delta \in \bar{\Mca}
        }}
        \left(
        \sum_{\substack{
            \Pfr_1, \Qfr_0 \in \Mca \\
            \delta =  \Pfr_1 \Op \Qfr_0
        }}
        \lambda_{\Triangle{\Pfr_0}{\Pfr_1}{\Pfr_2}}
        \lambda_{\Triangle{\Qfr_0}{\Qfr_1}{\Qfr_2}}
        \right)
        \SquareRight{\Qfr_1}{\Qfr_2}{\Pfr_2}{\Pfr_0}{\delta}
        \right) \\
        & \quad +
        \left(
        \sum_{
            \Pfr_0, \Pfr_2, \Qfr_1, \Qfr_2 \in \Mca
        }
        \left(
        \sum_{\substack{
            \Pfr_1, \Qfr_0 \in \Mca \\
            \Pfr_1 \Op \Qfr_0 = \Unit_\Mca
        }}
        \lambda_{\Triangle{\Pfr_0}{\Pfr_1}{\Pfr_2}}
        \lambda_{\Triangle{\Qfr_0}{\Qfr_1}{\Qfr_2}}
        -
        \lambda_{\Triangle{\Pfr_0}{\Qfr_1}{\Pfr_1}}
        \lambda_{\Triangle{\Qfr_0}{\Qfr_2}{\Pfr_2}}
        \right)
        \SquareN{\Qfr_1}{\Qfr_2}{\Pfr_2}{\Pfr_0}
        \right) \\
        & \quad -
        \left(
        \sum_{\substack{
            \Pfr_0, \Pfr_1, \Qfr_1, \Qfr_2 \in \Mca \\
            \delta \in \bar{\Mca}
        }}
        \left(
        \sum_{\substack{
            \Pfr_2, \Qfr_0 \in \Mca \\
            \delta =  \Pfr_2 \Op \Qfr_0
        }}
        \lambda_{\Triangle{\Pfr_0}{\Pfr_1}{\Pfr_2}}
        \lambda_{\Triangle{\Qfr_0}{\Qfr_1}{\Qfr_2}}
        \right)
        \SquareLeft{\Pfr_1}{\Qfr_1}{\Qfr_2}{\Pfr_0}{\delta}
        \right) \\
        & = 0,
    \end{split}\end{equation}
    and hence, is equivalent to the fact
    that~\eqref{equ:associative_elements_Cli_M_1},
    \eqref{equ:associative_elements_Cli_M_2},
    and~\eqref{equ:associative_elements_Cli_M_3} hold.
\end{proof}
\medbreak

For instance, by Proposition~\ref{prop:associative_elements_Cli_M}, the
binary elements
\begin{subequations}
\begin{equation}
    \Triangle{1}{1}{1},
\end{equation}
\begin{equation}
    \TriangleEEE{}{}{}
    +
    \TriangleEXE{}{1}{}
    -
    \TriangleXEE{1}{}{}
    +
    \TriangleEEX{}{}{1}
    -
    \TriangleXXE{1}{1}{}
    +
    \TriangleEXX{}{1}{1}
    -
    \TriangleXEX{1}{}{1}
    -
    \Triangle{1}{1}{1}
\end{equation}
\end{subequations}
of $\Cli\N_2$ are associative, and the binary elements
\begin{subequations}
\begin{equation}
    \TriangleEXX{}{0}{0}
    -
    \Triangle{0}{0}{0},
\end{equation}
\begin{equation}
    \TriangleXEE{0}{}{}
    -
    \TriangleXXE{0}{0}{}
    -
    \TriangleXEX{0}{}{0}
    +
    \Triangle{0}{0}{0}
\end{equation}
\end{subequations}
of $\Cli\Dbb_0$ are associative.
\medbreak

\subsubsection{Symmetries}
Let $\Returned : \Cli\Mca \to \Cli\Mca$ be the linear map sending any
$\Mca$-clique $\Pfr$ of arity $n$ to the $\Mca$-clique $\Returned(\Pfr)$
of the same arity such that, for any arc $(x, y)$ where
$1 \leq x < y \leq n + 1$,
\begin{equation} \label{equ:returned_map_Cli_M}
    \left(\Returned(\Pfr)\right)(x, y) := \Pfr(n - y + 2, n - x + 2).
\end{equation}
Graphically, $\Returned(\Pfr)$ is the $\Mca$-clique obtained by
applying on $\Pfr$ a reflection trough the vertical line passing by its
base. For instance, one has in $\Cli\Z$,
\begin{equation}
    \Returned\left(
    \begin{tikzpicture}[scale=.85,Centering]
        \node[CliquePoint](1)at(-0.50,-0.87){};
        \node[CliquePoint](2)at(-1.00,-0.00){};
        \node[CliquePoint](3)at(-0.50,0.87){};
        \node[CliquePoint](4)at(0.50,0.87){};
        \node[CliquePoint](5)at(1.00,0.00){};
        \node[CliquePoint](6)at(0.50,-0.87){};
        \draw[CliqueEdge](1)edge[]node[CliqueLabel]
            {\begin{math}1\end{math}}(2);
        \draw[CliqueEdge](1)edge[bend left=30]node[CliqueLabel]
            {\begin{math}-2\end{math}}(5);
        \draw[CliqueEmptyEdge](1)edge[]node[CliqueLabel]{}(6);
        \draw[CliqueEdge](2)edge[]node[CliqueLabel]
            {\begin{math}-2\end{math}}(3);
        \draw[CliqueEmptyEdge](3)edge[]node[CliqueLabel]{}(4);
        \draw[CliqueEdge](3)edge[bend right=30]node[CliqueLabel]
            {\begin{math}1\end{math}}(5);
        \draw[CliqueEmptyEdge](4)edge[]node[CliqueLabel]{}(5);
        \draw[CliqueEmptyEdge](5)edge[]node[CliqueLabel]{}(6);
    \end{tikzpicture}
    \right)
    \enspace = \enspace
    \begin{tikzpicture}[scale=.85,Centering]
        \node[CliquePoint](1)at(-0.50,-0.87){};
        \node[CliquePoint](2)at(-1.00,-0.00){};
        \node[CliquePoint](3)at(-0.50,0.87){};
        \node[CliquePoint](4)at(0.50,0.87){};
        \node[CliquePoint](5)at(1.00,0.00){};
        \node[CliquePoint](6)at(0.50,-0.87){};
        \draw[CliqueEmptyEdge](1)edge[]node[CliqueLabel]{}(2);
        \draw[CliqueEmptyEdge](1)edge[]node[CliqueLabel]{}(6);
        \draw[CliqueEmptyEdge](2)edge[]node[CliqueLabel]{}(3);
        \draw[CliqueEdge](2)edge[bend right=30]node[CliqueLabel]
            {\begin{math}1\end{math}}(4);
        \draw[CliqueEdge](2)edge[bend left=30]node[CliqueLabel]
            {\begin{math}-2\end{math}}(6);
        \draw[CliqueEmptyEdge](3)edge[]node[CliqueLabel]{}(4);
        \draw[CliqueEdge](4)edge[]node[CliqueLabel]
            {\begin{math}-2\end{math}}(5);
        \draw[CliqueEdge](5)edge[]node[CliqueLabel]
            {\begin{math}1\end{math}}(6);
    \end{tikzpicture}\,.
\end{equation}
\medbreak

\begin{Proposition} \label{prop:symmetries_Cli_M}
    Let $\Mca$ be a unitary magma. Then, the group of symmetries of
    $\Cli\Mca$ contains the map $\Returned$ and all the maps
    $\Cli\theta$ where $\theta$ is a unitary magma automorphisms
    of~$\Mca$.
\end{Proposition}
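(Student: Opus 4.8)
The plan is to verify that $\Returned$ and each $\Cli\theta$ (for $\theta$ a unitary magma automorphism of $\Mca$) are symmetries of $\Cli\Mca$ in the sense defined in Section~\ref{subsec:ns_operads}, and that they are closed under composition so that they indeed sit inside the group of symmetries. For the maps $\Cli\theta$: since $\theta$ is in particular a unitary magma morphism, Theorem~\ref{thm:clique_construction} already tells us $\Cli\theta$ is an operad morphism; because $\theta$ is bijective, so is $\Cli\theta$ (its inverse is $\Cli(\theta^{-1})$, again by functoriality of $\Cli$), hence $\Cli\theta$ is a graded automorphism fixing $\UnitClique$, i.e.\ an automorphism of the operad. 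So this part is essentially immediate from the functoriality statement.

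The substantive part is $\Returned$. First I would check it is a graded vector space automorphism: it clearly preserves arity, and it is an involution since applying \eqref{equ:returned_map_Cli_M} twice sends $(x,y)$ to $\Pfr(n-(n-x+2)+2, n-(n-y+2)+2) = \Pfr(x,y)$, so $\Returned \circ \Returned = \Id$ and in particular $\Returned$ is bijective. It fixes $\UnitClique$ trivially (there is only one arc, the base $(1,2)$, and $n-y+2 = n-x+2$ reduces to the base again when $n=1$). The heart of the matter is the antimorphism identity $\Returned(\Pfr \circ_i \Qfr) = \Returned(\Pfr) \circ_{n-i+1} \Returned(\Qfr)$ for $\Pfr$ of arity $n$. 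I would prove this by a direct case analysis: fix an arc $(x,y)$ of the arity-$(n+m-1)$ clique and compute $\bigl(\Returned(\Pfr \circ_i \Qfr)\bigr)(x,y) = (\Pfr \circ_i \Qfr)(n+m-1-y+2, n+m-1-x+2)$ using the six cases of \eqref{equ:partial_composition_Cli_M}, and separately compute $\bigl(\Returned(\Pfr) \circ_{n-i+1} \Returned(\Qfr)\bigr)(x,y)$ using \eqref{equ:partial_composition_Cli_M} with $i$ replaced by $n-i+1$ and the two factors replaced by $\Returned(\Pfr)$, $\Returned(\Qfr)$, then unfold the $\Returned$ on each factor via \eqref{equ:returned_map_Cli_M}; the reflection $z \mapsto n+m-z+1$ (on the big polygon) exchanges the regions ``left of the glued edge'', ``spanning it'', ``right of it'', and ``inside $\Qfr$'' in exactly the way that matches the swap $i \leftrightarrow n-i+1$, and on the glued edge the label is $\Pfr_i \Op \Qfr_0$ on one side and $\Returned(\Pfr)_{n-i+1} \Op \Returned(\Qfr)_0 = \Pfr_i \Op \Qfr_0$ on the other — here one uses that $\Returned$ fixes the base of any clique and sends the $i$th edge to the $(n-i+1)$th edge. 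This bookkeeping is routine but is the one place where care is needed; the main obstacle is simply keeping the index shifts straight across all the cases and making sure the degenerate arc $(x,y)=(i,i+m)$ (resp.\ its reflection) is handled consistently.

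Finally, to conclude that these maps lie in the \emph{group} of symmetries it suffices to note that the group of symmetries is by definition the set of all symmetries under composition, so merely exhibiting $\Returned$ and the $\Cli\theta$ as symmetries places them in that group; no further closure argument is required for the statement as phrased. I would therefore organize the write-up as: (1) cite Theorem~\ref{thm:clique_construction} and functoriality to dispatch the $\Cli\theta$; (2) observe $\Returned$ is an arity-preserving involutive linear bijection fixing $\UnitClique$; (3) carry out the case check establishing the antimorphism relation \eqref{equ:returned_map_Cli_M}-style; (4) conclude.

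\begin{proof}
    Let $\theta$ be a unitary magma automorphism of $\Mca$. By
    Theorem~\ref{thm:clique_construction}, $\Cli\theta$ is an operad
    morphism, and since $\theta$ is bijective with inverse the unitary
    magma automorphism $\theta^{-1}$, the functoriality of $\Cli$ gives
    $\Cli(\theta^{-1}) \circ \Cli\theta = \Cli(\Id_\Mca) = \Id_{\Cli\Mca}$
    and symmetrically, so $\Cli\theta$ is invertible. Hence $\Cli\theta$
    is a graded vector space automorphism of $\Cli\Mca$ sending
    $\UnitClique$ to $\UnitClique$ and commuting with partial
    compositions, that is, an automorphism of the operad $\Cli\Mca$.
    \smallbreak

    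Consider now the map $\Returned$. It preserves arities by definition.
    Applying~\eqref{equ:returned_map_Cli_M} twice to an $\Mca$-clique
    $\Pfr$ of arity $n$ yields, for any arc $(x, y)$,
    \begin{equation}
        \left(\Returned(\Returned(\Pfr))\right)(x, y)
        = \left(\Returned(\Pfr)\right)(n - y + 2, n - x + 2)
        = \Pfr\bigl(n - (n - x + 2) + 2,\, n - (n - y + 2) + 2\bigr)
        = \Pfr(x, y),
    \end{equation}
    so $\Returned \circ \Returned = \Id_{\Cli\Mca}$; in particular
    $\Returned$ is a graded vector space automorphism. It fixes
    $\UnitClique$ since this is the unique $\Mca$-clique of arity $1$.
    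Observe also that $\Returned$ sends the base $(1, n + 1)$ of a clique
    of arity $n$ to the base (as $n - (n + 1) + 2 = 1$ and
    $n - 1 + 2 = n + 1$), and sends the $i$th edge $(i, i + 1)$ to the
    $(n - i + 1)$th edge $(n - i + 1, n - i + 2)$; consequently, for any
    $\Mca$-clique $\Pfr$ of arity $n$,
    $\Returned(\Pfr)_0 = \Pfr_0$ and $\Returned(\Pfr)_{n - i + 1} = \Pfr_i$
    for all $i \in [n]$.
    \smallbreak

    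It remains to prove that for $\Pfr$ of arity $n$, $\Qfr$ of arity $m$,
    and $i \in [n]$, one has
    $\Returned(\Pfr \circ_i \Qfr) = \Returned(\Pfr) \circ_{n - i + 1} \Returned(\Qfr)$.
    Write $N := n + m - 1$ for the common arity of both sides and fix an
    arc $(x, y)$ with $1 \leq x < y \leq N + 1$. Set
    $x' := N - y + 2$ and $y' := N - x + 2$, so that
    $\left(\Returned(\Pfr \circ_i \Qfr)\right)(x, y)
    = (\Pfr \circ_i \Qfr)(x', y')$, and this last value is computed by
    the six cases of~\eqref{equ:partial_composition_Cli_M}. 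The affine
    involution $z \mapsto N + 2 - z$ on $[N + 1]$ exchanges the
    conditions ``$y' \leq i$'' and ``$N + 2 - m \leq x$'' (i.e.\
    ``$(n - i + 1) + m \leq x$'' since $N + 2 - m = n + 1$), exchanges
    ``$x' \leq i < i + m \leq y'$'' with
    ``$x \leq n - i + 1 < (n - i + 1) + m \leq y$'', sends
    ``$i + m \leq x'$'' to ``$y \leq n - i + 1$'', sends
    ``$i \leq x' < y' \leq i + m$'' to
    ``$n - i + 1 \leq x < y \leq (n - i + 1) + m$'', and sends the
    degenerate arc $(x', y') = (i, i + m)$ to
    $(x, y) = (n - i + 1, (n - i + 1) + m)$. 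In each of these matched
    cases, comparing~\eqref{equ:partial_composition_Cli_M} for
    $\Pfr \circ_i \Qfr$ evaluated at $(x', y')$ with
    \eqref{equ:partial_composition_Cli_M} for
    $\Returned(\Pfr) \circ_{n - i + 1} \Returned(\Qfr)$ evaluated at
    $(x, y)$ and unfolding~\eqref{equ:returned_map_Cli_M} on the factors
    $\Returned(\Pfr)$ and $\Returned(\Qfr)$, the corresponding index
    shifts agree, and on the glued edge one gets $\Pfr_i \Op \Qfr_0$ on
    the left-hand side and
    $\Returned(\Pfr)_{n - i + 1} \Op \Returned(\Qfr)_0 = \Pfr_i \Op \Qfr_0$
    on the right-hand side by the observation of the previous paragraph.
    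The remaining arcs are labeled by $\Unit_\Mca$ on both sides. Hence
    the two cliques coincide, $\Returned$ is an antiautomorphism of
    $\Cli\Mca$, and the statement follows.
\end{proof}
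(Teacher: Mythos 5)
Your proposal is correct and follows the same route as the paper: the $\Cli\theta$ part is dispatched by the functoriality of $\Cli$ (Theorem~\ref{thm:clique_construction}), and $\Returned$ is shown to be an antiautomorphism by a direct case check against the definition of the partial composition. The paper's own proof simply asserts that this verification is straightforward, whereas you carry out the index bookkeeping explicitly (and correctly, including the non-obvious point that the glued-edge label $\Pfr_i \Op \Qfr_0$ is reproduced in the same order on both sides, so no commutativity of $\Op$ is needed).
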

\begin{proof}
    When $\theta$ is a unitary magma automorphism of $\Mca$, since by
    Theorem~\ref{thm:clique_construction} $\Cli$ is a functor
    respecting bijections, $\Cli\theta$ is an operad automorphism of
    $\Cli\Mca$. Hence, $\Cli\theta$ belongs to the group of symmetries
    of $\Cli\Mca$. Moreover, the fact that $\Returned$ belongs to the
    group of symmetries of $\Cli\Mca$ can be established by showing that
    this map is an antiautomorphism of $\Cli\Mca$, directly from the
    definition of the partial composition of $\Cli\Mca$ and that
    of~$\Returned$.
\end{proof}
\medbreak

\subsubsection{Basic set-operad basis}
A unitary magma $\Mca$ is \Def{right cancelable} if for any
$x, y, z \in \Mca$, $y \Op x = z \Op x$ implies $y = z$.
\medbreak

\begin{Proposition} \label{prop:basic_Cli_M}
    Let $\Mca$ be a unitary magma. The fundamental basis of $\Cli\Mca$
    is a basic set-operad basis if and only if $\Mca$ is right
    cancelable.
\end{Proposition}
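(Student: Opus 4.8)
The plan is to unpack the definition of a basic set-operad basis and verify both implications directly from the explicit formula~\eqref{equ:partial_composition_Cli_M} for the partial composition of $\Cli\Mca$. Since $\Cliques_\Mca$ is already a set-operad basis of $\Cli\Mca$, I only need to decide whether, for every $n, m \geq 1$, every $i \in [n]$, and every $\Mca$-clique $\Qfr$ of arity $m$, the map $\circ_i^\Qfr$ sending an $\Mca$-clique $\Pfr$ of arity $n$ to $\Pfr \circ_i \Qfr$ is injective.

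For the implication ``$\Mca$ right cancelable $\Rightarrow$ basic'', I would argue that $\Pfr$ is reconstructible from $\Pfr \circ_i \Qfr$ together with the known data $\Qfr$ and $i$. The case $n = 1$ is immediate since then $\Pfr = \UnitClique$ and $\Pfr \circ_1 \Qfr = \Qfr$. For $n \geq 2$, reading~\eqref{equ:partial_composition_Cli_M}, each arc $(x, y)$ of $\Pfr$ other than its $i$th edge $(i, i + 1)$ transmits its label $\Pfr(x, y)$ unchanged to a unique arc of $\Pfr \circ_i \Qfr$: to $(x, y)$ when $y \leq i$, to $(x, y + m - 1)$ when $x \leq i < y$, and to $(x + m - 1, y + m - 1)$ when $i + 1 \leq x$. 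One checks that these three families of arcs of the composite are pairwise disjoint, none of them equals $(i, i + m)$, and together they exhaust all arcs of $\Pfr$ but the $i$th edge; hence every label of $\Pfr$ except $\Pfr_i$ is read off directly from $\Pfr \circ_i \Qfr$. Finally, the arc $(i, i + m)$ of $\Pfr \circ_i \Qfr$ bears the label $\Pfr_i \Op \Qfr_0$; since $\Qfr_0$ is determined by $\Qfr$ and $\Mca$ is right cancelable, $\Pfr_i$ is determined too. So $\circ_i^\Qfr$ is injective for all choices of parameters, and $\Cliques_\Mca$ is basic.

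For the converse, assuming $\Mca$ is not right cancelable, I would exhibit a single non-injective map $\circ_i^\Qfr$. Pick $a, b, c \in \Mca$ with $b \ne c$ but $b \Op a = c \Op a$, and note $a \ne \Unit_\Mca$ (otherwise $b = b \Op \Unit_\Mca = c \Op \Unit_\Mca = c$). Let $\Qfr$ be the $\Mca$-triangle with base labeled $a$ and both edges labeled $\Unit_\Mca$; let $\Pfr$ (resp. $\Pfr'$) be the $\Mca$-triangle with first edge labeled $b$ (resp. $c$) and all remaining arcs labeled $\Unit_\Mca$; and take $i := 1$. Then $\Pfr \ne \Pfr'$, but comparing~\eqref{equ:partial_composition_Cli_M} arc by arc shows $\Pfr \circ_1 \Qfr = \Pfr' \circ_1 \Qfr$: both composites agree on the arcs inherited from $\Qfr$ and on the unchanged arcs of $\Pfr$ and $\Pfr'$, while on the arc $(1, 3)$ they carry $\Pfr_1 \Op \Qfr_0 = b \Op a$ and $c \Op a$ respectively, which coincide. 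Hence $\circ_1^\Qfr$ is not injective and $\Cliques_\Mca$ is not a basic set-operad basis.

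I expect the only mildly delicate point to be the index bookkeeping in the first implication, namely verifying that the three families of arcs of $\Pfr \circ_i \Qfr$ recording the labels of $\Pfr$ are pairwise disjoint, avoid the arc $(i, i + m)$, and cover all arcs of $\Pfr$ except the $i$th edge, with due attention to the extreme cases $i = 1$, $i = n$, and the base of $\Pfr$. Everything else is a routine reading of~\eqref{equ:partial_composition_Cli_M}.
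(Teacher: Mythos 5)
Your proof is correct and follows essentially the same route as the paper's: the forward direction reads off all labels of $\Pfr$ except $\Pfr_i$ directly from $\Pfr \circ_i \Qfr$ and recovers $\Pfr_i$ by right cancellation, and the converse isolates the cancellation property on the label of the arc $(i, i+m)$. The only cosmetic difference is that you prove the converse by contrapositive with an explicit pair of triangles, whereas the paper states the equivalence directly; the underlying argument is identical.
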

\begin{proof}
    Assume first that $\Mca$ is right cancelable. Let $n \geq 1$, $i \in [n]$, and $\Pfr$,
    $\Pfr'$, and $\Qfr$ be three $\Mca$-cliques such that $\Pfr$ and $\Pfr'$ are of arity
    $n$. If $\circ_i^\Qfr(\Pfr) = \circ_i^\Qfr(\Pfr')$, we have $\Pfr \circ_i \Qfr = \Pfr'
    \circ_i \Qfr$. By definition of the partial composition map of $\Cli\Mca$, we have
    $\Pfr(x, y) = \Pfr'(x, y)$ for all arcs $(x, y)$ where $1 \leq x < y \leq n + 1$ and
    $(x, y) \ne (i, i + 1)$.  Moreover, we have
    \begin{math}
        \Pfr_i \Op \Qfr_0 = \Pfr'_i \Op \Qfr_0.
    \end{math}
    Since $\Mca$ is right cancelable, this implies that
    $\Pfr_i = \Pfr'_i$, and hence, $\Pfr = \Pfr'$. This shows that the
    maps $\circ_i^\Qfr$ are injective and thus, that the fundamental
    basis of $\Cli\Mca$ is a basic set-operad basis.
    \smallbreak

    Conversely, assume that the fundamental basis of $\Cli\Mca$ is a
    basic set-operad basis. Then, in particular, for all $n \geq 1$ and
    all $\Mca$-cliques $\Pfr$, $\Pfr'$, and $\Qfr$ such that $\Pfr$ and
    $\Pfr'$ are of arity $n$, $\circ_1^\Qfr(\Pfr) = \circ_1^\Qfr(\Pfr')$
    implies $\Pfr = \Pfr'$. This is equivalent to the statement that
    \begin{math}
        \Pfr_1 \Op \Qfr_0 = \Pfr'_1 \Op \Qfr_0
    \end{math}
    implies $\Pfr_1 = \Pfr'_1$. This amount exactly to the statement that $\Mca$
    is right cancelable.
\end{proof}
\medbreak

\subsubsection{Cyclic operad structure}
Let $\rho : \Cli\Mca \to \Cli\Mca$ be the linear map sending any
$\Mca$-clique $\Pfr$ of arity $n$ to the $\Mca$-clique $\rho(\Pfr)$ of
the same arity such that, for any arc $(x, y)$ where
$1 \leq x < y \leq n + 1$,
\begin{equation} \label{equ:rotation_map_Cli_M}
    (\rho(\Pfr))(x, y) :=
    \begin{cases}
        \Pfr(x + 1, y + 1) & \mbox{if } y \leq n, \\
        \Pfr(1, x + 1) & \mbox{otherwise (} y = n + 1 \mbox{)}.
    \end{cases}
\end{equation}
Graphically, $\rho(\Pfr)$ is the $\Mca$-clique obtained by
applying a rotation of one step of $\Pfr$ in counterclockwise
direction. For instance, one has in $\Cli\Z$,
\begin{equation}
    \rho\left(
    \begin{tikzpicture}[scale=.85,Centering]
        \node[CliquePoint](1)at(-0.50,-0.87){};
        \node[CliquePoint](2)at(-1.00,-0.00){};
        \node[CliquePoint](3)at(-0.50,0.87){};
        \node[CliquePoint](4)at(0.50,0.87){};
        \node[CliquePoint](5)at(1.00,0.00){};
        \node[CliquePoint](6)at(0.50,-0.87){};
        \draw[CliqueEdge](1)edge[]node[CliqueLabel]
            {\begin{math}1\end{math}}(2);
        \draw[CliqueEdge](1)edge[bend left=30]node[CliqueLabel]
            {\begin{math}-2\end{math}}(5);
        \draw[CliqueEmptyEdge](1)edge[]node[CliqueLabel]{}(6);
        \draw[CliqueEdge](2)edge[]node[CliqueLabel]
            {\begin{math}-2\end{math}}(3);
        \draw[CliqueEmptyEdge](3)edge[]node[CliqueLabel]{}(4);
        \draw[CliqueEdge](3)edge[bend right=30]node[CliqueLabel]
            {\begin{math}1\end{math}}(5);
        \draw[CliqueEmptyEdge](4)edge[]node[CliqueLabel]{}(5);
        \draw[CliqueEmptyEdge](5)edge[]node[CliqueLabel]{}(6);
    \end{tikzpicture}
    \right)
    \enspace = \enspace
    \begin{tikzpicture}[scale=.85,Centering]
        \node[CliquePoint](1)at(-0.50,-0.87){};
        \node[CliquePoint](2)at(-1.00,-0.00){};
        \node[CliquePoint](3)at(-0.50,0.87){};
        \node[CliquePoint](4)at(0.50,0.87){};
        \node[CliquePoint](5)at(1.00,0.00){};
        \node[CliquePoint](6)at(0.50,-0.87){};
        \draw[CliqueEdge](1)edge[]node[CliqueLabel]
            {\begin{math}-2\end{math}}(2);
        \draw[CliqueEdge](1)edge[]node[CliqueLabel]
            {\begin{math}1\end{math}}(6);
        \draw[CliqueEmptyEdge](2)edge[]node[CliqueLabel]{}(3);
        \draw[CliqueEdge](2)edge[bend right=30]node[CliqueLabel]
            {\begin{math}1\end{math}}(4);
        \draw[CliqueEmptyEdge](3)edge[]node[CliqueLabel]{}(4);
        \draw[CliqueEmptyEdge](4)edge[]node[CliqueLabel]{}(5);
        \draw[CliqueEdge](4)edge[bend right=30]node[CliqueLabel]
            {\begin{math}-2\end{math}}(6);
        \draw[CliqueEmptyEdge](5)edge[]node[CliqueLabel]{}(6);
    \end{tikzpicture}\,.
\end{equation}
\medbreak

\begin{Proposition} \label{prop:cyclic_Cli_M}
    Let $\Mca$ be a unitary magma. The map $\rho$ is a rotation map
    of $\Cli\Mca$, endowing this operad with a cyclic operad structure.
\end{Proposition}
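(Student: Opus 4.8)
The plan is to verify directly that the map $\rho$ defined by \eqref{equ:rotation_map_Cli_M} satisfies the three defining properties \eqref{equ:rotation_map_1}, \eqref{equ:rotation_map_2}, and \eqref{equ:rotation_map_3} of a rotation map. Property \eqref{equ:rotation_map_1} is immediate: the only $\Mca$-clique of arity $1$ is $\UnitClique$, and applying \eqref{equ:rotation_map_Cli_M} to it (its sole arc being the base $(1, 2)$, treated by the second case of the formula) returns $\UnitClique$; hence $\rho(\Unit) = \Unit$.

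For property \eqref{equ:rotation_map_2}, the key remark is that $\rho$ acts on an $\Mca$-clique $\Pfr$ of arity $n$ by a cyclic relabeling of its $n + 1$ vertices: unwinding \eqref{equ:rotation_map_Cli_M}, the label of an arc $(x, y)$ of $\rho(\Pfr)$ equals the label in $\Pfr$ of the arc obtained from the pair $\{x, y\}$ by applying the cyclic shift $z \mapsto z + 1 \pmod{n + 1}$ on $[n + 1]$ and reordering so that the smaller index comes first. Iterating this vertex shift $n + 1$ times yields the identity permutation of $[n + 1]$, so $\rho^{n + 1}(\Pfr) = \Pfr$. Alternatively one can argue by a short induction on the number of applications of $\rho$, tracking the two cases of \eqref{equ:rotation_map_Cli_M}; either way the verification is routine.

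The core of the proof is property \eqref{equ:rotation_map_3}, which I would establish by a case analysis on the arcs of the composite clique, comparing the images under \eqref{equ:rotation_map_Cli_M} of $\Pfr \circ_i \Qfr$ with those of $\rho(\Qfr) \circ_m \rho(\Pfr)$ when $i = 1$ and with those of $\rho(\Pfr) \circ_{i - 1} \Qfr$ when $i \geq 2$, where $m := |\Qfr|$. Geometrically, $\rho(\Pfr)$ is obtained from $\Pfr$ by turning its base into its last edge and its first edge into its new base. When $i \geq 2$, the gluing site of $\Pfr \circ_i \Qfr$ does not involve the first edge of $\Pfr$, so a rotation of $\Pfr \circ_i \Qfr$ amounts to rotating $\Pfr$ while shifting the gluing index from $i$ to $i - 1$, which is exactly $\rho(\Pfr) \circ_{i - 1} \Qfr$. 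When $i = 1$, the first edge of $\Pfr \circ_1 \Qfr$ is the first edge of $\Qfr$, and rotating moves the pair of glued arcs around so that the roles of $\Pfr$ and $\Qfr$ are exchanged: the base of $\rho(\Pfr)$, namely the first edge of $\Pfr$, gets glued onto the last edge of $\rho(\Qfr)$, namely the base of $\Qfr$, giving $\rho(\Qfr) \circ_m \rho(\Pfr)$. Making this precise requires substituting the piecewise formula \eqref{equ:partial_composition_Cli_M} into the piecewise formula \eqref{equ:rotation_map_Cli_M} and checking the resulting subcases for each arc $(x, y)$; in particular one must track the arc carrying the product label $\Pfr_i \Op \Qfr_0$ to see that it lands on the correspondingly glued arc on the other side, with the same value, since $\rho$ leaves the relevant edge- and base-labels unchanged.

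Once \eqref{equ:rotation_map_1}, \eqref{equ:rotation_map_2}, and \eqref{equ:rotation_map_3} are verified, it follows that $\rho$ is a rotation map of $\Cli\Mca$ and hence that $\Cli\Mca$ carries a cyclic operad structure. I expect the bookkeeping in the $i = 1$ case of \eqref{equ:rotation_map_3} to be the main obstacle, since both the rotation and the composition act nontrivially on the glued arcs and on the vertex indexing there; the remaining cases reduce to straightforward index arithmetic.
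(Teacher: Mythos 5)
Your overall strategy --- a direct verification of \eqref{equ:rotation_map_1}, \eqref{equ:rotation_map_2}, and \eqref{equ:rotation_map_3} --- is exactly the one the paper takes; its proof consists of the single assertion that this verification is ``technical but straightforward''. Your handling of \eqref{equ:rotation_map_1} and \eqref{equ:rotation_map_2} is correct (the identification of $\rho$ with the cyclic relabeling of the $n+1$ vertices is the right way to see $\rho^{n+1} = \mathrm{Id}$), and your reduction of the $i \geq 2$ case of \eqref{equ:rotation_map_3} to index bookkeeping does go through: the glued arc $(i, i+m)$ of $\Pfr \circ_i \Qfr$, labeled $\Pfr_i \Op \Qfr_0$, is sent by $\rho$ to the arc $(i-1, i+m-1)$, which is precisely the glued arc of $\rho(\Pfr) \circ_{i-1} \Qfr$ with label $(\rho(\Pfr))_{i-1} \Op \Qfr_0 = \Pfr_i \Op \Qfr_0$.

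The gap is in the $i = 1$ case, at the precise point you wave through with ``with the same value''. Set $n := |\Pfr|$ and $m := |\Qfr|$ with $n, m \geq 2$. In $\rho(\Pfr \circ_1 \Qfr)$, the arc $(m, m+n)$ receives, by the second case of \eqref{equ:rotation_map_Cli_M}, the label $(\Pfr \circ_1 \Qfr)(1, 1+m) = \Pfr_1 \Op \Qfr_0$. On the other side, the glued arc of $\rho(\Qfr) \circ_m \rho(\Pfr)$ is also $(m, m+n)$, but by \eqref{equ:partial_composition_Cli_M} its label is $(\rho(\Qfr))_m \Op (\rho(\Pfr))_0 = \Qfr(1, m+1) \Op \Pfr(1, 2) = \Qfr_0 \Op \Pfr_1$. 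All other arcs do match (this is worth checking, and it works), so the whole identity reduces to $\Pfr_1 \Op \Qfr_0 = \Qfr_0 \Op \Pfr_1$ for arbitrary $\Pfr_1, \Qfr_0 \in \Mca$, that is, to the commutativity of $\Op$. This is not a cosmetic issue: taking for $\Mca$ the three-element monoid $\{\Unit_\Mca, a, b\}$ with $x \Op y = x$ whenever $x, y \neq \Unit_\Mca$, and for $\Pfr$ and $\Qfr$ two $\Mca$-triangles with $\Pfr_1 = a$, $\Qfr_0 = b$ and all other labels equal to $\Unit_\Mca$, one finds that $\rho(\Pfr \circ_1 \Qfr)$ carries the label $a$ on the arc $(2,4)$ while $\rho(\Qfr) \circ_2 \rho(\Pfr)$ carries $b$ there. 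So your argument cannot be completed as written for a general unitary magma; it needs the additional hypothesis that $\Mca$ is commutative (which neither your proof nor the statement of the proposition imposes, and which the paper's one-line proof also does not surface). You should either add that hypothesis or explain explicitly why the two labels of the glued arc coincide, since that is the only genuine content of the verification.
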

\begin{proof}
    The fact that $\rho$ is a rotation map for $\Cli\Mca$ follows from
    a technical but straightforward verification of the fact that
    Relations~\eqref{equ:rotation_map_1}, \eqref{equ:rotation_map_2},
    and~\eqref{equ:rotation_map_3} hold.
\end{proof}
\medbreak

\subsubsection{Alternative bases}
If $\Pfr$ and $\Qfr$ are two $\Mca$-cliques of the same arity, the
\Def{Hamming distance} $\Hamming(\Pfr, \Qfr)$ between $\Pfr$ and $\Qfr$
is the number of arcs $(x, y)$ such that $\Pfr(x, y) \ne \Qfr(x, y)$.
Let $\OrdBE$ be the partial order relation on the set of all
$\Mca$-cliques, where, for any $\Mca$-cliques $\Pfr$ and $\Qfr$, one
has $\Pfr \OrdBE \Qfr$ if $\Qfr$ can be obtained from $\Pfr$ by
replacing some labels $\Unit_\Mca$ of its edges or its base by other
labels of $\Mca$. In the same way, let $\OrdD$ be the partial order
on the same set where $\Pfr \OrdD \Qfr$ if $\Qfr$ can be obtained from
$\Pfr$ by replacing some labels $\Unit_\Mca$ of its diagonals by other
labels of $\Mca$.
\medbreak

For all $\Mca$-cliques $\Pfr$, let us introduce the elements of $\Cli\Mca$ defined by
\begin{subequations}
\begin{equation}
    \Hsf_\Pfr :=
    \sum_{\substack{
        \Pfr' \in \Cliques_\Mca \\
        \Pfr' \OrdBE \Pfr
    }}
    \Pfr',
\end{equation}
and
\begin{equation}
    \Ksf_\Pfr :=
    \sum_{\substack{
        \Pfr' \in \Cliques_\Mca \\
        \Pfr' \OrdD \Pfr
    }}
    (-1)^{\Hamming(\Pfr', \Pfr)}
    \Pfr'.
\end{equation}
\end{subequations}
For instance, in~$\Cli\Z$,
\begin{subequations}
\begin{equation}
    \Hsf_{
    \begin{tikzpicture}[scale=.6,Centering]
        \node[CliquePoint](1)at(-0.59,-0.81){};
        \node[CliquePoint](2)at(-0.95,0.31){};
        \node[CliquePoint](3)at(-0.00,1.00){};
        \node[CliquePoint](4)at(0.95,0.31){};
        \node[CliquePoint](5)at(0.59,-0.81){};
        \draw[CliqueEmptyEdge](1)edge[]node[]{}(2);
        \draw[CliqueEmptyEdge](1)edge[]node[]{}(5);
        \draw[CliqueEmptyEdge](2)edge[]node[]{}(3);
        \draw[CliqueEdge](2)edge[bend left=30]node[CliqueLabel,near start]
            {\begin{math}1\end{math}}(5);
        \draw[CliqueEdge](3)edge[]node[CliqueLabel]
            {\begin{math}1\end{math}}(4);
        \draw[CliqueEdge](4)edge[]node[CliqueLabel]
            {\begin{math}2\end{math}}(5);
        \draw[CliqueEdge](1)edge[bend right=30]node[CliqueLabel,near end]
            {\begin{math}2\end{math}}(3);
    \end{tikzpicture}}
    =
    \begin{tikzpicture}[scale=.6,Centering]
        \node[CliquePoint](1)at(-0.59,-0.81){};
        \node[CliquePoint](2)at(-0.95,0.31){};
        \node[CliquePoint](3)at(-0.00,1.00){};
        \node[CliquePoint](4)at(0.95,0.31){};
        \node[CliquePoint](5)at(0.59,-0.81){};
        \draw[CliqueEmptyEdge](1)edge[]node[]{}(2);
        \draw[CliqueEmptyEdge](1)edge[]node[]{}(5);
        \draw[CliqueEmptyEdge](2)edge[]node[]{}(3);
        \draw[CliqueEdge](2)edge[bend left=30]node[CliqueLabel,near start]
            {\begin{math}1\end{math}}(5);
        \draw[CliqueEmptyEdge](3)edge[]node[]{}(4);
        \draw[CliqueEmptyEdge](4)edge[]node[]{}(5);
        \draw[CliqueEdge](1)edge[bend right=30]node[CliqueLabel,near end]
            {\begin{math}2\end{math}}(3);
    \end{tikzpicture}
    +
    \begin{tikzpicture}[scale=.6,Centering]
        \node[CliquePoint](1)at(-0.59,-0.81){};
        \node[CliquePoint](2)at(-0.95,0.31){};
        \node[CliquePoint](3)at(-0.00,1.00){};
        \node[CliquePoint](4)at(0.95,0.31){};
        \node[CliquePoint](5)at(0.59,-0.81){};
        \draw[CliqueEmptyEdge](1)edge[]node[]{}(2);
        \draw[CliqueEmptyEdge](1)edge[]node[]{}(5);
        \draw[CliqueEmptyEdge](2)edge[]node[]{}(3);
        \draw[CliqueEdge](2)edge[bend left=30]node[CliqueLabel,near start]
            {\begin{math}1\end{math}}(5);
        \draw[CliqueEmptyEdge](3)edge[]node[]{}(4);
        \draw[CliqueEdge](4)edge[]node[CliqueLabel]
            {\begin{math}2\end{math}}(5);
        \draw[CliqueEdge](1)edge[bend right=30]node[CliqueLabel,near end]
            {\begin{math}2\end{math}}(3);
    \end{tikzpicture}
    +
    \begin{tikzpicture}[scale=.6,Centering]
        \node[CliquePoint](1)at(-0.59,-0.81){};
        \node[CliquePoint](2)at(-0.95,0.31){};
        \node[CliquePoint](3)at(-0.00,1.00){};
        \node[CliquePoint](4)at(0.95,0.31){};
        \node[CliquePoint](5)at(0.59,-0.81){};
        \draw[CliqueEmptyEdge](1)edge[]node[]{}(2);
        \draw[CliqueEmptyEdge](1)edge[]node[]{}(5);
        \draw[CliqueEmptyEdge](2)edge[]node[]{}(3);
        \draw[CliqueEdge](2)edge[bend left=30]node[CliqueLabel,near start]
            {\begin{math}1\end{math}}(5);
        \draw[CliqueEdge](3)edge[]node[CliqueLabel]
            {\begin{math}1\end{math}}(4);
        \draw[CliqueEmptyEdge](4)edge[]node[]{}(5);
        \draw[CliqueEdge](1)edge[bend right=30]node[CliqueLabel,near end]
            {\begin{math}2\end{math}}(3);
    \end{tikzpicture}
    +
    \begin{tikzpicture}[scale=.6,Centering]
        \node[CliquePoint](1)at(-0.59,-0.81){};
        \node[CliquePoint](2)at(-0.95,0.31){};
        \node[CliquePoint](3)at(-0.00,1.00){};
        \node[CliquePoint](4)at(0.95,0.31){};
        \node[CliquePoint](5)at(0.59,-0.81){};
        \draw[CliqueEmptyEdge](1)edge[]node[]{}(2);
        \draw[CliqueEmptyEdge](1)edge[]node[]{}(5);
        \draw[CliqueEmptyEdge](2)edge[]node[]{}(3);
        \draw[CliqueEdge](2)edge[bend left=30]node[CliqueLabel,near start]
            {\begin{math}1\end{math}}(5);
        \draw[CliqueEdge](3)edge[]node[CliqueLabel]
            {\begin{math}1\end{math}}(4);
        \draw[CliqueEdge](4)edge[]node[CliqueLabel]
            {\begin{math}2\end{math}}(5);
        \draw[CliqueEdge](1)edge[bend right=30]node[CliqueLabel,near end]
            {\begin{math}2\end{math}}(3);
    \end{tikzpicture}\,,
\end{equation}
\begin{equation}
    \Ksf_{
    \begin{tikzpicture}[scale=.6,Centering]
        \node[CliquePoint](1)at(-0.59,-0.81){};
        \node[CliquePoint](2)at(-0.95,0.31){};
        \node[CliquePoint](3)at(-0.00,1.00){};
        \node[CliquePoint](4)at(0.95,0.31){};
        \node[CliquePoint](5)at(0.59,-0.81){};
        \draw[CliqueEmptyEdge](1)edge[]node[]{}(2);
        \draw[CliqueEmptyEdge](1)edge[]node[]{}(5);
        \draw[CliqueEmptyEdge](2)edge[]node[]{}(3);
        \draw[CliqueEdge](2)edge[bend left=30]node[CliqueLabel,near start]
            {\begin{math}1\end{math}}(5);
        \draw[CliqueEdge](3)edge[]node[CliqueLabel]
            {\begin{math}1\end{math}}(4);
        \draw[CliqueEdge](4)edge[]node[CliqueLabel]
            {\begin{math}2\end{math}}(5);
        \draw[CliqueEdge](1)edge[bend right=30]node[CliqueLabel,near end]
            {\begin{math}2\end{math}}(3);
    \end{tikzpicture}}
    =
    \begin{tikzpicture}[scale=.6,Centering]
        \node[CliquePoint](1)at(-0.59,-0.81){};
        \node[CliquePoint](2)at(-0.95,0.31){};
        \node[CliquePoint](3)at(-0.00,1.00){};
        \node[CliquePoint](4)at(0.95,0.31){};
        \node[CliquePoint](5)at(0.59,-0.81){};
        \draw[CliqueEmptyEdge](1)edge[]node[]{}(2);
        \draw[CliqueEmptyEdge](1)edge[]node[]{}(5);
        \draw[CliqueEmptyEdge](2)edge[]node[]{}(3);
        \draw[CliqueEdge](2)edge[bend left=30]node[CliqueLabel,near start]
            {\begin{math}1\end{math}}(5);
        \draw[CliqueEdge](3)edge[]node[CliqueLabel]
            {\begin{math}1\end{math}}(4);
        \draw[CliqueEdge](4)edge[]node[CliqueLabel]
            {\begin{math}2\end{math}}(5);
        \draw[CliqueEdge](1)edge[bend right=30]node[CliqueLabel,near end]
            {\begin{math}2\end{math}}(3);
    \end{tikzpicture}
    -
    \begin{tikzpicture}[scale=.6,Centering]
        \node[CliquePoint](1)at(-0.59,-0.81){};
        \node[CliquePoint](2)at(-0.95,0.31){};
        \node[CliquePoint](3)at(-0.00,1.00){};
        \node[CliquePoint](4)at(0.95,0.31){};
        \node[CliquePoint](5)at(0.59,-0.81){};
        \draw[CliqueEmptyEdge](1)edge[]node[]{}(2);
        \draw[CliqueEmptyEdge](1)edge[]node[]{}(5);
        \draw[CliqueEmptyEdge](2)edge[]node[]{}(3);
        \draw[CliqueEdge](3)edge[]node[CliqueLabel]
            {\begin{math}1\end{math}}(4);
        \draw[CliqueEdge](4)edge[]node[CliqueLabel]
            {\begin{math}2\end{math}}(5);
        \draw[CliqueEdge](1)edge[bend right=30]node[CliqueLabel]
            {\begin{math}2\end{math}}(3);
    \end{tikzpicture}
    -
    \begin{tikzpicture}[scale=.6,Centering]
        \node[CliquePoint](1)at(-0.59,-0.81){};
        \node[CliquePoint](2)at(-0.95,0.31){};
        \node[CliquePoint](3)at(-0.00,1.00){};
        \node[CliquePoint](4)at(0.95,0.31){};
        \node[CliquePoint](5)at(0.59,-0.81){};
        \draw[CliqueEmptyEdge](1)edge[]node[]{}(2);
        \draw[CliqueEmptyEdge](1)edge[]node[]{}(5);
        \draw[CliqueEmptyEdge](2)edge[]node[]{}(3);
        \draw[CliqueEdge](2)edge[bend left=30]node[CliqueLabel]
            {\begin{math}1\end{math}}(5);
        \draw[CliqueEdge](3)edge[]node[CliqueLabel]
            {\begin{math}1\end{math}}(4);
        \draw[CliqueEdge](4)edge[]node[CliqueLabel]
            {\begin{math}2\end{math}}(5);
    \end{tikzpicture}
    +
    \begin{tikzpicture}[scale=.6,Centering]
        \node[CliquePoint](1)at(-0.59,-0.81){};
        \node[CliquePoint](2)at(-0.95,0.31){};
        \node[CliquePoint](3)at(-0.00,1.00){};
        \node[CliquePoint](4)at(0.95,0.31){};
        \node[CliquePoint](5)at(0.59,-0.81){};
        \draw[CliqueEmptyEdge](1)edge[]node[]{}(2);
        \draw[CliqueEmptyEdge](1)edge[]node[]{}(5);
        \draw[CliqueEmptyEdge](2)edge[]node[]{}(3);
        \draw[CliqueEdge](3)edge[]node[CliqueLabel]
            {\begin{math}1\end{math}}(4);
        \draw[CliqueEdge](4)edge[]node[CliqueLabel]
            {\begin{math}2\end{math}}(5);
    \end{tikzpicture}\,.
\end{equation}
\end{subequations}

Since by Möbius inversion, one has for any $\Mca$-clique $\Pfr$,
\begin{equation}
    \sum_{\substack{
        \Pfr' \in \Cliques_\Mca \\
        \Pfr' \OrdBE \Pfr
    }}
    (-1)^{\Hamming(\Pfr', \Pfr)}
    \Hsf_{\Pfr'}
    =
    \Pfr
    =
    \sum_{\substack{
        \Pfr' \in \Cliques_\Mca \\
        \Pfr' \OrdD \Pfr
    }}
    \Ksf_{\Pfr'},
\end{equation}
by triangularity, the family of all the $\Hsf_\Pfr$ (resp. $\Ksf_\Pfr$)
forms a  basis of $\Cli\Mca$ called the \Def{$\Hsf$-basis} (resp. the
\Def{$\Ksf$-basis}).
\medbreak

If $\Pfr$ is an $\Mca$-clique, $\Del_0(\Pfr)$ (resp. $\Del_i(\Pfr)$) is
the $\Mca$-clique obtained by replacing the label of the base
(resp. $i$th edge) of $\Pfr$ by $\Unit_\Mca$.
\medbreak

\begin{Proposition} \label{prop:composition_Cli_M_basis_H}
    Let $\Mca$ be a unitary magma. The partial composition of $\Cli\Mca$ can be
    expressed in terms of the $\Hsf$-basis, for any $\Mca$-cliques $\Pfr$ and
    $\Qfr$ different from $\UnitClique$ and any valid integer $i$, by
    \begin{small}
    \begin{equation}
        \Hsf_\Pfr \circ_i \Hsf_\Qfr
        =
        \begin{cases}
            \Hsf_{\Pfr \circ_i \Qfr}
            + \Hsf_{\Del_i(\Pfr) \circ_i \Qfr}
            + \Hsf_{\Pfr \circ_i \Del_0(\Qfr)}
            + \Hsf_{\Del_i(\Pfr) \circ_i \Del_0(\Qfr)}
                & \mbox{if } \Pfr_i \ne \Unit_\Mca \mbox{ and }
                    \Qfr_0 \ne \Unit_\Mca, \\
            \Hsf_{\Pfr \circ_i \Qfr}
            + \Hsf_{\Del_i(\Pfr) \circ_i \Qfr}
                & \mbox{if } \Pfr_i \ne \Unit_\Mca \mbox{ and }
                    \Qfr_0 = \Unit_\Mca, \\
            \Hsf_{\Pfr \circ_i \Qfr}
            + \Hsf_{\Pfr \circ_i \Del_0(\Qfr)}
                & \mbox{if } \Pfr_i = \Unit_\Mca \mbox{ and }
                \Qfr_0 \ne \Unit_\Mca, \\
            \Hsf_{\Pfr \circ_i \Qfr} & \mbox{otherwise}.
        \end{cases}
    \end{equation}
    \end{small}
\end{Proposition}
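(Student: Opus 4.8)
The plan is to expand both sides in the fundamental basis and then regroup the resulting double sum into $\Hsf$-basis elements. First I would fix notation: for an $\Mca$-clique $\Rfr$, let $E(\Rfr)$ be the set consisting of the solid edges of $\Rfr$ together with the base of $\Rfr$ if it is solid, and for $S \subseteq E(\Rfr)$ let $\Rfr^{[S]}$ be the $\Mca$-clique obtained from $\Rfr$ by relabeling every position of $S$ by $\Unit_\Mca$. By the definition of $\OrdBE$, the cliques $\Rfr'$ with $\Rfr' \OrdBE \Rfr$ are precisely the $\Rfr^{[S]}$, $S \subseteq E(\Rfr)$, each occurring once; hence $\Hsf_\Rfr = \sum_{S \subseteq E(\Rfr)} \Rfr^{[S]}$, so that $\Hsf_\Pfr \circ_i \Hsf_\Qfr = \sum_{S \subseteq E(\Pfr)} \sum_{T \subseteq E(\Qfr)} \Pfr^{[S]} \circ_i \Qfr^{[T]}$.

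The main work is to understand one term $\Pfr^{[S]} \circ_i \Qfr^{[T]}$. Because $\Pfr \ne \UnitClique$ and $\Qfr \ne \UnitClique$, we have $|\Pfr|, |\Qfr| \ge 2$, so the arc $(i, i + |\Qfr|)$ created by the composition is always a genuine diagonal of $\Pfr \circ_i \Qfr$ — never an edge, never the base. Reading off \eqref{equ:partial_composition_Cli_M}, I would verify that the labels $\Pfr_i$ and $\Qfr_0$ affect the composite only through the label $\Pfr_i \Op \Qfr_0$ of that one diagonal, whereas any other position in $E(\Pfr)$, and any position in $E(\Qfr)$ other than the base, corresponds under the obvious reindexing maps $\iota$, $\jmath$ to an edge or the base of the composite carrying the same label. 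Splitting $S = S' \sqcup S''$, where $S''$ is the $i$th edge of $\Pfr$ if it lies in $S$ and is empty otherwise, and likewise $T = T' \sqcup T''$ with $T''$ the base of $\Qfr$ if it lies in $T$, one obtains $\Pfr^{[S]} \circ_i \Qfr^{[T]} = \bigl(X_{S'',T''}\bigr)^{[\iota(S')\,\sqcup\,\jmath(T')]}$, where $X_{S'',T''}$ is $\Pfr \circ_i \Qfr$, $\Del_i(\Pfr) \circ_i \Qfr$, $\Pfr \circ_i \Del_0(\Qfr)$, or $\Del_i(\Pfr) \circ_i \Del_0(\Qfr)$ according to the four values of $(S'',T'')$. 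The decisive observation is that $E(X_{S'',T''})$ is independent of $(S'',T'')$ — only a diagonal label changes — and is the disjoint union of $\iota\bigl(E(\Pfr) \setminus \{i\text{th edge}\}\bigr)$ and $\jmath\bigl(E(\Qfr) \setminus \{\text{base}\}\bigr)$.

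Granting this, I would finish by summing in two stages. For fixed $(S'',T'')$, as $S'$ ranges over all subsets of $E(\Pfr)$ not containing the $i$th edge and $T'$ over all subsets of $E(\Qfr)$ not containing the base, the set $\iota(S') \sqcup \jmath(T')$ ranges exactly once over all subsets of $E(X_{S'',T''})$; hence $\sum_{S',T'} \bigl(X_{S'',T''}\bigr)^{[\iota(S')\,\sqcup\,\jmath(T')]} = \Hsf_{X_{S'',T''}}$. Summing over the admissible $(S'',T'')$ then yields the claim: $S''$ can be nonempty only if $\Pfr_i \ne \Unit_\Mca$ and $T''$ only if $\Qfr_0 \ne \Unit_\Mca$, which reproduces exactly the four cases of the statement once one uses that $\Del_i(\Pfr) = \Pfr$ when $\Pfr_i = \Unit_\Mca$ and $\Del_0(\Qfr) = \Qfr$ when $\Qfr_0 = \Unit_\Mca$. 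I expect the bookkeeping of the previous paragraph to be the main obstacle — namely, checking from \eqref{equ:partial_composition_Cli_M} both that downgrading an edge or base of $\Pfr$ or $\Qfr$ away from the glued pair $(\Pfr_i, \Qfr_0)$ leaves the special diagonal untouched and agrees with downgrading the matching position of the composite, and that the two resulting families of positions assemble into a disjoint union covering all edges and the base of $\Pfr \circ_i \Qfr$. Once this is established, the remaining steps are formal.
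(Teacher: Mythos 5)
Your proposal is correct and follows essentially the same route as the paper: both expand $\Hsf_\Pfr \circ_i \Hsf_\Qfr$ as a double sum over $\Pfr' \OrdBE \Pfr$ and $\Qfr' \OrdBE \Qfr$ in the fundamental basis and split it into the four summands according to whether the $i$th edge of $\Pfr'$ and the base of $\Qfr'$ are solid, which is exactly your split by $(S'', T'')$. Your subset parametrization of the interval below $\OrdBE$ and the explicit matching of edge/base positions of $\Pfr$ and $\Qfr$ with those of $\Pfr \circ_i \Qfr$ merely make precise the step the paper leaves as an assertion, so the two arguments coincide in substance.
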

\begin{proof}
    From the definition of the $\Hsf$-basis, we have
    \begin{equation}\begin{split}
        \label{equ:composition_Cli_M_basis_H_demo}
        \Hsf_\Pfr \circ_i \Hsf_\Qfr
        & =
        \sum_{\substack{
            \Pfr', \Qfr' \in \Cliques_\Mca \\
            \Pfr' \OrdBE \Pfr \\
            \Qfr' \OrdBE \Qfr
        }}
        \Pfr' \circ_i \Qfr' \\
        & =
        \sum_{\substack{
            \Pfr', \Qfr' \in \Cliques_\Mca \\
            \Pfr' \OrdBE \Pfr \\
            \Qfr' \OrdBE \Qfr \\
            \Pfr'_i \ne \Unit_\Mca \\
            \Qfr'_0 \ne \Unit_\Mca
        }}
        \Pfr' \circ_i \Qfr'
        +
        \sum_{\substack{
            \Pfr', \Qfr' \in \Cliques_\Mca \\
            \Pfr' \OrdBE \Pfr \\
            \Qfr' \OrdBE \Qfr \\
            \Pfr'_i \ne \Unit_\Mca \\
            \Qfr'_0 = \Unit_\Mca
        }}
        \Pfr' \circ_i \Qfr'
        +
        \sum_{\substack{
            \Pfr', \Qfr' \in \Cliques_\Mca \\
            \Pfr' \OrdBE \Pfr \\
            \Qfr' \OrdBE \Qfr \\
            \Pfr'_i = \Unit_\Mca \\
            \Qfr'_0 \ne \Unit_\Mca
        }}
        \Pfr' \circ_i \Qfr'
        +
        \sum_{\substack{
            \Pfr', \Qfr' \in \Cliques_\Mca \\
            \Pfr' \OrdBE \Pfr \\
            \Qfr' \OrdBE \Qfr \\
            \Pfr'_i = \Unit_\Mca \\
            \Qfr'_0 = \Unit_\Mca
        }}
        \Pfr' \circ_i \Qfr'.
    \end{split}\end{equation}
    Let $s_1$ (resp. $s_2$, $s_3$, $s_4$) be the first (resp. second,
    third, fourth) summand of the right-hand side
    of~\eqref{equ:composition_Cli_M_basis_H_demo}. There are four cases
    to explore depending on whether the $i$th edge of $\Pfr$ and the
    base of $\Qfr$ are solid or not. From the definition of the
    $\Hsf$-basis and of the partial order relation $\OrdBE$, we have
    that
    \begin{enumerate}[fullwidth,label=(\alph*)]
        \item when $\Pfr_i \ne \Unit_\Mca$ and $\Qfr_0 \ne \Unit_\Mca$,
        $s_1 = \Hsf_{\Pfr \circ_i \Qfr}$,
        $s_2 = \Hsf_{\Pfr \circ_i \Del_0(\Qfr)}$,
        $s_3 = \Hsf_{\Del_i(\Pfr) \circ_i \Qfr}$, and
        $s_4 = \Hsf_{\Del_i(\Pfr) \circ_i \Del_0(\Qfr)}$;
        \item when $\Pfr_i \ne \Unit_\Mca$ and $\Qfr_0 = \Unit_\Mca$,
        $s_1 = 0$, $s_2 = \Hsf_{\Pfr \circ_i \Qfr}$,
        $s_3 = 0$, and $s_4 = \Hsf_{\Del_i(\Pfr) \circ_i \Qfr}$;
        \item when $\Pfr_i = \Unit_\Mca$ and $\Qfr_0 \ne \Unit_\Mca$,
        $s_1 = 0$, $s_2 = 0$, $s_3 = \Hsf_{\Pfr \circ_i \Qfr}$, and
        $s_4 = \Hsf_{\Pfr \circ_i \Del_0(\Qfr)}$;
        \item and when $\Pfr_i = \Unit_\Mca$ and $\Qfr_0 = \Unit_\Mca$,
        $s_1 = 0$, $s_2 = 0$, $s_3 = 0$, and
        $s_4 = \Hsf_{\Pfr \circ_i \Qfr}$.
    \end{enumerate}
    By assembling these cases together, we obtain the stated result.
\end{proof}
\medbreak

\begin{Proposition} \label{prop:composition_Cli_M_basis_K}
    Let $\Mca$ be a unitary magma. The partial composition of $\Cli\Mca$ can be
    expressed in terms of the $\Ksf$-basis, for any $\Mca$-cliques $\Pfr$ and
    $\Qfr$ different from $\UnitClique$ and any valid integer $i$, by
    \begin{small}
    \begin{equation}
        \Ksf_\Pfr \circ_i \Ksf_\Qfr
        =
        \begin{cases}
            \Ksf_{\Pfr \circ_i \Qfr}
                & \mbox{if }
                \Pfr_i \Op \Qfr_0 = \Unit_\Mca, \\
            \Ksf_{\Pfr \circ_i \Qfr} +
            \Ksf_{\Del_i(\Pfr) \circ_i \Del_0(\Qfr)}
                & \mbox{otherwise}.
        \end{cases}
    \end{equation}
    \end{small}
\end{Proposition}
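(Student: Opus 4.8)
The plan is to expand $\Ksf_\Pfr \circ_i \Ksf_\Qfr$ in the fundamental basis, to describe the combinatorics of a partial composition $\Pfr' \circ_i \Qfr'$ of two fundamental elements, and then to re-sum. Write $n := |\Pfr|$ and $m := |\Qfr|$; since $\Pfr, \Qfr \ne \UnitClique$ we have $n, m \geq 2$, so that $(i, i + m)$ is always a genuine diagonal of a clique of arity $n + m - 1$. From the definition of the $\Ksf$-basis and bilinearity of $\circ_i$,
\[
    \Ksf_\Pfr \circ_i \Ksf_\Qfr
    =
    \sum_{\substack{\Pfr' \OrdD \Pfr \\ \Qfr' \OrdD \Qfr}}
    (-1)^{\Hamming(\Pfr', \Pfr) + \Hamming(\Qfr', \Qfr)}\;
    \Pfr' \circ_i \Qfr'.
\]
The crucial first remark is that $\OrdD$ only alters diagonals, so $\Pfr'_i = \Pfr_i$ and $\Qfr'_0 = \Qfr_0$ for every $\Pfr' \OrdD \Pfr$ and $\Qfr' \OrdD \Qfr$; hence the label $\Pfr'_i \Op \Qfr'_0 = \Pfr_i \Op \Qfr_0 =: \delta$ carried by the diagonal $(i, i + m)$ of $\Pfr' \circ_i \Qfr'$ is the same for all terms of this sum.

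Next I would read off from~\eqref{equ:partial_composition_Cli_M} the structure of $\Rfr' := \Pfr' \circ_i \Qfr'$: its base and its edges coincide with those of $\Pfr \circ_i \Qfr$ (the label of the $i$th edge of $\Pfr'$ and that of the base of $\Qfr'$ being absorbed into the diagonal $(i, i + m)$ and contributing nowhere else); the diagonal $(i, i + m)$ of $\Rfr'$ is labeled $\delta$; and each remaining diagonal of $\Rfr'$ either comes, in a label-preserving way, from a diagonal of $\Pfr'$ or from a diagonal of $\Qfr'$, or is a ``straddling'' diagonal labeled $\Unit_\Mca$. Thus the solid diagonals of $\Rfr'$ are exactly the solid diagonals of $\Pfr'$, those of $\Qfr'$, and, when $\delta \ne \Unit_\Mca$, the diagonal $(i, i + m)$ itself. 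In particular $(\Pfr', \Qfr') \mapsto \Pfr' \circ_i \Qfr'$ is injective, $\Hamming(\Pfr', \Pfr) + \Hamming(\Qfr', \Qfr) = \Hamming(\Pfr' \circ_i \Qfr', \Pfr \circ_i \Qfr)$, and its image is $\{\Rfr' : \Rfr' \OrdD \Pfr \circ_i \Qfr\}$ when $\delta = \Unit_\Mca$, and $\{\Rfr' : \Rfr' \OrdD \Pfr \circ_i \Qfr \text{ and the diagonal } (i, i + m) \text{ of } \Rfr' \text{ is solid}\}$ when $\delta \ne \Unit_\Mca$.

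Re-summing then finishes the proof. If $\delta = \Unit_\Mca$, the previous paragraph identifies the right-hand side with $\sum_{\Rfr' \OrdD \Pfr \circ_i \Qfr} (-1)^{\Hamming(\Rfr', \Pfr \circ_i \Qfr)} \Rfr' = \Ksf_{\Pfr \circ_i \Qfr}$. If $\delta \ne \Unit_\Mca$, I would split the defining sum of $\Ksf_{\Pfr \circ_i \Qfr}$ according to whether the diagonal $(i, i + m)$ of the index clique $\Rfr'$ is solid: the ``solid'' part is exactly $\Ksf_\Pfr \circ_i \Ksf_\Qfr$ by the previous paragraph. For the ``non-solid'' part, one checks on bases, edges, and diagonals (using that $\Unit_\Mca$ is a unit) that the $\Mca$-clique obtained from $\Pfr \circ_i \Qfr$ by relabeling $(i, i + m)$ with $\Unit_\Mca$ is precisely $\Del_i(\Pfr) \circ_i \Del_0(\Qfr)$; hence the $\Rfr' \OrdD \Pfr \circ_i \Qfr$ with $(i, i + m)$ non-solid are exactly the $\Rfr' \OrdD \Del_i(\Pfr) \circ_i \Del_0(\Qfr)$, and for these $\Hamming(\Rfr', \Pfr \circ_i \Qfr) = 1 + \Hamming(\Rfr', \Del_i(\Pfr) \circ_i \Del_0(\Qfr))$, so this part equals $-\Ksf_{\Del_i(\Pfr) \circ_i \Del_0(\Qfr)}$. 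Rearranging yields $\Ksf_\Pfr \circ_i \Ksf_\Qfr = \Ksf_{\Pfr \circ_i \Qfr} + \Ksf_{\Del_i(\Pfr) \circ_i \Del_0(\Qfr)}$.

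The step demanding the most care is the structural analysis of $\Pfr' \circ_i \Qfr'$: extracting from~\eqref{equ:partial_composition_Cli_M} that the $i$th edge of $\Pfr'$ and the base of $\Qfr'$ are absorbed into the diagonal $(i, i + m)$ and appear nowhere else, that the residual diagonal-to-diagonal correspondence is label-preserving, and the identification of ``$\Pfr \circ_i \Qfr$ with $(i, i+m)$ trivialized'' with $\Del_i(\Pfr) \circ_i \Del_0(\Qfr)$. Once this dictionary is set up, the rest is a reindexing of sums, essentially parallel to (but simpler than) the proof of Proposition~\ref{prop:composition_Cli_M_basis_H}.
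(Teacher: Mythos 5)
Your proof is correct and follows essentially the same route as the paper's: expand in the fundamental basis, use that $\OrdD$ fixes edge and base labels to reindex the double sum over $(\Pfr', \Qfr')$ as a sum over $\Rfr \OrdD \Pfr \circ_i \Qfr$ whose glued diagonal carries $\Pfr_i \Op \Qfr_0$, and in the non-unit case split off the terms where that diagonal is trivialized to recover $-\Ksf_{\Del_i(\Pfr) \circ_i \Del_0(\Qfr)}$ via the shift $\Hamming(\Rfr, \Pfr \circ_i \Qfr) = 1 + \Hamming(\Rfr, \Del_i(\Pfr) \circ_i \Del_0(\Qfr))$. The only difference is that you make explicit the injectivity and Hamming-additivity of the reindexing, which the paper leaves implicit.
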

\begin{proof}
    Let $m$ be the arity of $\Qfr$. From the definition of the
    $\Ksf$-basis and of the partial order relation $\OrdD$, we have
    \begin{equation}\begin{split}
        \label{equ:composition_Cli_M_basis_K_demo}
        \Ksf_\Pfr \circ_i \Ksf_\Qfr
        & =
        \sum_{\substack{
            \Pfr', \Qfr' \in \Cliques_\Mca \\
            \Pfr' \OrdD \Pfr \\
            \Qfr' \OrdD \Qfr
        }}
        (-1)^{\Hamming(\Pfr', \Pfr) + \Hamming(\Qfr', \Qfr)}
        \Pfr' \circ_i \Qfr' \\
        & =
        \sum_{\substack{
            \Pfr', \Qfr' \in \Cliques_\Mca \\
            \Pfr' \circ_i \Qfr' \OrdD \Pfr \circ_i \Qfr \\
            \Pfr'_i = \Pfr_i \\
            \Qfr'_0 = \Qfr_0
        }}
        (-1)^{\Hamming(\Pfr', \Pfr) + \Hamming(\Qfr', \Qfr)}
        \Pfr' \circ_i \Qfr' \\
        & =
        \sum_{\substack{
            \Rfr \in \Cliques_\Mca \\
            \Rfr \OrdD \Pfr \circ_i \Qfr \\
            \Rfr(i, i + m - 1) = \Pfr_i \Op \Qfr_0
        }}
        (-1)^{\Hamming(\Rfr, \Pfr \circ_i \Qfr)}
        \Rfr.
    \end{split}\end{equation}
    When $\Pfr_i \Op \Qfr_0 = \Unit_\Mca$,
    \eqref{equ:composition_Cli_M_basis_K_demo} is equal to
    $\Ksf_{\Pfr \circ_i \Qfr}$. Otherwise, when
    $\Pfr_i \Op \Qfr_0 \ne \Unit_\Mca$, we have
    \begin{equation}\begin{split}
        \sum_{\substack{
            \Rfr \in \Cliques_\Mca \\
            \Rfr \OrdD \Pfr \circ_i \Qfr \\
            \Rfr(i, i + m - 1) = \Pfr_i \Op \Qfr_0
        }}
        (-1)^{\Hamming(\Rfr, \Pfr \circ_i \Qfr)}
        \Rfr
        & =
         \sum_{\substack{
            \Rfr \in \Cliques_\Mca \\
            \Rfr \OrdD \Pfr \circ_i \Qfr
        }}
        (-1)^{\Hamming(\Rfr, \Pfr \circ_i \Qfr)}
        \Rfr
        \enspace -
        \sum_{\substack{
            \Rfr \in \Cliques_\Mca \\
            \Rfr \OrdD \Pfr \circ_i \Qfr \\
            \Rfr(i, i + m - 1) \ne \Pfr_i \Op \Qfr_0
        }}
        (-1)^{\Hamming(\Rfr, \Pfr \circ_i \Qfr)}
        \Rfr \\
        & =
        \Ksf_{\Pfr \circ_i \Qfr}
        \enspace -
        \sum_{\substack{
            \Rfr \in \Cliques_\Mca \\
            \Rfr \OrdD \Del_i(\Pfr) \circ_i \Del_0(\Qfr) \\
        }}
        (-1)^{\Hamming(\Rfr, \Pfr \circ_i \Qfr)}
        \Rfr \\
        & =
        \Ksf_{\Pfr \circ_i \Qfr}
        \enspace -
        \sum_{\substack{
            \Rfr \in \Cliques_\Mca \\
            \Rfr \OrdD \Del_i(\Pfr) \circ_i \Del_0(\Qfr) \\
        }}
        (-1)^{1 + \Hamming(\Rfr, \Del_i(\Pfr) \circ_i \Del_0(\Qfr))}
        \Rfr \\
        & =
        \Ksf_{\Pfr \circ_i \Qfr}
        + \Ksf_{\Del_i(\Pfr) \circ_i \Del_0(\Qfr)}.
    \end{split}\end{equation}
    This proves the claimed formula for the partial composition of
    $\Cli\Mca$ over the $\Ksf$-basis.
\end{proof}
\medbreak

For instance, in $\Cli\Z$,
\begin{subequations}
\begin{equation}
    \Hsf_{
    \begin{tikzpicture}[scale=0.4,Centering]
        \node[CliquePoint](1)at(-0.87,-0.50){};
        \node[CliquePoint](2)at(-0.00,1.00){};
        \node[CliquePoint](3)at(0.87,-0.50){};
        \draw[CliqueEmptyEdge](1)edge[]node[]{}(2);
        \draw[CliqueEmptyEdge](1)edge[]node[]{}(3);
        \draw[CliqueEdge](2)edge[]node[CliqueLabel]
            {\begin{math}1\end{math}}(3);
    \end{tikzpicture}}
    \circ_2
    \Hsf_{
    \begin{tikzpicture}[scale=0.4,Centering]
        \node[CliquePoint](1)at(-0.87,-0.50){};
        \node[CliquePoint](2)at(-0.00,1.00){};
        \node[CliquePoint](3)at(0.87,-0.50){};
        \draw[CliqueEmptyEdge](1)edge[]node[]{}(2);
        \draw[CliqueEdge](1)edge[]node[CliqueLabel]
            {\begin{math}1\end{math}}(3);
        \draw[CliqueEmptyEdge](2)edge[]node[]{}(3);
    \end{tikzpicture}}
    =
    \Hsf_{
    \begin{tikzpicture}[scale=0.5,Centering]
        \node[CliquePoint](1)at(-0.71,-0.71){};
        \node[CliquePoint](2)at(-0.71,0.71){};
        \node[CliquePoint](3)at(0.71,0.71){};
        \node[CliquePoint](4)at(0.71,-0.71){};
        \draw[CliqueEmptyEdge](1)edge[]node[]{}(2);
        \draw[CliqueEmptyEdge](1)edge[]node[]{}(4);
        \draw[CliqueEmptyEdge](2)edge[]node[]{}(3);
        \draw[CliqueEmptyEdge](3)edge[]node[]{}(4);
    \end{tikzpicture}}
    +
    2\;
    \Hsf_{
    \begin{tikzpicture}[scale=0.5,Centering]
        \node[CliquePoint](1)at(-0.71,-0.71){};
        \node[CliquePoint](2)at(-0.71,0.71){};
        \node[CliquePoint](3)at(0.71,0.71){};
        \node[CliquePoint](4)at(0.71,-0.71){};
        \draw[CliqueEmptyEdge](1)edge[]node[]{}(2);
        \draw[CliqueEmptyEdge](1)edge[]node[]{}(4);
        \draw[CliqueEmptyEdge](2)edge[]node[]{}(3);
        \draw[CliqueEdge](2)edge[]node[CliqueLabel]
            {\begin{math}1\end{math}}(4);
        \draw[CliqueEmptyEdge](3)edge[]node[]{}(4);
    \end{tikzpicture}}
    +
    \Hsf_{
    \begin{tikzpicture}[scale=0.5,Centering]
        \node[CliquePoint](1)at(-0.71,-0.71){};
        \node[CliquePoint](2)at(-0.71,0.71){};
        \node[CliquePoint](3)at(0.71,0.71){};
        \node[CliquePoint](4)at(0.71,-0.71){};
        \draw[CliqueEmptyEdge](1)edge[]node[]{}(2);
        \draw[CliqueEmptyEdge](1)edge[]node[]{}(4);
        \draw[CliqueEmptyEdge](2)edge[]node[]{}(3);
        \draw[CliqueEdge](2)edge[]node[CliqueLabel]
            {\begin{math}2\end{math}}(4);
        \draw[CliqueEmptyEdge](3)edge[]node[]{}(4);
    \end{tikzpicture}}\,,
\end{equation}
\begin{equation}
    \Ksf_{
    \begin{tikzpicture}[scale=0.4,Centering]
        \node[CliquePoint](1)at(-0.87,-0.50){};
        \node[CliquePoint](2)at(-0.00,1.00){};
        \node[CliquePoint](3)at(0.87,-0.50){};
        \draw[CliqueEmptyEdge](1)edge[]node[]{}(2);
        \draw[CliqueEmptyEdge](1)edge[]node[]{}(3);
        \draw[CliqueEdge](2)edge[]node[CliqueLabel]
            {\begin{math}1\end{math}}(3);
    \end{tikzpicture}}
    \circ_2
    \Ksf_{
    \begin{tikzpicture}[scale=0.4,Centering]
        \node[CliquePoint](1)at(-0.87,-0.50){};
        \node[CliquePoint](2)at(-0.00,1.00){};
        \node[CliquePoint](3)at(0.87,-0.50){};
        \draw[CliqueEmptyEdge](1)edge[]node[]{}(2);
        \draw[CliqueEdge](1)edge[]node[CliqueLabel]
            {\begin{math}1\end{math}}(3);
        \draw[CliqueEmptyEdge](2)edge[]node[]{}(3);
    \end{tikzpicture}}
    =
    \Ksf_{
    \begin{tikzpicture}[scale=0.5,Centering]
        \node[CliquePoint](1)at(-0.71,-0.71){};
        \node[CliquePoint](2)at(-0.71,0.71){};
        \node[CliquePoint](3)at(0.71,0.71){};
        \node[CliquePoint](4)at(0.71,-0.71){};
        \draw[CliqueEmptyEdge](1)edge[]node[]{}(2);
        \draw[CliqueEmptyEdge](1)edge[]node[]{}(4);
        \draw[CliqueEmptyEdge](2)edge[]node[]{}(3);
        \draw[CliqueEmptyEdge](3)edge[]node[]{}(4);
    \end{tikzpicture}}
    +
    \Ksf_{
    \begin{tikzpicture}[scale=0.5,Centering]
        \node[CliquePoint](1)at(-0.71,-0.71){};
        \node[CliquePoint](2)at(-0.71,0.71){};
        \node[CliquePoint](3)at(0.71,0.71){};
        \node[CliquePoint](4)at(0.71,-0.71){};
        \draw[CliqueEmptyEdge](1)edge[]node[]{}(2);
        \draw[CliqueEmptyEdge](1)edge[]node[]{}(4);
        \draw[CliqueEmptyEdge](2)edge[]node[]{}(3);
        \draw[CliqueEdge](2)edge[]node[CliqueLabel]
            {\begin{math}2\end{math}}(4);
        \draw[CliqueEmptyEdge](3)edge[]node[]{}(4);
    \end{tikzpicture}}\,,
\end{equation}
\begin{equation}
    \Hsf_{
    \begin{tikzpicture}[scale=0.5,Centering]
        \node[CliquePoint](1)at(-0.71,-0.71){};
        \node[CliquePoint](2)at(-0.71,0.71){};
        \node[CliquePoint](3)at(0.71,0.71){};
        \node[CliquePoint](4)at(0.71,-0.71){};
        \draw[CliqueEmptyEdge](1)edge[]node[]{}(2);
        \draw[CliqueEdge](1)edge[]node[CliqueLabel]
            {\begin{math}2\end{math}}(3);
        \draw[CliqueEmptyEdge](1)edge[]node[]{}(4);
        \draw[CliqueEmptyEdge](2)edge[]node[]{}(3);
        \draw[CliqueEdge](3)edge[]node[CliqueLabel]
            {\begin{math}1\end{math}}(4);
    \end{tikzpicture}}
    \circ_3
    \Hsf_{
    \begin{tikzpicture}[scale=0.4,Centering]
        \node[CliquePoint](1)at(-0.87,-0.50){};
        \node[CliquePoint](2)at(-0.00,1.00){};
        \node[CliquePoint](3)at(0.87,-0.50){};
        \draw[CliqueEdge](1)edge[]node[CliqueLabel]
            {\begin{math}1\end{math}}(2);
        \draw[CliqueEdge](1)edge[]node[CliqueLabel]
            {\begin{math}2\end{math}}(3);
        \draw[CliqueEdge](2)edge[]node[CliqueLabel]
            {\begin{math}2\end{math}}(3);
    \end{tikzpicture}}
    =
    \Hsf_{
    \begin{tikzpicture}[scale=0.6,Centering]
        \node[CliquePoint](1)at(-0.59,-0.81){};
        \node[CliquePoint](2)at(-0.95,0.31){};
        \node[CliquePoint](3)at(-0.00,1.00){};
        \node[CliquePoint](4)at(0.95,0.31){};
        \node[CliquePoint](5)at(0.59,-0.81){};
        \draw[CliqueEmptyEdge](1)edge[]node[]{}(2);
        \draw[CliqueEdge](1)edge[bend right=30]node[CliqueLabel]
            {\begin{math}2\end{math}}(3);
        \draw[CliqueEmptyEdge](1)edge[]node[]{}(5);
        \draw[CliqueEmptyEdge](2)edge[]node[]{}(3);
        \draw[CliqueEdge](3)edge[]node[CliqueLabel]
            {\begin{math}1\end{math}}(4);
        \draw[CliqueEdge](4)edge[]node[CliqueLabel]
            {\begin{math}2\end{math}}(5);
    \end{tikzpicture}}
    +
    \Hsf_{
    \begin{tikzpicture}[scale=0.6,Centering]
        \node[CliquePoint](1)at(-0.59,-0.81){};
        \node[CliquePoint](2)at(-0.95,0.31){};
        \node[CliquePoint](3)at(-0.00,1.00){};
        \node[CliquePoint](4)at(0.95,0.31){};
        \node[CliquePoint](5)at(0.59,-0.81){};
        \draw[CliqueEmptyEdge](1)edge[]node[]{}(2);
        \draw[CliqueEdge](1)edge[]node[CliqueLabel]
            {\begin{math}2\end{math}}(3);
        \draw[CliqueEmptyEdge](1)edge[]node[]{}(5);
        \draw[CliqueEmptyEdge](2)edge[]node[]{}(3);
        \draw[CliqueEdge](3)edge[]node[CliqueLabel]
            {\begin{math}1\end{math}}(4);
        \draw[CliqueEdge](3)edge[]node[CliqueLabel]
            {\begin{math}1\end{math}}(5);
        \draw[CliqueEdge](4)edge[]node[CliqueLabel]
            {\begin{math}2\end{math}}(5);
    \end{tikzpicture}}
    +
    \Hsf_{
    \begin{tikzpicture}[scale=0.6,Centering]
        \node[CliquePoint](1)at(-0.59,-0.81){};
        \node[CliquePoint](2)at(-0.95,0.31){};
        \node[CliquePoint](3)at(-0.00,1.00){};
        \node[CliquePoint](4)at(0.95,0.31){};
        \node[CliquePoint](5)at(0.59,-0.81){};
        \draw[CliqueEmptyEdge](1)edge[]node[]{}(2);
        \draw[CliqueEdge](1)edge[]node[CliqueLabel]
            {\begin{math}2\end{math}}(3);
        \draw[CliqueEmptyEdge](1)edge[]node[]{}(5);
        \draw[CliqueEmptyEdge](2)edge[]node[]{}(3);
        \draw[CliqueEdge](3)edge[]node[CliqueLabel]
            {\begin{math}1\end{math}}(4);
        \draw[CliqueEdge](3)edge[]node[CliqueLabel]
            {\begin{math}2\end{math}}(5);
        \draw[CliqueEdge](4)edge[]node[CliqueLabel]
            {\begin{math}2\end{math}}(5);
    \end{tikzpicture}}
    +
    \Hsf_{
    \begin{tikzpicture}[scale=0.6,Centering]
        \node[CliquePoint](1)at(-0.59,-0.81){};
        \node[CliquePoint](2)at(-0.95,0.31){};
        \node[CliquePoint](3)at(-0.00,1.00){};
        \node[CliquePoint](4)at(0.95,0.31){};
        \node[CliquePoint](5)at(0.59,-0.81){};
        \draw[CliqueEmptyEdge](1)edge[]node[]{}(2);
        \draw[CliqueEdge](1)edge[]node[CliqueLabel]
            {\begin{math}2\end{math}}(3);
        \draw[CliqueEmptyEdge](1)edge[]node[]{}(5);
        \draw[CliqueEmptyEdge](2)edge[]node[]{}(3);
        \draw[CliqueEdge](3)edge[]node[CliqueLabel]
            {\begin{math}1\end{math}}(4);
        \draw[CliqueEdge](3)edge[]node[CliqueLabel]
            {\begin{math}3\end{math}}(5);
        \draw[CliqueEdge](4)edge[]node[CliqueLabel]
            {\begin{math}2\end{math}}(5);
    \end{tikzpicture}}\,,
\end{equation}
\begin{equation}
    \Ksf_{
    \begin{tikzpicture}[scale=0.5,Centering]
        \node[CliquePoint](1)at(-0.71,-0.71){};
        \node[CliquePoint](2)at(-0.71,0.71){};
        \node[CliquePoint](3)at(0.71,0.71){};
        \node[CliquePoint](4)at(0.71,-0.71){};
        \draw[CliqueEmptyEdge](1)edge[]node[]{}(2);
        \draw[CliqueEdge](1)edge[]node[CliqueLabel]
            {\begin{math}2\end{math}}(3);
        \draw[CliqueEmptyEdge](1)edge[]node[]{}(4);
        \draw[CliqueEmptyEdge](2)edge[]node[]{}(3);
        \draw[CliqueEdge](3)edge[]node[CliqueLabel]
            {\begin{math}1\end{math}}(4);
    \end{tikzpicture}}
    \circ_3
    \Ksf_{
    \begin{tikzpicture}[scale=0.4,Centering]
        \node[CliquePoint](1)at(-0.87,-0.50){};
        \node[CliquePoint](2)at(-0.00,1.00){};
        \node[CliquePoint](3)at(0.87,-0.50){};
        \draw[CliqueEdge](1)edge[]node[CliqueLabel]
            {\begin{math}1\end{math}}(2);
        \draw[CliqueEdge](1)edge[]node[CliqueLabel]
            {\begin{math}2\end{math}}(3);
        \draw[CliqueEdge](2)edge[]node[CliqueLabel]
            {\begin{math}2\end{math}}(3);
    \end{tikzpicture}}
    =
    \Ksf_{
    \begin{tikzpicture}[scale=0.6,Centering]
        \node[CliquePoint](1)at(-0.59,-0.81){};
        \node[CliquePoint](2)at(-0.95,0.31){};
        \node[CliquePoint](3)at(-0.00,1.00){};
        \node[CliquePoint](4)at(0.95,0.31){};
        \node[CliquePoint](5)at(0.59,-0.81){};
        \draw[CliqueEmptyEdge](1)edge[]node[]{}(2);
        \draw[CliqueEdge](1)edge[bend right=30]node[CliqueLabel]
            {\begin{math}2\end{math}}(3);
        \draw[CliqueEmptyEdge](1)edge[]node[]{}(5);
        \draw[CliqueEmptyEdge](2)edge[]node[]{}(3);
        \draw[CliqueEdge](3)edge[]node[CliqueLabel]
            {\begin{math}1\end{math}}(4);
        \draw[CliqueEdge](4)edge[]node[CliqueLabel]
            {\begin{math}2\end{math}}(5);
    \end{tikzpicture}}
    +
    \Ksf_{
    \begin{tikzpicture}[scale=0.6,Centering]
        \node[CliquePoint](1)at(-0.59,-0.81){};
        \node[CliquePoint](2)at(-0.95,0.31){};
        \node[CliquePoint](3)at(-0.00,1.00){};
        \node[CliquePoint](4)at(0.95,0.31){};
        \node[CliquePoint](5)at(0.59,-0.81){};
        \draw[CliqueEmptyEdge](1)edge[]node[]{}(2);
        \draw[CliqueEdge](1)edge[]node[CliqueLabel]
            {\begin{math}2\end{math}}(3);
        \draw[CliqueEmptyEdge](1)edge[]node[]{}(5);
        \draw[CliqueEmptyEdge](2)edge[]node[]{}(3);
        \draw[CliqueEdge](3)edge[]node[CliqueLabel]
            {\begin{math}1\end{math}}(4);
        \draw[CliqueEdge](3)edge[]node[CliqueLabel]
            {\begin{math}3\end{math}}(5);
        \draw[CliqueEdge](4)edge[]node[CliqueLabel]
            {\begin{math}2\end{math}}(5);
    \end{tikzpicture}}\,,
\end{equation}
\begin{equation}
    \Hsf_{
    \begin{tikzpicture}[scale=0.6,Centering]
        \node[CliquePoint](1)at(-0.71,-0.71){};
        \node[CliquePoint](2)at(-0.71,0.71){};
        \node[CliquePoint](3)at(0.71,0.71){};
        \node[CliquePoint](4)at(0.71,-0.71){};
        \draw[CliqueEmptyEdge](1)edge[]node[]{}(2);
        \draw[CliqueEmptyEdge](1)edge[]node[]{}(4);
        \draw[CliqueEdge](2)edge[]node[CliqueLabel]
            {\begin{math}-1\end{math}}(3);
        \draw[CliqueEdge](2)edge[]node[CliqueLabel]
            {\begin{math}2\end{math}}(4);
        \draw[CliqueEdge](3)edge[]node[CliqueLabel]
            {\begin{math}1\end{math}}(4);
    \end{tikzpicture}}
    \circ_2
    \Hsf_{
    \begin{tikzpicture}[scale=0.6,Centering]
        \node[CliquePoint](1)at(-0.71,-0.71){};
        \node[CliquePoint](2)at(-0.71,0.71){};
        \node[CliquePoint](3)at(0.71,0.71){};
        \node[CliquePoint](4)at(0.71,-0.71){};
        \draw[CliqueEmptyEdge](1)edge[]node[]{}(2);
        \draw[CliqueEdge](1)edge[]node[CliqueLabel]
            {\begin{math}-1\end{math}}(3);
        \draw[CliqueEdge](1)edge[]node[CliqueLabel]
            {\begin{math}1\end{math}}(4);
        \draw[CliqueEdge](2)edge[]node[CliqueLabel]
            {\begin{math}1\end{math}}(3);
        \draw[CliqueEmptyEdge](3)edge[]node[]{}(4);
    \end{tikzpicture}}
    =
    \Hsf_{
    \begin{tikzpicture}[scale=0.8,Centering]
        \node[CliquePoint](1)at(-0.50,-0.87){};
        \node[CliquePoint](2)at(-1.00,-0.00){};
        \node[CliquePoint](3)at(-0.50,0.87){};
        \node[CliquePoint](4)at(0.50,0.87){};
        \node[CliquePoint](5)at(1.00,0.00){};
        \node[CliquePoint](6)at(0.50,-0.87){};
        \draw[CliqueEmptyEdge](1)edge[]node[]{}(2);
        \draw[CliqueEmptyEdge](1)edge[]node[]{}(6);
        \draw[CliqueEmptyEdge](2)edge[]node[]{}(3);
        \draw[CliqueEdge](2)edge[]node[CliqueLabel]
            {\begin{math}-1\end{math}}(4);
        \draw[CliqueEdge](2)edge[]node[CliqueLabel]
            {\begin{math}-1\end{math}}(5);
        \draw[CliqueEdge](2)edge[]node[CliqueLabel]
            {\begin{math}2\end{math}}(6);
        \draw[CliqueEdge](3)edge[]node[CliqueLabel]
            {\begin{math}1\end{math}}(4);
        \draw[CliqueEmptyEdge](4)edge[]node[]{}(5);
        \draw[CliqueEdge](5)edge[]node[CliqueLabel]
            {\begin{math}1\end{math}}(6);
    \end{tikzpicture}}
     + 2 \;
    \Hsf_{
    \begin{tikzpicture}[scale=0.8,Centering]
        \node[CliquePoint](1)at(-0.50,-0.87){};
        \node[CliquePoint](2)at(-1.00,-0.00){};
        \node[CliquePoint](3)at(-0.50,0.87){};
        \node[CliquePoint](4)at(0.50,0.87){};
        \node[CliquePoint](5)at(1.00,0.00){};
        \node[CliquePoint](6)at(0.50,-0.87){};
        \draw[CliqueEmptyEdge](1)edge[]node[]{}(2);
        \draw[CliqueEmptyEdge](1)edge[]node[]{}(6);
        \draw[CliqueEmptyEdge](2)edge[]node[]{}(3);
        \draw[CliqueEdge](2)edge[bend right=30]node[CliqueLabel]
            {\begin{math}-1\end{math}}(4);
        \draw[CliqueEdge](2)edge[bend left=30]node[CliqueLabel]
            {\begin{math}2\end{math}}(6);
        \draw[CliqueEdge](3)edge[]node[CliqueLabel]
            {\begin{math}1\end{math}}(4);
        \draw[CliqueEmptyEdge](4)edge[]node[]{}(5);
        \draw[CliqueEdge](5)edge[]node[CliqueLabel]
            {\begin{math}1\end{math}}(6);
    \end{tikzpicture}}
     +
    \Hsf_{
    \begin{tikzpicture}[scale=0.8,Centering]
        \node[CliquePoint](1)at(-0.50,-0.87){};
        \node[CliquePoint](2)at(-1.00,-0.00){};
        \node[CliquePoint](3)at(-0.50,0.87){};
        \node[CliquePoint](4)at(0.50,0.87){};
        \node[CliquePoint](5)at(1.00,0.00){};
        \node[CliquePoint](6)at(0.50,-0.87){};
        \draw[CliqueEmptyEdge](1)edge[]node[]{}(2);
        \draw[CliqueEmptyEdge](1)edge[]node[]{}(6);
        \draw[CliqueEmptyEdge](2)edge[]node[]{}(3);
        \draw[CliqueEdge](2)edge[]node[CliqueLabel]
            {\begin{math}-1\end{math}}(4);
        \draw[CliqueEdge](2)edge[]node[CliqueLabel]
            {\begin{math}1\end{math}}(5);
        \draw[CliqueEdge](2)edge[]node[CliqueLabel]
            {\begin{math}2\end{math}}(6);
        \draw[CliqueEdge](3)edge[]node[CliqueLabel]
            {\begin{math}1\end{math}}(4);
        \draw[CliqueEmptyEdge](4)edge[]node[]{}(5);
        \draw[CliqueEdge](5)edge[]node[CliqueLabel]
            {\begin{math}1\end{math}}(6);
    \end{tikzpicture}}\,,
\end{equation}
\begin{equation}
    \Ksf_{
    \begin{tikzpicture}[scale=0.6,Centering]
        \node[CliquePoint](1)at(-0.71,-0.71){};
        \node[CliquePoint](2)at(-0.71,0.71){};
        \node[CliquePoint](3)at(0.71,0.71){};
        \node[CliquePoint](4)at(0.71,-0.71){};
        \draw[CliqueEmptyEdge](1)edge[]node[]{}(2);
        \draw[CliqueEmptyEdge](1)edge[]node[]{}(4);
        \draw[CliqueEdge](2)edge[]node[CliqueLabel]
            {\begin{math}-1\end{math}}(3);
        \draw[CliqueEdge](2)edge[]node[CliqueLabel]
            {\begin{math}2\end{math}}(4);
        \draw[CliqueEdge](3)edge[]node[CliqueLabel]
            {\begin{math}1\end{math}}(4);
    \end{tikzpicture}}
    \circ_2
    \Ksf_{
    \begin{tikzpicture}[scale=0.6,Centering]
        \node[CliquePoint](1)at(-0.71,-0.71){};
        \node[CliquePoint](2)at(-0.71,0.71){};
        \node[CliquePoint](3)at(0.71,0.71){};
        \node[CliquePoint](4)at(0.71,-0.71){};
        \draw[CliqueEmptyEdge](1)edge[]node[]{}(2);
        \draw[CliqueEdge](1)edge[]node[CliqueLabel]
            {\begin{math}-1\end{math}}(3);
        \draw[CliqueEdge](1)edge[]node[CliqueLabel]
            {\begin{math}1\end{math}}(4);
        \draw[CliqueEdge](2)edge[]node[CliqueLabel]
            {\begin{math}1\end{math}}(3);
        \draw[CliqueEmptyEdge](3)edge[]node[]{}(4);
    \end{tikzpicture}}
    =
    \Ksf_{
    \begin{tikzpicture}[scale=0.8,Centering]
        \node[CliquePoint](1)at(-0.50,-0.87){};
        \node[CliquePoint](2)at(-1.00,-0.00){};
        \node[CliquePoint](3)at(-0.50,0.87){};
        \node[CliquePoint](4)at(0.50,0.87){};
        \node[CliquePoint](5)at(1.00,0.00){};
        \node[CliquePoint](6)at(0.50,-0.87){};
        \draw[CliqueEmptyEdge](1)edge[]node[]{}(2);
        \draw[CliqueEmptyEdge](1)edge[]node[]{}(6);
        \draw[CliqueEmptyEdge](2)edge[]node[]{}(3);
        \draw[CliqueEdge](2)edge[bend right=30]node[CliqueLabel]
            {\begin{math}-1\end{math}}(4);
        \draw[CliqueEdge](2)edge[bend left=30]node[CliqueLabel]
            {\begin{math}2\end{math}}(6);
        \draw[CliqueEdge](3)edge[]node[CliqueLabel]
            {\begin{math}1\end{math}}(4);
        \draw[CliqueEmptyEdge](4)edge[]node[]{}(5);
        \draw[CliqueEdge](5)edge[]node[CliqueLabel]
            {\begin{math}1\end{math}}(6);
    \end{tikzpicture}}\,,
\end{equation}
\end{subequations}
and in $\Dbb_1$,
\begin{subequations}
\begin{equation}
    \Hsf_{
    \begin{tikzpicture}[scale=0.6,Centering]
        \node[CliquePoint](1)at(-0.71,-0.71){};
        \node[CliquePoint](2)at(-0.71,0.71){};
        \node[CliquePoint](3)at(0.71,0.71){};
        \node[CliquePoint](4)at(0.71,-0.71){};
        \draw[CliqueEmptyEdge](1)edge[]node[]{}(2);
        \draw[CliqueEmptyEdge](1)edge[]node[]{}(4);
        \draw[CliqueEdge](2)edge[]node[CliqueLabel]
            {\begin{math}0\end{math}}(3);
        \draw[CliqueEdge](2)edge[]node[CliqueLabel]
            {\begin{math}\Dtt_1\end{math}}(4);
        \draw[CliqueEdge](3)edge[]node[CliqueLabel]
            {\begin{math}0\end{math}}(4);
    \end{tikzpicture}}
    \circ_2
    \Hsf_{
    \begin{tikzpicture}[scale=0.6,Centering]
        \node[CliquePoint](1)at(-0.71,-0.71){};
        \node[CliquePoint](2)at(-0.71,0.71){};
        \node[CliquePoint](3)at(0.71,0.71){};
        \node[CliquePoint](4)at(0.71,-0.71){};
        \draw[CliqueEmptyEdge](1)edge[]node[]{}(2);
        \draw[CliqueEdge](1)edge[]node[CliqueLabel]
            {\begin{math}0\end{math}}(3);
        \draw[CliqueEdge](1)edge[]node[CliqueLabel]
            {\begin{math}0\end{math}}(4);
        \draw[CliqueEdge](2)edge[]node[CliqueLabel]
            {\begin{math}0\end{math}}(3);
        \draw[CliqueEmptyEdge](3)edge[]node[]{}(4);
    \end{tikzpicture}}
    =
    3 \;
    \Hsf_{
    \begin{tikzpicture}[scale=0.8,Centering]
        \node[CliquePoint](1)at(-0.50,-0.87){};
        \node[CliquePoint](2)at(-1.00,-0.00){};
        \node[CliquePoint](3)at(-0.50,0.87){};
        \node[CliquePoint](4)at(0.50,0.87){};
        \node[CliquePoint](5)at(1.00,0.00){};
        \node[CliquePoint](6)at(0.50,-0.87){};
        \draw[CliqueEmptyEdge](1)edge[]node[]{}(2);
        \draw[CliqueEmptyEdge](1)edge[]node[]{}(6);
        \draw[CliqueEmptyEdge](2)edge[]node[]{}(3);
        \draw[CliqueEdge](2)edge[]node[CliqueLabel]
            {\begin{math}0\end{math}}(4);
        \draw[CliqueEdge](2)edge[]node[CliqueLabel]
            {\begin{math}0\end{math}}(5);
        \draw[CliqueEdge](2)edge[]node[CliqueLabel]
            {\begin{math}\Dtt_1\end{math}}(6);
        \draw[CliqueEdge](3)edge[]node[CliqueLabel]
            {\begin{math}0\end{math}}(4);
        \draw[CliqueEmptyEdge](4)edge[]node[]{}(5);
        \draw[CliqueEdge](5)edge[]node[CliqueLabel]
            {\begin{math}0\end{math}}(6);
    \end{tikzpicture}}
    +
    \Hsf_{
    \begin{tikzpicture}[scale=0.8,Centering]
        \node[CliquePoint](1)at(-0.50,-0.87){};
        \node[CliquePoint](2)at(-1.00,-0.00){};
        \node[CliquePoint](3)at(-0.50,0.87){};
        \node[CliquePoint](4)at(0.50,0.87){};
        \node[CliquePoint](5)at(1.00,0.00){};
        \node[CliquePoint](6)at(0.50,-0.87){};
        \draw[CliqueEmptyEdge](1)edge[]node[]{}(2);
        \draw[CliqueEmptyEdge](1)edge[]node[]{}(6);
        \draw[CliqueEmptyEdge](2)edge[]node[]{}(3);
        \draw[CliqueEdge](2)edge[bend right=30]node[CliqueLabel]
            {\begin{math}0\end{math}}(4);
        \draw[CliqueEdge](2)edge[bend left=30]node[CliqueLabel]
            {\begin{math}\Dtt_1\end{math}}(6);
        \draw[CliqueEdge](3)edge[]node[CliqueLabel]
            {\begin{math}0\end{math}}(4);
        \draw[CliqueEmptyEdge](4)edge[]node[]{}(5);
        \draw[CliqueEdge](5)edge[]node[CliqueLabel]
            {\begin{math}0\end{math}}(6);
    \end{tikzpicture}}\,,
\end{equation}
\begin{equation}
    \Ksf_{
    \begin{tikzpicture}[scale=0.6,Centering]
        \node[CliquePoint](1)at(-0.71,-0.71){};
        \node[CliquePoint](2)at(-0.71,0.71){};
        \node[CliquePoint](3)at(0.71,0.71){};
        \node[CliquePoint](4)at(0.71,-0.71){};
        \draw[CliqueEmptyEdge](1)edge[]node[]{}(2);
        \draw[CliqueEmptyEdge](1)edge[]node[]{}(4);
        \draw[CliqueEdge](2)edge[]node[CliqueLabel]
            {\begin{math}0\end{math}}(3);
        \draw[CliqueEdge](2)edge[]node[CliqueLabel]
            {\begin{math}\Dtt_1\end{math}}(4);
        \draw[CliqueEdge](3)edge[]node[CliqueLabel]
            {\begin{math}0\end{math}}(4);
    \end{tikzpicture}}
    \circ_2
    \Ksf_{
    \begin{tikzpicture}[scale=0.6,Centering]
        \node[CliquePoint](1)at(-0.71,-0.71){};
        \node[CliquePoint](2)at(-0.71,0.71){};
        \node[CliquePoint](3)at(0.71,0.71){};
        \node[CliquePoint](4)at(0.71,-0.71){};
        \draw[CliqueEmptyEdge](1)edge[]node[]{}(2);
        \draw[CliqueEdge](1)edge[]node[CliqueLabel]
            {\begin{math}0\end{math}}(3);
        \draw[CliqueEdge](1)edge[]node[CliqueLabel]
            {\begin{math}0\end{math}}(4);
        \draw[CliqueEdge](2)edge[]node[CliqueLabel]
            {\begin{math}0\end{math}}(3);
        \draw[CliqueEmptyEdge](3)edge[]node[]{}(4);
    \end{tikzpicture}}
    =
    \Ksf_{
    \begin{tikzpicture}[scale=0.8,Centering]
        \node[CliquePoint](1)at(-0.50,-0.87){};
        \node[CliquePoint](2)at(-1.00,-0.00){};
        \node[CliquePoint](3)at(-0.50,0.87){};
        \node[CliquePoint](4)at(0.50,0.87){};
        \node[CliquePoint](5)at(1.00,0.00){};
        \node[CliquePoint](6)at(0.50,-0.87){};
        \draw[CliqueEmptyEdge](1)edge[]node[]{}(2);
        \draw[CliqueEmptyEdge](1)edge[]node[]{}(6);
        \draw[CliqueEmptyEdge](2)edge[]node[]{}(3);
        \draw[CliqueEdge](2)edge[]node[CliqueLabel]
            {\begin{math}0\end{math}}(4);
        \draw[CliqueEdge](2)edge[]node[CliqueLabel]
            {\begin{math}0\end{math}}(5);
        \draw[CliqueEdge](2)edge[]node[CliqueLabel]
            {\begin{math}\Dtt_1\end{math}}(6);
        \draw[CliqueEdge](3)edge[]node[CliqueLabel]
            {\begin{math}0\end{math}}(4);
        \draw[CliqueEmptyEdge](4)edge[]node[]{}(5);
        \draw[CliqueEdge](5)edge[]node[CliqueLabel]
            {\begin{math}0\end{math}}(6);
    \end{tikzpicture}}
    +
    \Ksf_{
    \begin{tikzpicture}[scale=0.8,Centering]
        \node[CliquePoint](1)at(-0.50,-0.87){};
        \node[CliquePoint](2)at(-1.00,-0.00){};
        \node[CliquePoint](3)at(-0.50,0.87){};
        \node[CliquePoint](4)at(0.50,0.87){};
        \node[CliquePoint](5)at(1.00,0.00){};
        \node[CliquePoint](6)at(0.50,-0.87){};
        \draw[CliqueEmptyEdge](1)edge[]node[]{}(2);
        \draw[CliqueEmptyEdge](1)edge[]node[]{}(6);
        \draw[CliqueEmptyEdge](2)edge[]node[]{}(3);
        \draw[CliqueEdge](2)edge[bend right=30]node[CliqueLabel]
            {\begin{math}0\end{math}}(4);
        \draw[CliqueEdge](2)edge[bend left=30]node[CliqueLabel]
            {\begin{math}\Dtt_1\end{math}}(6);
        \draw[CliqueEdge](3)edge[]node[CliqueLabel]
            {\begin{math}0\end{math}}(4);
        \draw[CliqueEmptyEdge](4)edge[]node[]{}(5);
        \draw[CliqueEdge](5)edge[]node[CliqueLabel]
            {\begin{math}0\end{math}}(6);
    \end{tikzpicture}}\,.
\end{equation}
\end{subequations}
\medbreak

\subsubsection{Rational functions} \label{subsubsec:rational_functions}
The graded vector space of all commutative rational functions
$\K(\Ubb)$, where $\Ubb$ is the infinite commutative alphabet
$\{u_1, u_2, \dots\}$, has the structure of an operad $\RatFct$
introduced by Loday~\cite{Lod10} and is defined as follows. Let
$\RatFct(n)$ be the subspace $\K(u_1, \dots, u_n)$ of $\K(\Ubb)$ and
\begin{equation}
    \RatFct := \bigoplus_{n \geq 1} \RatFct(n).
\end{equation}
Observe that since $\RatFct$ is a graded space, each rational function
has an arity. Hence, by setting $f_1(u_1) := 1$ and $f_2(u_1, u_2) := 1$,
$f_1$ is of arity $1$ while $f_2$ is of arity $2$, so that $f_1$ and
$f_2$ are considered as different rational functions. The partial
composition of two rational functions $f \in \RatFct(n)$ and
$g \in \RatFct(m)$ is defined by
\begin{equation} \label{equ:partial_composition_RatFct}
    f \circ_i g :=
    f\left(u_1, \dots, u_{i - 1}, u_i + \dots + u_{i + m - 1},
        u_{i + m}, \dots, u_{n + m - 1}\right)
    \;
    g\left(u_i, \dots, u_{i + m - 1}\right).
\end{equation}
The rational function $f$ of $\RatFct(1)$ defined by $f(u_1) := 1$ is
the unit of~$\RatFct$. As shown by Loday, this operad is (nontrivially)
isomorphic to the operad $\Mould$ introduced by Chapoton~\cite{Cha07}.
\medbreak

Let us assume that $\Mca$ is a \Def{$\Z$-graded unitary magma}, that
is a unitary magma such that there exists a unitary magma morphism
$\theta : \Mca \to \Z$. We say that $\theta$ is a \Def{rank function}
of $\Mca$. In this context, let
\begin{equation}
    \Frac_\theta : \Cli\Mca \to \RatFct
\end{equation}
be the linear map defined, for any $\Mca$-clique $\Pfr$, by
\begin{equation} \label{equ:definition_frac_clique}
    \Frac_\theta(\Pfr) :=
    \prod_{(x, y) \in \Arcs_\Pfr}
    \left(u_x + \dots + u_{y - 1}\right)^{\theta(\Pfr(x, y))}.
\end{equation}
For instance, by considering the unitary magma $\Z$ together with its
identity map $\Id$ as rank function, one has
\begin{equation}
    \Frac_\Id\left(
    \begin{tikzpicture}[scale=.85,Centering]
        \node[CliquePoint](1)at(-0.43,-0.90){};
        \node[CliquePoint](2)at(-0.97,-0.22){};
        \node[CliquePoint](3)at(-0.78,0.62){};
        \node[CliquePoint](4)at(-0.00,1.00){};
        \node[CliquePoint](5)at(0.78,0.62){};
        \node[CliquePoint](6)at(0.97,-0.22){};
        \node[CliquePoint](7)at(0.43,-0.90){};
        \draw[CliqueEdge](1)edge[]node[CliqueLabel]
            {\begin{math}-1\end{math}}(2);
        \draw[CliqueEdge](1)edge[bend left=30]node[CliqueLabel,near start]
            {\begin{math}2\end{math}}(5);
        \draw[CliqueEdge](1)edge[]node[CliqueLabel]
            {\begin{math}1\end{math}}(7);
        \draw[CliqueEmptyEdge](2)edge[]node[CliqueLabel]{}(3);
        \draw[CliqueEmptyEdge](3)edge[]node[CliqueLabel]{}(4);
        \draw[CliqueEdge](3)edge[bend left=30]node[CliqueLabel,near start]
            {\begin{math}-2\end{math}}(7);
        \draw[CliqueEdge](4)edge[]node[CliqueLabel]
            {\begin{math}3\end{math}}(5);
        \draw[CliqueEmptyEdge](5)edge[]node[CliqueLabel]{}(6);
        \draw[CliqueEmptyEdge](6)edge[]node[CliqueLabel]{}(7);
        \draw[CliqueEdge](5)edge[]node[CliqueLabel]
            {\begin{math}-1\end{math}}(7);
    \end{tikzpicture}\right)
    =
    \frac{
        \left(u_1 + u_2 + u_3 + u_4\right)^2
        \left(u_1 + u_2 + u_3 + u_4 + u_5 + u_6\right)
        u_4^3
    }{u_1 \left(u_3 + u_4 + u_5 + u_6\right)^2 \left(u_5 + u_6\right)}.
\end{equation}
\medbreak

\begin{Theorem} \label{thm:rat_fct_cliques}
    Let $\Mca$ be a $\Z$-graded unitary magma and $\theta$ be a rank
    function of $\Mca$. The map $\Frac_\theta$ is an operad morphism
    from $\Cli\Mca$ to~$\RatFct$.
\end{Theorem}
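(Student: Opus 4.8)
The plan is to verify the three defining properties of an operad morphism: preservation of arities, preservation of the unit, and commutation with partial compositions. Since an $\Mca$-clique $\Pfr$ of arity $n$ has all its arcs $(x, y)$ satisfying $1 \leq x < y \leq n + 1$, the product \eqref{equ:definition_frac_clique} defining $\Frac_\theta(\Pfr)$ involves only the variables $u_1, \dots, u_n$ (and since each factor $u_x + \dots + u_{y - 1}$ is nonzero, negative exponents cause no problem), so $\Frac_\theta(\Pfr) \in \RatFct(n)$ and $\Frac_\theta$ respects arities. For the unit, the $\Mca$-clique $\UnitClique$ has a single arc, its base $(1, 2)$, labeled by $\Unit_\Mca$; as $\theta$ is a unitary magma morphism, $\theta(\Unit_\Mca) = 0$, hence $\Frac_\theta(\UnitClique) = u_1^0 = 1$, the unit of $\RatFct$.

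The real content is the identity $\Frac_\theta(\Pfr \circ_i \Qfr) = \Frac_\theta(\Pfr) \circ_i \Frac_\theta(\Qfr)$ for $\Mca$-cliques $\Pfr$ of arity $n$, $\Qfr$ of arity $m$, and $i \in [n]$. I would expand the left-hand side via \eqref{equ:definition_frac_clique} as a product indexed by $\Arcs_{\Pfr \circ_i \Qfr}$, and sort the arcs of $\Pfr \circ_i \Qfr$ according to the six cases of \eqref{equ:partial_composition_Cli_M}. The arcs falling in the last case are labeled $\Unit_\Mca$ and contribute the trivial factor $1$ (because $\theta(\Unit_\Mca) = 0$), so they may be discarded; the surviving ones are the images of the arcs of $\Pfr$ other than its $i$th edge (re-indexed by the shifts appearing in the first three cases of \eqref{equ:partial_composition_Cli_M}), the images of the arcs of $\Qfr$ other than its base (re-indexed by $(x', y') \mapsto (x' + i - 1, y' + i - 1)$), and the single \emph{merged} arc $(i, i + m)$, which carries the label $\Pfr_i \Op \Qfr_0$.

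On the right-hand side, I would apply the definition \eqref{equ:partial_composition_RatFct} to $f := \Frac_\theta(\Pfr)$ and $g := \Frac_\theta(\Qfr)$ and examine the effect of the two prescribed variable substitutions on a single factor $u_x + \dots + u_{y - 1}$ of the products defining $f$ and $g$. The decisive elementary fact is that the substitution $u_k \mapsto u_k$ for $k < i$, $u_i \mapsto u_i + \dots + u_{i + m - 1}$, and $u_k \mapsto u_{k + m - 1}$ for $k > i$, turns the contiguous block of indices $\{x, \dots, y - 1\}$ into the contiguous block obtained from $(x, y)$ by precisely the index shift that \eqref{equ:partial_composition_Cli_M} assigns to the corresponding arc of $\Pfr$: nothing changes when $y \leq i$, the block becomes $\{x + m - 1, \dots, y + m - 2\}$ when $x \geq i + 1$, and it becomes $\{x, \dots, y + m - 2\}$ (still contiguous) when $x \leq i < y$, matching the arc $(x, y + m - 1)$ of the second case. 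Likewise, the substitution $u_k \mapsto u_{k + i - 1}$ acting on the factors of $g$ realizes the shift $(x', y') \mapsto (x' + i - 1, y' + i - 1)$ on the arcs of $\Qfr$. It remains to handle the two factors not yet accounted for: the factor of $f$ coming from the $i$th edge $(i, i + 1)$ of $\Pfr$ becomes $(u_i + \dots + u_{i + m - 1})^{\theta(\Pfr_i)}$, and the factor of $g$ coming from the base $(1, m + 1)$ of $\Qfr$ becomes $(u_i + \dots + u_{i + m - 1})^{\theta(\Qfr_0)}$; since $\theta$ is a morphism to the additive magma $\Z$, their product is $(u_i + \dots + u_{i + m - 1})^{\theta(\Pfr_i \Op \Qfr_0)}$, which is exactly the contribution of the merged arc $(i, i + m)$ on the left-hand side. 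Matching all factors one-to-one gives the identity, and together with the first two points this shows that $\Frac_\theta$ is an operad morphism.

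The only genuinely delicate part is the index bookkeeping of the previous paragraph, namely checking that the three ranges in \eqref{equ:partial_composition_Cli_M} correspond exactly to the block $\{x, \dots, y - 1\}$ lying below, above, or straddling position $i$, and that in the straddling case the substituted block stays contiguous; the sole algebraic ingredient is additivity of $\theta$ together with $\theta(\Unit_\Mca) = 0$, used both to drop the $\Unit_\Mca$-labeled arcs and to recombine the two factors over the merged arc. All other verifications are formal comparisons of monomials in the $u_j$.
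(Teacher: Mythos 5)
Your proposal is correct and follows essentially the same route as the paper's proof: expand both sides over the arcs, track how the substitutions of \eqref{equ:partial_composition_RatFct} shift the index blocks, and use $\theta(\Pfr_i) + \theta(\Qfr_0) = \theta(\Pfr_i \Op \Qfr_0)$ together with $\theta(\Unit_\Mca) = 0$ to merge the two factors on the composed arc and discard the unit-labeled ones. If anything, you are slightly more explicit than the paper about the arcs of $\Pfr$ straddling position $i$, but the argument is the same.
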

\begin{proof}
    For the sake of brevity of notation, for all positive integers $x < y$, we denote
    by $\Ubb_{x, y}$ the sums $u_x + \dots + u_{y - 1}$. Let $\Pfr$
    and $\Qfr$ be two $\Mca$-cliques of respective arities $n$ and $m$,
    and $i \in [n]$ be an integer. From the definition of the partial
    composition of $\Cli\Mca$, the
    one (see~\eqref{equ:partial_composition_RatFct}) of $\RatFct$, and the
    fact that $\theta$ is a unitary magma morphism, we have
    \begin{equation}\begin{split}
        \Frac_\theta(\Pfr) \circ_i \Frac_\theta(\Qfr)
        & =
        \left(\Frac_\theta(\Pfr)\right)
            \left(u_1, \dots,u_{i - 1}, \Ubb_{i, i + m}, u_{i + m},
            \dots, u_{n + m - 1}\right)
        \;
        \left(\Frac_\theta(\Qfr)\right)
            \left(u_i, \dots, u_{i + m - 1}\right) \\
        & =
        \left(
        \prod_{1 \leq x < y \leq i - 1}
        \Ubb_{x, y}^{\theta(\Pfr(x, y))}
        \right)
        \left(
        \prod_{i + 1 \leq x < y \leq n + 1}
        \Ubb_{x + m - 1, y + m - 1}^{\theta(\Pfr(x, y))}
        \right)
        \Ubb_{i, i + m}^{\theta(\Pfr_i)} \\
        & \qquad
        \left(
        \prod_{1 \leq x < y \leq m + 1}
        \Ubb_{x + i - 1, y + i - 1}^{\theta(\Qfr(x, y))}
        \right) \\
        & =
        \left(
        \prod_{1 \leq x < y \leq i - 1}
        \Ubb_{x, y}^{\theta(\Pfr(x, y))}
        \right)
        \left(
        \prod_{i + 1 \leq x < y \leq n + 1}
        \Ubb_{x + m - 1, y + m - 1}^{\theta(\Pfr(x, y))}
        \right)
        \Ubb_{i, i + m}
            ^{\theta(\Pfr_i) + \theta(\Qfr_0)} \\
        & \qquad
        \left(
        \prod_{\substack{
            1 \leq x < y \leq m + 1 \\
            (x, y) \ne (1, m + 1)
        }}
        \Ubb_{x + i - 1, y + i - 1}^{\theta(\Qfr(x, y))}
        \right) \\
        & =
        \left(
        \prod_{1 \leq x < y \leq i - 1}
        \Ubb_{x, y}^{\theta(\Pfr(x, y))}
        \right)
        \left(
        \prod_{i + 1 \leq x < y \leq n + 1}
        \Ubb_{x + m - 1, y + m - 1}^{\theta(\Pfr(x, y))}
        \right)
        \Ubb_{i, i + m}
            ^{\theta(\Pfr_i \Op \Qfr_0)} \\
        & \qquad
        \left(
        \prod_{\substack{
            1 \leq x < y \leq m + 1 \\
            (x, y) \ne (1, m + 1)
        }}
        \Ubb_{x + i - 1, y + i - 1}^{\theta(\Qfr(x, y))}
        \right) \\
        & =
        \prod_{(x, y) \in \Arcs_{\Pfr \circ_i \Qfr}}
        \Ubb_{x, y}^{\theta((\Pfr \circ_i \Qfr)(x, y))} \\
        & =
        \Frac_\theta(\Pfr \circ_i \Qfr).
    \end{split}\end{equation}
    Moreover, since $\theta(\Unit_\Mca) = 0$, one has
    $\Frac_\theta\left(\UnitClique\right) = 1$, so that $\Frac_\theta$
    sends the unit of $\Cli\Mca$ to the unit of $\RatFct$. Therefore,
    $\Frac_\theta$ is an operad morphism.
\end{proof}
\medbreak

The operad morphism $\Frac_\theta$ is not injective. Indeed, by
considering the magma $\Z$ together with its identity map $\Id$ as rank
function, one has for instance
\begin{subequations}
\begin{equation} \label{equ:frac_not_injective_1}
    \Frac_\Id\left(
    \TriangleXEE{1}{}{}
    - \TriangleEXE{}{1}{}
    - \TriangleEEX{}{}{1}
    \right)
    = (u_1 + u_2) - u_1 - u_2
    = 0,
\end{equation}
\begin{equation} \label{equ:frac_not_injective_2}
    \Frac_\Id\left(
    \begin{tikzpicture}[scale=0.6,Centering]
        \node[CliquePoint](1)at(-0.71,-0.71){};
        \node[CliquePoint](2)at(-0.71,0.71){};
        \node[CliquePoint](3)at(0.71,0.71){};
        \node[CliquePoint](4)at(0.71,-0.71){};
        \draw[CliqueEmptyEdge](1)edge[]node[]{}(2);
        \draw[CliqueEmptyEdge](1)edge[]node[]{}(4);
        \draw[CliqueEdge](2)edge[]node[CliqueLabel]
            {\begin{math}-1\end{math}}(3);
        \draw[CliqueEdge](3)edge[]node[CliqueLabel]
            {\begin{math}-1\end{math}}(4);
    \end{tikzpicture}
    -
    \begin{tikzpicture}[scale=0.6,Centering]
        \node[CliquePoint](1)at(-0.71,-0.71){};
        \node[CliquePoint](2)at(-0.71,0.71){};
        \node[CliquePoint](3)at(0.71,0.71){};
        \node[CliquePoint](4)at(0.71,-0.71){};
        \draw[CliqueEmptyEdge](1)edge[]node[]{}(2);
        \draw[CliqueEmptyEdge](1)edge[]node[]{}(4);
        \draw[CliqueEmptyEdge](2)edge[]node[]{}(3);
        \draw[CliqueEdge](2)edge[]node[CliqueLabel]
            {\begin{math}-1\end{math}}(4);
        \draw[CliqueEdge](3)edge[]node[CliqueLabel]
            {\begin{math}-1\end{math}}(4);
    \end{tikzpicture}
    -
    \begin{tikzpicture}[scale=0.6,Centering]
        \node[CliquePoint](1)at(-0.71,-0.71){};
        \node[CliquePoint](2)at(-0.71,0.71){};
        \node[CliquePoint](3)at(0.71,0.71){};
        \node[CliquePoint](4)at(0.71,-0.71){};
        \draw[CliqueEmptyEdge](1)edge[]node[]{}(2);
        \draw[CliqueEmptyEdge](1)edge[]node[]{}(4);
        \draw[CliqueEmptyEdge](3)edge[]node[]{}(4);
        \draw[CliqueEdge](2)edge[]node[CliqueLabel]
            {\begin{math}-1\end{math}}(3);
        \draw[CliqueEdge](2)edge[]node[CliqueLabel]
            {\begin{math}-1\end{math}}(4);
    \end{tikzpicture}
    \right)
    =
    \frac{1}{u_2 u_3}
    -
    \frac{1}{(u_2 + u_3) u_3}
    -
    \frac{1}{u_2 (u_2 + u_3)}
    = 0.
\end{equation}
\end{subequations}
\medbreak

\begin{Proposition} \label{prop:rat_fct_cliques_map_Laurent_polynomials}
    The subspace of $\RatFct$ of all Laurent polynomials on~$\Ubb$ is
    the image by $\Frac_\Id : \Cli\Z \to \RatFct$ of the subspace of
    $\Cli\Z$ consisting in the linear span of all $\Z$-bubbles.
\end{Proposition}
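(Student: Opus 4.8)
The plan is to prove the two inclusions between the space $L$ of all Laurent polynomials on $\Ubb$ and the image $\Frac_\Id\left(\K \Angle{\Bubbles_\Z}\right)$, working arity by arity. Since $\Frac_\Id$ is linear and, by Theorem~\ref{thm:rat_fct_cliques}, an operad morphism (so it preserves arities), the image of the linear span of the $\Z$-bubbles of arity $n$ is exactly the linear span of the rational functions $\Frac_\Id(\Pfr)$ with $\Pfr \in \Bubbles_\Z$ of arity $n$. It therefore suffices to check, for each $n$, that this span coincides with the space of Laurent polynomials in $u_1, \dots, u_n$.

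The first step is to compute $\Frac_\Id$ on a $\Z$-bubble. If $\Pfr$ is a $\Z$-bubble of arity $n$, then every diagonal of $\Pfr$ is labeled by $\Unit_\Z = 0$, so among the arcs of $\Pfr$ only the $n$ edges $(i, i + 1)$, $i \in [n]$, and the base $(1, n + 1)$ may carry a nonzero label. Substituting this into~\eqref{equ:definition_frac_clique} with $\theta = \Id$, every factor attached to a diagonal equals $1$, and one obtains
\begin{equation*}
    \Frac_\Id(\Pfr)
    = \left(u_1 + \dots + u_n\right)^{\Pfr_0}
    \, u_1^{\Pfr_1} \cdots u_n^{\Pfr_n},
    \qquad \Pfr_0, \Pfr_1, \dots, \Pfr_n \in \Z.
\end{equation*}

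The inclusion $L \subseteq \Frac_\Id\left(\K \Angle{\Bubbles_\Z}\right)$ is then immediate: given a Laurent monomial $u_1^{b_1} \cdots u_n^{b_n}$ with $b_1, \dots, b_n \in \Z$, take $\Pfr$ to be the $\Z$-bubble of arity $n$ whose base is labeled by $\Unit_\Z$ and whose $i$th edge is labeled by $b_i$; by the formula above, $\Frac_\Id(\Pfr) = u_1^{b_1} \cdots u_n^{b_n}$. Since the Laurent monomials span $L$ and $\Frac_\Id$ is linear, $L$ is contained in the image.

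For the reverse inclusion I would revisit the generators $\left(u_1 + \dots + u_n\right)^{\Pfr_0} u_1^{\Pfr_1} \cdots u_n^{\Pfr_n}$ of the image: the factors $u_i^{\Pfr_i}$ are Laurent monomials, and the base contribution $\left(u_1 + \dots + u_n\right)^{\Pfr_0}$ is to be expanded, via the multinomial formula, into a Laurent polynomial, so that each generator, and hence the whole image, lies in $L$; together with the previous step this yields the claimed equality. I expect the real work to be concentrated precisely in this last point, namely in controlling the base contribution $\left(u_1 + \dots + u_n\right)^{\Pfr_0}$ and in identifying which bubbles, or which linear combinations of bubbles, keep the image inside the Laurent-polynomial subspace, whereas the edge labels and the inclusion $L \subseteq \Frac_\Id\left(\K \Angle{\Bubbles_\Z}\right)$ present no difficulty.
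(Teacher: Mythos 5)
Your computation of $\Frac_\Id$ on a $\Z$-bubble and your proof that every Laurent polynomial lies in the image are correct and essentially identical to the paper's argument: the paper also realizes $u_1^{\alpha_1} \cdots u_n^{\alpha_n}$ as the image of the bubble whose edges carry the exponents and whose other arcs carry $0$. One small discrepancy: the paper places this bubble in arity $n+1$ rather than $n$, and your strictly arity-by-arity formulation breaks at $n = 1$, since $\Cli\Z(1)$ is spanned by the unit clique alone, so the only element of $\RatFct(1)$ attainable from an arity-$1$ bubble is the constant $1$, not all of $\K\left[u_1, u_1^{-1}\right]$.

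The genuine gap is in your reverse inclusion. Expanding $\left(u_1 + \dots + u_n\right)^{\Pfr_0}$ by the multinomial formula only makes sense for $\Pfr_0 \geq 0$; when $\Pfr_0 < 0$ this factor is not a Laurent polynomial, and no passage to linear combinations can repair this, since already the single bubble of arity $2$ with base labeled $-1$ and both edges labeled $0$ is sent to $\left(u_1 + u_2\right)^{-1}$. So $\Frac_\Id\left(\K\Angle{\Bubbles_\Z}\right)$ strictly contains the space of Laurent polynomials, and the two-sided equality you set out to prove is false as literally stated. Your closing remark --- that the real work lies in ``identifying which bubbles keep the image inside the Laurent-polynomial subspace'' --- is pointing at a genuine obstruction rather than a missing computation: one would have to restrict to bubbles whose base is labeled by $0$ (equivalently, $\Pfr_0 \geq 0$) for your expansion step to go through. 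For what it is worth, the paper's own proof only establishes the inclusion you proved first and never addresses the converse, so the proposition is best read as the surjectivity claim that every Laurent polynomial is the image of a linear combination of $\Z$-bubbles; for that statement your argument (with the arity fixed as above) is complete.
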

\begin{proof}
    First, by Theorem~\ref{thm:rat_fct_cliques}, $\Frac_\Id$ is a
    well-defined operad morphism from $\Cli\Z$ to $\RatFct$. Let
    $u_1^{\alpha_1} \dots u_n^{\alpha_n}$ be a Laurent monomial, where
    $\alpha_1, \dots, \alpha_n \in \Z$ and $n \geq 1$. Consider also the
    $\Z$-clique $\Pfr_\alpha$ of arity $n + 1$ satisfying
    \begin{equation}
        \Pfr_\alpha(x, y) :=
        \begin{cases}
            \alpha_x & \mbox{if } y = x + 1, \\
            0 & \mbox{otherwise}.
        \end{cases}
    \end{equation}
    Observe that $\Pfr_\alpha$ is a $\Z$-bubble. By definition of
    $\Frac_\Id$, we have
    $\Frac_\Id(\Pfr_\alpha) = u_1^{\alpha_1} \dots u_n^{\alpha_n}$.
    Now, since a Laurent polynomial is a linear combination of some
    Laurent monomials, by the linearity of $\Frac_\Id$, the statement of
    the proposition follows.
\end{proof}
\medbreak


For any $n \geq 1$, let
\begin{equation}
    \Op : \Cli\Mca(n) \otimes \Cli\Mca(n) \to \Cli\Mca(n)
\end{equation}
be the product defined for all $\Mca$-cliques $\Pfr$ and $\Qfr$ by
\begin{equation}
    (\Pfr \Op \Qfr)(x, y) := \Pfr(x, y) \Op \Qfr(x, y),
\end{equation}
where $(x, y)$ is any arc such that $1 \leq x < y \leq n + 1$,
and then extended linearly.
For instance, in $\Cli\Z$,
\begin{equation}
    \begin{tikzpicture}[scale=0.8,Centering]
        \node[CliquePoint](1)at(-0.50,-0.87){};
        \node[CliquePoint](2)at(-1.00,-0.00){};
        \node[CliquePoint](3)at(-0.50,0.87){};
        \node[CliquePoint](4)at(0.50,0.87){};
        \node[CliquePoint](5)at(1.00,0.00){};
        \node[CliquePoint](6)at(0.50,-0.87){};
        \draw[CliqueEmptyEdge](1)edge[]node[]{}(2);
        \draw[CliqueEmptyEdge](1)edge[]node[]{}(6);
        \draw[CliqueEmptyEdge](2)edge[]node[]{}(3);
        \draw[CliqueEdge](2)edge[bend right=30]node[CliqueLabel]
            {\begin{math}2\end{math}}(4);
        \draw[CliqueEdge](2)edge[bend left=30]node[CliqueLabel]
            {\begin{math}-1\end{math}}(6);
        \draw[CliqueEdge](3)edge[]node[CliqueLabel]
            {\begin{math}1\end{math}}(4);
        \draw[CliqueEmptyEdge](4)edge[]node[]{}(5);
        \draw[CliqueEdge](5)edge[]node[CliqueLabel]
            {\begin{math}-2\end{math}}(6);
    \end{tikzpicture}
    \enspace \Op \enspace
    \begin{tikzpicture}[scale=0.8,Centering]
        \node[CliquePoint](1)at(-0.50,-0.87){};
        \node[CliquePoint](2)at(-1.00,-0.00){};
        \node[CliquePoint](3)at(-0.50,0.87){};
        \node[CliquePoint](4)at(0.50,0.87){};
        \node[CliquePoint](5)at(1.00,0.00){};
        \node[CliquePoint](6)at(0.50,-0.87){};
        \draw[CliqueEmptyEdge](1)edge[]node[]{}(2);
        \draw[CliqueEmptyEdge](1)edge[]node[]{}(6);
        \draw[CliqueEdge](2)edge[]node[CliqueLabel]
            {\begin{math}3\end{math}}(3);
        \draw[CliqueEdge](2)edge[bend right=30]node[CliqueLabel]
            {\begin{math}1\end{math}}(4);
        \draw[CliqueEdge](1)edge[bend left=30]node[CliqueLabel]
            {\begin{math}-1\end{math}}(5);
        \draw[CliqueEdge](3)edge[]node[CliqueLabel]
            {\begin{math}1\end{math}}(4);
        \draw[CliqueEmptyEdge](4)edge[]node[]{}(5);
        \draw[CliqueEdge](5)edge[]node[CliqueLabel]
            {\begin{math}2\end{math}}(6);
    \end{tikzpicture}
    \enspace = \enspace
    \begin{tikzpicture}[scale=0.8,Centering]
        \node[CliquePoint](1)at(-0.50,-0.87){};
        \node[CliquePoint](2)at(-1.00,-0.00){};
        \node[CliquePoint](3)at(-0.50,0.87){};
        \node[CliquePoint](4)at(0.50,0.87){};
        \node[CliquePoint](5)at(1.00,0.00){};
        \node[CliquePoint](6)at(0.50,-0.87){};
        \draw[CliqueEmptyEdge](1)edge[]node[]{}(2);
        \draw[CliqueEmptyEdge](1)edge[]node[]{}(6);
        \draw[CliqueEmptyEdge](5)edge[]node[]{}(6);
        \draw[CliqueEdge](2)edge[]node[CliqueLabel]
            {\begin{math}3\end{math}}(3);
        \draw[CliqueEdge](2)edge[bend right=30]node[CliqueLabel,near end]
            {\begin{math}3\end{math}}(4);
        \draw[CliqueEdge](2)edge[bend left=30]node[CliqueLabel,near end]
            {\begin{math}-1\end{math}}(6);
        \draw[CliqueEdge](3)edge[]node[CliqueLabel]
            {\begin{math}2\end{math}}(4);
        \draw[CliqueEmptyEdge](4)edge[]node[]{}(5);
        \draw[CliqueEdge](1)edge[bend left=30]node[CliqueLabel,near end]
            {\begin{math}-1\end{math}}(5);
    \end{tikzpicture}\,.
\end{equation}
\medbreak

\begin{Proposition} \label{prop:rat_fct_cliques_product}
    Let $\Mca$ be a $\Z$-graded unitary magma and $\theta$ be a rank
    function of~$\Mca$. For any homogeneous elements $f$ and $g$ of
    $\Cli\Mca$ of the same arity,
    \begin{equation} \label{equ:rat_fct_cliques_product}
        \Frac_\theta(f) \Frac_\theta(g) = \Frac_\theta(f \Op g).
    \end{equation}
\end{Proposition}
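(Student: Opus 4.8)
The plan is to reduce to the fundamental basis and then compute directly, using only the fact that $\theta$ is a unitary magma morphism into the additive magma $\Z$. Since both sides of \eqref{equ:rat_fct_cliques_product} are bilinear in $(f, g)$ (the product $\Op$ on $\Cli\Mca(n)$ is defined on $\Mca$-cliques and extended linearly, and $\Frac_\theta$ is linear by Theorem~\ref{thm:rat_fct_cliques}), it suffices to establish the identity when $f = \Pfr$ and $g = \Qfr$ are two $\Mca$-cliques of the same arity~$n$; this is exactly the situation in which $\Pfr \Op \Qfr$ is defined.

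First I would write, using the definition \eqref{equ:definition_frac_clique} of $\Frac_\theta$ and the fact that $\Pfr$, $\Qfr$, and $\Pfr \Op \Qfr$ all share the same arc set $\Arcs_\Pfr = \Arcs_\Qfr = \Arcs_{\Pfr \Op \Qfr}$,
\begin{equation}
    \Frac_\theta(\Pfr)\, \Frac_\theta(\Qfr)
    =
    \prod_{(x, y) \in \Arcs_\Pfr}
    \left(u_x + \dots + u_{y - 1}\right)^{\theta(\Pfr(x, y))}
    \left(u_x + \dots + u_{y - 1}\right)^{\theta(\Qfr(x, y))}
    =
    \prod_{(x, y) \in \Arcs_\Pfr}
    \left(u_x + \dots + u_{y - 1}\right)^{\theta(\Pfr(x, y)) + \theta(\Qfr(x, y))}.
\end{equation}
Then the key step is to invoke that $\theta : \Mca \to \Z$ is a unitary magma morphism, so that $\theta(\Pfr(x, y)) + \theta(\Qfr(x, y)) = \theta\!\left(\Pfr(x, y) \Op \Qfr(x, y)\right) = \theta\!\left((\Pfr \Op \Qfr)(x, y)\right)$ for every arc $(x, y)$, the last equality being the definition of $\Op$ on $\Cli\Mca(n)$. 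Substituting this exponent back gives $\prod_{(x, y) \in \Arcs_\Pfr} (u_x + \dots + u_{y - 1})^{\theta((\Pfr \Op \Qfr)(x, y))} = \Frac_\theta(\Pfr \Op \Qfr)$, which is the claim.

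I do not expect a genuine obstacle here: the statement is essentially the observation that $\Frac_\theta$ turns the arcwise magma product into the product of rational functions, which works because multiplication of the factors $(u_x + \dots + u_{y-1})$ adds exponents and $\theta$ converts the magma operation into addition in $\Z$. The only point worth a word of care is the hypothesis that $f$ and $g$ be homogeneous of the \emph{same} arity, which is needed merely so that $f \Op g$ is defined; once reduced to $\Mca$-cliques, the computation is the short display above together with the morphism property of $\theta$.
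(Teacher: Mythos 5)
Your proof is correct and follows essentially the same route as the paper's: reduce to $\Mca$-cliques by bilinearity, merge the two products over the common arc set by adding exponents, and use the fact that $\theta$ is a unitary magma morphism to rewrite $\theta(\Pfr(x,y)) + \theta(\Qfr(x,y))$ as $\theta((\Pfr \Op \Qfr)(x,y))$. No gaps.
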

\begin{proof}
    Let $\Pfr$ and $\Qfr$ be two $\Mca$-cliques of $\Cli\Mca$ of arity
    $n$. By definition of the operation $\Op$ on $\Cli\Mca(n)$ and the
    fact that $\theta$ is a unitary magma morphism,
    \begin{equation}\begin{split}
        \Frac_\theta(\Pfr) \Frac_\theta(\Qfr)
        & =
        \left(
        \prod_{(x, y) \in \Arcs_\Pfr}
        \left(u_x + \dots + u_{y - 1}\right)^{\theta(\Pfr(x, y))}
        \right)
        \left(
        \prod_{(x, y) \in \Arcs_\Qfr}
        \left(u_x + \dots + u_{y - 1}\right)^{\theta(\Qfr(x, y))}
        \right) \\
        & =
        \prod_{1 \leq x < y \leq n + 1}
        \left(u_x + \dots + u_{y - 1}\right)
            ^{\theta(\Pfr(x, y)) + \theta(\Qfr(x, y))} \\
        & =
        \prod_{1 \leq x < y \leq n + 1}
        \left(u_x + \dots + u_{y - 1}\right)
            ^{\theta(\Pfr(x, y) * \Qfr(x, y))} \\
        & = \Frac_\theta(\Pfr \Op \Qfr).
    \end{split}\end{equation}
    By the linearity of $\Frac_\theta$ and of $\Op$,
    \eqref{equ:rat_fct_cliques_product} follows.
\end{proof}
\medbreak

\begin{Proposition} \label{prop:rat_fct_cliques_inverse}
    Let $\Pfr$ be an $\Mca$-clique of $\Cli\Z$. Then,
    \begin{equation}
        \frac{1}{\Frac_\Id(\Pfr)} = \Frac_\Id((\Cli \eta)(\Pfr)),
    \end{equation}
    where $\eta : \Z \to \Z$ is the unitary magma morphism defined by
    $\eta(x) := -x$ for all $x \in \Z$.
\end{Proposition}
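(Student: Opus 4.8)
The plan is to unwind the definition of $\Frac_\Id$ directly. First I would observe that $\eta : \Z \to \Z$, $x \mapsto -x$, is indeed a unitary magma morphism: it sends $0$ to $0$ and satisfies $\eta(x + y) = -(x + y) = (-x) + (-y) = \eta(x) + \eta(y)$. Hence, by Theorem~\ref{thm:clique_construction}, $\Cli\eta$ is a well-defined operad endomorphism of $\Cli\Z$, and by~\eqref{equ:morphism_Cli_M}, for any $\Z$-clique $\Pfr$ of arity $n$ and any arc $(x, y)$ with $1 \leq x < y \leq n + 1$, one has $((\Cli\eta)(\Pfr))(x, y) = \eta(\Pfr(x, y)) = -\Pfr(x, y)$. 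In particular $(\Cli\eta)(\Pfr)$ has the same size as $\Pfr$ and therefore $\Arcs_{(\Cli\eta)(\Pfr)} = \Arcs_\Pfr$.

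Next I would simply substitute into~\eqref{equ:definition_frac_clique} with $\theta := \Id$:
\begin{equation}
    \Frac_\Id((\Cli\eta)(\Pfr))
    =
    \prod_{(x, y) \in \Arcs_\Pfr}
    \left(u_x + \dots + u_{y - 1}\right)^{((\Cli\eta)(\Pfr))(x, y)}
    =
    \prod_{(x, y) \in \Arcs_\Pfr}
    \left(u_x + \dots + u_{y - 1}\right)^{-\Pfr(x, y)},
\end{equation}
and then factor out the exponent $-1$ to recognize this last product as the multiplicative inverse of $\prod_{(x, y) \in \Arcs_\Pfr} \left(u_x + \dots + u_{y - 1}\right)^{\Pfr(x, y)} = \Frac_\Id(\Pfr)$. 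This yields $\Frac_\Id((\Cli\eta)(\Pfr)) = 1/\Frac_\Id(\Pfr)$, which is the claim. (One may note in passing that this is consistent with Proposition~\ref{prop:rat_fct_cliques_product}, since $\Pfr \Op (\Cli\eta)(\Pfr)$ is the $\Z$-clique all of whose arcs are labeled by $0 = \Unit_\Z$, whose image under $\Frac_\Id$ is $1$.)

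There is essentially no obstacle here: the statement is an immediate consequence of the product formula defining $\Frac_\Id$ together with the fact that $x \mapsto x^{-1}$ on rational functions turns a product of powers into the product of the corresponding negated powers. The only point requiring a word of care is that $\Arcs_\Pfr$ is an invariant of the underlying polygon and is unchanged by relabeling, so the two products range over the same index set; once that is noted, the computation above is complete.
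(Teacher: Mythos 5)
Your proof is correct and follows essentially the same route as the paper's: relabel each arc by $-\Pfr(x,y)$ via $\Cli\eta$, substitute into the defining product of $\Frac_\Id$, and pull the sign out of the exponents to recognize the reciprocal. The remark about consistency with Proposition~\ref{prop:rat_fct_cliques_product} is a nice extra but not needed.
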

\begin{proof}
    Observe that $(\Cli \eta)(\Pfr)$ is the $\Mca$-clique obtained by
    relabeling each arc $(x, y)$ of $\Pfr$ by~$-\Pfr(x, y)$. Hence,
    since $\eta$ is a unitary magma morphism, we have
    \begin{equation}\begin{split}
        \Frac_\Id((\Cli \eta)(\Pfr))
        & =
        \prod_{(x, y) \in \Arcs_\Pfr}
        \left(u_x + \dots + u_{y - 1}\right)^{\theta(-\Pfr(x, y))} \\
        & =
        \prod_{(x, y) \in \Arcs_\Pfr}
        \left(u_x + \dots + u_{y - 1}\right)^{-\theta(\Pfr(x, y))} \\
        & =
        \frac{1}{\Frac_\Id(\Pfr)}
    \end{split}\end{equation}
    as expected.
\end{proof}
\medbreak

\section{Quotients and suboperads}\label{sec:quotients_suboperads}
We define here quotients and suboperads of $\Cli\Mca$, leading to the
construction of some new operads involving various combinatorial objects
which are, basically, $\Mca$-cliques with some restrictions.
\medbreak

\subsection{Main substructures}%
\label{subsec:main_substructures}
Most of the natural subfamilies of $\Mca$-cliques that can be described
by simple combinatorial properties as $\Mca$-cliques with restrained
labels for the bases, edges, and diagonals, white $\Mca$-cliques,
$\Mca$-cliques with a fixed maximal crossing number,
$\Mca$-bubbles, $\Mca$-cliques with a fixed maximal value for their
degrees, nesting-free $\Mca$-cliques, and acyclic $\Mca$-cliques
inherit from the algebraic structure of operad of $\Cli\Mca$ and form
quotients and suboperads of $\Cli\Mca$ (see
Table~\ref{tab:main_substructures}).
\begin{table}[ht]
    \centering
    \begin{small}
    \begin{tabular}{c|c|c}
        Operad & Objects & Status with respect to $\Cli\Mca$
            \\ \hline \hline
        $\Lab_{B, E, D}\Mca$ & $\Mca$-cliques with restricted labels
            & Suboperad \\
        $\Whi\Mca$ & White $\Mca$-cliques & Suboperad \\
        $\Cro_k\Mca$ & $\Mca$-cliques of crossings at most $k$
            & Suboperad and quotient \\
        $\Bub\Mca$ & $\Mca$-bubbles
            & Quotient \\
        $\Deg_k\Mca$ & $\Mca$-cliques of degree at most $k$
            & Quotient \\
        $\Nes\Mca$ & Nesting-free $\Mca$-cliques
            & Quotient \\
        $\Acy\Mca$ & Acyclic $\Mca$-cliques
            & Quotient
    \end{tabular}
    \end{small}
    \smallbreak

    \caption{\footnotesize
    Operads constructed as suboperads or quotients of $\Cli\Mca$. All
    these operads depend on a unitary magma $\Mca$ which has, in some
    cases, to satisfy some precise conditions. Some of these operads
    depend also on a nonnegative integer~$k$ or subsets $B$, $E$, and
    $D$ of~$\Mca$.}
    \label{tab:main_substructures}
\end{table}
We construct and briefly study
here these main substructures of~$\Cli\Mca$.
\medbreak

\subsubsection{Restricting the labels}%
\label{subsubsec:suboperad_Cli_M_labels}
In what follows, if $X$ and $Y$ are two subsets of $\Mca$,
$X \Op Y$ denotes the set $\{x \Op y : x \in X \mbox{ and } y \in Y\}$.
\medbreak

Let $B$, $E$, and $D$ be three subsets of $\Mca$ and
$\Lab_{B, E, D}\Mca$ be the subspace of $\Cli\Mca$ generated by all
$\Mca$-cliques $\Pfr$ such that the bases of $\Pfr$ are labeled by $B$,
all edges of $\Pfr$ are labeled by $E$, and all diagonals of $\Pfr$ are
labeled by~$D$.
\medbreak

\begin{Proposition} \label{prop:suboperad_Cli_M_labels}
    Let $\Mca$ be a unitary magma and $B$, $E$, and $D$ be three subsets
    of $\Mca$. If $\Unit_\Mca \in B$, $\Unit_\Mca \in D$, and
    $E \Op B \subseteq D$, $\Lab_{B, E, D}\Mca$ is a suboperad
    of~$\Cli\Mca$.
\end{Proposition}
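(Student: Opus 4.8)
The plan is to check the three conditions defining a suboperad recalled in Section~\ref{subsec:ns_operads}: that $\Lab_{B, E, D}\Mca$ is a graded subspace, that it contains the unit $\UnitClique$ of $\Cli\Mca$, and that it is stable under the partial compositions $\circ_i$. The first two are immediate. By definition $\Lab_{B, E, D}\Mca$ is the linear span of a set of $\Mca$-cliques, and this set is a disjoint union over $n \geq 1$ of sets of $\Mca$-cliques of arity $n$; hence $\Lab_{B, E, D}\Mca$ is a graded subspace of $\Cli\Mca$. Moreover the $\Mca$-clique $\UnitClique$ has arity $1$, has no edge and no diagonal, and its base is labeled by $\Unit_\Mca$; since $\Unit_\Mca \in B$ by hypothesis, $\UnitClique$ belongs to $\Lab_{B, E, D}\Mca$. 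This is exactly where the assumption $\Unit_\Mca \in B$ is used.

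The substance is stability under partial composition. Since the partial composition of $\Cli\Mca$ is defined on the fundamental basis and $\Lab_{B, E, D}\Mca$ is spanned by fundamental basis elements, it suffices to prove $\Pfr \circ_i \Qfr \in \Lab_{B, E, D}\Mca$ for any two $\Mca$-cliques $\Pfr$, $\Qfr$ whose bases are labeled in $B$, whose edges are labeled in $E$, and whose diagonals are labeled in $D$, and any $i \in [|\Pfr|]$. If $\Pfr = \UnitClique$ or $\Qfr = \UnitClique$, then by~\eqref{equ:operad_axiom_3} the composition equals $\Qfr$ or $\Pfr$ and there is nothing to prove, so assume both have arity at least $2$; write $m := |\Qfr|$. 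Inspecting the graphical description of $\circ_i$ in Figure~\ref{fig:composition_Cli_M} (equivalently, the six cases of~\eqref{equ:partial_composition_Cli_M}), one records that in $\Pfr \circ_i \Qfr$: the base is the base of $\Pfr$, hence labeled in $B$; every edge is either an edge of $\Pfr$ distinct from its $i$th edge or an edge of $\Qfr$, hence labeled in $E$; and every diagonal is either a diagonal of $\Pfr$ (labeled in $D$), a diagonal of $\Qfr$ (labeled in $D$), the glued arc $(i, i+m)$, or one of the arcs created by the gluing, which receive the label $\Unit_\Mca$. For the glued arc one has $(\Pfr \circ_i \Qfr)(i, i + m) = \Pfr_i \Op \Qfr_0$ with $\Pfr_i \in E$ and $\Qfr_0 \in B$, so this label lies in $E \Op B \subseteq D$; and the arcs created by the gluing are diagonals labeled $\Unit_\Mca \in D$. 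Hence every label of $\Pfr \circ_i \Qfr$ respects the prescribed constraints, which means $\Pfr \circ_i \Qfr \in \Lab_{B, E, D}\Mca$; this uses exactly the two remaining hypotheses $E \Op B \subseteq D$ and $\Unit_\Mca \in D$.

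The only delicate point is the bookkeeping underlying the previous paragraph: one must check, case by case in~\eqref{equ:partial_composition_Cli_M}, that an arc $(x,y)$ of $\Pfr \circ_i \Qfr$ which happens to be an edge, the base, or a diagonal of $\Pfr \circ_i \Qfr$ really gets its label from an edge, the base, or a diagonal (or from the two special positions) of $\Pfr$ or $\Qfr$ respectively. This reduces to the elementary facts that the condition $y = x + 1$ together with the range restrictions forces the source arc in $\Pfr$ or $\Qfr$ to be an edge, that $(x,y) = (1, |\Pfr| + m)$ forces the source to be the base of $\Pfr$, and that the \emph{otherwise} case of~\eqref{equ:partial_composition_Cli_M} only ever produces diagonals of $\Pfr \circ_i \Qfr$ — routine verifications, but the one place where a little care is needed. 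Having established the three conditions, $\Lab_{B, E, D}\Mca$ is a suboperad of $\Cli\Mca$.
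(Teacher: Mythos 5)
Your proof is correct and follows essentially the same route as the paper's: verify that $\UnitClique$ lies in $\Lab_{B,E,D}\Mca$ using $\Unit_\Mca \in B$, then trace through the partial composition to see that the base of $\Pfr \circ_i \Qfr$ inherits its label from the base of $\Pfr$, the edges from edges of $\Pfr$ or $\Qfr$, and the diagonals are either non-solid (using $\Unit_\Mca \in D$), inherited from diagonals of $\Pfr$ or $\Qfr$, or the glued arc labeled by $\Pfr_i \Op \Qfr_0 \in E \Op B \subseteq D$. Your version is somewhat more explicit about the case analysis in~\eqref{equ:partial_composition_Cli_M}, but the argument is the same.
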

\begin{proof}
    First, since $\Unit_\Mca \in B$, the unit $\UnitClique$ of
    $\Cli\Mca$ belongs to $\Lab_{B, E, D}\Mca$. Consider now two
    $\Mca$-cliques $\Pfr$ and $\Qfr$ of $\Lab_{B, E, D}\Mca$ and a
    partial composition $\Rfr := \Pfr \circ_i \Qfr$ for a valid integer
    $i$. By the definition of the partial composition of $\Cli\Mca$, the
    base of $\Rfr$ has the same label as the base of $\Pfr$, and all
    edges of $\Rfr$ have labels coming from the ones of $\Pfr$ and
    $\Qfr$. Moreover, all diagonals of $\Rfr$ are either non-solid, or
    come from diagonals of $\Pfr$ and $\Qfr$, or are the diagonal
    $\Rfr(i, i + |\Qfr|)$ which is labeled by $\Pfr_i \Op \Qfr_0$. Since
    $\Unit_\Mca \in D$, $\Pfr_i \in E$, $\Qfr_0 \in B$, and
    $E \Op B \subseteq D$, all the labels of these diagonals are in $D$.
    For these reasons, $\Rfr$ is in $\Lab_{B, E, D}\Mca$. This implies
    the statement of the proposition.
\end{proof}
\medbreak

\begin{Proposition} \label{prop:suboperad_Cli_M_labels_dimensions}
    Let $\Mca$ be a unitary magma and $B$, $E$, and $D$ be three
    finite subsets of $\Mca$. For all $n \geq 2$,
    \begin{equation} \label{equ:suboperad_Cli_M_labels_dimensions}
        \dim \Lab_{B, E, D}\Mca(n) =
         b e^n d^{(n + 1)(n - 2) / 2},
    \end{equation}
    where $b := \# B$, $e := \# E$, and $d := \# D$.
\end{Proposition}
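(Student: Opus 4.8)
The plan is to count the $\Mca$-cliques $\Pfr$ of arity $n$ that lie in the generating set of $\Lab_{B, E, D}\Mca$; since these form a basis of $\Lab_{B, E, D}\Mca(n)$ this count gives the dimension. An $\Mca$-clique of arity $n \geq 2$ is the same datum as a map assigning a label in $\Mca$ to each arc of the underlying polygon, and the arcs split into three disjoint classes: the base $(1, n+1)$, the $n$ edges $(i, i+1)$ for $i \in [n]$, and the diagonals, that is all remaining arcs $(x, y)$ with $1 \leq x < y \leq n+1$. By definition, $\Pfr$ is a basis element of $\Lab_{B, E, D}\Mca$ precisely when its base label lies in $B$, each of its edge labels lies in $E$, and each of its diagonal labels lies in $D$. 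These choices are completely independent of one another, so the number of such $\Mca$-cliques is the product of the number of choices for the base, for the edges, and for the diagonals.

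First I would record that there is exactly one arc that is a base, giving $b := \# B$ choices; there are exactly $n$ arcs that are edges, each contributing $e := \# E$ choices independently, hence $e^n$; and it remains to count the diagonals. The total number of arcs of a polygon of size $n$ is $\binom{n+1}{2}$, of which $1$ is the base and $n$ are edges, so the number of diagonals is
\begin{equation}
    \binom{n+1}{2} - n - 1 = \frac{(n+1)n}{2} - n - 1 = \frac{n^2 - n - 2}{2} = \frac{(n+1)(n-2)}{2}.
\end{equation}
Each diagonal independently contributes $d := \# D$ choices, giving $d^{(n+1)(n-2)/2}$. Multiplying the three independent factors yields
\begin{equation}
    \dim \Lab_{B, E, D}\Mca(n) = b\, e^n\, d^{(n+1)(n-2)/2},
\end{equation}
which is exactly~\eqref{equ:suboperad_Cli_M_labels_dimensions}.

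There is essentially no obstacle here: the only point requiring the slightest care is the arithmetic identifying the number of diagonals as $(n+1)(n-2)/2$ and checking it is a nonnegative integer for $n \geq 2$ (it is $0$ when $n = 2$, consistent with the fact that a triangle has no diagonals). One should also note that the hypotheses of Proposition~\ref{prop:suboperad_Cli_M_labels} ($\Unit_\Mca \in B$, $\Unit_\Mca \in D$, $E \Op B \subseteq D$) are not needed for this dimension count as such — they guarantee that $\Lab_{B, E, D}\Mca$ is a suboperad, and hence that the spanning $\Mca$-cliques indeed form a basis of a well-defined graded subspace — but the enumeration itself is purely a count of maps on the arc set, exactly as in the proof of Proposition~\ref{prop:dimensions_Cli_M}.
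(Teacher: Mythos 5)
Your proof is correct and follows essentially the same route as the paper's: an independent count of label choices for the base ($b$), the $n$ edges ($e^n$), and the $\binom{n+1}{2}-(n+1) = (n+1)(n-2)/2$ diagonals ($d$ each). The paper packages the diagonal count as $d^{\binom{n+1}{2}}/d^{n+1}$ via Proposition~\ref{prop:dimensions_Cli_M}, but the computation is the same.
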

\begin{proof}
    By Proposition~\ref{prop:dimensions_Cli_M}, there are
    $m^{\binom{n + 1}{2}}$ $\Mca$-cliques of arity $n$, where
    $m := \# \Mca$. Hence, there are
    $m^{\binom{n + 1}{2}} / m^{n + 1}$ $\Mca$-cliques of arity $n$ with
    all edges and the base labeled by $\Unit_\Mca$. This also says that
    there are $d^{\binom{n + 1}{2}} / d^{n + 1}$ $\Mca$-cliques of arity
    $n$ with all diagonals labeled by $D$ and all edges and the base
    labeled by $\Unit_\Mca$. Since an $\Mca$-clique of
    $\Lab_{B, E, D}\Mca(n)$ has its $n$ edges labeled by $E$ and its
    base labeled by $B$, \eqref{equ:suboperad_Cli_M_labels_dimensions}
    follows.
\end{proof}
\medbreak

\subsubsection{White cliques}%
\label{subsubsec:suboperad_Cli_M_white}
Let $\Whi\Mca$ be the subspace of $\Cli\Mca$ generated by all white
$\Mca$-cliques. Since, by definition of white $\Mca$-cliques,
\begin{equation}
    \Whi\Mca = \Lab_{\{\Unit_\Mca\}, \{\Unit_\Mca\}, \Mca}\Mca,
\end{equation}
by Proposition~\ref{prop:suboperad_Cli_M_labels}, $\Whi\Mca$ is a
suboperad of $\Cli\Mca$. It follows from
Proposition~\ref{prop:suboperad_Cli_M_labels_dimensions} that when
$\Mca$ is finite, the dimensions of $\Whi\Mca$ satisfy, for any
$n \geq 2$,
\begin{equation}
    \dim \Whi\Mca(n) =
    m^{(n + 1)(n - 2) / 2},
\end{equation}
where $m := \# \Mca$.
\medbreak

\subsubsection{Restricting the crossings}%
\label{subsubsec:quotient_Cli_M_crossings}
Let $k \geq 0$ be an integer and $\Rel_{\Cro_k\Mca}$ be the subspace of
$\Cli\Mca$ generated by all $\Mca$-cliques $\Pfr$ such that
$\Cros(\Pfr) \geq k + 1$. As a quotient of graded vector spaces,
\begin{equation}
    \Cro_k\Mca := \Cli\Mca / \Rel_{\Cro_k\Mca}
\end{equation}
is the linear span of all $\Mca$-cliques $\Pfr$ such that
$\Cros(\Pfr) \leq k$.
\medbreak

\begin{Proposition} \label{prop:quotient_Cli_M_crossings}
    Let $\Mca$ be a unitary magma and $k \geq 0$ be an integer. Then,
    the space $\Cro_k\Mca$ is a quotient operad of $\Cli\Mca$ and is
    isomorphic to the suboperad of~$\Cli\Mca$ restricted to the subspace
    generated by all $\Mca$-cliques with crossing numbers no greater
    than~$k$.
\end{Proposition}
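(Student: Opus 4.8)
The plan is to derive both assertions from the single combinatorial identity
\[
    \Cros(\Pfr \circ_i \Qfr) = \max\{\Cros(\Pfr), \Cros(\Qfr)\}
\]
holding for all $\Mca$-cliques $\Pfr$, $\Qfr$ and every $i \in [|\Pfr|]$. Assuming it, we argue as follows. If $\Cros(\Pfr) \geq k + 1$, then $\Cros(\Pfr \circ_i \Qfr) \geq k + 1$ and $\Cros(\Qfr \circ_j \Pfr) \geq k + 1$ for all valid $i$ and $j$, so $\Rel_{\Cro_k\Mca}$ is an operad ideal of $\Cli\Mca$; hence $\Cro_k\Mca = \Cli\Mca / \Rel_{\Cro_k\Mca}$ is a quotient operad. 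Dually, let $V_k$ be the subspace of $\Cli\Mca$ spanned by the $\Mca$-cliques $\Pfr$ with $\Cros(\Pfr) \leq k$. Since $\Cros(\UnitClique) = 0 \leq k$ and, by the identity, $\Cros(\Pfr) \leq k$ and $\Cros(\Qfr) \leq k$ imply $\Cros(\Pfr \circ_i \Qfr) \leq k$, the space $V_k$ contains the unit of $\Cli\Mca$ and is closed under its partial compositions, hence is a suboperad. Finally, the bases $\{\Pfr \in \Cliques_\Mca : \Cros(\Pfr) \leq k\}$ of $V_k$ and $\{\Pfr \in \Cliques_\Mca : \Cros(\Pfr) \geq k + 1\}$ of $\Rel_{\Cro_k\Mca}$ partition $\Cliques_\Mca$, so $\Cli\Mca = V_k \oplus \Rel_{\Cro_k\Mca}$ as graded vector spaces; therefore the composite operad morphism $V_k \hookrightarrow \Cli\Mca \twoheadrightarrow \Cro_k\Mca$ is bijective, hence an operad isomorphism carrying the partial composition of $\Cro_k\Mca$ to that of $V_k$. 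This is the statement.

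It remains to prove the identity, which I would do by reading off the solid diagonals of $\Rfr := \Pfr \circ_i \Qfr$ from~\eqref{equ:partial_composition_Cli_M} (cf. Figure~\ref{fig:composition_Cli_M}). Set $m := |\Qfr|$. The vertex map $\phi_\Pfr$ sending $z$ to $z$ if $z \leq i$ and to $z + m - 1$ if $z \geq i + 1$, and the vertex map $\phi_\Qfr$ sending $z$ to $z + i - 1$, are strictly increasing; $\phi_\Pfr$ sends the vertices of $\Pfr$ to the vertices of $\Rfr$ avoiding the open interval $(i, i + m)$, while $\phi_\Qfr$ sends the vertices of $\Qfr$ into the closed interval $[i, i + m]$. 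From~\eqref{equ:partial_composition_Cli_M} one checks that the solid diagonals of $\Rfr$ are precisely the $\phi_\Pfr$-images of the solid diagonals of $\Pfr$, the $\phi_\Qfr$-images of the solid diagonals of $\Qfr$, and --- only when $m \geq 2$ and $\Pfr_i \Op \Qfr_0 \neq \Unit_\Mca$ --- the arc $(i, i + m)$: indeed every arc produced by the last case of~\eqref{equ:partial_composition_Cli_M} is labeled $\Unit_\Mca$, and edges and bases of $\Pfr$ and $\Qfr$ either remain edges and bases of $\Rfr$ or, for the $i$th edge of $\Pfr$, become $(i, i + m)$.

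The heart of the argument, and the only step that is not routine bookkeeping, is the following crossing analysis. As $\phi_\Pfr$ and $\phi_\Qfr$ are strictly increasing, two diagonals of $\Pfr$ (resp. of $\Qfr$) cross if and only if their images cross, so the $\phi_\Pfr$-images (resp. $\phi_\Qfr$-images) of the solid diagonals of $\Pfr$ (resp. of $\Qfr$) reproduce inside $\Rfr$, with unchanged labels, the full crossing pattern of $\Pfr$ (resp. of $\Qfr$). Moreover a crossing between a $\phi_\Pfr$-image and either a $\phi_\Qfr$-image or the arc $(i, i + m)$ would force an endpoint of the $\phi_\Pfr$-image to lie strictly inside $[i, i + m]$, hence in $(i, i + m)$, which is impossible; and no diagonal of $\Rfr$ crosses $(i, i + m)$, again because a $\phi_\Qfr$-image has both endpoints in $[i, i + m]$. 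Consequently the crossing number in $\Rfr$ of the image of a solid diagonal of $\Pfr$ equals its crossing number in $\Pfr$, the analogous statement holds for $\Qfr$, and $(i, i + m)$ has crossing number $0$ in $\Rfr$. Taking the maximum over all solid diagonals of $\Rfr$ yields $\Cros(\Rfr) = \max\{\Cros(\Pfr), \Cros(\Qfr)\}$ (the degenerate cases $\Pfr = \UnitClique$ or $\Qfr = \UnitClique$, where $\Rfr$ is $\Qfr$ or $\Pfr$, being trivial), which completes the proof.
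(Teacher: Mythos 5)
Your proof is correct and follows essentially the same route as the paper's: both arguments hinge on the identity $\Cros(\Pfr \circ_i \Qfr) = \max\{\Cros(\Pfr), \Cros(\Qfr)\}$, deducing from it that $\Rel_{\Cro_k\Mca}$ is an operad ideal and that the span of $\Mca$-cliques of crossing number at most $k$ is a suboperad. The only difference is one of detail: you supply a full verification of the crossing identity and make the final isomorphism $V_k \cong \Cli\Mca/\Rel_{\Cro_k\Mca}$ explicit, whereas the paper merely observes these facts.
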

\begin{proof}
    We first prove that $\Cro_k\Mca$ is a quotient of $\Cli\Mca$.
    For this, observe that if $\Pfr$ and $\Qfr$ are two $\Mca$-cliques,
    \begin{equation} \label{equ:quotient_Cli_M_crossings}
        \Cros(\Pfr \circ_i \Qfr) = \max\{\Cros(\Pfr), \Cros(\Qfr)\}
    \end{equation}
    for any valid integer $i$. For this reason, if $\Pfr$ is an
    $\Mca$-clique of $\Rel_{\Cro_k\Mca}$, each clique obtained by a
    partial composition involving $\Pfr$ and other $\Mca$-cliques is
    still in $\Rel_{\Cro_k\Mca}$. This proves that $\Rel_{\Cro_k\Mca}$
    is an operad ideal of $\Cli\Mca$ and hence, that $\Cro_k\Mca$ is a
    quotient of $\Cli\Mca$.
    \smallbreak

    To prove the second part of the statement, consider two
    $\Mca$-cliques $\Pfr$ and $\Qfr$ of $\Cro_k\Mca$.
    By~\eqref{equ:quotient_Cli_M_crossings}, all $\Mca$-cliques
    $\Pfr \circ_i \Qfr$ are still in $\Cro_k\Mca$, for all valid
    integers $i$. Moreover, the unit $\UnitClique$ of $\Cli\Mca$
    belongs to $\Cro_k\Mca$. This implies the desired property.
\end{proof}
\medbreak

For instance, in the operad $\Cro_2\Z$, we have
\begin{equation}
    \begin{tikzpicture}[scale=0.7,Centering]
        \node[CliquePoint](1)at(-0.59,-0.81){};
        \node[CliquePoint](2)at(-0.95,0.31){};
        \node[CliquePoint](3)at(-0.00,1.00){};
        \node[CliquePoint](4)at(0.95,0.31){};
        \node[CliquePoint](5)at(0.59,-0.81){};
        \draw[CliqueEmptyEdge](1)edge[]node[]{}(2);
        \draw[CliqueEdge](1)
            edge[bend right=30]node[CliqueLabel,near start]
            {\begin{math}2\end{math}}(3);
        \draw[CliqueEmptyEdge](1)edge[]node[]{}(5);
        \draw[CliqueEmptyEdge](2)edge[]node[]{}(3);
        \draw[CliqueEdge](3)edge[]node[CliqueLabel]
            {\begin{math}1\end{math}}(4);
        \draw[CliqueEdge](4)edge[]node[CliqueLabel]
            {\begin{math}2\end{math}}(5);
        \draw[CliqueEdge](2)edge[]node[CliqueLabel, near end]
            {\begin{math}1\end{math}}(4);
        \draw[CliqueEdge](2)
            edge[bend left=30]node[CliqueLabel,near end]
            {\begin{math}3\end{math}}(5);
    \end{tikzpicture}
    \circ_3
    \begin{tikzpicture}[scale=0.6,Centering]
        \node[CliquePoint](1)at(-0.71,-0.71){};
        \node[CliquePoint](2)at(-0.71,0.71){};
        \node[CliquePoint](3)at(0.71,0.71){};
        \node[CliquePoint](4)at(0.71,-0.71){};
        \draw[CliqueEmptyEdge](1)edge[]node[]{}(2);
        \draw[CliqueEdge](1)edge[]node[CliqueLabel]
            {\begin{math}2\end{math}}(3);
        \draw[CliqueEmptyEdge](1)edge[]node[]{}(4);
        \draw[CliqueEmptyEdge](2)edge[]node[]{}(3);
        \draw[CliqueEdge](3)edge[]node[CliqueLabel]
            {\begin{math}1\end{math}}(4);
    \end{tikzpicture}
    =
    \begin{tikzpicture}[scale=0.9,Centering]
        \node[CliquePoint](1)at(-0.43,-0.90){};
        \node[CliquePoint](2)at(-0.97,-0.22){};
        \node[CliquePoint](3)at(-0.78,0.62){};
        \node[CliquePoint](4)at(-0.00,1.00){};
        \node[CliquePoint](5)at(0.78,0.62){};
        \node[CliquePoint](6)at(0.97,-0.22){};
        \node[CliquePoint](7)at(0.43,-0.90){};
        \draw[CliqueEmptyEdge](1)edge[]node[]{}(2);
        \draw[CliqueEmptyEdge](1)edge[]node[]{}(7);
        \draw[CliqueEmptyEdge](2)edge[]node[]{}(3);
        \draw[CliqueEmptyEdge](3)edge[]node[]{}(4);
        \draw[CliqueEmptyEdge](4)edge[]node[]{}(5);
        \draw[CliqueEdge](5)edge[]node[CliqueLabel]
            {\begin{math}1\end{math}}(6);
        \draw[CliqueEdge](6)edge[]node[CliqueLabel]
            {\begin{math}2\end{math}}(7);
        \draw[CliqueEdge](1)
            edge[bend right=30]node[CliqueLabel,near end]
            {\begin{math}2\end{math}}(3);
        \draw[CliqueEdge](2)edge[]node[CliqueLabel,near end]
            {\begin{math}1\end{math}}(6);
        \draw[CliqueEdge](2)
            edge[bend left=30]node[CliqueLabel,near end]
            {\begin{math}3\end{math}}(7);
        \draw[CliqueEdge](3)edge[]node[CliqueLabel]
            {\begin{math}2\end{math}}(5);
        \draw[CliqueEdge](3)edge[]node[CliqueLabel]
            {\begin{math}1\end{math}}(6);
    \end{tikzpicture}\,.
\end{equation}
\medbreak

When $0 \leq k' \leq k$ are integers, by
Proposition~\ref{prop:quotient_Cli_M_crossings}, $\Cro_k\Mca$ and
$\Cro_{k'}\Mca$ are both quotients and suboperads of $\Cli\Mca$. First,
since any $\Mca$-clique of $\Cro_{k'}\Mca$ is also an $\Mca$-clique of
$\Cro_k\Mca$, $\Cro_{k'}\Mca$ is a suboperad of~$\Cro_k\Mca$. Second,
since $\Rel_{\Cro_k\Mca}$ is a subspace of $\Rel_{\Cro_{k'}\Mca}$,
$\Cro_{k'}\Mca$ is a quotient of~$\Cro_k\Mca$.
\medbreak

Observe that $\Cro_0\Mca$ is the linear span of all noncrossing
$\Mca$-cliques. We can see these objects as noncrossing
configurations~\cite{FN99} where the edges and bases are colored by
elements of $\Mca$ and the diagonals by elements of $\bar{\Mca}$. The
operad $\Cro_0\Mca$ has a lot of combinatorial and algebraic properties
and will be studied in detail in~\cite{Cliques2}.
\medbreak

\subsubsection{Bubbles}
\label{subsubsec:quotient_Cli_M_bubbles}
Let $\Rel_{\Bub\Mca}$ be the subspace of $\Cli\Mca$ generated by all
$\Mca$-cliques that are not bubbles. As a quotient of graded vector
spaces,
\begin{equation}
    \Bub\Mca := \Cli\Mca / \Rel_{\Bub\Mca}
\end{equation}
is the linear span of all $\Mca$-bubbles.
\medbreak

\begin{Proposition} \label{prop:quotient_Cli_M_bubbles}
    Let $\Mca$ be a unitary magma. Then, the space $\Bub_\Mca$ is a
    quotient operad of $\Cli\Mca$.
\end{Proposition}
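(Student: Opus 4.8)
The plan is to verify that $\Rel_{\Bub\Mca}$ is an operad ideal of $\Cli\Mca$; the proposition then follows at once, since the quotient of an operad by an ideal is an operad, and it has already been observed that, as a graded vector space, $\Cli\Mca / \Rel_{\Bub\Mca}$ is the linear span of the $\Mca$-bubbles. Because $\Rel_{\Bub\Mca}$ is spanned by the $\Mca$-cliques that are \emph{not} bubbles and the partial composition of $\Cli\Mca$ is defined on the fundamental basis (a set-operad basis), it is enough to show: for every $\Mca$-clique $\Pfr$ and every $\Mca$-clique $\Qfr$ that is not a bubble, the $\Mca$-cliques $\Pfr \circ_i \Qfr$ and $\Qfr \circ_j \Pfr$ are not bubbles either, for all valid integers $i$ and $j$.

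For this, the key step I would isolate is the elementary observation that partial composition carries each solid diagonal of \emph{either} operand to a solid diagonal of the result. Tracking a diagonal $(x,y)$ of $\Qfr$ through~\eqref{equ:partial_composition_Cli_M}: in $\Pfr \circ_i \Qfr$ the arc $(x+i-1, y+i-1)$ receives the label $\Qfr(x,y)$ (this is the fourth case of the definition, which applies precisely because $(x,y) \neq (1, |\Qfr|+1)$), and $(x+i-1, y+i-1)$ is again a diagonal of $\Pfr \circ_i \Qfr$ — it is not of the form $(z, z+1)$ since $y - x \geq 2$, and it cannot be the base of $\Pfr \circ_i \Qfr$, for that would force $(x,y)$ to be the base of $\Qfr$. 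Symmetrically, in $\Qfr \circ_j \Pfr$ (with $m := |\Pfr|$), a diagonal $(x,y)$ of the left operand $\Qfr$ is, according to whether $y \leq j$, $x \leq j < y$, or $j < x$, sent respectively to the arc $(x,y)$, $(x, y+m-1)$, or $(x+m-1, y+m-1)$ with label $\Qfr(x,y)$, and in each case a one-line check against the list of cases in~\eqref{equ:partial_composition_Cli_M} and against the definitions of edge and base shows this arc is still a diagonal.

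Granting this, if $\Qfr$ is not a bubble it has a solid diagonal $(x,y)$, and then both $\Pfr \circ_i \Qfr$ and $\Qfr \circ_j \Pfr$ inherit a solid diagonal, hence are not bubbles and lie in $\Rel_{\Bub\Mca}$. This establishes that $\Rel_{\Bub\Mca}$ is an operad ideal, and the proposition follows. The only point requiring any care — the ``hard part,'' such as it is — is the positional bookkeeping verifying that the displaced arcs remain diagonals rather than degenerating into edges or the base; this is purely a matter of checking inequalities, and no degenerate case actually arises precisely because a diagonal of either operand is, by definition, neither an edge nor the base. Note that one cannot hope for $\Bub\Mca$ to be a suboperad here: composing two $\Mca$-bubbles may create solid diagonals, which is exactly why the bubble operad appears as a quotient rather than a suboperad.
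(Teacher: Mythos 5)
Your proof is correct and follows essentially the same route as the paper's: both arguments show that $\Rel_{\Bub\Mca}$ is an operad ideal by observing that every solid diagonal of either operand survives as a solid diagonal of the partial composition, so non-bubbles compose to non-bubbles. You merely spell out the index bookkeeping that the paper states without detail, and your checks (that the displaced arcs are never edges, never the base, and never the glued arc $(i, i+m)$) are all accurate.
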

\begin{proof}
    If $\Pfr$ and $\Qfr$ are two $\Mca$-cliques, all solid diagonals
    of $\Pfr$ and $\Qfr$ appear in $\Pfr \circ_i \Qfr$, for any valid
    integer $i$. For this reason, if $\Pfr$ is an $\Mca$-clique of
    $\Rel_{\Bub\Mca}$, each $\Mca$-clique obtained by a partial
    composition involving $\Pfr$ and other $\Mca$-cliques is still in
    $\Rel_{\Bub\Mca}$. This proves that $\Rel_{\Bub\Mca}$ is an operad
    ideal of $\Cli\Mca$ and implies the statement of the proposition.
\end{proof}
\medbreak

For instance, in the operad $\Bub\Z$, we have
\vspace{-1.75em}
\begin{multicols}{2}
\begin{subequations}
\begin{equation}
    \begin{tikzpicture}[scale=0.6,Centering]
        \node[CliquePoint](1)at(-0.59,-0.81){};
        \node[CliquePoint](2)at(-0.95,0.31){};
        \node[CliquePoint](3)at(-0.00,1.00){};
        \node[CliquePoint](4)at(0.95,0.31){};
        \node[CliquePoint](5)at(0.59,-0.81){};
        \draw[CliqueEmptyEdge](1)edge[]node[]{}(2);
        \draw[CliqueEmptyEdge](1)edge[]node[]{}(5);
        \draw[CliqueEmptyEdge](2)edge[]node[]{}(3);
        \draw[CliqueEdge](3)edge[]node[CliqueLabel]
            {\begin{math}1\end{math}}(4);
        \draw[CliqueEdge](4)edge[]node[CliqueLabel]
            {\begin{math}2\end{math}}(5);
    \end{tikzpicture}
    \circ_2
    \begin{tikzpicture}[scale=0.5,Centering]
        \node[CliquePoint](1)at(-0.71,-0.71){};
        \node[CliquePoint](2)at(-0.71,0.71){};
        \node[CliquePoint](3)at(0.71,0.71){};
        \node[CliquePoint](4)at(0.71,-0.71){};
        \draw[CliqueEmptyEdge](1)edge[]node[]{}(2);
        \draw[CliqueEmptyEdge](1)edge[]node[]{}(4);
        \draw[CliqueEmptyEdge](2)edge[]node[]{}(3);
        \draw[CliqueEdge](3)edge[]node[CliqueLabel]
            {\begin{math}1\end{math}}(4);
    \end{tikzpicture}
    =
    \begin{tikzpicture}[scale=0.8,Centering]
        \node[CliquePoint](1)at(-0.43,-0.90){};
        \node[CliquePoint](2)at(-0.97,-0.22){};
        \node[CliquePoint](3)at(-0.78,0.62){};
        \node[CliquePoint](4)at(-0.00,1.00){};
        \node[CliquePoint](5)at(0.78,0.62){};
        \node[CliquePoint](6)at(0.97,-0.22){};
        \node[CliquePoint](7)at(0.43,-0.90){};
        \draw[CliqueEmptyEdge](1)edge[]node[]{}(2);
        \draw[CliqueEmptyEdge](1)edge[]node[]{}(7);
        \draw[CliqueEmptyEdge](2)edge[]node[]{}(3);
        \draw[CliqueEmptyEdge](3)edge[]node[]{}(4);
        \draw[CliqueEmptyEdge](4)edge[]node[]{}(5);
        \draw[CliqueEdge](4)edge[]node[CliqueLabel]
            {\begin{math}1\end{math}}(5);
        \draw[CliqueEdge](5)edge[]node[CliqueLabel]
            {\begin{math}1\end{math}}(6);
        \draw[CliqueEdge](6)edge[]node[CliqueLabel]
            {\begin{math}2\end{math}}(7);
    \end{tikzpicture}\,,
\end{equation}
\begin{equation}
    \begin{tikzpicture}[scale=0.6,Centering]
        \node[CliquePoint](1)at(-0.59,-0.81){};
        \node[CliquePoint](2)at(-0.95,0.31){};
        \node[CliquePoint](3)at(-0.00,1.00){};
        \node[CliquePoint](4)at(0.95,0.31){};
        \node[CliquePoint](5)at(0.59,-0.81){};
        \draw[CliqueEmptyEdge](1)edge[]node[]{}(2);
        \draw[CliqueEmptyEdge](1)edge[]node[]{}(5);
        \draw[CliqueEmptyEdge](2)edge[]node[]{}(3);
        \draw[CliqueEdge](3)edge[]node[CliqueLabel]
            {\begin{math}-1\end{math}}(4);
        \draw[CliqueEdge](4)edge[]node[CliqueLabel]
            {\begin{math}2\end{math}}(5);
    \end{tikzpicture}
    \circ_3
    \begin{tikzpicture}[scale=0.5,Centering]
        \node[CliquePoint](1)at(-0.71,-0.71){};
        \node[CliquePoint](2)at(-0.71,0.71){};
        \node[CliquePoint](3)at(0.71,0.71){};
        \node[CliquePoint](4)at(0.71,-0.71){};
        \draw[CliqueEmptyEdge](1)edge[]node[]{}(2);
        \draw[CliqueEmptyEdge](1)edge[]node[]{}(4);
        \draw[CliqueEmptyEdge](2)edge[]node[]{}(3);
        \draw[CliqueEdge](3)edge[]node[CliqueLabel]
            {\begin{math}1\end{math}}(4);
        \draw[CliqueEdge](1)edge[]node[CliqueLabel]
            {\begin{math}1\end{math}}(4);
    \end{tikzpicture}
    =
    \begin{tikzpicture}[scale=0.8,Centering]
        \node[CliquePoint](1)at(-0.43,-0.90){};
        \node[CliquePoint](2)at(-0.97,-0.22){};
        \node[CliquePoint](3)at(-0.78,0.62){};
        \node[CliquePoint](4)at(-0.00,1.00){};
        \node[CliquePoint](5)at(0.78,0.62){};
        \node[CliquePoint](6)at(0.97,-0.22){};
        \node[CliquePoint](7)at(0.43,-0.90){};
        \draw[CliqueEmptyEdge](1)edge[]node[]{}(2);
        \draw[CliqueEmptyEdge](1)edge[]node[]{}(7);
        \draw[CliqueEmptyEdge](2)edge[]node[]{}(3);
        \draw[CliqueEmptyEdge](3)edge[]node[]{}(4);
        \draw[CliqueEmptyEdge](4)edge[]node[]{}(5);
        \draw[CliqueEdge](5)edge[]node[CliqueLabel]
            {\begin{math}1\end{math}}(6);
        \draw[CliqueEdge](6)edge[]node[CliqueLabel]
            {\begin{math}2\end{math}}(7);
    \end{tikzpicture}\,,
\end{equation}

\begin{equation}
    \begin{tikzpicture}[scale=0.6,Centering]
        \node[CliquePoint](1)at(-0.59,-0.81){};
        \node[CliquePoint](2)at(-0.95,0.31){};
        \node[CliquePoint](3)at(-0.00,1.00){};
        \node[CliquePoint](4)at(0.95,0.31){};
        \node[CliquePoint](5)at(0.59,-0.81){};
        \draw[CliqueEmptyEdge](1)edge[]node[]{}(2);
        \draw[CliqueEmptyEdge](1)edge[]node[]{}(5);
        \draw[CliqueEmptyEdge](2)edge[]node[]{}(3);
        \draw[CliqueEdge](3)edge[]node[CliqueLabel]
            {\begin{math}1\end{math}}(4);
        \draw[CliqueEdge](4)edge[]node[CliqueLabel]
            {\begin{math}2\end{math}}(5);
    \end{tikzpicture}
    \circ_3
    \begin{tikzpicture}[scale=0.5,Centering]
        \node[CliquePoint](1)at(-0.71,-0.71){};
        \node[CliquePoint](2)at(-0.71,0.71){};
        \node[CliquePoint](3)at(0.71,0.71){};
        \node[CliquePoint](4)at(0.71,-0.71){};
        \draw[CliqueEmptyEdge](1)edge[]node[]{}(2);
        \draw[CliqueEmptyEdge](1)edge[]node[]{}(4);
        \draw[CliqueEmptyEdge](2)edge[]node[]{}(3);
        \draw[CliqueEdge](3)edge[]node[CliqueLabel]
            {\begin{math}1\end{math}}(4);
    \end{tikzpicture}
    = 0,
\end{equation}
\begin{equation}
    \begin{tikzpicture}[scale=0.6,Centering]
        \node[CliquePoint](1)at(-0.59,-0.81){};
        \node[CliquePoint](2)at(-0.95,0.31){};
        \node[CliquePoint](3)at(-0.00,1.00){};
        \node[CliquePoint](4)at(0.95,0.31){};
        \node[CliquePoint](5)at(0.59,-0.81){};
        \draw[CliqueEmptyEdge](1)edge[]node[]{}(2);
        \draw[CliqueEmptyEdge](1)edge[]node[]{}(5);
        \draw[CliqueEmptyEdge](2)edge[]node[]{}(3);
        \draw[CliqueEdge](3)edge[]node[CliqueLabel]
            {\begin{math}1\end{math}}(4);
        \draw[CliqueEdge](4)edge[]node[CliqueLabel]
            {\begin{math}2\end{math}}(5);
    \end{tikzpicture}
    \circ_2
    \begin{tikzpicture}[scale=0.5,Centering]
        \node[CliquePoint](1)at(-0.71,-0.71){};
        \node[CliquePoint](2)at(-0.71,0.71){};
        \node[CliquePoint](3)at(0.71,0.71){};
        \node[CliquePoint](4)at(0.71,-0.71){};
        \draw[CliqueEmptyEdge](1)edge[]node[]{}(2);
        \draw[CliqueEmptyEdge](2)edge[]node[]{}(3);
        \draw[CliqueEdge](3)edge[]node[CliqueLabel]
            {\begin{math}1\end{math}}(4);
        \draw[CliqueEdge](1)edge[]node[CliqueLabel]
            {\begin{math}2\end{math}}(4);
    \end{tikzpicture}
    = 0.
\end{equation}
\end{subequations}
\end{multicols}
\medbreak

When $\Mca$ is finite, the dimensions of $\Bub\Mca$ satisfy, for any
$n \geq 2$,
\begin{equation}
    \dim \Bub\Mca(n) = m^{n + 1},
\end{equation}
where $m := \# \Mca$.
\medbreak

\subsubsection{Restricting the degrees}%
\label{subsubsec:quotient_Cli_M_degrees}
Let $k \geq 0$ be an integer and $\Rel_{\Deg_k\Mca}$ be the subspace of
$\Cli\Mca$ generated by all $\Mca$-cliques $\Pfr$ such that
$\Degr(\Pfr) \geq k + 1$. As a quotient of graded vector spaces,
\begin{equation}
    \Deg_k\Mca := \Cli\Mca / \Rel_{\Deg_k\Mca}
\end{equation}
is the linear span of all $\Mca$-cliques $\Pfr$ such that
$\Degr(\Pfr) \leq k$.
\medbreak

\begin{Proposition} \label{prop:quotient_Cli_M_degrees}
    Let $\Mca$ be a unitary magma without nontrivial unit divisors and
    $k \geq 0$ be an integer. Then, the space $\Deg_k\Mca$ is a quotient
    operad of $\Cli\Mca$.
\end{Proposition}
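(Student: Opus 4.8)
The plan is to prove that $\Rel_{\Deg_k\Mca}$ is an operad ideal of $\Cli\Mca$; the statement then follows at once from the definition of a quotient operad. Since $\Rel_{\Deg_k\Mca}$ is by construction the linear span of a subset of the fundamental basis of $\Cli\Mca$, and since the fundamental basis is a set-operad basis of $\Cli\Mca$, it is enough to show that for all $\Mca$-cliques $\Pfr$ and $\Qfr$, writing $n := |\Pfr|$ and $m := |\Qfr|$, and for any valid integer $i$, one has
\begin{equation*}
    \Degr(\Pfr \circ_i \Qfr) \geq \max\{\Degr(\Pfr), \Degr(\Qfr)\}.
\end{equation*}
Granting this, whenever one of the two inputs of a partial composition has degree at least $k + 1$, the resulting $\Mca$-clique does too, so $\Rel_{\Deg_k\Mca}$ absorbs partial compositions on both sides.

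To establish the inequality, I would first read off from the definition of the partial composition of $\Cli\Mca$ an explicit description of $\Skel(\Pfr \circ_i \Qfr)$. Under the re-indexing induced by $\circ_i$, the vertices $1, \dots, i$ and $i + m, \dots, n + m$ of $\Pfr \circ_i \Qfr$ carry a copy of $\Skel(\Pfr)$ (vertex $i + 1$ of $\Pfr$ being sent to vertex $i + m$), the vertices $i, \dots, i + m$ carry a copy of $\Skel(\Qfr)$ (vertex $1$ of $\Qfr$ being sent to $i$ and vertex $m + 1$ to $i + m$), these two copies meet exactly in the two vertices $i$ and $i + m$, every arc of $\Pfr$ other than its $i$th edge and every arc of $\Qfr$ other than its base reappears with the same label and hence with the same solidity, and the single overlapping arc $\{i, i + m\}$ is solid precisely when $\Pfr_i \Op \Qfr_0 \neq \Unit_\Mca$. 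In particular the arcs created by the ``otherwise'' clause of the composition are all non-solid and play no role.

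The only point where the hypothesis on $\Mca$ is used is the status of this overlapping arc. If the $i$th edge of $\Pfr$ is solid, that is $\Pfr_i \neq \Unit_\Mca$, then either $\Qfr_0 = \Unit_\Mca$, whence $\Pfr_i \Op \Qfr_0 = \Pfr_i \neq \Unit_\Mca$, or $\Qfr_0 \neq \Unit_\Mca$, whence $\Pfr_i \Op \Qfr_0 \neq \Unit_\Mca$ because $\Mca$ has no nontrivial unit divisors; symmetrically, if the base of $\Qfr$ is solid then $\Pfr_i \Op \Qfr_0 \neq \Unit_\Mca$. Thus the arc $\{i, i + m\}$ belongs to $\Skel(\Pfr \circ_i \Qfr)$ as soon as the $i$th edge of $\Pfr$ or the base of $\Qfr$ was solid. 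Consequently, after re-indexing, every vertex of $\Pfr$ keeps at least as many solid incident arcs in $\Pfr \circ_i \Qfr$ as it had in $\Pfr$: for a vertex not in $\{i, i + 1\}$ all incident solid arcs survive verbatim, and for a vertex in $\{i, i + 1\}$ the incident solid arcs of $\Pfr$ other than the $i$th edge survive while the $i$th edge, if solid, is replaced by the still-solid arc $\{i, i + m\}$. Hence $\Degr(\Pfr \circ_i \Qfr) \geq \Degr(\Pfr)$; exchanging the roles of $\Pfr$ and $\Qfr$ gives $\Degr(\Pfr \circ_i \Qfr) \geq \Degr(\Qfr)$, and the inequality follows.

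I expect the only real difficulty to be the bookkeeping in this explicit description of $\Skel(\Pfr \circ_i \Qfr)$ — keeping the vertex re-indexing straight and verifying that $\{i, i + m\}$ is the unique arc whose solidity must be recomputed. Once that is done, the no-nontrivial-unit-divisors hypothesis does exactly the work needed to keep that arc from disappearing, and it is genuinely needed: if $a \Op b = \Unit_\Mca$ with $a, b \neq \Unit_\Mca$, then composing the $\Mca$-triangle whose only solid arc is its first edge, labelled $a$, with the $\Mca$-triangle whose only solid arc is its base, labelled $b$, yields a white $\Mca$-triangle, so the degree strictly drops and $\Deg_0\Mca$ fails to be a quotient.
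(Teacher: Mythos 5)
Your proposal is correct and follows essentially the same route as the paper: establish the inequality $\Degr(\Pfr \circ_i \Qfr) \geq \max\{\Degr(\Pfr), \Degr(\Qfr)\}$ by observing that, since $\Mca$ has no nontrivial unit divisors, every solid arc of $\Pfr$ or $\Qfr$ gives rise to a solid arc in $\Pfr \circ_i \Qfr$, and conclude that $\Rel_{\Deg_k\Mca}$ is an operad ideal. You simply spell out the skeleton bookkeeping in more detail and add a (correct) remark on the necessity of the hypothesis, which the paper leaves implicit.
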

\begin{proof}
    Since $\Mca$ has no nontrivial unit divisors, for any $\Mca$-cliques
    $\Pfr$ and $\Qfr$ of $\Cli\Mca$, each solid arc of $\Pfr$ (resp.
    $\Qfr$) gives rise to a solid arc in $\Pfr \circ_i \Qfr$, for any
    valid integer $i$. Hence,
    \begin{equation}
        \Degr(\Pfr \circ_i \Qfr) \geq \max\{\Degr(\Pfr), \Degr(\Qfr)\},
    \end{equation}
    and then, if $\Pfr$ is an $\Mca$-clique of $\Rel_{\Deg_k\Mca}$,
    each $\Mca$-clique obtained by a partial composition involving
    $\Pfr$ and other $\Mca$-cliques is still in $\Rel_{\Deg_k\Mca}$.
    This proves that $\Rel_{\Deg_k\Mca}$ is an operad ideal of
    $\Cli\Mca$ and implies the statement of the proposition.
\end{proof}
\medbreak

For instance, in the operad $\Deg_3\Dbb_2$ (observe that $\Dbb_2$ is
a unitary magma without nontrivial unit divisors), we have
\vspace{-1.75em}
\begin{multicols}{2}
\begin{subequations}
\begin{equation}
    \begin{tikzpicture}[scale=0.6,Centering]
        \node[CliquePoint](1)at(-0.59,-0.81){};
        \node[CliquePoint](2)at(-0.95,0.31){};
        \node[CliquePoint](3)at(-0.00,1.00){};
        \node[CliquePoint](4)at(0.95,0.31){};
        \node[CliquePoint](5)at(0.59,-0.81){};
        \draw[CliqueEmptyEdge](1)edge[]node[]{}(2);
        \draw[CliqueEmptyEdge](1)edge[]node[]{}(5);
        \draw[CliqueEdge](2)edge[]node[CliqueLabel]
            {\begin{math}\Dtt_1\end{math}}(3);
        \draw[CliqueEmptyEdge](3)edge[]node[]{}(4);
        \draw[CliqueEdge](4)edge[]node[CliqueLabel]
            {\begin{math}0\end{math}}(5);
        \draw[CliqueEmptyEdge](1)edge[]node[]{}(2);
        \draw[CliqueEdge](1)
            edge[bend left=30]node[CliqueLabel,near start]
            {\begin{math}0\end{math}}(4);
        \draw[CliqueEdge](2)
            edge[bend left=30]node[CliqueLabel,near start]
            {\begin{math}\Dtt_1\end{math}}(5);
    \end{tikzpicture}
    \circ_2
    \begin{tikzpicture}[scale=0.5,Centering]
        \node[CliquePoint](1)at(-0.71,-0.71){};
        \node[CliquePoint](2)at(-0.71,0.71){};
        \node[CliquePoint](3)at(0.71,0.71){};
        \node[CliquePoint](4)at(0.71,-0.71){};
        \draw[CliqueEmptyEdge](1)edge[]node[]{}(2);
        \draw[CliqueEdge](1)edge[]node[CliqueLabel]
            {\begin{math}0\end{math}}(4);
        \draw[CliqueEmptyEdge](2)edge[]node[]{}(3);
        \draw[CliqueEdge](3)edge[]node[CliqueLabel]
            {\begin{math}0\end{math}}(4);
        \draw[CliqueEdge](2)edge[]node[CliqueLabel]
            {\begin{math}\Dtt_1\end{math}}(4);
    \end{tikzpicture}
    =
    \begin{tikzpicture}[scale=0.8,Centering]
        \node[CliquePoint](1)at(-0.43,-0.90){};
        \node[CliquePoint](2)at(-0.97,-0.22){};
        \node[CliquePoint](3)at(-0.78,0.62){};
        \node[CliquePoint](4)at(-0.00,1.00){};
        \node[CliquePoint](5)at(0.78,0.62){};
        \node[CliquePoint](6)at(0.97,-0.22){};
        \node[CliquePoint](7)at(0.43,-0.90){};
        \draw[CliqueEmptyEdge](1)edge[]node[]{}(2);
        \draw[CliqueEmptyEdge](1)edge[]node[]{}(7);
        \draw[CliqueEmptyEdge](2)edge[]node[]{}(3);
        \draw[CliqueEmptyEdge](3)edge[]node[]{}(4);
        \draw[CliqueEmptyEdge](4)edge[]node[]{}(5);
        \draw[CliqueEdge](3)edge[bend right=30]node[CliqueLabel]
            {\begin{math}\Dtt_1\end{math}}(5);
        \draw[CliqueEdge](4)edge[]node[CliqueLabel]
            {\begin{math}0\end{math}}(5);
        \draw[CliqueEmptyEdge](5)edge[]node[CliqueLabel]{}(6);
        \draw[CliqueEdge](6)edge[]node[CliqueLabel]
            {\begin{math}0\end{math}}(7);
        \draw[CliqueEdge](2)edge[bend left=30]node[CliqueLabel,near end]
            {\begin{math}\Dtt_1\end{math}}(7);
        \draw[CliqueEdge](2)edge[bend right=30]node[CliqueLabel]
            {\begin{math}0\end{math}}(5);
    \end{tikzpicture}\,,
\end{equation}

\begin{equation}
    \begin{tikzpicture}[scale=0.6,Centering]
        \node[CliquePoint](1)at(-0.59,-0.81){};
        \node[CliquePoint](2)at(-0.95,0.31){};
        \node[CliquePoint](3)at(-0.00,1.00){};
        \node[CliquePoint](4)at(0.95,0.31){};
        \node[CliquePoint](5)at(0.59,-0.81){};
        \draw[CliqueEmptyEdge](1)edge[]node[]{}(2);
        \draw[CliqueEmptyEdge](1)edge[]node[]{}(5);
        \draw[CliqueEdge](2)edge[]node[CliqueLabel]
            {\begin{math}\Dtt_1\end{math}}(3);
        \draw[CliqueEmptyEdge](3)edge[]node[]{}(4);
        \draw[CliqueEdge](4)edge[]node[CliqueLabel]
            {\begin{math}0\end{math}}(5);
        \draw[CliqueEmptyEdge](1)edge[]node[]{}(2);
        \draw[CliqueEdge](1)
            edge[bend left=30]node[CliqueLabel,near start]
            {\begin{math}0\end{math}}(4);
        \draw[CliqueEdge](2)
            edge[bend left=30]node[CliqueLabel,near start]
            {\begin{math}\Dtt_1\end{math}}(5);
    \end{tikzpicture}
    \circ_3
    \begin{tikzpicture}[scale=0.5,Centering]
        \node[CliquePoint](1)at(-0.71,-0.71){};
        \node[CliquePoint](2)at(-0.71,0.71){};
        \node[CliquePoint](3)at(0.71,0.71){};
        \node[CliquePoint](4)at(0.71,-0.71){};
        \draw[CliqueEmptyEdge](1)edge[]node[]{}(2);
        \draw[CliqueEdge](1)edge[]node[CliqueLabel]
            {\begin{math}0\end{math}}(4);
        \draw[CliqueEmptyEdge](2)edge[]node[]{}(3);
        \draw[CliqueEdge](3)edge[]node[CliqueLabel]
            {\begin{math}0\end{math}}(4);
        \draw[CliqueEdge](2)edge[]node[CliqueLabel]
            {\begin{math}\Dtt_1\end{math}}(4);
    \end{tikzpicture}
    = 0.
\end{equation}
\end{subequations}
\end{multicols}
\medbreak

When $0 \leq k' \leq k$ are integers, by
Proposition~\ref{prop:quotient_Cli_M_degrees}, $\Deg_k\Mca$ and
$\Deg_{k'}\Mca$ are both quotient operads of $\Cli\Mca$. Moreover,
since $\Rel_{\Deg_k\Mca}$ is a subspace of $\Rel_{\Deg_{k'}\Mca}$,
$\Deg_{k'}\Mca$ is a quotient operad of $\Deg_k\Mca$.
\medbreak

Observe that $\Deg_0\Mca$ is the linear span of all $\Mca$-cliques
without solid arcs. If $\Pfr$ and $\Qfr$ are such $\Mca$-cliques, all
partial compositions $\Pfr \circ_i \Qfr$ are equal to the unique
$\Mca$-clique without solid arcs of arity $|\Pfr| + |\Qfr| - 1$. For
this reason, $\Deg_0\Mca$ is the associative operad~$\As$.
\medbreak

Any skeleton of an $\Mca$-clique of arity $n$ of $\Deg_1\Mca$ can be
seen as a partition of the set $[n + 1]$ into singletons or pairs.
Therefore, $\Deg_1\Mca$ can be seen as an operad on such colored
partitions, where each pair of the partitions has one color from the
set $\bar{\Mca}$. In the operad $\Deg_1\Dbb_0$ (observe that $\Dbb_0$ is
the only unitary magma without nontrivial unit divisors on two
elements), one has for instance
\vspace{-1.75em}
\begin{multicols}{2}
\begin{subequations}
\begin{equation} \label{equ:example_involutions_1}
    \begin{tikzpicture}[scale=0.6,Centering]
        \node[CliquePoint](1)at(-0.59,-0.81){};
        \node[CliquePoint](2)at(-0.95,0.31){};
        \node[CliquePoint](3)at(-0.00,1.00){};
        \node[CliquePoint](4)at(0.95,0.31){};
        \node[CliquePoint](5)at(0.59,-0.81){};
        \draw[CliqueEmptyEdge](1)edge[]node[]{}(2);
        \draw[CliqueEmptyEdge](1)edge[]node[]{}(5);
        \draw[CliqueEmptyEdge](2)edge[]node[]{}(3);
        \draw[CliqueEmptyEdge](3)edge[]node[]{}(4);
        \draw[CliqueEmptyEdge](4)edge[]node[]{}(5);
        \draw[CliqueEdge](1)edge[bend left=30]node[CliqueLabel]
            {\begin{math}0\end{math}}(4);
    \end{tikzpicture}
    \circ_2
    \begin{tikzpicture}[scale=0.5,Centering]
        \node[CliquePoint](1)at(-0.71,-0.71){};
        \node[CliquePoint](2)at(-0.71,0.71){};
        \node[CliquePoint](3)at(0.71,0.71){};
        \node[CliquePoint](4)at(0.71,-0.71){};
        \draw[CliqueEmptyEdge](1)edge[]node[]{}(2);
        \draw[CliqueEmptyEdge](1)edge[]node[]{}(4);
        \draw[CliqueEmptyEdge](3)edge[]node[]{}(4);
        \draw[CliqueEdge](1)edge[]node[CliqueLabel,near start]
            {\begin{math}0\end{math}}(3);
        \draw[CliqueEmptyEdge](2)edge[]node[]{}(3);
        \draw[CliqueEdge](2)edge[]node[CliqueLabel,near end]
            {\begin{math}0\end{math}}(4);
    \end{tikzpicture}
    =
    \begin{tikzpicture}[scale=0.8,Centering]
        \node[CliquePoint](1)at(-0.43,-0.90){};
        \node[CliquePoint](2)at(-0.97,-0.22){};
        \node[CliquePoint](3)at(-0.78,0.62){};
        \node[CliquePoint](4)at(-0.00,1.00){};
        \node[CliquePoint](5)at(0.78,0.62){};
        \node[CliquePoint](6)at(0.97,-0.22){};
        \node[CliquePoint](7)at(0.43,-0.90){};
        \draw[CliqueEmptyEdge](1)edge[]node[]{}(2);
        \draw[CliqueEmptyEdge](1)edge[]node[]{}(7);
        \draw[CliqueEmptyEdge](2)edge[]node[]{}(3);
        \draw[CliqueEmptyEdge](3)edge[]node[]{}(4);
        \draw[CliqueEmptyEdge](4)edge[]node[]{}(5);
        \draw[CliqueEdge](1)edge[bend left=30]node[CliqueLabel]
            {\begin{math}0\end{math}}(6);
        \draw[CliqueEmptyEdge](4)edge[]node[]{}(5);
        \draw[CliqueEmptyEdge](5)edge[]node[CliqueLabel]{}(6);
        \draw[CliqueEmptyEdge](6)edge[]node[]{}(7);
        \draw[CliqueEdge](2)
            edge[bend right=30]node[CliqueLabel,near start]
            {\begin{math}0\end{math}}(4);
        \draw[CliqueEdge](3)edge[bend right=30]node[CliqueLabel,near end]
            {\begin{math}0\end{math}}(5);
    \end{tikzpicture}\,,
\end{equation}

\begin{equation} \label{equ:example_involutions_2}
    \begin{tikzpicture}[scale=0.6,Centering]
        \node[CliquePoint](1)at(-0.59,-0.81){};
        \node[CliquePoint](2)at(-0.95,0.31){};
        \node[CliquePoint](3)at(-0.00,1.00){};
        \node[CliquePoint](4)at(0.95,0.31){};
        \node[CliquePoint](5)at(0.59,-0.81){};
        \draw[CliqueEmptyEdge](1)edge[]node[]{}(2);
        \draw[CliqueEmptyEdge](1)edge[]node[]{}(5);
        \draw[CliqueEmptyEdge](2)edge[]node[]{}(3);
        \draw[CliqueEmptyEdge](3)edge[]node[]{}(4);
        \draw[CliqueEmptyEdge](1)edge[]node[]{}(2);
        \draw[CliqueEmptyEdge](4)edge[]node[]{}(5);
        \draw[CliqueEdge](1)edge[bend left=30]node[CliqueLabel]
            {\begin{math}0\end{math}}(4);
    \end{tikzpicture}
    \circ_3
    \begin{tikzpicture}[scale=0.5,Centering]
        \node[CliquePoint](1)at(-0.71,-0.71){};
        \node[CliquePoint](2)at(-0.71,0.71){};
        \node[CliquePoint](3)at(0.71,0.71){};
        \node[CliquePoint](4)at(0.71,-0.71){};
        \draw[CliqueEmptyEdge](1)edge[]node[]{}(2);
        \draw[CliqueEmptyEdge](1)edge[]node[]{}(4);
        \draw[CliqueEmptyEdge](3)edge[]node[]{}(4);
        \draw[CliqueEdge](1)edge[]node[CliqueLabel,near start]
            {\begin{math}0\end{math}}(3);
        \draw[CliqueEmptyEdge](2)edge[]node[]{}(3);
        \draw[CliqueEdge](2)edge[]node[CliqueLabel,near end]
            {\begin{math}0\end{math}}(4);
    \end{tikzpicture}
    = 0.
\end{equation}
\end{subequations}
\end{multicols}
\medbreak

By seeing each solid arc $(x, y)$ of an $\Mca$-clique $\Pfr$ of
$\Deg_1\Dbb_0$ of arity $n$ as the transposition exchanging the letter
$x$ and the letter $y$, we can interpret $\Pfr$ as an involution of
$\mathfrak{S}_{n + 1}$ made of the product of these transpositions.
Hence, $\Deg_1\Dbb_0$ can be seen as an operad on involutions. Under
this point of view, the partial
compositions~\eqref{equ:example_involutions_1}
and~\eqref{equ:example_involutions_2} translate on permutations as
\vspace{-1.75em}
\begin{multicols}{2}
\begin{subequations}
\begin{equation}
    42315 \circ_2 3412 = 6452317,
\end{equation}

\begin{equation}
    42315 \circ_3 3412 = 0.
\end{equation}
\end{subequations}
\end{multicols}
\noindent Equivalently, by the Robinson-Schensted correspondence (see
for instance~\cite{Lot02}), $\Deg_1\Dbb_0$ is an operad on standard Young
tableaux. The dimensions of the operad $\Deg_1\Dbb_0$ begin by
\begin{equation}
    1, 4, 10, 26, 76, 232, 764, 2620,
\end{equation}
and form, except for the first terms, Sequence~\OEIS{A000085}
of~\cite{Slo}.
Moreover, when $\# \Mca = 3$, the dimensions of
$\Deg_1\Mca$ begin by
\begin{equation}
    1, 7, 25, 81, 331, 1303, 5937, 26785,
\end{equation}
and form, except for the first terms, Sequence~\OEIS{A047974}
of~\cite{Slo}.
\medbreak

Besides, any skeleton of an $\Mca$-clique of $\Deg_2\Mca$ can be seen as
a \Def{thunderstorm graph}, {\em i.e.}, a graph where connected
components are cycles or paths. Therefore, $\Deg_2\Mca$ can be seen as
an operad on such colored graphs, where the arcs of the graphs have one
color from the set $\bar{\Mca}$. When $\# \Mca = 2$, the
dimensions of this operad begin by
\begin{equation}
    1, 8, 41, 253, 1858, 15796, 152219, 1638323,
\end{equation}
and form, except for the first terms, Sequence~\OEIS{A136281}
of~\cite{Slo}.
\medbreak

\subsubsection{Nesting-free cliques}%
\label{subsubsec:quotient_Cli_M_Inf}
Let $\Rel_{\Nes\Mca}$ be the subspace of $\Cli\Mca$ generated by all
$\Mca$-cliques that are not nesting-free.  As a quotient of graded
vector spaces,
\begin{equation}
    \Nes\Mca := \Cli\Mca / \Rel_{\Nes\Mca}
\end{equation}
is the linear span of all nesting-free $\Mca$-cliques.
\medbreak

\begin{Proposition} \label{prop:quotient_Cli_M_nesting_free}
    Let $\Mca$ be a unitary magma without nontrivial unit divisors.
    Then, the space $\Nes\Mca$ is a quotient operad of $\Cli\Mca$.
\end{Proposition}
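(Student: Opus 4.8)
The plan is to show, exactly as in the proofs of Propositions~\ref{prop:quotient_Cli_M_bubbles} and~\ref{prop:quotient_Cli_M_degrees}, that $\Rel_{\Nes\Mca}$ is an operad ideal of $\Cli\Mca$; the statement that $\Nes\Mca$ is a quotient operad then follows at once. So, given two $\Mca$-cliques $\Pfr$ and $\Qfr$ and a valid integer $i$, I have to check that if $\Pfr$ or $\Qfr$ is not nesting-free, then $\Pfr \circ_i \Qfr$ is not nesting-free either.

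The first step is the elementary observation already used in the proof of Proposition~\ref{prop:quotient_Cli_M_degrees}: since $\Mca$ has no nontrivial unit divisors, $\Pfr_i \Op \Qfr_0 = \Unit_\Mca$ forces $\Pfr_i = \Unit_\Mca$ and $\Qfr_0 = \Unit_\Mca$. Reading off~\eqref{equ:partial_composition_Cli_M}, this describes how arcs travel through the composition. Writing $m := |\Qfr|$, there is an injective assignment of the arcs of $\Pfr$ to arcs of $\Pfr \circ_i \Qfr$ induced by order-preserving vertex maps ($(x, y)$ is left unchanged if $y \leq i$, sent to $(x, y + m - 1)$ if $x \leq i < y$, and sent to $(x + m - 1, y + m - 1)$ if $i < x$), and likewise an injective assignment $(x, y) \mapsto (x + i - 1, y + i - 1)$ of the arcs of $\Qfr$; both assignments send solid arcs to solid arcs, and their images overlap only at the arc $(i, i + m)$, which is at once the image of the $i$th edge of $\Pfr$ and of the base of $\Qfr$, and which carries the label $\Pfr_i \Op \Qfr_0$, solid as soon as one of $\Pfr_i$, $\Qfr_0$ is.

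Granting this, the second step is immediate: the relation ``$(x', y')$ is nested in $(x, y)$'', being the purely order-theoretic condition $x \leq x' < y' \leq y$, is preserved by order-preserving vertex maps. Hence a witness of non-nesting-freeness of $\Pfr$ --- a pair of distinct solid arcs $(x, y) \ne (x', y')$ of $\Pfr$ with $(x', y')$ nested in $(x, y)$ --- is carried to a pair of solid arcs of $\Pfr \circ_i \Qfr$, still nested, and still distinct because the assignment on the arcs of $\Pfr$ is injective; the same argument applied to the assignment on the arcs of $\Qfr$ handles a witness living in $\Qfr$. Thus $\Pfr \circ_i \Qfr$ is not nesting-free whenever $\Pfr$ or $\Qfr$ is not, which is precisely the ideal property.

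I expect the only delicate point to be the bookkeeping in the first step around the merged arc $(i, i + m)$: one must verify, and this is exactly where the no-unit-divisor hypothesis is used, that a nesting of $\Pfr$ whose inner arc is the $i$th edge --- or a nesting of $\Qfr$ whose outer arc is the base --- keeps a solid merged arc, distinctness surviving for free since within the arcs of a single clique the assignment is injective. Everything else is a routine inspection of~\eqref{equ:partial_composition_Cli_M}.
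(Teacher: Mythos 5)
Your proposal is correct and follows essentially the same route as the paper: show that $\Rel_{\Nes\Mca}$ is an operad ideal by observing that, since $\Mca$ has no nontrivial unit divisors, every solid arc of $\Pfr$ or $\Qfr$ gives rise to a solid arc of $\Pfr \circ_i \Qfr$, so that a nested pair of distinct solid arcs survives the composition. The paper states this more tersely, while you make explicit the order-preserving vertex maps and the bookkeeping around the merged arc $(i, i+m)$; both are the same argument.
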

\begin{proof}
    Since $\Mca$ has no nontrivial unit divisors, for any $\Mca$-cliques
    $\Pfr$ and $\Qfr$ of $\Cli\Mca$, each solid arc of $\Pfr$ (resp.
    $\Qfr$) gives rise to a solid arc in $\Pfr \circ_i \Qfr$, for any
    valid integer $i$. For this reason, if $\Pfr$ is an $\Mca$-clique of
    $\Rel_{\Nes\Mca}$, $\Pfr$ is not nesting-free and each
    $\Mca$-clique obtained by a partial composition involving $\Pfr$ and
    other $\Mca$-cliques is still not nesting-free and thus, belongs
    to $\Rel_{\Nes\Mca}$. This proves that $\Rel_{\Nes\Mca}$ is an
    operad ideal of $\Cli\Mca$ and implies the statement of the
    proposition.
\end{proof}
\medbreak

For instance, in the operad $\Nes\Dbb_2$,
\vspace{-1.75em}
\begin{multicols}{2}
\begin{subequations}
\begin{equation}
    \begin{tikzpicture}[scale=0.6,Centering]
        \node[CliquePoint](1)at(-0.59,-0.81){};
        \node[CliquePoint](2)at(-0.95,0.31){};
        \node[CliquePoint](3)at(-0.00,1.00){};
        \node[CliquePoint](4)at(0.95,0.31){};
        \node[CliquePoint](5)at(0.59,-0.81){};
        \draw[CliqueEmptyEdge](1)edge[]node[]{}(2);
        \draw[CliqueEmptyEdge](1)edge[]node[]{}(5);
        \draw[CliqueEmptyEdge](2)edge[]node[]{}(3);
        \draw[CliqueEmptyEdge](3)edge[]node[]{}(4);
        \draw[CliqueEmptyEdge](4)edge[]node[]{}(5);
        \draw[CliqueEdge](1)
            edge[bend right=30]node[CliqueLabel,near start]
            {\begin{math}0\end{math}}(3);
        \draw[CliqueEdge](2)
            edge[bend right=30]node[CliqueLabel,near end]
            {\begin{math}\Dtt_1\end{math}}(4);
    \end{tikzpicture}
    \circ_4
    \begin{tikzpicture}[scale=0.5,Centering]
        \node[CliquePoint](1)at(-0.71,-0.71){};
        \node[CliquePoint](2)at(-0.71,0.71){};
        \node[CliquePoint](3)at(0.71,0.71){};
        \node[CliquePoint](4)at(0.71,-0.71){};
        \draw[CliqueEmptyEdge](1)edge[]node[]{}(4);
        \draw[CliqueEmptyEdge](3)edge[]node[]{}(4);
        \draw[CliqueEdge](1)edge[]node[CliqueLabel]
            {\begin{math}\Dtt_1\end{math}}(2);
        \draw[CliqueEdge](2)edge[]node[CliqueLabel]
            {\begin{math}0\end{math}}(3);
    \end{tikzpicture}
    =
    \begin{tikzpicture}[scale=0.8,Centering]
        \node[CliquePoint](1)at(-0.43,-0.90){};
        \node[CliquePoint](2)at(-0.97,-0.22){};
        \node[CliquePoint](3)at(-0.78,0.62){};
        \node[CliquePoint](4)at(-0.00,1.00){};
        \node[CliquePoint](5)at(0.78,0.62){};
        \node[CliquePoint](6)at(0.97,-0.22){};
        \node[CliquePoint](7)at(0.43,-0.90){};
        \draw[CliqueEmptyEdge](1)edge[]node[]{}(2);
        \draw[CliqueEmptyEdge](1)edge[]node[]{}(7);
        \draw[CliqueEmptyEdge](2)edge[]node[]{}(3);
        \draw[CliqueEmptyEdge](3)edge[]node[]{}(4);
        \draw[CliqueEmptyEdge](6)edge[]node[]{}(7);
        \draw[CliqueEdge](4)edge[]node[CliqueLabel]
            {\begin{math}\Dtt_1\end{math}}(5);
        \draw[CliqueEdge](5)edge[]node[CliqueLabel]
            {\begin{math}0\end{math}}(6);
        \draw[CliqueEdge](1)
            edge[bend right=30]node[CliqueLabel,near start]
            {\begin{math}0\end{math}}(3);
        \draw[CliqueEdge](2)edge[bend right=30]node[CliqueLabel,near end]
            {\begin{math}\Dtt_1\end{math}}(4);
    \end{tikzpicture}\,,
\end{equation}

\begin{equation}
    \begin{tikzpicture}[scale=0.6,Centering]
        \node[CliquePoint](1)at(-0.59,-0.81){};
        \node[CliquePoint](2)at(-0.95,0.31){};
        \node[CliquePoint](3)at(-0.00,1.00){};
        \node[CliquePoint](4)at(0.95,0.31){};
        \node[CliquePoint](5)at(0.59,-0.81){};
        \draw[CliqueEmptyEdge](1)edge[]node[]{}(2);
        \draw[CliqueEmptyEdge](1)edge[]node[]{}(5);
        \draw[CliqueEmptyEdge](2)edge[]node[]{}(3);
        \draw[CliqueEmptyEdge](3)edge[]node[]{}(4);
        \draw[CliqueEmptyEdge](4)edge[]node[]{}(5);
        \draw[CliqueEdge](1)
            edge[bend right=30]node[CliqueLabel,near start]
            {\begin{math}0\end{math}}(3);
        \draw[CliqueEdge](2)edge[bend right=30]node[CliqueLabel,near end]
            {\begin{math}\Dtt_1\end{math}}(4);
    \end{tikzpicture}
    \circ_3
    \begin{tikzpicture}[scale=0.5,Centering]
        \node[CliquePoint](1)at(-0.71,-0.71){};
        \node[CliquePoint](2)at(-0.71,0.71){};
        \node[CliquePoint](3)at(0.71,0.71){};
        \node[CliquePoint](4)at(0.71,-0.71){};
        \draw[CliqueEmptyEdge](1)edge[]node[]{}(4);
        \draw[CliqueEmptyEdge](3)edge[]node[]{}(4);
        \draw[CliqueEdge](1)edge[]node[CliqueLabel]
            {\begin{math}\Dtt_2\end{math}}(2);
        \draw[CliqueEdge](2)edge[]node[CliqueLabel]
            {\begin{math}0\end{math}}(3);
    \end{tikzpicture}
    = 0.
\end{equation}
\end{subequations}
\end{multicols}
\medbreak

Observe that in the same way as considering $\Mca$-cliques of crossing
numbers $k$ or less leads to quotients $\Cro_k\Mca$ of
$\Cli\Mca$ (see Section~\ref{subsubsec:quotient_Cli_M_crossings}), it is
possible to define analogous quotients $\Nes_k\Mca$ spanned by
$\Mca$-cliques having solid arcs that nest at most $k$ other ones.
\medbreak

Recall that a \Def{Dyck path} of \Def{size} $n$ is a word $u$ on
$\{\Att, \Btt\}$ of length $2n$ such that $|u|_\Att = |u|_\Btt$ and,
for each prefix $v$ of $u$, $|v|_\Att \geq |v|_\Btt$.
\medbreak

\begin{Lemma} \label{lem:bijection_Inf_M_Dyck_paths}
    Let $\Mca$ be a finite unitary magma without nontrivial unit
    divisors. For all $n \geq 2$, the set of all $\Mca$-cliques of
    $\Nes\Mca(n)$ is in one-to-one correspondence with the set of all
    Dyck paths of size $n + 1$ wherein letters $\Att$ at even positions
    are colored by~$\bar{\Mca}$. Moreover, there is a correspondence
    between these two sets that sends any $\Mca$-clique of $\Nes\Mca(n)$
    with $k$ solid edges to a Dyck path with exactly $k$ letters $\Att$
    at even positions, for any $0 \leq k \leq n$.
\end{Lemma}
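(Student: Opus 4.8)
The plan is to prove the lemma by constructing an explicit bijection in two stages and then reading off the refined count.

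\emph{A combinatorial reduction.} An $\Mca$-clique $\Pfr$ is nesting-free precisely when its solid arcs form an antichain for the inclusion order on the closed intervals $[x,y]$ they span. Sorting such an antichain by left endpoint, one sees that the solid arcs of a nesting-free $\Mca$-clique of arity $n$ amount to two strictly increasing sequences $x_1 < \dots < x_k$ and $y_1 < \dots < y_k$ in $[n+1]$ with $x_i < y_i$ for all $i \in [k]$, the solid arcs being the $(x_i,y_i)$; equivalently, to an antichain $S$ of intervals of $[n+1]$. Restoring labels, I would note that an element of $\Nes\Mca(n)$ is the same datum as such an $S$ together with a map $c \colon S \to \bar{\Mca}$, every remaining arc carrying $\Unit_\Mca$; and, conversely, for $n \geq 2$ each pair $(S,c)$ yields a genuine nesting-free $\Mca$-clique. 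This last point is the only place the hypothesis $n \geq 2$ is used, since for $n = 1$ an $S$ containing the base would clash with the convention that $\UnitClique$ is the unique $\Mca$-clique of arity $1$. Thus the set of $\Mca$-cliques of $\Nes\Mca(n)$ is identified with these pairs $(S,c)$, and the number of solid arcs of $\Pfr$ equals $\# S$.

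\emph{The Dyck path.} To a pair $(S,c)$ I would attach the word $w = w_1 \cdots w_{2(n+1)}$ over $\{\Att,\Btt\}$ in which, for $v \in [n+1]$, $w_{2v-1} := \Att$ if $v$ is not the right endpoint of any arc of $S$ and $w_{2v-1} := \Btt$ otherwise, while $w_{2v} := \Att$ if $v$ is the left endpoint of an arc of $S$ and $w_{2v} := \Btt$ otherwise; whenever $w_{2v} = \Att$ I colour it by the value of $c$ on the unique arc of $S$ starting at $v$. The central claim to verify is that $w$ is always a Dyck path of size $n+1$: each vertex $u$ contributes $+2$, $0$ or $-2$ to the running height according as $u$ is a left endpoint only, is both or is neither, or is a right endpoint only of arcs of $S$, so the height after $2v$ letters equals $2(\alpha(v) - \beta(v))$ where $\alpha(v) := \#\{i : x_i \leq v\}$ and $\beta(v) := \#\{i : y_i \leq v\}$; the inequalities $x_i < y_i$ give the inclusion $\{i : y_i \leq v\} \subseteq \{i : x_i \leq v\}$, hence $\alpha(v) \geq \beta(v)$, and a parallel one-line estimate handles the intermediate odd positions. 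For the inverse one reads off $X := \{v : w_{2v} = \Att\}$, with its colours, and $Y := \{v : w_{2v-1} = \Btt\}$; a letter-count shows $\# X = \# Y$, and the Dyck condition on $w$ turns out to be exactly equivalent to $x_i < y_i$ for every $i$ — in particular the odd-position heights rule out $x_i = y_i$. Hence $\Phi \colon (S,c) \mapsto w$ is a bijection onto the set of Dyck paths of size $n+1$ whose letters $\Att$ at even positions are coloured by $\bar{\Mca}$.

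\emph{Conclusion.} Since the even positions carrying $\Att$ in $\Phi(S,c)$ are precisely those indexed by $X$, their number is $\# S$ and the colours are transported verbatim; composing $\Phi$ with the identification of the first step produces the asserted bijection, carrying an $\Mca$-clique with $k$ solid arcs to a Dyck path with exactly $k$ letters $\Att$ at even positions. The plain one-to-one correspondence, and the dimension formula when $\Mca$ is finite, then follow by summing over $k$. In this programme the height expansions and the inversion bookkeeping are routine; the delicate step I expect to be the main obstacle is the equivalence between the ``global'' balance inequalities $\alpha(v) \geq \beta(v)$ (together with their sharper odd-position counterparts) coming from the Dyck property and the ``pointwise'' inequalities $x_i < y_i$ that characterise nesting-freeness — this is the heart of the verification.
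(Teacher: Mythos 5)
Your proposal is correct and coincides with the paper's own argument: the word you build (odd position $2v-1$ records whether $v$ is a right endpoint, even position $2v$ records whether $v$ is a left endpoint, carrying the colour of the outgoing arc) is exactly the paper's vertex-by-vertex decoration, and your antichain/sorted-endpoint reformulation of nesting-freeness is the same observation the paper uses to invert the map. You merely spell out the height estimates and the equivalence with $x_i < y_i$ more explicitly than the paper does, which is a welcome but not substantively different elaboration.
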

\begin{proof}
    In this proof, we denote by $\Att_c$ the letter $\Att$ of a Dyck
    path colored by $c \in \bar{\Mca}$. Given an $\Mca$-clique $\Pfr$ of
    $\Nes\Mca(n)$, we decorate each vertex $x$ of $\Pfr$ by
    \begin{enumerate}[fullwidth,label={(\it\arabic*)}]
        \item \label{item:bijection_Inf_M_Dyck_paths_1}
        $\Att\Att_c$ if $x$ has one outcoming arc and no incoming arc,
        where $c$ is the label of the outcoming arc from~$x$;
        \item \label{item:bijection_Inf_M_Dyck_paths_2}
        $\Btt\Btt$ if $x$ has no outcoming arc and one incoming arc;
        \item \label{item:bijection_Inf_M_Dyck_paths_3}
        $\Btt\Att_c$ if $x$ has both one outcoming arc and one incoming
        arc, where $c$ is the label of the outcoming arc from~$x$;
        \item \label{item:bijection_Inf_M_Dyck_paths_4}
        $\Att\Btt$ otherwise.
    \end{enumerate}
    Let $\phi$ be the map sending $\Pfr$ to the word obtained by
    concatenating the decorations of the vertices of $\Pfr$ thus
    described, read from $1$ to~$n + 1$.
    \smallbreak

    We show that $\phi$ is a bijection between the two sets of the
    statement of the lemma. First, observe that since $\Pfr$ is
    nesting-free, for each vertex $y$ of $\Pfr$, there is at most one
    incoming arc to $y$ and one outcoming arc from $y$. For this reason,
    for any vertex $y$ of $\Pfr$, the total number of incoming arcs to
    vertices $x \leq y$ of $\Pfr$ is smaller than or equal to the total
    number of outcoming arcs to vertices $x \leq y$ of $\Pfr$, and the
    total number of vertices having an incoming arc is equal to the
    total number of vertices having an outcoming arc in $\Pfr$. Thus, by
    forgetting the colorings of its letters, the word $\phi(\Pfr)$ is
    a Dyck path.
    \smallbreak

    Besides, given a Dyck path $u$ of size $n + 1$ wherein letters
    $\Att$ at even positions are colored by $\bar{\Mca}$, one can build
    a unique $\Mca$-clique $\Pfr$ of $\Nes\Mca(n)$ such that
    $\phi(\Pfr) = u$. Indeed, by reading the letters of $u$ two by two,
    one knows the number of outcoming and incoming arcs for each vertex
    of $\Pfr$. Since $\Pfr$ is nesting-free, there is one unique way
    to connect these vertices by solid diagonals without creating
    nestings of arcs. Moreover,
    by~\ref{item:bijection_Inf_M_Dyck_paths_1},
    \ref{item:bijection_Inf_M_Dyck_paths_2},
    \ref{item:bijection_Inf_M_Dyck_paths_3},
    and~\ref{item:bijection_Inf_M_Dyck_paths_4}, the colors of the
    letters $\Att$ at even positions allow to label the solid arcs of
    $\Pfr$. Hence $\phi$ is a bijection as claimed.
    \smallbreak

    Finally, by definition of $\phi$, we observe that if $\Pfr$ has
    exactly $k$ solid arcs, the Dyck path $\phi(\Pfr)$ has exactly $k$
    occurrences of the letter $\Att$ at even positions. This implies the
    whole statement of the lemma.
\end{proof}
\medbreak

Let $\Nar(n, k)$ be the \Def{Narayana number}~\cite{Nar55} defined
for all $0 \leq k \leq n - 2$ by
\begin{equation}
    \Nar(n, k) := \frac{1}{k + 1} \binom{n - 2}{k} \binom{n - 1}{k}.
\end{equation}
The number of Dyck paths of size $n - 1$ and exactly $k$ occurrences of
the factor $\Att \Btt$ is $\Nar(n, k)$. Equivalently, this is also the
number of binary trees with $n$ leaves and exactly $k$ internal nodes
having an internal node as a left child.
\medbreak

\begin{Proposition} \label{prop:dimensions_Inf_M}
    Let $\Mca$ be a finite unitary magma without nontrivial unit
    divisors. For all $n \geq 2$,
    \begin{equation} \label{equ:dimensions_Inf_M}
        \dim \Nes\Mca(n) =
        \sum_{0 \leq k \leq n}
        (m - 1)^k \; \Nar(n + 2, k),
    \end{equation}
    where $m := \# \Mca$.
\end{Proposition}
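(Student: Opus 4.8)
The plan is to combine Lemma~\ref{lem:bijection_Inf_M_Dyck_paths} with an enumeration of Dyck paths by the number of letters $\Att$ lying at even positions.

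By Lemma~\ref{lem:bijection_Inf_M_Dyck_paths}, the fundamental basis of $\Nes\Mca(n)$ is in bijection with the set of Dyck paths of size $n + 1$ in which every letter $\Att$ at an even position carries a color from $\bar{\Mca}$, the $\Mca$-cliques with exactly $k$ solid arcs corresponding to the Dyck paths having exactly $k$ letters $\Att$ at even positions. Since $\# \bar{\Mca} = m - 1$ and these colors are chosen independently, a fixed Dyck path with $k$ such letters yields $(m - 1)^k$ colored ones. Writing $d_{N, k}$ for the number of Dyck paths of size $N$ with exactly $k$ letters $\Att$ at even positions, we get
\begin{equation*}
    \dim \Nes\Mca(n) = \sum_{k \geq 0} d_{n + 1, k}\, (m - 1)^k,
\end{equation*}
so that~\eqref{equ:dimensions_Inf_M} reduces to the identity $d_{N, k} = \Nar(N + 1, k)$, which I would establish for all $N \geq 1$ and $k \geq 0$.

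To prove $d_{N, k} = \Nar(N + 1, k)$, I would pass to generating series. Set $F(x, t) := \sum_u x^{|u|} t^{a(u)}$, the sum being over all Dyck paths $u$ (including the empty one), with $|u|$ the size of $u$ and $a(u)$ the number of its letters $\Att$ at even positions, so $F(x, t) = \sum_{N \geq 0} \big( \sum_k d_{N, k} t^k \big) x^N$. Any nonempty Dyck path factors uniquely as $u = \Att\, v\, \Btt\, w$ with $v$ and $w$ Dyck paths. Inserting $v$ just after the initial $\Att$ shifts all its positions by one, hence exchanges its even and odd positions, while $w$ keeps its parities and the displayed $\Att$ (at the odd position $1$) and $\Btt$ (at an even position) contribute nothing; therefore $a(u) = (|v| - a(v)) + a(w)$, which gives the functional equation
\begin{equation*}
    F(x, t) = 1 + x\, F(x t, t^{-1})\, F(x, t).
\end{equation*}
This equation has a unique solution among the formal power series in $x$ with constant term $1$, the coefficient of $x^N$ being determined recursively by those of smaller degree. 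It therefore suffices to check that the Narayana series $H(x, t) := \sum_{N \geq 0} \big( \sum_k \Nar(N + 1, k) t^k \big) x^N$ satisfies the same equation. Using the binary tree interpretation of $\Nar$ recalled before the statement (a binary tree is a leaf or an ordered pair of binary trees, and one splits on whether the left subtree is a leaf) one obtains $H = 1 + x (1 - t) H + x t H^2$, that is $H = 1 + x (t H + 1 - t) H$. On the other hand, the explicit formula for $\Nar$ together with $\binom{N}{k + 1} = \tfrac{N - k}{k + 1} \binom{N}{k}$ gives $\Nar(N + 1, k) = \Nar(N + 1, N - 1 - k)$, so each polynomial $\sum_k \Nar(N + 1, k) t^k$ is palindromic of degree $N - 1$ for $N \geq 1$, which is equivalent to $H(x t, t^{-1}) = t H(x, t) + 1 - t$. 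Substituting this into $H = 1 + x (t H + 1 - t) H$ yields $H = 1 + x H(x t, t^{-1}) H(x, t)$, whence $F = H$ by uniqueness, and thus $d_{N, k} = \Nar(N + 1, k)$, completing the proof.

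The heart of the matter is this equidistribution $d_{N, k} = \Nar(N + 1, k)$: the statistic ``number of $\Att$ at even positions'' is not literally one of the classical Narayana statistics (peaks, valleys, double rises), so it genuinely requires an argument, and the parity swap undergone by the block $v$ in the first-return decomposition --- precisely what forces the substitution $t \mapsto t^{-1}$ in the functional equation --- is the point one must handle carefully. An alternative would be to build a direct bijection between Dyck paths of size $N$ enumerated by $\Att$'s at even positions and binary trees with $N + 1$ leaves enumerated by internal nodes with an internal left child, but the natural recursive structures on the two sides do not match up, so this does not seem to simplify anything.
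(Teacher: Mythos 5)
Your proof follows the paper's reduction exactly: Lemma~\ref{lem:bijection_Inf_M_Dyck_paths} converts the count of nesting-free $\Mca$-cliques of arity $n$ with $k$ solid arcs into $(m-1)^k$ times the number $d_{n+1,k}$ of Dyck paths of size $n+1$ with $k$ letters $\Att$ at even positions, and everything then rests on the identity $d_{N,k} = \Nar(N+1,k)$. The difference is that the paper simply cites Sulanke~\cite{Sul98} for this last identity, whereas you prove it. Your argument for it is correct: the parity bookkeeping $a(u) = (|v| - a(v)) + a(w)$ in the first-return decomposition is right (the block $v$ is shifted by one position, hence flips parities, while $w$ is shifted by the even amount $2|v|+2$), the resulting equation $F(x,t) = 1 + x\,F(xt,t^{-1})\,F(x,t)$ does determine $F$ uniquely because of the prefactor $x$, the binary-tree recursion gives $H = 1 + x(tH + 1 - t)H$, and the symmetry $\Nar(N+1,k) = \Nar(N+1,N-1-k)$ (which follows from $\binom{N}{k+1} = \tfrac{N-k}{k+1}\binom{N}{k}$ as you indicate) converts this into the same equation for $H$. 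What your route buys is self-containedness: the equidistribution of the statistic ``$\Att$'s at even positions'' with the Narayana statistic is genuinely nontrivial, and you supply a proof where the paper outsources it. One small convention point to fix: with the paper's definition of $\Nar(n,k)$ for $0 \leq k \leq n-2$, the polynomial $\sum_k \Nar(1,k)\,t^k$ is an empty sum, so the constant term of your $H$ would be $0$ rather than $1$; you should set $\Nar(1,0) := 1$ (the one-leaf binary tree), both so that $F$ and $H$ agree at $N = 0$ and so that the $N = 0$ term is consistent with your identity $H(xt,t^{-1}) = tH(x,t) + 1 - t$. This is cosmetic and does not affect the statement, which only involves $\Nar(n+2,k)$ for $n \geq 2$.
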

\begin{proof}
    It is known from~\cite{Sul98} that the number of Dyck paths of size
    $n + 1$ with $k$ occurrences of the letter $\Att$ at even positions
    is the Narayana number $\Nar(n + 2, k)$. Hence, by using this
    property together with Lemma~\ref{lem:bijection_Inf_M_Dyck_paths},
    we obtain that the number of nesting-free $\Mca$-cliques of size
    $n$ with $k$ solid arcs is $(m - 1)^k \; \Nar(n + 2, k)$. Therefore,
    since a nesting-free $\Mca$-clique of arity $n$ can have at most
    $n$ solid arcs, \eqref{equ:dimensions_Inf_M} holds.
\end{proof}
\medbreak

The skeletons of the $\Mca$-cliques of $\Nes\Mca$ of arities greater
than $1$ are the graphs such that, if $\{x, y\}$ and $\{x', y'\}$ are
two arcs such that $x \leq x' < y' \leq y$, then $x = x'$ and $y = y'$.
Therefore, $\Nes\Mca$ can be seen as an operad on such colored graphs,
where the arcs of the graphs have one color from the set $\bar{\Mca}$.
Equivalently, as Lemma~\ref{lem:bijection_Inf_M_Dyck_paths} shows,
$\Nes\Mca$ can be seen as an operad of Dyck paths where letters $\Att$
at even positions are colored by~$\bar{\Mca}$.
\medbreak

By Proposition~\ref{prop:dimensions_Inf_M}, when $\# \Mca = 2$, the
dimensions of $\Nes\Mca$ begin by
\begin{equation}
    1, 5, 14, 42, 132, 429, 1430, 4862,
\end{equation}
and form, except for the first terms, Sequence~\OEIS{A000108}
of~\cite{Slo}. When $\# \Mca = 3$, the dimensions of
$\Nes\Mca$ begin by
\begin{equation}
    1, 11, 45, 197, 903, 4279, 20793, 103049,
\end{equation}
and form, except for the first terms, Sequence~\OEIS{A001003}
of~\cite{Slo}. When $\# \Mca = 4$, the dimensions of
$\Nes\Mca$ begin by
\begin{equation}
    1, 19, 100, 562, 3304, 20071, 124996, 793774,
\end{equation}
and form, except for the first terms, Sequence~\OEIS{A007564}
of~\cite{Slo}.
\medbreak

\subsubsection{Acyclic decorated cliques}%
\label{subsubsec:quotient_Cli_M_acyclic}
Let $\Rel_{\Acy\Mca}$ be the subspace of $\Cli\Mca$ generated by all
$\Mca$-cliques that are not acyclic. As a quotient of graded vector
spaces,
\begin{equation}
    \Acy\Mca := \Cli\Mca / \Rel_{\Acy\Mca}
\end{equation}
is the linear span of all acyclic $\Mca$-cliques.
\medbreak

\begin{Proposition} \label{prop:quotient_Cli_M_acyclic}
    Let $\Mca$ be a unitary magma without nontrivial unit divisors.
    Then, the space $\Acy\Mca$ is a quotient operad of $\Cli\Mca$.
\end{Proposition}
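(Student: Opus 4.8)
The plan is to mirror the proofs of the earlier quotient statements, in particular Proposition~\ref{prop:quotient_Cli_M_nesting_free}, since acyclicity is a purely combinatorial property of the skeleton that can only worsen under partial composition. First I would recall that $\Mca$ has no nontrivial unit divisors, so by the observation already used several times (e.g. in the proof of Proposition~\ref{prop:quotient_Cli_M_degrees}), if $\Pfr$ and $\Qfr$ are $\Mca$-cliques then every solid arc of $\Pfr$ and every solid arc of $\Qfr$ gives rise to a solid arc of $\Pfr \circ_i \Qfr$; moreover the newly created arc $(\Pfr \circ_i \Qfr)(i, i+m)$ labeled by $\Pfr_i \Op \Qfr_0$ is solid precisely when at least one of $\Pfr_i$, $\Qfr_0$ is nonunital — and it is solid whenever both are. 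The key point is therefore that $\Skel(\bar{\Pfr})$ and $\Skel(\bar{\Qfr})$ both embed, as subgraphs (up to the relabeling of vertices dictated by the composition), into $\Skel(\overline{\Pfr \circ_i \Qfr})$.

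The central step is to show that $\Skel(\overline{\Pfr\circ_i\Qfr})$ contains a copy of $\Skel(\bar\Pfr)$: writing $\Pfr \circ_i \Qfr = \Rfr$, the vertices $[i]\cup[i+m, n+m]$ of $\Rfr$ correspond to the vertices $[i]\cup[i+1,n+1]$ of $\Pfr$ (via $z \mapsto z$ for $z\le i$ and $z\mapsto z-m+1$ for $z\ge i+m$), and by the definition~\eqref{equ:partial_composition_Cli_M} of the partial composition together with the no-unit-divisors hypothesis, an arc of $\Pfr$ is solid iff the corresponding arc of $\Rfr$ is solid — in particular the $i$th edge $(i,i+1)$ of $\Pfr$, if solid, yields the solid arc $(i,i+m)$ of $\Rfr$. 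Similarly the vertices $[i, i+m]$ of $\Rfr$ carry a copy of $\Skel(\bar\Qfr)$ via $z\mapsto z-i+1$, and the base $(1, m+1)$ of $\Qfr$, if solid, again maps to $(i,i+m)$. Hence any cycle in $\Skel(\bar\Pfr)$ or in $\Skel(\bar\Qfr)$ transports to a cycle in $\Skel(\bar\Rfr)$, so if $\Pfr$ is not acyclic then $\Rfr$ is not acyclic, and likewise for $\Qfr$. This shows that $\Rel_{\Acy\Mca}$ is stable under all partial compositions $\Pfr\circ_i\Qfr$ and $\Qfr\circ_j\Pfr$ with $\Pfr\in\Rel_{\Acy\Mca}$, hence is an operad ideal of $\Cli\Mca$, and therefore $\Acy\Mca$ is a quotient operad.

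The only subtlety — and the part I would write most carefully — is the handling of the glued arc $(i, i+m)$, since it is the unique place where the composition does not simply ``copy'' an arc of $\Pfr$ or of $\Qfr$: its label is $\Pfr_i\Op\Qfr_0$, and here the no-unit-divisors condition is exactly what guarantees that a cycle of $\bar\Pfr$ passing through the $i$th edge is not destroyed (the edge could a priori have had a nonunital label whose product with $\Qfr_0$ collapses to $\Unit_\Mca$). Once that is noted, the argument is a short transport-of-cycles computation exactly parallel to the proof of Proposition~\ref{prop:quotient_Cli_M_nesting_free}, and no further obstacle remains. I would conclude with the single sentence that $\Rel_{\Acy\Mca}$ is thus an operad ideal, whence the claim.
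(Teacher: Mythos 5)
Your proposal is correct and takes essentially the same route as the paper: the no-nontrivial-unit-divisors hypothesis guarantees that every solid arc of $\Pfr$ and of $\Qfr$ survives as a solid arc of $\Pfr \circ_i \Qfr$, so cycles in the skeleton are preserved and $\Rel_{\Acy\Mca}$ is an operad ideal. The paper's version is just terser; your explicit treatment of the vertex relabelings and of the glued arc $(i, i+m)$ fills in details the paper leaves implicit.
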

\begin{proof}
    Since $\Mca$ has no nontrivial unit divisors, for any $\Mca$-cliques
    $\Pfr$ and $\Qfr$ of $\Cli\Mca$, each solid arc of $\Pfr$ (resp.
    $\Qfr$) gives rise to a solid arc in $\Pfr \circ_i \Qfr$, for any
    valid integer $i$. For this reason, if $\Pfr$ is an $\Mca$-clique of
    $\Rel_{\Acy\Mca}$, $\Pfr$ is not acyclic and each $\Mca$-clique
    obtained by a partial composition involving $\Pfr$ and other
    $\Mca$-cliques is still not acyclic and thus, belongs to
    $\Rel_{\Acy\Mca}$. This proves that $\Rel_{\Acy\Mca}$ is an operad
    ideal of $\Cli\Mca$ and implies the statement of the proposition.
\end{proof}
\medbreak

For instance, in the operad $\Acy\Dbb_2$,
\vspace{-1.75em}
\begin{multicols}{2}
\begin{subequations}
\begin{equation}
    \begin{tikzpicture}[scale=0.7,Centering]
        \node[CliquePoint](1)at(-0.59,-0.81){};
        \node[CliquePoint](2)at(-0.95,0.31){};
        \node[CliquePoint](3)at(-0.00,1.00){};
        \node[CliquePoint](4)at(0.95,0.31){};
        \node[CliquePoint](5)at(0.59,-0.81){};
        \draw[CliqueEmptyEdge](1)edge[]node[]{}(5);
        \draw[CliqueEmptyEdge](2)edge[]node[]{}(3);
        \draw[CliqueEmptyEdge](3)edge[]node[]{}(4);
        \draw[CliqueEmptyEdge](4)edge[]node[]{}(5);
        \draw[CliqueEdge](1)edge[]node[CliqueLabel]
            {\begin{math}0\end{math}}(2);
        \draw[CliqueEdge](2)
            edge[bend left=20]node[CliqueLabel,near start]
            {\begin{math}0\end{math}}(5);
        \draw[CliqueEdge](3)
            edge[bend right=20]node[CliqueLabel,near start]
            {\begin{math}\Dtt_1\end{math}}(5);
        \draw[CliqueEdge](1)edge[]node[CliqueLabel,near end]
            {\begin{math}0\end{math}}(4);
    \end{tikzpicture}
    \circ_1
    \begin{tikzpicture}[scale=0.6,Centering]
        \node[CliquePoint](1)at(-0.71,-0.71){};
        \node[CliquePoint](2)at(-0.71,0.71){};
        \node[CliquePoint](3)at(0.71,0.71){};
        \node[CliquePoint](4)at(0.71,-0.71){};
        \draw[CliqueEmptyEdge](1)edge[]node[]{}(2);
        \draw[CliqueEmptyEdge](2)edge[]node[]{}(3);
        \draw[CliqueEmptyEdge](3)edge[]node[]{}(4);
        \draw[CliqueEdge](1)edge[]node[CliqueLabel]
            {\begin{math}\Dtt_1\end{math}}(3);
        \draw[CliqueEdge](1)edge[]node[CliqueLabel]
            {\begin{math}\Dtt_1\end{math}}(4);
    \end{tikzpicture}
    =
    \begin{tikzpicture}[scale=0.9,Centering]
        \node[CliquePoint](1)at(-0.43,-0.90){};
        \node[CliquePoint](2)at(-0.97,-0.22){};
        \node[CliquePoint](3)at(-0.78,0.62){};
        \node[CliquePoint](4)at(-0.00,1.00){};
        \node[CliquePoint](5)at(0.78,0.62){};
        \node[CliquePoint](6)at(0.97,-0.22){};
        \node[CliquePoint](7)at(0.43,-0.90){};
        \draw[CliqueEmptyEdge](1)edge[]node[]{}(2);
        \draw[CliqueEmptyEdge](1)edge[]node[]{}(7);
        \draw[CliqueEmptyEdge](2)edge[]node[]{}(3);
        \draw[CliqueEmptyEdge](3)edge[]node[]{}(4);
        \draw[CliqueEmptyEdge](4)edge[]node[]{}(5);
        \draw[CliqueEmptyEdge](5)edge[]node[]{}(6);
        \draw[CliqueEmptyEdge](6)edge[]node[]{}(7);
        \draw[CliqueEdge](1)edge[bend right=20]node[CliqueLabel,near end]
            {\begin{math}\Dtt_1\end{math}}(3);
        \draw[CliqueEdge](1)edge[]node[CliqueLabel]
            {\begin{math}0\end{math}}(4);
        \draw[CliqueEdge](1)
            edge[bend left=30]node[CliqueLabel,near start]
            {\begin{math}0\end{math}}(6);
        \draw[CliqueEdge](4)edge[]node[CliqueLabel,near start]
            {\begin{math}0\end{math}}(7);
        \draw[CliqueEdge](5)
            edge[bend right=20]node[CliqueLabel,near start]
            {\begin{math}\Dtt_1\end{math}}(7);
    \end{tikzpicture}\,,
\end{equation}

\begin{equation}
    \begin{tikzpicture}[scale=0.7,Centering]
        \node[CliquePoint](1)at(-0.59,-0.81){};
        \node[CliquePoint](2)at(-0.95,0.31){};
        \node[CliquePoint](3)at(-0.00,1.00){};
        \node[CliquePoint](4)at(0.95,0.31){};
        \node[CliquePoint](5)at(0.59,-0.81){};
        \draw[CliqueEmptyEdge](1)edge[]node[]{}(5);
        \draw[CliqueEmptyEdge](2)edge[]node[]{}(3);
        \draw[CliqueEmptyEdge](3)edge[]node[]{}(4);
        \draw[CliqueEmptyEdge](4)edge[]node[]{}(5);
        \draw[CliqueEdge](1)edge[]node[CliqueLabel]
            {\begin{math}0\end{math}}(2);
        \draw[CliqueEdge](2)
            edge[bend left=20]node[CliqueLabel,near start]
            {\begin{math}0\end{math}}(5);
        \draw[CliqueEdge](3)
            edge[bend right=20]node[CliqueLabel,near start]
            {\begin{math}\Dtt_1\end{math}}(5);
        \draw[CliqueEdge](1)edge[]node[CliqueLabel,near end]
            {\begin{math}0\end{math}}(4);
    \end{tikzpicture}
    \circ_3
    \begin{tikzpicture}[scale=0.6,Centering]
        \node[CliquePoint](1)at(-0.71,-0.71){};
        \node[CliquePoint](2)at(-0.71,0.71){};
        \node[CliquePoint](3)at(0.71,0.71){};
        \node[CliquePoint](4)at(0.71,-0.71){};
        \draw[CliqueEmptyEdge](1)edge[]node[]{}(2);
        \draw[CliqueEmptyEdge](2)edge[]node[]{}(3);
        \draw[CliqueEmptyEdge](3)edge[]node[]{}(4);
        \draw[CliqueEdge](1)edge[]node[CliqueLabel]
            {\begin{math}\Dtt_2\end{math}}(3);
        \draw[CliqueEdge](1)edge[]node[CliqueLabel]
            {\begin{math}\Dtt_1\end{math}}(4);
    \end{tikzpicture}
    = 0.
\end{equation}
\end{subequations}
\end{multicols}
\medbreak

The skeletons of the $\Mca$-cliques of $\Acy\Mca$ of arities greater
than $1$ are acyclic graphs or equivalently, forests of non-rooted trees.
Therefore, $\Acy\Mca$ can be seen as an operad on colored forests of
trees, where the edges of the trees of the forests have one color from
the set $\bar{\Mca}$. When $\# \Mca = 2$, the dimensions of $\Acy\Mca$
begin by
\begin{equation}
    1, 7, 38, 291, 2932, 36961, 561948, 10026505,
\end{equation}
and form, except for the first terms, Sequence~\OEIS{A001858}
of~\cite{Slo}.
\medbreak

\subsection{Secondary substructures}%
\label{subsec:secondary_substructures}
Some more substructures of $\Cli\Mca$ are constructed and briefly
studied here. They are constructed by mixing some of the constructions
of the seven main substructures of $\Cli\Mca$ defined in
Section~\ref{subsec:main_substructures} in the following sense.
\medbreak

For any operad $\Oca$ and operad ideals $\Rel_1$ and $\Rel_2$ of $\Oca$,
the space $\Rel_1 + \Rel_2$ is still an operad ideal of $\Oca$, and
$\Oca / \left(\Rel_1 + \Rel_2\right)$ is a quotient of both $\Oca / \Rel_1$ and
$\Oca / \Rel_2$. Moreover, if $\Oca'$ is a suboperad of $\Oca$ and
$\Rel$ is an operad ideal of $\Oca$, the space $\Rel \cap \Oca'$ is an
operad ideal of $\Oca'$, and $\Oca' / \left(\Rel \cap \Oca'\right)$ is a quotient of
$\Oca'$ and a suboperad of $\Oca / \Rel$. For these reasons
(straightforwardly provable), we can combine the constructions of the
previous section to build plenty new suboperads and quotients
of~$\Cli\Mca$ (see Table~\ref{tab:secondary_substructures}).
\begin{table}[ht]
    \centering
    \begin{tabular}{c|c|c}
        Operad & Objects & Ideal of $\Cli\Mca$ \\ \hline \hline
        $\WNC\Mca$ & White noncrossing cliques &
            $\Rel_{\Cro_0\Mca} \cap \Whi\Mca$ \\
        $\Paths\Mca$ & Forests of paths &
            $\Rel_{\Deg_2\Mca} + \Rel_{\Acy\Mca}$ \\
        $\Forests\Mca$ & Forests &
            $\Rel_{\Cro_0\Mca} + \Rel_{\Acy\Mca}$ \\
        $\Motzkin\Mca$ & Motzkin configurations &
            $\Rel_{\Cro_0\Mca} + \Rel_{\Deg_1\Mca}$ \\
        $\Diss\Mca$ & Dissections of polygons &
            $\left(\Rel_{\Cro_0\Mca} + \Rel_{\Deg_1\Mca}\right)
                \cap \Whi\Mca$ \\
        $\Luc\Mca$ & Lucas configurations &
            $\Rel_{\Bub\Mca} + \Rel_{\Deg_1\Mca}$
    \end{tabular}
    \smallbreak

    \caption{\footnotesize
    Operads obtained as quotients of $\Cli\Mca$ by mixing certain ideals
    of~$\Cli\Mca$. All these operads depend on a unitary magma $\Mca$
    which has, in some cases, to satisfy some precise conditions.}
    \label{tab:secondary_substructures}
\end{table}
\medbreak

\subsubsection{Colored white noncrossing configurations}
When $\Mca$ is a unitary magma, let
\begin{equation}
    \WNC\Mca := \Whi\Mca / \Rel_{\Cro_0\Mca} \cap \Whi\Mca.
\end{equation}
The $\Mca$-cliques of $\WNC\Mca$ are white noncrossing $\Mca$-cliques.
\medbreak

When $\# \Mca = 2$, the dimensions of $\WNC\Mca$ begin by
\begin{equation}
    1, 1, 3, 11, 45, 197, 903, 4279,
\end{equation}
and form Sequence~\OEIS{A001003} of~\cite{Slo}. When $\# \Mca = 3$, the
dimensions of $\WNC\Mca$ begin by
\begin{equation}
    1, 1, 5, 31, 215, 1597, 12425, 99955,
\end{equation}
and form Sequence~\OEIS{A269730} of~\cite{Slo}. Observe that these
dimensions are shifted versions the ones of the
$\gamma$-polytridendriform operads $\TDendr_\gamma$~\cite{Gir16} with
$\gamma := \# \Mca - 1$.
\medbreak

\subsubsection{Colored forests of paths}
When $\Mca$ is a unitary magma without nontrivial unit divisors, let
\begin{equation}
    \Paths\Mca := \Cli\Mca / \left(\Rel_{\Deg_2\Mca} + \Rel_{\Acy\Mca}\right).
\end{equation}
The skeletons of the $\Mca$-cliques of $\Paths\Mca$ are forests of
non-rooted trees that are paths. Therefore, $\Paths\Mca$ can be seen as
an operad on such colored graphs, where the arcs of the graphs have one
color from the set~$\bar{\Mca}$.
\medbreak

When $\# \Mca = 2$,
the dimensions of $\Paths\Mca$ begin by
\begin{equation}
    1, 7, 34, 206, 1486, 12412, 117692, 1248004,
\end{equation}
an form, except for the first terms, Sequence~\OEIS{A011800}
of~\cite{Slo}.
\medbreak

\subsubsection{Colored forests}
When $\Mca$ is a unitary magma without nontrivial unit divisors, let
\begin{equation}
    \Forests\Mca := \Cli\Mca / \left(\Rel_{\Cro_0\Mca} + \Rel_{\Acy\Mca}\right).
\end{equation}
The skeletons of the $\Mca$-cliques of $\Forests\Mca$ are forests of
rooted trees having no arcs $\{x, y\}$ and $\{x', y'\}$ satisfying
$x < x' < y < y'$. Therefore, $\Forests\Mca$ can be seen as an operad
on such colored forests, where the edges of the forests have one color
from the set $\bar{\Mca}$. When $\# \Mca = 2$, the dimensions of
$\Forests\Mca$ begin by
\begin{equation}
    1, 7, 33, 81, 1083, 6854, 45111, 305629,
\end{equation}
and form, except for the first terms, Sequence~\OEIS{A054727},
of~\cite{Slo}.
\medbreak

\subsubsection{Colored Motzkin configurations}%
\label{subsubsec:Motzkin_configurations}
When $\Mca$ is a unitary magma without nontrivial unit divisors, let
\begin{equation}
    \Motzkin\Mca := \Cli\Mca / \left(\Rel_{\Cro_0\Mca} + \Rel_{\Deg_1\Mca}\right).
\end{equation}
The skeletons of the $\Mca$-cliques of $\Motzkin\Mca$ are configurations
of non-intersecting chords on a circle. Equivalently, these objects are
graphs of involutions (see
Section~\ref{subsubsec:quotient_Cli_M_degrees}) having no arcs
$\{x, y\}$ and $\{x', y'\}$ satisfying $x < x' < y < y'$. These objects
are enumerated by Motzkin numbers~\cite{Mot48}. Therefore,
$\Motzkin\Mca$ can be seen as an operad on such colored graphs, where
the arcs of the graphs have one color from the set $\bar{\Mca}$. When
$\# \Mca = 2$, the dimensions of $\Motzkin\Mca$ begin by
\begin{equation}
    1, 4, 9, 21, 51, 127, 323, 835,
\end{equation}
and form, except for the first terms, Sequence~\OEIS{A001006},
of~\cite{Slo}.
\medbreak

\subsubsection{Colored dissections of polygons}
When $\Mca$ is a unitary magma without nontrivial unit divisors, let
\begin{equation}
    \Diss\Mca :=
    \Whi\Mca / \left(\left(\Rel_{\Cro_0\Mca} + \Rel_{\Deg_1\Mca}\right)
        \cap \Whi\Mca\right).
\end{equation}
The skeletons of the $\Mca$-cliques of $\Diss\Mca$ are \Def{strict
dissections of polygons}, that are graphs of Motzkin configurations
with no arcs of the form $\{x, x + 1\}$ or $\{1, n + 1\}$, where $n + 1$
is the number of vertices of the graphs. Therefore, $\Diss\Mca$ can be
seen as an operad on such colored graphs, where the arcs of the graphs
have one color from the set $\bar{\Mca}$. When $\# \Mca = 2$, the
dimensions of $\Diss\Mca$ begin by
\begin{equation}
    1, 1, 3, 6, 13, 29, 65, 148,
\end{equation}
and form, except for the first terms, Sequence~\OEIS{A093128}
of~\cite{Slo}.
\medbreak

\subsubsection{Colored Lucas configurations}
When $\Mca$ is a unitary magma without nontrivial unit divisors, let
\begin{equation}
    \Luc\Mca := \Cli\Mca / \left(\Rel_{\Bub\Mca} + \Rel_{\Deg_1\Mca}\right).
\end{equation}
The skeletons of the $\Mca$-cliques of $\Luc\Mca$ are graphs such that
all vertices are of degree at most $1$ and all arcs are of the form
$\{x, x + 1\}$ or $\{1, n + 1\}$, where $n + 1$ is the number of
vertices of the graphs. Therefore, $\Luc\Mca$ can be seen as an operad
on such colored graphs, where the arcs of the graphs have one color
from the set $\bar{\Mca}$. When $\# \Mca = 2$, the dimensions of
$\Luc\Mca$ begin by
\begin{equation}
    1, 4, 7, 11, 18, 29, 47, 76,
\end{equation}
and form, except for the first terms, Sequence~\OEIS{A000032}
of~\cite{Slo}.
\medbreak

\subsection{Relations between substructures}
The suboperads and quotients of $\Cli\Mca$ constructed in
Sections~\ref{subsec:main_substructures}
and~\ref{subsec:secondary_substructures} are linked by injective or
surjective operad morphisms. To establish these, we begin with the
following lemma.
\medbreak

\begin{Lemma} \label{lem:inclusion_families_cliques}
    Let $\Mca$ be a unitary magma. Then,
    \begin{enumerate}[fullwidth,label={(\it\roman*)}]
        \item \label{item:inclusion_families_cliques_1}
        the space $\Rel_{\Acy\Mca}$ is a subspace of $\Rel_{\Deg_1\Mca}$;
        \item \label{item:inclusion_families_cliques_2}
        the spaces $\Rel_{\Nes\Mca}$ and $\Rel_{\Bub\Mca}$ are subspaces
        of $\Rel_{\Deg_0\Mca}$;
        \item \label{item:inclusion_families_cliques_3}
        the spaces $\Rel_{\Cro_0\Mca}$ and $\Rel_{\Deg_2\Mca}$ are
        subspaces of $\Rel_{\Bub\Mca}$;
        \item \label{item:inclusion_families_cliques_4}
        the spaces $\Rel_{\Deg_2\Mca}$ and $\Rel_{\Acy\Mca}$ are
        subspaces of $\Rel_{\Nes\Mca}$.
    \end{enumerate}
\end{Lemma}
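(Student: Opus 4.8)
The plan is to exploit the fact that each of the spaces $\Rel_{\Acy\Mca}$, $\Rel_{\Deg_k\Mca}$, $\Rel_{\Nes\Mca}$, $\Rel_{\Bub\Mca}$, and $\Rel_{\Cro_0\Mca}$ is, directly from its definition, the linear span of a sub-family of the fundamental basis $\Cliques_\Mca$ of $\Cli\Mca$, namely the $\Mca$-cliques failing a prescribed combinatorial property of their skeleton. Consequently, an inclusion $\Rel_X \subseteq \Rel_Y$ of two such spaces is equivalent to the set-theoretic inclusion of the two generating families, so each item becomes a purely combinatorial implication: if the skeleton of an $\Mca$-clique $\Pfr$ fails property $X$, then it also fails property $Y$. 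The one reusable structural remark I would isolate at the outset is that the skeleton of any $\Mca$-bubble has all its vertices of degree at most $2$, since a bubble carries only solid edges and possibly a solid base, and each vertex is incident to at most two of those.

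With this in hand, \ref{item:inclusion_families_cliques_1} follows because a non-acyclic $\Pfr$ has a cycle in $\Skel(\Pfr)$, hence a vertex of degree at least $2$, so $\Degr(\Pfr) \geq 2$ and $\Pfr$ generates $\Rel_{\Deg_1\Mca}$. For \ref{item:inclusion_families_cliques_2} I would note that an $\Mca$-clique which is not nesting-free has at least one solid arc, and one which is not a bubble has a solid diagonal; in both cases $\Degr(\Pfr) \geq 1$, so $\Pfr$ generates $\Rel_{\Deg_0\Mca}$. For \ref{item:inclusion_families_cliques_3} I would use the preliminary remark: if $\Cros(\Pfr) \geq 1$ then $\Pfr$ has (two crossing) solid diagonals, and if $\Degr(\Pfr) \geq 3$ then $\Pfr$ cannot be a bubble because bubbles have maximal degree $2$; either way $\Pfr$ generates $\Rel_{\Bub\Mca}$.

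The item demanding a little more care is \ref{item:inclusion_families_cliques_4}, where a nesting must actually be produced, and here the trick is to pick the right vertex. For $\Rel_{\Deg_2\Mca} \subseteq \Rel_{\Nes\Mca}$, I would take a vertex $x$ of $\Pfr$ with $\deg(x) \geq 3$; among its three distinct neighbours in $\Skel(\Pfr)$, two lie on the same side of $x$ by the pigeonhole principle, say two neighbours $y < y'$ both larger than $x$ (the case of two smaller neighbours being symmetric), and then the solid arc $(x, y)$ is properly nested in the solid arc $(x, y')$, so $\Pfr$ is not nesting-free. For $\Rel_{\Acy\Mca} \subseteq \Rel_{\Nes\Mca}$, I would take a cycle of $\Skel(\Pfr)$ and let $m$ be its numerically smallest vertex; its two neighbours along the cycle are both distinct from $m$ and thus both larger, and the same nesting argument applies to the two solid arcs incident to $m$. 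In every case $\Pfr$ generates $\Rel_{\Nes\Mca}$, which finishes the proof; I anticipate no genuine obstacle beyond this choice of an extremal vertex (the cycle minimum, respectively a vertex of maximal degree) to exhibit the nesting.
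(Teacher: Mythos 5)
Your proposal is correct and follows essentially the same route as the paper: reduce each inclusion of ideals to an inclusion of the spanning sets of $\Mca$-cliques, handle items (i)--(iii) by observing that the relevant failure forces a solid arc, a solid diagonal, or a vertex of large degree (your remark that bubbles have skeleton degree at most $2$ is just the contrapositive of the paper's pigeonhole on the three arcs at a vertex of degree $3$), and settle item (iv) by exhibiting a nesting at an extremal vertex --- the pigeonholed side of a high-degree vertex, respectively the minimum of a cycle --- exactly as the paper does.
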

\begin{proof}
    All the spaces appearing in the statement of the lemma are subspaces
    of $\Cli\Mca$ generated by some subfamilies of $\Mca$-cliques.
    Therefore, to prove the assertions of the lemma, we shall prove
    inclusions of adequate subfamilies of such objects.
    \smallbreak

    If $\Pfr$ is an $\Mca$-clique of $\Rel_{\Acy\Mca}$, by definition,
    $\Pfr$ has a cycle formed by solid arcs. Hence, $\Pfr$ has in
    particular a solid arc and a vertex of degree $2$ or more. For this
    reason, since $\Rel_{\Deg_1\Mca}$ is the linear span of all
    $\Mca$-cliques of degree $2$ or more, $\Pfr$ is in
    $\Rel_{\Deg_1\Mca}$. This
    implies~\ref{item:inclusion_families_cliques_1}.
    \smallbreak

    If $\Pfr$ is an $\Mca$-clique of $\Rel_{\Nes\Mca}$ or
    $\Rel_{\Bub\Mca}$, by definition, $\Pfr$ has in particular a solid
    arc. Hence, since $\Rel_{\Deg_0\Mca}$ is the linear span of all
    $\Mca$-cliques with at least one vertex with a positive degree,
    $\Pfr$ is in $\Rel_{\Deg_0\Mca}$. This
    implies~\ref{item:inclusion_families_cliques_2}.
    \smallbreak

    If $\Pfr$ is an $\Mca$-clique of $\Rel_{\Cro_0\Mca}$ or
    $\Rel_{\Deg_2\Mca}$, $\Pfr$ has in particular a solid diagonal.
    Indeed, when $\Pfr$ is in $\Rel_{\Cro_0\Mca}$ this property is
    immediate. When $\Pfr$ is in $\Rel_{\Deg_2\Mca}$, since $\Pfr$ has a
    vertex $x$ of degree $3$ or more, the skeleton of $\Pfr$ has three
    arcs $\{x, y_1\}$, $\{x, y_2\}$, and $\{x, y_3\}$ with
    $y_i \ne x - 1$, $y_i \ne x + 1$, and $y_i \ne |\Pfr| + 1$ for at
    least one $i \in [3]$, so that the arc
    $(\min\{x, y_i\}, \max\{x, y_i\})$ is a solid diagonal of $\Pfr$. For
    this reason, since $\Rel_{\Bub\Mca}$ is the linear span of all
    $\Mca$-cliques with at least one solid diagonal, $\Pfr$ is in
    $\Rel_{\Bub\Mca}$. This
    implies~\ref{item:inclusion_families_cliques_3}.
    \smallbreak

    If $\Pfr$ is an $\Mca$-clique of $\Rel_{\Deg_2\Mca}$ or
    $\Rel_{\Acy\Mca}$, $\Pfr$ has in particular a solid arc nested in
    another one. Indeed, when $\Pfr$ is in $\Rel_{\Deg_2\Mca}$, since
    $\Pfr$ has a vertex $x$ of a degree $3$ or more, the skeleton
    of $\Pfr$ has three arcs $\{x, y_1\}$, $\{x, y_2\}$, and
    $\{x, y_3\}$. One can check that for all relative orders between
    the vertices $x$, $y_1$, $y_2$, and $y_3$, one of these arcs
    is nested in another one, so that $\Pfr$ is not nesting-free. When
    $\Pfr$ is in $\Rel_{\Acy\Mca}$, $\Pfr$ contains a cycle formed by
    solid arcs. Let $x_1$, $x_2$, \dots, $x_k$, $k \geq 3$, be the
    vertices of $\Pfr$ that form this cycle. We can assume without loss
    of generality that $x_1 \leq x_i$ for all $i \in [k]$ and thus, that
    $(x_1, x_2)$ and $(x_1, x_k)$ are solid arcs of $\Pfr$ being part of
    the cycle. Then, when $x_2 < x_k$, since
    $x_1 \leq x_1 < x_2 \leq x_k$, the arc $(x_1, x_2)$ is nested in
    $(x_1, x_k)$. Otherwise, $x_k < x_2$, and since
    $x_1 \leq x_1 < x_k \leq x_2$, the arc $(x_1, x_k)$ is nested in
    $(x_1, x_2)$. For these reasons, since $\Rel_{\Nes\Mca}$ is the
    linear span of all $\Mca$-cliques that are non nesting-free,
    $\Pfr$ is in $\Rel_{\Nes\Mca}$. This
    implies~\ref{item:inclusion_families_cliques_4}.
\end{proof}
\medbreak

\subsubsection{Relations between the main substructures}
Here we list and explain the morphisms between the main substructures of
$\Cli\Mca$. First, Lemma~\ref{lem:inclusion_families_cliques} implies
that there are surjective operad morphisms from $\Acy\Mca$ to
$\Deg_1\Mca$, from $\Nes\Mca$ to $\Deg_0\Mca$, from $\Bub\Mca$ to
$\Deg_0\Mca$, from $\Cro_0\Mca$ to $\Bub\Mca$, from $\Deg_2\Mca$ to
$\Bub\Mca$, from $\Deg_2\Mca$ to $\Nes\Mca$, and from $\Acy\Mca$
to~$\Nes\Mca$. Second, when $B$, $E$, and $D$ are subsets of $\Mca$ such
that $\Unit_\Mca \in B$, $\Unit_\Mca \in E$, and $E \Op B \subseteq D$,
$\Whi\Mca$ is a suboperad of $\Lab_{B,E,D}\Mca$. Finally, there is a
surjective operad morphism from $\Whi\Mca$ to the associative operad
$\As$ sending any $\Mca$-clique $\Pfr$ of $\Whi\Mca$ to the unique basis
element of $\As$ of the same arity as the one of~$\Pfr$. The relations
between the main suboperads and quotients of $\Cli\Mca$ built here are
summarized in the diagram of operad morphisms of
Figure~\ref{fig:diagram_main_operads}.
\begin{figure}[ht]
    \centering
    \scalebox{.73}{
    \begin{tikzpicture}[xscale=1.2,yscale=1.1,Centering]
        \node[text=Col2](CliM)at(8,10)
            {\begin{math}\Cli\Mca\end{math}};
        \node[text=Col4](AcyM)at(4,8)
            {\begin{math}\Acy\Mca\end{math}};
        \node[text=Col4](DegkM)at(6,8)
            {\begin{math}\Deg_k\Mca\end{math}};
        \node[text=Col1](CrokM)at(10,8)
            {\begin{math}\Cro_k\Mca\end{math}};
        \node[text=Col1](LabM)at(12,8)
            {\begin{math}\Lab_{B, E, D}\Mca\end{math}};
        \node[text=Col4](Deg2M)at(6,6)
            {\begin{math}\Deg_2\Mca\end{math}};
        \node[text=Col1](Cro0M)at(10,6)
            {\begin{math}\Cro_0\Mca\end{math}};
        \node[text=Col4](NesM)at(4,4)
            {\begin{math}\Nes\Mca\end{math}};
        \node[text=Col4](Deg1M)at(6,4)
            {\begin{math}\Deg_1\Mca\end{math}};
        \node[text=Col4](BubM)at(8,4)
            {\begin{math}\Bub\Mca\end{math}};
        \node[text=Col1](WhiM)at(12,4)
            {\begin{math}\Whi\Mca\end{math}};
        \node[text=Col4](Deg0M)at(8,2)
            {\begin{math}\Deg_0\Mca\end{math}};
        \draw[Surjection](CliM)--(AcyM);
        \draw[Surjection](CliM)--(DegkM);
        \draw[Surjection](CliM)to[bend right=15](CrokM);
        \draw[Injection](CrokM)to[bend right=15](CliM);
        \draw[Injection](LabM)--(CliM);
        \draw[Surjection](AcyM)--(NesM);
        \draw[Surjection](AcyM)--(Deg1M);
        \draw[Surjection](DegkM)--(Deg2M);
        \draw[Surjection](CrokM)to[bend right=15](Cro0M);
        \draw[Injection](Cro0M)to[bend right=15](CrokM);
        \draw[Injection](WhiM)--(LabM);
        \draw[Surjection](Deg2M)--(NesM);
        \draw[Surjection](Deg2M)--(Deg1M);
        \draw[Surjection](Deg2M)--(BubM);
        \draw[Surjection](Cro0M)--(BubM);
        \draw[Surjection](NesM)--(Deg0M);
        \draw[Surjection](Deg1M)--(Deg0M);
        \draw[Surjection](BubM)--(Deg0M);
        \draw[Surjection](WhiM)--(Deg0M);
    \end{tikzpicture}}
    \vspace{-.5em}
    \caption{\footnotesize
    The diagram of the main suboperads and quotients of $\Cli\Mca$.
    Arrows~$\rightarrowtail$ (resp.~$\twoheadrightarrow$) are injective
    (resp. surjective) operad morphisms. Here, $\Mca$ is a unitary magma
    without nontrivial unit divisors, $k$ is a positive integer, and
    $B$, $E$, and $D$ are subsets of $\Mca$ such that
    $\Unit_\Mca \in B$, $\Unit_\Mca \in E$, and $E \Op B \subseteq D$.}
    \label{fig:diagram_main_operads}
\end{figure}
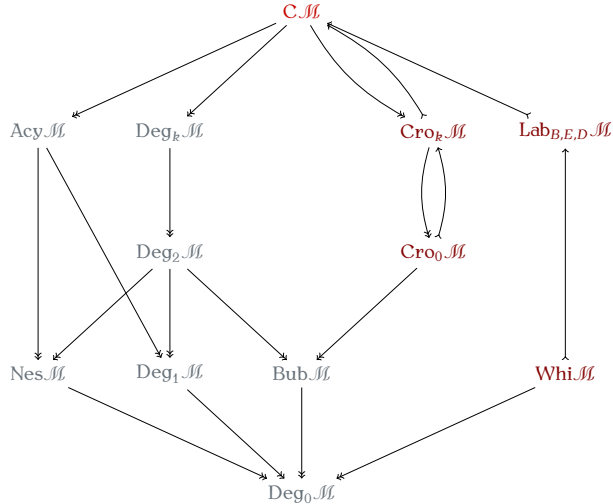
\medbreak

\subsubsection{Relations between the secondary and main substructures}
Here we list and explain the morphisms between the secondary and main
substructures of $\Cli\Mca$. First, immediately from their definitions,
$\WNC\Mca$ is a suboperad of $\Cro_0\Mca$ and a quotient of~$\Whi\Mca$,
$\Paths\Mca$ is both a quotient of $\Deg_2\Mca$ and $\Acy\Mca$,
$\Forests\Mca$ is both a quotient of $\Cro_0\Mca$ and $\Acy\Mca$,
$\Motzkin\Mca$ is both a quotient of $\Cro_0\Mca$ and $\Deg_1\Mca$,
$\Diss\Mca$ is a suboperad of $\Motzkin\Mca$ and a quotient of
$\WNC\Mca$, and $\Luc\Mca$ is both a quotient of $\Bub\Mca$ and
$\Deg_1\Mca$. Moreover, since by
Lemma~\ref{lem:inclusion_families_cliques}, $\Rel_{\Acy\Mca}$ is a
subspace of $\Rel_{\Deg_1\Mca}$, $\Rel_{\Deg_2\Mca}$ and
$\Rel_{\Acy\Mca}$ are subspaces of $\Rel_{\Nes\Mca}$, and
$\Rel_{\Cro_0\Mca}$ is a subspace of $\Rel_{\Bub\Mca}$, we respectively
have that $\Rel_{\Deg_2\Mca} + \Rel_{\Acy\Mca}$ is a subspace of both
$\Rel_{\Deg_1\Mca}$ and $\Rel_{\Nes\Mca}$,
$\Rel_{\Cro_0\Mca} + \Rel_{\Acy\Mca}$ is a subspace of
$\Rel_{\Cro_0\Mca} + \Rel_{\Deg_1\Mca}$, and
$\Rel_{\Cro_0\Mca} + \Rel_{\Deg_1\Mca}$ is a subspace of
$\Rel_{\Bub\Mca} + \Rel_{\Deg_1\Mca}$. For these reasons, there are
surjective operad morphisms from $\Paths\Mca$ to $\Deg_1\Mca$, from
$\Paths\Mca$ to~$\Nes\Mca$, from $\Forests\Mca$ to~$\Motzkin\Mca$, and
from $\Motzkin\Mca$ to~$\Luc\Mca$. The relations between the secondary
suboperads and quotients of $\Cli\Mca$ built here are summarized in the
diagram of operad morphisms of
Figure~\ref{fig:diagram_secondary_operads}.
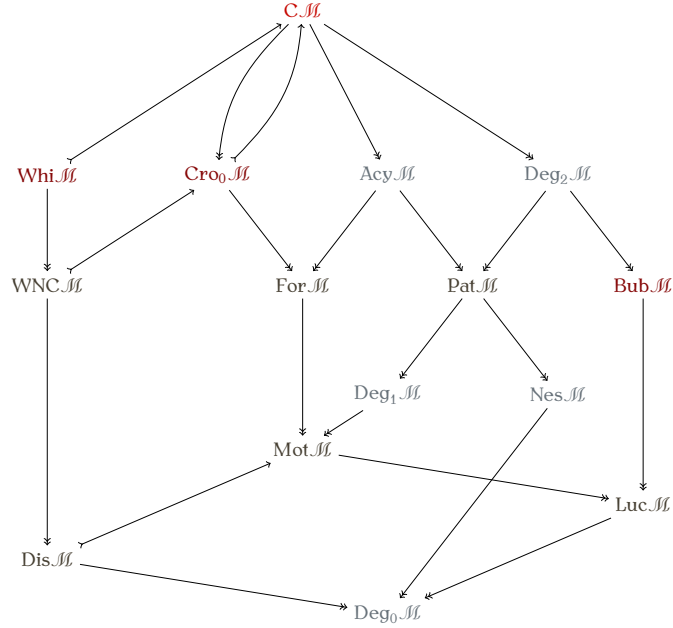
\begin{figure}[ht]
    \centering
    \scalebox{.73}{
    \begin{tikzpicture}[xscale=1.55,yscale=1,Centering]
        \node[text=Col2](CliM)at(13,3)
            {\begin{math}\Cli\Mca\end{math}};
        \node[text=Col4](Deg0M)at(14,-8)
            {\begin{math}\Deg_0\Mca\end{math}};
        \node[text=Col1](Cro0M)at(12,0)
            {\begin{math}\Cro_0\Mca\end{math}};
        \node[text=Col1](WhiM)at(10,0)
            {\begin{math}\Whi\Mca\end{math}};
        \node[text=Col4](AcyM)at(14,0)
            {\begin{math}\Acy\Mca\end{math}};
        \node[text=Col4](Deg2M)at(16,0)
            {\begin{math}\Deg_2\Mca\end{math}};
        \node[text=Col6](WNCM)at(10,-2)
            {\begin{math}\WNC\Mca\end{math}};
        \node[text=Col6](ForM)at(13,-2)
            {\begin{math}\Forests\Mca\end{math}};
        \node[text=Col6](PatM)at(15,-2)
            {\begin{math}\Paths\Mca\end{math}};
        \node[text=Col1](BubM)at(17,-2)
            {\begin{math}\Bub\Mca\end{math}};
        \node[text=Col4](Deg1M)at(14,-4)
            {\begin{math}\Deg_1\Mca\end{math}};
        \node[text=Col4](NesM)at(16,-4)
            {\begin{math}\Nes\Mca\end{math}};
        \node[text=Col6](MotM)at(13,-5)
            {\begin{math}\Motzkin\Mca\end{math}};
        \node[text=Col6](DisM)at(10,-7)
            {\begin{math}\Diss\Mca\end{math}};
        \node[text=Col6](LucM)at(17,-6)
            {\begin{math}\Luc\Mca\end{math}};
        \draw[Injection](WNCM)--(Cro0M);
        \draw[Injection](DisM)--(MotM);
        \draw[Surjection](Cro0M)--(ForM);
        \draw[Surjection](Deg2M)--(BubM);
        \draw[Surjection](Deg2M)--(PatM);
        \draw[Surjection](AcyM)--(PatM);
        \draw[Surjection](AcyM)--(ForM);
        \draw[Surjection](PatM)--(NesM);
        \draw[Surjection](PatM)--(Deg1M);
        \draw[Surjection](ForM)--(MotM);
        \draw[Surjection](MotM)--(LucM);
        \draw[Surjection](WNCM)--(DisM);
        \draw[Surjection](WhiM)--(WNCM);
        \draw[Surjection](Deg1M)--(MotM);
        \draw[Surjection](BubM)--(LucM);
        \draw[Surjection](DisM)--(Deg0M);
        \draw[Surjection](LucM)--(Deg0M);
        \draw[Injection](WhiM)--(CliM);
        \draw[Injection](Cro0M)to[bend right=15](CliM);
        \draw[Surjection](CliM)to[bend right=15](Cro0M);
        \draw[Surjection](CliM)--(AcyM);
        \draw[Surjection](CliM)--(Deg2M);
        \draw[Surjection](NesM)--(Deg0M);
    \end{tikzpicture}}
    \vspace{-.5em}
    \caption{\footnotesize
    The diagram of the secondary suboperads and quotients of $\Cli\Mca$
    together with some of their related main suboperads and quotients of
    $\Cli\Mca$. Arrows~$\rightarrowtail$ (resp.~$\twoheadrightarrow$)
    are injective (resp. surjective) operad morphisms. Here, $\Mca$ is a
    unitary magma without nontrival unit divisors.}
    \label{fig:diagram_secondary_operads}
\end{figure}
\medbreak

\section{Concrete constructions}%
\label{sec:concrete_constructions}
The clique construction provides alternative definitions of known
operads. We explore here the cases of the operads $\MT$
and $\DMT$ of multi-tildes and double multi-tildes, and the gravity
operad~$\Grav$.
\medbreak

\subsection{Operads from language theory}
We provide constructions of two operads coming from formal language
theory by using the clique construction.
\medbreak

\subsubsection{Multi-tildes}
Multi-tildes are operators introduced in~\cite{CCM11} in the context of
formal language theory as a convenient way to express regular languages.
Let, for any $n \geq 1$, $P_n$ be the set
\begin{equation}
    P_n :=  \left\{(x, y) \in [n]^2 : x \leq y\right\}.
\end{equation}
A \Def{multi-tilde} is a pair $(n, \Sfr)$ where $n$ is a positive
integer and $\Sfr$ is a subset of $P_n$. The \Def{arity} of the
multi-tilde $(n, \Sfr)$ is $n$.
\medbreak

As shown in~\cite{LMN13}, the graded (by the arity) collection of all
multi-tildes admits a very natural structure of an operad. This operad,
denoted by $\MT$, is defined as follows. The partial composition
$(n, \Sfr) \circ_i (m, \Tfr)$, $i \in [n]$, of two multi-tildes
$(n, \Sfr)$ and $(m, \Tfr)$ is defined by
\begin{equation}
    (n, \Sfr) \circ_i (m, \Tfr) :=
    \left(n + m - 1,
    \left\{\Shift_i^m(x, y) : (x, y) \in \Sfr\right\}
    \cup
    \left\{\Shift_0^i(x, y) : (x, y) \in \Tfr\right\}\right),
\end{equation}
where
\begin{equation}
    \Shift_j^p(x, y) :=
    \begin{cases}
        (x, y) & \mbox{if } y \leq i - 1, \\
        (x, y + p - 1) & \mbox{if } x \leq i \leq y, \\
        (x + p - 1, y + p - 1) & \mbox{otherwise}.
    \end{cases}
\end{equation}
For instance, one has
\begin{subequations}
\begin{equation} \label{equ:example_composition_MT_1}
    (5, \{(1, 5), (2, 4), (4, 5)\}) \circ_4 (6, \{(2, 2), (4, 6)\}) \\
    = (10, \{(1, 10), (2, 9), (4, 10), (5, 5), (7, 9)\}),
\end{equation}
\begin{equation} \label{equ:example_composition_MT_2}
    (5, \{(1, 5), (2, 4), (4, 5)\}) \circ_5 (6, \{(2, 2), (4, 6)\}) \\
    = (10, \{(1, 10), (2, 4), (4, 10), (6, 6), (8, 10)\}).
\end{equation}
\end{subequations}
Observe that the multi-tilde $(1, \emptyset)$ is the unit of~$\MT$.
\medbreak

Let $\phi_{\MT} : \MT \to \Cli\Dbb_0$ be the linear map defined as
follows. For any multi-tilde $(n, \Sfr)$ different from
$(1, \{(1, 1)\})$, $\phi_{\MT}((n, \Sfr))$ is the $\Dbb_0$-clique of
arity $n$ defined, for any $1 \leq x < y \leq n + 1$, by
\begin{equation} \label{equ:isomorphism_MT_Cli_M}
    \phi_{\MT}((n, \Sfr))(x, y) :=
    \begin{cases}
        0 & \mbox{if } (x, y - 1) \in \Sfr, \\
        \Unit & \mbox{otherwise}.
    \end{cases}
\end{equation}
For instance,
\begin{equation}
    \phi_{\MT}((5, \{(1, 5), (2, 4), (4, 5)\}))
    =
    \begin{tikzpicture}[scale=.7,Centering]
        \node[CliquePoint](1)at(-0.50,-0.87){};
        \node[CliquePoint](2)at(-1.00,-0.00){};
        \node[CliquePoint](3)at(-0.50,0.87){};
        \node[CliquePoint](4)at(0.50,0.87){};
        \node[CliquePoint](5)at(1.00,0.00){};
        \node[CliquePoint](6)at(0.50,-0.87){};
        \draw[CliqueEdge](1)edge[]node[CliqueLabel]
            {\begin{math}0\end{math}}(6);
        \draw[CliqueEmptyEdge](1)edge[]node[CliqueLabel]{}(2);
        \draw[CliqueEmptyEdge](2)edge[]node[CliqueLabel]{}(3);
        \draw[CliqueEdge](2)edge[]node[CliqueLabel,near start]
            {\begin{math}0\end{math}}(5);
        \draw[CliqueEmptyEdge](3)edge[]node[CliqueLabel]{}(4);
        \draw[CliqueEdge](4)
            edge[bend right=30]node[CliqueLabel,near start]
            {\begin{math}0\end{math}}(6);
        \draw[CliqueEmptyEdge](4)edge[]node[CliqueLabel]{}(5);
        \draw[CliqueEmptyEdge](5)edge[]node[CliqueLabel]{}(6);
    \end{tikzpicture}\,.
\end{equation}
\medbreak

\begin{Proposition} \label{prop:construction_MT}
    The operad $\Cli\Dbb_0$ is isomorphic to the suboperad
    of $\MT$ consisting in the linear span of all multi-tildes except
    the nontrivial multi-tilde $(1, \{(1, 1)\})$ of arity $1$. Moreover,
    $\phi_{\MT}$ is an isomorphism between these two operads.
\end{Proposition}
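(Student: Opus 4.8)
The statement asserts that $\phi_{\MT}$ is an operad isomorphism from $\MT$ (minus the exceptional arity-$1$ element) onto $\Cli\Dbb_0$. I would split the proof into three parts: (1) $\phi_{\MT}$ is a bijection on underlying sets; (2) $\phi_{\MT}$ sends the unit to the unit and is compatible with partial compositions; (3) therefore it restricts to the claimed isomorphism.

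\emph{Bijectivity.} For a multi-tilde $(n,\Sfr)$ with $n\geq 2$ (and for $n=1$ only $(1,\emptyset)$ is allowed on the $\MT$ side after removing $(1,\{(1,1)\})$), the definition \eqref{equ:isomorphism_MT_Cli_M} assigns the label $0$ to arc $(x,y)$ exactly when $(x,y-1)\in\Sfr$, and $\Unit$ otherwise. I would note that the map $(x,y)\mapsto(x,y-1)$ is a bijection from the set of arcs $\{(x,y):1\le x<y\le n+1\}$ of an $n$-clique onto $P_n=\{(x,y)\in[n]^2:x\le y\}$. Hence a subset $\Sfr\subseteq P_n$ is the same datum as a choice, for each arc, of whether it is solid ($0$) or not ($\Unit$); since $\bar{\Dbb_0}=\{0\}$ a $\Dbb_0$-clique is precisely a $\{0,\Unit\}$-labelling of its arcs. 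So $\phi_{\MT}$ is a bijection in each arity $n\ge 1$, with inverse sending a $\Dbb_0$-clique $\Pfr$ to $(n,\{(x,y-1):\Pfr(x,y)=0\})$. The arity-$1$ bookkeeping matches because $(1,\emptyset)\mapsto\UnitClique$ and the removed element $(1,\{(1,1)\})$ has no clique counterpart (the only size-$1$ clique is $\UnitClique$, by convention).

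\emph{Unit and compositions.} The unit of $\MT$ is $(1,\emptyset)$, which maps to $\UnitClique$, the unit of $\Cli\Dbb_0$ — good. For the partial compositions, I would compare the two formulas directly on the level of arc-sets/labellings. On the $\MT$ side, $(n,\Sfr)\circ_i(m,\Tfr)$ is obtained by reindexing $\Sfr$ via $\Shift_i^m$, reindexing $\Tfr$ via $\Shift_0^i$, and taking the union. On the $\Cli$ side, \eqref{equ:partial_composition_Cli_M} relabels arcs of $\Pfr$ coming before/across/after the $i$th edge, copies arcs of $\Qfr$ into the glued region, labels the new diagonal $(i,i+m)$ by $\Pfr_i\Op\Qfr_0$, and sets everything else to $\Unit$. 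The content of the proof is to check that under the arc$\leftrightarrow$pair bijection, the three index-shift cases of $\Shift_i^m$ correspond to the first three cases of the clique composition, the $\Shift_0^i$ shift corresponds to the fourth (interior) case, and — crucially — the union on the $\MT$ side matches the magma product $\Pfr_i\Op\Qfr_0$ in $\Dbb_0$: an arc lands in the union iff it comes from $\Sfr$ \emph{or} from $\Tfr$, and in $\Dbb_0$ one has $\Unit\Op\Unit=\Unit$ while $0\Op\Unit=\Unit\Op 0=0\Op 0=0$, i.e. the glued arc is solid iff at least one of the two overlapping arcs was solid, which is exactly ``$(x,y)$ in the union''. (Here the only overlap is the $i$th edge of $\Pfr$ with the base of $\Qfr$; since $\MT$ uses set-union, solidity is a logical OR, and $\Dbb_0$'s operation is precisely OR on $\{\Unit<0\}$.) I would also record that the examples \eqref{equ:example_composition_MT_1}--\eqref{equ:example_composition_MT_2} are consistent with this. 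Since $\phi_{\MT}$ is linear, bijective, arity-preserving, unit-preserving, and intertwines $\circ_i$, it is an operad isomorphism; restricting to its domain (all of $\MT$ except $(1,\{(1,1)\})$, which already forms a suboperad because that element never arises from a composition of arity-$\ge 2$ pieces and is not the unit) gives the statement.

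\emph{Main obstacle.} Nothing here is deep; the only genuinely fiddly step is the bookkeeping in part (2): verifying that the index arithmetic of $\Shift_i^m$ and $\Shift_0^i$ matches, case by case, the piecewise formula \eqref{equ:partial_composition_Cli_M} under the shift $(x,y)\mapsto(x,y-1)$, and being careful about the boundary arc $(i,i+m)$ versus $(i,i+m-1)$ in pair-coordinates. I expect this to be a routine but slightly error-prone verification, which I would organise as a short table of the four/five cases rather than prose, exactly mirroring how Theorem~\ref{thm:clique_construction} handled the operad axioms ``technical but simple''.
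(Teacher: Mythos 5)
Your proposal is correct and follows essentially the same route as the paper, whose proof merely asserts that bijectivity is a direct consequence of the definition \eqref{equ:isomorphism_MT_Cli_M} and that the morphism property follows from comparing the two partial compositions. You simply carry out the verification the paper leaves implicit, the key point being exactly the one you isolate: under the arc-to-pair correspondence the only overlap is the $i$th edge of $\Pfr$ with the base of $\Qfr$, and the set union of $\MT$ matches the operation of $\Dbb_0$, which is logical OR on solidity.
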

\begin{proof}
    A direct consequence of the
    definition~\eqref{equ:isomorphism_MT_Cli_M} of $\phi_{\MT}$ is that
    this map is an isomorphism of vector spaces. Moreover, it follows
    from the definitions of the partial compositions of $\MT$ and
    $\Cli\Dbb_0$ that $\phi_{\MT}$ is an operad morphism.
\end{proof}
\medbreak

By Proposition~\ref{prop:construction_MT}, one can interpret the partial
compositions~\eqref{equ:example_composition_MT_1}
and~\eqref{equ:example_composition_MT_2} of multi-tildes as partial
compositions of $\Dbb_0$-cliques. This give respectively
\begin{subequations}
\begin{equation}
    \begin{tikzpicture}[scale=.7,Centering]
        \node[CliquePoint](1)at(-0.50,-0.87){};
        \node[CliquePoint](2)at(-1.00,-0.00){};
        \node[CliquePoint](3)at(-0.50,0.87){};
        \node[CliquePoint](4)at(0.50,0.87){};
        \node[CliquePoint](5)at(1.00,0.00){};
        \node[CliquePoint](6)at(0.50,-0.87){};
        \draw[CliqueEdge](1)edge[]node[CliqueLabel]
            {\begin{math}0\end{math}}(6);
        \draw[CliqueEmptyEdge](1)edge[]node[CliqueLabel]{}(2);
        \draw[CliqueEmptyEdge](2)edge[]node[CliqueLabel]{}(3);
        \draw[CliqueEdge](2)edge[]node[CliqueLabel,near start]
            {\begin{math}0\end{math}}(5);
        \draw[CliqueEmptyEdge](3)edge[]node[CliqueLabel]{}(4);
        \draw[CliqueEdge](4)
            edge[bend right=30]node[CliqueLabel,near start]
            {\begin{math}0\end{math}}(6);
        \draw[CliqueEmptyEdge](4)edge[]node[CliqueLabel]{}(5);
        \draw[CliqueEmptyEdge](5)edge[]node[CliqueLabel]{}(6);
    \end{tikzpicture}
    \enspace \circ_4 \enspace
    \begin{tikzpicture}[scale=.8,Centering]
        \node[CliquePoint](1)at(-0.43,-0.90){};
        \node[CliquePoint](2)at(-0.97,-0.22){};
        \node[CliquePoint](3)at(-0.78,0.62){};
        \node[CliquePoint](4)at(-0.00,1.00){};
        \node[CliquePoint](5)at(0.78,0.62){};
        \node[CliquePoint](6)at(0.97,-0.22){};
        \node[CliquePoint](7)at(0.43,-0.90){};
        \draw[CliqueEmptyEdge](1)edge[]node[CliqueLabel]{}(2);
        \draw[CliqueEmptyEdge](1)edge[]node[CliqueLabel]{}(7);
        \draw[CliqueEdge](2)edge[]node[CliqueLabel]
            {\begin{math}0\end{math}}(3);
        \draw[CliqueEmptyEdge](3)edge[]node[CliqueLabel]{}(4);
        \draw[CliqueEmptyEdge](4)edge[]node[CliqueLabel]{}(5);
        \draw[CliqueEdge](4)edge[bend right=30]node[CliqueLabel]
            {\begin{math}0\end{math}}(7);
        \draw[CliqueEmptyEdge](5)edge[]node[CliqueLabel]{}(6);
        \draw[CliqueEmptyEdge](6)edge[]node[CliqueLabel]{}(7);
    \end{tikzpicture}
    \enspace = \enspace
    \begin{tikzpicture}[scale=1.1,Centering]
        \node[CliquePoint](1)at(-0.28,-0.96){};
        \node[CliquePoint](2)at(-0.76,-0.65){};
        \node[CliquePoint](3)at(-0.99,-0.14){};
        \node[CliquePoint](4)at(-0.91,0.42){};
        \node[CliquePoint](5)at(-0.54,0.84){};
        \node[CliquePoint](6)at(-0.00,1.00){};
        \node[CliquePoint](7)at(0.54,0.84){};
        \node[CliquePoint](8)at(0.91,0.42){};
        \node[CliquePoint](9)at(0.99,-0.14){};
        \node[CliquePoint](10)at(0.76,-0.65){};
        \node[CliquePoint](11)at(0.28,-0.96){};
        \draw[CliqueEdge](1)edge[]node[CliqueLabel]
            {\begin{math}0\end{math}}(11);
        \draw[CliqueEdge](2)
            edge[bend left=30]node[CliqueLabel,near start]
            {\begin{math}0\end{math}}(10);
        \draw[CliqueEdge](4)edge[bend left=30]node[CliqueLabel]
            {\begin{math}0\end{math}}(11);
        \draw[CliqueEdge](5)edge[]node[CliqueLabel]
            {\begin{math}0\end{math}}(6);
        \draw[CliqueEdge](7)edge[bend right=30]node[CliqueLabel]
            {\begin{math}0\end{math}}(10);
        \draw[CliqueEmptyEdge](1)edge[]node[CliqueLabel]{}(2);
        \draw[CliqueEmptyEdge](2)edge[]node[CliqueLabel]{}(3);
        \draw[CliqueEmptyEdge](3)edge[]node[CliqueLabel]{}(4);
        \draw[CliqueEmptyEdge](4)edge[]node[CliqueLabel]{}(5);
        \draw[CliqueEmptyEdge](6)edge[]node[CliqueLabel]{}(7);
        \draw[CliqueEmptyEdge](7)edge[]node[CliqueLabel]{}(8);
        \draw[CliqueEmptyEdge](8)edge[]node[CliqueLabel]{}(9);
        \draw[CliqueEmptyEdge](9)edge[]node[CliqueLabel]{}(10);
        \draw[CliqueEmptyEdge](10)edge[]node[CliqueLabel]{}(11);
    \end{tikzpicture}\,,
\end{equation}
\begin{equation}
    \begin{tikzpicture}[scale=.7,Centering]
        \node[CliquePoint](1)at(-0.50,-0.87){};
        \node[CliquePoint](2)at(-1.00,-0.00){};
        \node[CliquePoint](3)at(-0.50,0.87){};
        \node[CliquePoint](4)at(0.50,0.87){};
        \node[CliquePoint](5)at(1.00,0.00){};
        \node[CliquePoint](6)at(0.50,-0.87){};
        \draw[CliqueEdge](1)edge[]node[CliqueLabel]
            {\begin{math}0\end{math}}(6);
        \draw[CliqueEmptyEdge](1)edge[]node[CliqueLabel]{}(2);
        \draw[CliqueEmptyEdge](2)edge[]node[CliqueLabel]{}(3);
        \draw[CliqueEdge](2)edge[]node[CliqueLabel,near start]
            {\begin{math}0\end{math}}(5);
        \draw[CliqueEmptyEdge](3)edge[]node[CliqueLabel]{}(4);
        \draw[CliqueEdge](4)
            edge[bend right=30]node[CliqueLabel,near start]
            {\begin{math}0\end{math}}(6);
        \draw[CliqueEmptyEdge](4)edge[]node[CliqueLabel]{}(5);
        \draw[CliqueEmptyEdge](5)edge[]node[CliqueLabel]{}(6);
    \end{tikzpicture}
    \enspace \circ_5 \enspace
    \begin{tikzpicture}[scale=.8,Centering]
        \node[CliquePoint](1)at(-0.43,-0.90){};
        \node[CliquePoint](2)at(-0.97,-0.22){};
        \node[CliquePoint](3)at(-0.78,0.62){};
        \node[CliquePoint](4)at(-0.00,1.00){};
        \node[CliquePoint](5)at(0.78,0.62){};
        \node[CliquePoint](6)at(0.97,-0.22){};
        \node[CliquePoint](7)at(0.43,-0.90){};
        \draw[CliqueEmptyEdge](1)edge[]node[CliqueLabel]{}(2);
        \draw[CliqueEmptyEdge](1)edge[]node[CliqueLabel]{}(7);
        \draw[CliqueEdge](2)edge[]node[CliqueLabel]
            {\begin{math}0\end{math}}(3);
        \draw[CliqueEmptyEdge](3)edge[]node[CliqueLabel]{}(4);
        \draw[CliqueEmptyEdge](4)edge[]node[CliqueLabel]{}(5);
        \draw[CliqueEdge](4)edge[bend right=30]node[CliqueLabel]
            {\begin{math}0\end{math}}(7);
        \draw[CliqueEmptyEdge](5)edge[]node[CliqueLabel]{}(6);
        \draw[CliqueEmptyEdge](6)edge[]node[CliqueLabel]{}(7);
    \end{tikzpicture}
    \enspace = \enspace
    \begin{tikzpicture}[scale=1.1,Centering]
        \node[CliquePoint](1)at(-0.28,-0.96){};
        \node[CliquePoint](2)at(-0.76,-0.65){};
        \node[CliquePoint](3)at(-0.99,-0.14){};
        \node[CliquePoint](4)at(-0.91,0.42){};
        \node[CliquePoint](5)at(-0.54,0.84){};
        \node[CliquePoint](6)at(-0.00,1.00){};
        \node[CliquePoint](7)at(0.54,0.84){};
        \node[CliquePoint](8)at(0.91,0.42){};
        \node[CliquePoint](9)at(0.99,-0.14){};
        \node[CliquePoint](10)at(0.76,-0.65){};
        \node[CliquePoint](11)at(0.28,-0.96){};
        \draw[CliqueEdge](1)edge[]node[CliqueLabel]
            {\begin{math}0\end{math}}(11);
        \draw[CliqueEdge](2)
            edge[bend right=30]node[CliqueLabel,near start]
            {\begin{math}0\end{math}}(5);
        \draw[CliqueEdge](4)edge[bend left=30]node[CliqueLabel]
            {\begin{math}0\end{math}}(11);
        \draw[CliqueEdge](6)edge[]node[CliqueLabel]
            {\begin{math}0\end{math}}(7);
        \draw[CliqueEdge](8)edge[bend right=30]node[CliqueLabel]
            {\begin{math}0\end{math}}(11);
        \draw[CliqueEmptyEdge](1)edge[]node[CliqueLabel]{}(2);
        \draw[CliqueEmptyEdge](2)edge[]node[CliqueLabel]{}(3);
        \draw[CliqueEmptyEdge](3)edge[]node[CliqueLabel]{}(4);
        \draw[CliqueEmptyEdge](4)edge[]node[CliqueLabel]{}(5);
        \draw[CliqueEmptyEdge](5)edge[]node[CliqueLabel]{}(6);
        \draw[CliqueEmptyEdge](7)edge[]node[CliqueLabel]{}(8);
        \draw[CliqueEmptyEdge](8)edge[]node[CliqueLabel]{}(9);
        \draw[CliqueEmptyEdge](9)edge[]node[CliqueLabel]{}(10);
        \draw[CliqueEmptyEdge](10)edge[]node[CliqueLabel]{}(11);
    \end{tikzpicture}\,.
\end{equation}
\end{subequations}
\medbreak

\subsubsection{Double multi-tildes}
Double multi-tildes are natural generalizations of multi-tildes,
introduced in~\cite{GLMN16}. A \Def{double multi-tilde} is a triple
$(n, \Sfr, \Tfr)$ where $(n, \Tfr)$ and $(n, \Sfr)$ are both multi-tildes
of the same arity $n$. The \Def{arity} of the double multi-tilde
$(n, \Sfr, \Tfr)$ is $n$. As shown in~\cite{GLMN16}, the linear span of
all double multi-tildes admits a structure of an operad. This operad,
denoted by $\DMT$, is defined as follows. For any $n \geq 1$, $\DMT(n)$
is the linear span of all double multi-tildes of arity $n$ and the
partial composition $(n, \Sfr, \Tfr) \circ_i (m, \Ufr, \Vfr)$,
$i \in [n]$, of two double multi-tildes $(n, \Sfr, \Tfr)$ and
$(m, \Ufr, \Vfr)$ is defined linearly by
\begin{equation} \label{equ:partial_composition_DMT}
    (n, \Sfr, \Tfr) \circ_i (m, \Ufr, \Vfr) :=
    (n, \Sfr \circ_i \Ufr, \Tfr \circ_i \Vfr),
\end{equation}
where the two partial compositions $\circ_i$ of the right member
of~\eqref{equ:partial_composition_DMT} are the ones of~$\MT$. We can
observe that $\DMT$ is isomorphic to the Hadamard product $\MT * \MT$.
For instance, one has
\begin{equation} \label{equ:example_composition_DMT}
    (3, \{(2, 2)\}, \{(1, 2), (1, 3)\})
    \circ_2
    (2, \{(1, 1)\}, \{(1, 2)\})
    =
    (4, \{(2, 2), (2, 3)\}, \{(1, 3), (1, 4), (2, 3)\}).
\end{equation}
The unit of $\DMT$ is $(1, \emptyset, \emptyset)$.
\medbreak

Consider now the operad $\Cli\Dbb_0^2$ and let
$\phi_{\DMT} : \DMT \to \Cli\Dbb_0^2$ be the linear map defined as
follows. The image by $\phi_{\DMT}$ of $(1, \emptyset, \emptyset)$ is
the unit of $\Cli\Dbb_0^2$ and, for any double multi-tilde
$(n, \Sfr, \Tfr)$ of arity $n \geq 2$, $\phi_{\DMT}((n, \Sfr, \Tfr))$ is
the $\Dbb_0^2$-clique of arity $n$ defined, for any
$1 \leq x < y \leq n + 1$, by
\begin{equation} \label{equ:isomorphism_DMT_Cli_M}
    \phi_{\DMT}((n, \Sfr, \Tfr))(x, y) :=
    \begin{cases}
        (0, \Unit) & \mbox{if } (x, y - 1) \in \Sfr
            \mbox{ and } (x, y - 1) \notin \Tfr, \\
        (\Unit, 0) & \mbox{if } (x, y - 1) \notin \Sfr
            \mbox{ and } (x, y - 1) \in \Tfr, \\
        (0, 0) & \mbox{if } (x, y - 1) \in \Sfr
            \mbox{ and } (x, y - 1) \in \Tfr, \\
        (\Unit, \Unit) & \mbox{otherwise}.
    \end{cases}
\end{equation}
For instance,
\begin{equation}
    \phi_{\DMT}((4, \{(2, 2), (2, 3)\}, \{(1, 3), (1, 4), (2, 3)\}))
    =
    \begin{tikzpicture}[scale=1.0,Centering]
        \node[CliquePoint](1)at(-0.59,-0.81){};
        \node[CliquePoint](2)at(-0.95,0.31){};
        \node[CliquePoint](3)at(-0.00,1.00){};
        \node[CliquePoint](4)at(0.95,0.31){};
        \node[CliquePoint](5)at(0.59,-0.81){};
        \draw[CliqueEmptyEdge](1)edge[]node[CliqueLabel]{}(2);
        \draw[CliqueEdge](1)edge[bend left=30]node[CliqueLabel]
            {\begin{math}(\Unit, 0)\end{math}}(4);
        \draw[CliqueEdge](1)edge[]node[CliqueLabel]
            {\begin{math}(\Unit, 0)\end{math}}(5);
        \draw[CliqueEdge](2)edge[]node[CliqueLabel]
            {\begin{math}(0, \Unit)\end{math}}(3);
        \draw[CliqueEdge](2)edge[]node[CliqueLabel]
            {\begin{math}(0, 0)\end{math}}(4);
        \draw[CliqueEmptyEdge](3)edge[]node[CliqueLabel]{}(4);
        \draw[CliqueEmptyEdge](4)edge[]node[CliqueLabel]{}(5);
    \end{tikzpicture}\,.
\end{equation}
\medbreak

\begin{Proposition} \label{prop:construction_DMT}
    The operad $\Cli\Dbb_0^2$ is isomorphic to the suboperad of $\DMT$
    consisting in the linear span of all double multi-tildes except the
    three nontrivial double multi-tildes of arity $1$. Moreover,
    $\phi_{\DMT}$ is an isomorphism between these two operads.
\end{Proposition}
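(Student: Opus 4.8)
The plan is to mirror the proof of Proposition~\ref{prop:construction_MT}, exploiting the fact established in the excerpt that $\DMT$ is isomorphic to the Hadamard product $\MT * \MT$ and that, by Proposition~\ref{prop:Cli_M_Cartesian_product}, $\Cli\Dbb_0^2 = \Cli(\Dbb_0 \times \Dbb_0)$ is isomorphic to $(\Cli\Dbb_0) * (\Cli\Dbb_0)$. First I would observe that the map $\phi_{\DMT}$ given by~\eqref{equ:isomorphism_DMT_Cli_M} is, component by component, nothing but the map $\phi_{\MT}$ of Proposition~\ref{prop:construction_MT} applied to each of the two multi-tildes $(n, \Sfr)$ and $(n, \Tfr)$: indeed, the four cases of~\eqref{equ:isomorphism_DMT_Cli_M} record, in the first (resp. second) coordinate of the $\Dbb_0^2$-label of an arc $(x,y)$, exactly the value $\phi_{\MT}((n,\Sfr))(x,y)$ (resp. $\phi_{\MT}((n,\Tfr))(x,y)$) prescribed by~\eqref{equ:isomorphism_MT_Cli_M}. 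Hence, under the identifications $\DMT \simeq \MT * \MT$ and $\Cli\Dbb_0^2 \simeq (\Cli\Dbb_0)*(\Cli\Dbb_0)$, the map $\phi_{\DMT}$ corresponds to the Hadamard product $\phi_{\MT} * \phi_{\MT}$ of the isomorphism of Proposition~\ref{prop:construction_MT} with itself, restricted to arities $\geq 2$ in each factor.

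The main steps would then be: (1) check that $\phi_{\DMT}$ is a linear isomorphism of graded vector spaces onto its image; this is immediate since each of the four cases in~\eqref{equ:isomorphism_DMT_Cli_M} is reversible, so one recovers $\Sfr$ and $\Tfr$ from the two coordinates of the labels exactly as in the proof of Proposition~\ref{prop:construction_MT}, and the image consists precisely of all $\Dbb_0^2$-cliques of arity $n \geq 2$ together with the unit, which is the linear span of all double multi-tildes except the three nontrivial ones of arity~$1$ (these three correspond to the three nonzero elements of $\Dbb_0^2(1)$, which $\Cli\Dbb_0^2$ does not see since $\Cli\Dbb_0^2(1)$ is one-dimensional). (2) Verify that $\phi_{\DMT}$ is an operad morphism. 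Here I would use that the partial composition of $\DMT$ is defined component-wise by~\eqref{equ:partial_composition_DMT} from that of $\MT$, that the partial composition of $\Cli\Dbb_0^2 = \Cli(\Dbb_0\times\Dbb_0)$ is component-wise from that of $\Cli\Dbb_0$ by Proposition~\ref{prop:Cli_M_Cartesian_product} (or directly from~\eqref{equ:partial_composition_Cli_M} together with the component-wise definition of the magmatic operation on $\Dbb_0\times\Dbb_0$), and that $\phi_{\MT}$ is an operad morphism by Proposition~\ref{prop:construction_MT}. Since $\phi_{\DMT} = \phi_{\MT}*\phi_{\MT}$ on the relevant suboperads and the Hadamard product of two operad morphisms is an operad morphism, the result follows; one also checks that $\phi_{\DMT}$ sends the unit $(1,\emptyset,\emptyset)$ of $\DMT$ to the unit $\UnitClique$ of $\Cli\Dbb_0^2$.

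I do not expect a serious obstacle: the statement is the ``double'' analogue of Proposition~\ref{prop:construction_MT} and the only genuinely new ingredient, the compatibility of the two Hadamard-product descriptions, is already furnished by Proposition~\ref{prop:Cli_M_Cartesian_product}. The one point requiring a little care is bookkeeping around arity~$1$: one must note that $\Cli\Dbb_0^2(1)$ is one-dimensional whereas $\DMT(1)$ is four-dimensional, so the claimed suboperad of $\DMT$ must explicitly exclude the three nontrivial double multi-tildes of arity~$1$ (equivalently, one works with the suboperad of $\MT * \MT$ obtained by the Hadamard product of the arity-$\geq 2$-or-unit part of $\MT$ with itself). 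With that caveat, the verification that $\phi_{\DMT}$ intertwines the partial compositions reduces, arc by arc via~\eqref{equ:partial_composition_Cli_M} and~\eqref{equ:partial_composition_DMT}, to the scalar identity already used in the proof of Proposition~\ref{prop:construction_MT}, applied independently in each of the two coordinates, and the proof concludes.
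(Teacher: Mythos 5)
Your proposal is correct and follows essentially the same route as the paper, which itself notes that the first assertion follows from combining Proposition~\ref{prop:Cli_M_Cartesian_product} with Proposition~\ref{prop:construction_MT} (the Hadamard-product identification $\phi_{\DMT} = \phi_{\MT} * \phi_{\MT}$ you describe) and that the rest is a direct check that $\phi_{\DMT}$ is a linear isomorphism intertwining the component-wise partial compositions. Your extra care about the arity-$1$ bookkeeping is consistent with the statement and does not change the argument.
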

\begin{proof}
    There are two ways to prove the first assertion of the statement of
    the proposition. On the one hand, this property follows from
    Proposition~\ref{prop:Cli_M_Cartesian_product} and
    Proposition~\ref{prop:construction_MT}. On the other hand, the whole
    statement of the proposition is a direct consequence of the
    definition~\eqref{equ:isomorphism_DMT_Cli_M} of $\phi_{\DMT}$,
    showing that $\phi_{\DMT}$ is an isomorphism of vector spaces, and,
    from the definitions of the partial compositions of $\DMT$ and
    $\Cli\Dbb_0^2$ showing that $\phi_{\DMT}$ is an operad morphism.
\end{proof}
\medbreak

By Proposition~\ref{prop:construction_DMT}, one can interpret the
partial composition~\eqref{equ:example_composition_DMT} of double
multi-tildes as a partial composition of $\Dbb_0^2$-cliques. This gives
\begin{equation}
    \begin{tikzpicture}[scale=.7,Centering]
        \node[CliquePoint](1)at(-0.71,-0.71){};
        \node[CliquePoint](2)at(-0.71,0.71){};
        \node[CliquePoint](3)at(0.71,0.71){};
        \node[CliquePoint](4)at(0.71,-0.71){};
        \draw[CliqueEmptyEdge](1)edge[]node[CliqueLabel]{}(2);
        \draw[CliqueEdge](1)edge[]node[CliqueLabel]
            {\begin{math}(\Unit, 0)\end{math}}(3);
        \draw[CliqueEdge](1)edge[]node[CliqueLabel]
            {\begin{math}(\Unit, 0)\end{math}}(4);
        \draw[CliqueEdge](2)edge[]node[CliqueLabel]
            {\begin{math}(0, \Unit)\end{math}}(3);
        \draw[CliqueEmptyEdge](3)edge[]node[CliqueLabel]{}(4);
    \end{tikzpicture}
    \enspace \circ_2 \enspace
    \begin{tikzpicture}[scale=.6,Centering]
        \node[CliquePoint](1)at(-0.87,-0.50){};
        \node[CliquePoint](2)at(-0.00,1.00){};
        \node[CliquePoint](3)at(0.87,-0.50){};
        \draw[CliqueEdge](1)edge[]node[CliqueLabel]
            {\begin{math}(0, \Unit)\end{math}}(2);
        \draw[CliqueEdge](1)edge[]node[CliqueLabel]
            {\begin{math}(\Unit, 0)\end{math}}(3);
        \draw[CliqueEmptyEdge](2)edge[]node[CliqueLabel]{}(3);
    \end{tikzpicture}
    \enspace = \enspace
    \begin{tikzpicture}[scale=1.0,Centering]
        \node[CliquePoint](1)at(-0.59,-0.81){};
        \node[CliquePoint](2)at(-0.95,0.31){};
        \node[CliquePoint](3)at(-0.00,1.00){};
        \node[CliquePoint](4)at(0.95,0.31){};
        \node[CliquePoint](5)at(0.59,-0.81){};
        \draw[CliqueEmptyEdge](1)edge[]node[CliqueLabel]{}(2);
        \draw[CliqueEdge](1)edge[bend left=30]node[CliqueLabel]
            {\begin{math}(\Unit, 0)\end{math}}(4);
        \draw[CliqueEdge](1)edge[]node[CliqueLabel]
            {\begin{math}(\Unit, 0)\end{math}}(5);
        \draw[CliqueEdge](2)edge[]node[CliqueLabel]
            {\begin{math}(0, \Unit)\end{math}}(3);
        \draw[CliqueEdge](2)edge[]node[CliqueLabel]
            {\begin{math}(0, 0)\end{math}}(4);
        \draw[CliqueEmptyEdge](3)edge[]node[CliqueLabel]{}(4);
        \draw[CliqueEmptyEdge](4)edge[]node[CliqueLabel]{}(5);
    \end{tikzpicture}\,.
\end{equation}
\medbreak

\subsection{Gravity operad}
The \Def{operad of gravity chord diagrams} $\Grav$ is an operad defined
in~\cite{AP15}. This operad is the nonsymmetric version (obtained by
forgetting the actions of the symmetric groups) of the gravity operad, a
symmetric operad introduced by Getzler~\cite{Get94}. Let us describe
this operad.
\medbreak

A \Def{gravity chord diagram} is a $\{\star\}$-configuration $\Cfr$,
where $\star$ is any symbol, satisfying the following conditions. By
denoting by $n$ the size of $\Cfr$, all the edges and the base of $\Cfr$
are labeled (by $\star$), and if $(x, y)$ and $(x', y')$ are two labeled
crossing diagonals of $\Cfr$ such that $x < x'$, the arc $(x', y)$ is
not labeled. In other words, the quadrilateral formed by the vertices
$x$, $x'$, $y$, and $y'$ of $\Cfr$ is such that its side $(x', y)$ is
unlabeled. For instance,
\begin{equation}
    \begin{tikzpicture}[scale=.8,Centering]
        \node[CliquePoint](1)at(-0.38,-0.92){};
        \node[CliquePoint](2)at(-0.92,-0.38){};
        \node[CliquePoint](3)at(-0.92,0.38){};
        \node[CliquePoint](4)at(-0.38,0.92){};
        \node[CliquePoint](5)at(0.38,0.92){};
        \node[CliquePoint](6)at(0.92,0.38){};
        \node[CliquePoint](7)at(0.92,-0.38){};
        \node[CliquePoint](8)at(0.38,-0.92){};
        \draw[CliqueEdgeBlue](1)--(2);
        \draw[CliqueEdgeBlue](1)--(8);
        \draw[CliqueEdgeBlue](2)--(3);
        \draw[CliqueEdgeBlue](2)--(5);
        \draw[CliqueEdgeBlue](2)--(6);
        \draw[CliqueEdgeBlue](2)--(7);
        \draw[CliqueEdgeBlue](3)--(4);
        \draw[CliqueEdgeBlue](3)--(6);
        \draw[CliqueEdgeBlue](4)--(5);
        \draw[CliqueEdgeBlue](5)--(6);
        \draw[CliqueEdgeBlue](6)--(7);
        \draw[CliqueEdgeBlue](7)--(8);
    \end{tikzpicture}
\end{equation}
is a gravity chord diagram of arity $7$ having four labeled diagonals
(observe in particular that, as required, the arc $(3, 5)$ is not
labeled). For any $n \geq 2$, $\Grav(n)$ is the linear span of all
gravity chord  diagrams of size $n$. Moreover, $\Grav(1)$ is the linear
span of the  singleton containing the only polygon of size $1$ where
its only arc is not labeled. The partial composition of $\Grav$ is
defined graphically as follows. For any gravity chord diagrams $\Cfr$
and $\Dfr$ of respective arities $n$ and $m$, and $i \in [n]$, the
gravity chord diagram $\Cfr \circ_i \Dfr$ is obtained by gluing the
base of $\Dfr$ onto the $i$th edge of $\Cfr$, so that the arc
$(i, i + m)$ of $\Cfr \circ_i \Dfr$ is labeled. For example,
\begin{equation}
    \begin{tikzpicture}[scale=.6,Centering]
        \node[CliquePoint](1)at(-0.50,-0.87){};
        \node[CliquePoint](2)at(-1.00,-0.00){};
        \node[CliquePoint](3)at(-0.50,0.87){};
        \node[CliquePoint](4)at(0.50,0.87){};
        \node[CliquePoint](5)at(1.00,0.00){};
        \node[CliquePoint](6)at(0.50,-0.87){};
        \draw[CliqueEdgeBlue](1)--(2);
        \draw[CliqueEdgeBlue](1)--(4);
        \draw[CliqueEdgeBlue](1)--(6);
        \draw[CliqueEdgeBlue](2)--(3);
        \draw[CliqueEdgeBlue](2)--(5);
        \draw[CliqueEdgeBlue](3)--(4);
        \draw[CliqueEdgeBlue](4)--(5);
        \draw[CliqueEdgeBlue](5)--(6);
    \end{tikzpicture}
    \enspace
    \circ_3
    \enspace
    \begin{tikzpicture}[scale=.5,Centering]
        \node[CliquePoint](1)at(-0.71,-0.71){};
        \node[CliquePoint](2)at(-0.71,0.71){};
        \node[CliquePoint](3)at(0.71,0.71){};
        \node[CliquePoint](4)at(0.71,-0.71){};
        \draw[CliqueEdgeBlue](1)--(2);
        \draw[CliqueEdgeBlue](1)--(3);
        \draw[CliqueEdgeBlue](1)--(4);
        \draw[CliqueEdgeBlue](2)--(3);
        \draw[CliqueEdgeBlue](3)--(4);
    \end{tikzpicture}
    \enspace
    =
    \enspace
    \begin{tikzpicture}[scale=.8,Centering]
        \node[CliquePoint](1)at(-0.38,-0.92){};
        \node[CliquePoint](2)at(-0.92,-0.38){};
        \node[CliquePoint](3)at(-0.92,0.38){};
        \node[CliquePoint](4)at(-0.38,0.92){};
        \node[CliquePoint](5)at(0.38,0.92){};
        \node[CliquePoint](6)at(0.92,0.38){};
        \node[CliquePoint](7)at(0.92,-0.38){};
        \node[CliquePoint](8)at(0.38,-0.92){};
        \draw[CliqueEdgeBlue](1)--(2);
        \draw[CliqueEdgeBlue](1)--(6);
        \draw[CliqueEdgeBlue](1)--(8);
        \draw[CliqueEdgeBlue](2)--(3);
        \draw[CliqueEdgeBlue](2)--(7);
        \draw[CliqueEdgeBlue](3)--(4);
        \draw[CliqueEdgeBlue](3)--(5);
        \draw[CliqueEdgeBlue](3)--(6);
        \draw[CliqueEdgeBlue](4)--(5);
        \draw[CliqueEdgeBlue](5)--(6);
        \draw[CliqueEdgeBlue](6)--(7);
        \draw[CliqueEdgeBlue](7)--(8);
    \end{tikzpicture}\,.
\end{equation}
\medbreak

Let $\phi_{\Grav} : \Grav \to \Cli\Dbb_0$ be the linear map defined in
the following way. For any gravity chord diagram $\Cfr$,
$\phi_{\Grav}(\Cfr)$ is the $\Dbb_0$-clique of $\Cli\Dbb_0$ obtained by
replacing all labeled arcs of $\Cfr$ by arcs labeled by $0$ and all
unlabeled arcs by arcs labeled by $\Unit$. For instance,
\begin{equation}
    \phi_{\Grav}\left(
    \begin{tikzpicture}[scale=.8,Centering]
        \node[CliquePoint](1)at(-0.38,-0.92){};
        \node[CliquePoint](2)at(-0.92,-0.38){};
        \node[CliquePoint](3)at(-0.92,0.38){};
        \node[CliquePoint](4)at(-0.38,0.92){};
        \node[CliquePoint](5)at(0.38,0.92){};
        \node[CliquePoint](6)at(0.92,0.38){};
        \node[CliquePoint](7)at(0.92,-0.38){};
        \node[CliquePoint](8)at(0.38,-0.92){};
        \draw[CliqueEdgeBlue](1)--(2);
        \draw[CliqueEdgeBlue](1)--(6);
        \draw[CliqueEdgeBlue](1)--(8);
        \draw[CliqueEdgeBlue](2)--(3);
        \draw[CliqueEdgeBlue](2)--(7);
        \draw[CliqueEdgeBlue](3)--(4);
        \draw[CliqueEdgeBlue](3)--(5);
        \draw[CliqueEdgeBlue](3)--(6);
        \draw[CliqueEdgeBlue](4)--(5);
        \draw[CliqueEdgeBlue](5)--(6);
        \draw[CliqueEdgeBlue](6)--(7);
        \draw[CliqueEdgeBlue](7)--(8);
    \end{tikzpicture}
    \right)
    \enspace
    =
    \enspace
    \begin{tikzpicture}[scale=.8,Centering]
        \node[CliquePoint](1)at(-0.38,-0.92){};
        \node[CliquePoint](2)at(-0.92,-0.38){};
        \node[CliquePoint](3)at(-0.92,0.38){};
        \node[CliquePoint](4)at(-0.38,0.92){};
        \node[CliquePoint](5)at(0.38,0.92){};
        \node[CliquePoint](6)at(0.92,0.38){};
        \node[CliquePoint](7)at(0.92,-0.38){};
        \node[CliquePoint](8)at(0.38,-0.92){};
        \draw[CliqueEdge](1)edge[]node[CliqueLabel]
            {\begin{math}0\end{math}}(2);
        \draw[CliqueEdge](1)edge[bend left=20]node[CliqueLabel]
            {\begin{math}0\end{math}}(6);
        \draw[CliqueEdge](1)edge[]node[CliqueLabel]
            {\begin{math}0\end{math}}(8);
        \draw[CliqueEdge](2)edge[]node[CliqueLabel]
            {\begin{math}0\end{math}}(3);
        \draw[CliqueEdge](2)edge[]node[CliqueLabel,near end]
            {\begin{math}0\end{math}}(7);
        \draw[CliqueEdge](3)edge[]node[CliqueLabel]
            {\begin{math}0\end{math}}(4);
        \draw[CliqueEdge](3)edge[bend right=20]node[CliqueLabel]
            {\begin{math}0\end{math}}(5);
        \draw[CliqueEdge](3)edge[bend right=20]node[CliqueLabel]
            {\begin{math}0\end{math}}(6);
        \draw[CliqueEdge](4)edge[]node[CliqueLabel]
            {\begin{math}0\end{math}}(5);
        \draw[CliqueEdge](5)edge[]node[CliqueLabel]
            {\begin{math}0\end{math}}(6);
        \draw[CliqueEdge](6)edge[]node[CliqueLabel]
            {\begin{math}0\end{math}}(7);
        \draw[CliqueEdge](7)edge[]node[CliqueLabel]
            {\begin{math}0\end{math}}(8);
    \end{tikzpicture}\,.
\end{equation}
\medbreak

Let us say that an $\Mca$-clique $\Pfr$ satisfies the
\Def{gravity condition} if $\Pfr = \UnitClique$, or $\Pfr$ has only
solid edges and bases, and for all crossing diagonals $(x, y)$ and
$(x', y')$ of $\Pfr$ such that $x < x'$,
$\Pfr(x, y) \ne \Unit_\Mca \ne \Pfr(x', y')$
implies~$\Pfr(x', y) = \Unit_\Mca$.
\medbreak

\begin{Proposition} \label{prop:construction_Grav}
    The linear span of all $\Dbb_0$-cliques satisfying the gravity
    condition forms a suboperad of $\Cli\Dbb_0$ isomorphic to $\Grav$.
    Moreover, $\phi_{\Grav}$ is an isomorphism between these two operads.
\end{Proposition}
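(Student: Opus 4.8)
The plan is to exhibit $\phi_{\Grav}$ as a vector space isomorphism onto the claimed subspace, and then to verify that this subspace is a suboperad and that $\phi_{\Grav}$ commutes with partial compositions. First I would check that $\phi_{\Grav}$ lands in the span of $\Dbb_0$-cliques satisfying the gravity condition: by definition $\phi_{\Grav}(\Cfr)$ replaces every labeled arc of $\Cfr$ by an arc labeled $0$ and every unlabeled arc by an arc labeled $\Unit$, so solid arcs of $\phi_{\Grav}(\Cfr)$ correspond exactly to labeled arcs of $\Cfr$; since all edges and the base of $\Cfr$ are labeled, all edges and the base of $\phi_{\Grav}(\Cfr)$ are solid, and the quadrilateral condition on $\Cfr$ translates verbatim into the gravity condition on $\phi_{\Grav}(\Cfr)$. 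Conversely, given a $\Dbb_0$-clique $\Pfr$ satisfying the gravity condition, one recovers a gravity chord diagram by labeling precisely the solid arcs of $\Pfr$; this is well defined because in $\Dbb_0$ the only non-unit label appearing on a solid arc of such a $\Pfr$ is $0$ (there is no constraint forcing the $\Dtt_i$'s, but one restricts attention to the subspace spanned by cliques whose solid arcs are all labeled $0$, which is exactly the image). So $\phi_{\Grav}$ is a bijection between the underlying bases, hence a linear isomorphism, and in particular it sends the unit $\UnitClique$ (image of the size-$1$ polygon with unlabeled arc) to the unit of $\Cli\Dbb_0$.

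Next I would show the subspace is stable under the partial compositions of $\Cli\Dbb_0$, which simultaneously proves it is a suboperad and that $\phi_{\Grav}$ is an operad morphism. The key observation is that the partial composition $\Pfr \circ_i \Qfr$ of two $\Dbb_0$-cliques satisfying the gravity condition is again such a clique. Indeed, from the description of $\circ_i$ in~\eqref{equ:partial_composition_Cli_M}, the arcs of $\Pfr \circ_i \Qfr$ split into: arcs inherited from $\Pfr$ (with shifted indices), arcs inherited from $\Qfr$ (with shifted indices), the glued arc $(i, i+m)$ labeled $\Pfr_i \Op \Qfr_0 = 0 \Op 0 = 0$ (nonzero, as required since $\Pfr_i \ne \Unit$ and $\Qfr_0 \ne \Unit$), and new non-solid diagonals connecting a vertex of the $\Pfr$-part to a vertex of the $\Qfr$-part. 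All edges and the base of $\Pfr \circ_i \Qfr$ are solid because they come from edges/base of $\Pfr$ or edges of $\Qfr$, all of which are solid. For the crossing condition: if $(x,y)$ and $(x',y')$ are crossing solid diagonals of $\Pfr \circ_i \Qfr$ with $x < x'$, then since the new inter-part diagonals are all non-solid, both come either entirely from $\Pfr$, or entirely from $\Qfr$, or one is the glued arc $(i,i+m)$; in each case the gravity condition of the relevant factor (or a direct check for the glued arc, whose "outer" crossing partners lie in $\Pfr$ and whose $(x',y)$-side is an inter-part non-solid diagonal) forces $(x', y)$ to be non-solid.

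Then I would observe, by the same case analysis on arcs, that $\phi_{\Grav}(\Cfr \circ_i \Dfr) = \phi_{\Grav}(\Cfr) \circ_i \phi_{\Grav}(\Dfr)$: the partial composition of $\Grav$ glues the base of $\Dfr$ onto the $i$th edge of $\Cfr$ with the arc $(i,i+m)$ made labeled, which under $\phi_{\Grav}$ matches exactly the gluing in $\Cli\Dbb_0$ (labeled $\leftrightarrow$ solid, and $0 \Op 0 = 0$), and all the new inter-part arcs are unlabeled in $\Grav$ and non-solid (labeled $\Unit$) in $\Cli\Dbb_0$. Since $\phi_{\Grav}$ respects arities, sends unit to unit, and commutes with $\circ_i$, it is an operad morphism, and being bijective onto its image it is an isomorphism onto the suboperad of $\Dbb_0$-cliques satisfying the gravity condition.

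The main obstacle, and the step deserving the most care, is the crossing/quadrilateral bookkeeping in the partial composition: one must verify that no new solid diagonal is ever created between the two glued parts (this rests on the "otherwise" clause of~\eqref{equ:partial_composition_Cli_M} producing $\Unit_\Mca$), so that any crossing pair of solid diagonals in $\Pfr \circ_i \Qfr$ is "localized" to one factor or involves the glued base-arc, whence the gravity condition is inherited. Everything else is a routine unwinding of the definitions of $\phi_{\Grav}$ and of the two partial compositions.
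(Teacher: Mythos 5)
Your proposal is correct and follows essentially the same route as the paper: identify the image of $\phi_{\Grav}$ with the span of $\Dbb_0$-cliques satisfying the gravity condition, check closure under partial composition (the paper states this verification tersely, you spell out the case analysis on arcs), and conclude that the bijection $\phi_{\Grav}$ is an operad isomorphism. One small simplification: since $\Dbb_0 = \{\Unit, 0\}$ has no elements $\Dtt_i$, every solid arc is automatically labeled $0$, so your parenthetical restriction is unnecessary, and the arcs crossing the glued arc $(i, i+m)$ are exactly the ``otherwise'' arcs of~\eqref{equ:partial_composition_Cli_M}, hence all non-solid, making that case vacuous.
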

\begin{proof}
    Let us denote by $\Oca_\Grav$ the subspace of $\Cli\Dbb_0$ described
    in the statement of the proposition. First of all, it follows from
    the definition of the partial composition of $\Cli\Dbb_0$ that
    $\Oca_\Grav$ is closed under the partial composition operation (this
    property can be also seen as a consequence of the fact that the
    partial composition of two gravity chord diagrams is still a gravity
    chord diagram~\cite{AP15}). Hence, and since $\Oca_\Grav$ contains
    the unit of $\Cli\Dbb_0$, $\Oca_\Grav$ is an operad. Second, observe
    that the image of $\phi_{\Grav}$ is the underlying space of
    $\Oca_\Grav$ and, from the definition of the partial composition of
    $\Grav$, one can check that $\phi_{\Grav}$ is an operad morphism.
    Finally, since $\phi_{\Grav}$ is a bijection from $\Grav$ to
    $\Oca_\Grav$, the statement of the proposition follows.
\end{proof}
\medbreak

Proposition~\ref{prop:construction_Grav} shows hence that the operad
$\Grav$ can be built through the clique construction. Moreover, as
explained in~\cite{AP15}, $\Grav$ contains the nonsymmetric version of
the $\Lie$ operad, the symmetric operad describing the category of Lie
algebras. This nonsymmetric version of the Lie operad as been introduced
in~\cite{ST09}. Since $\Lie$ is contained in $\Grav$ as the subspace of
all gravity chord diagrams having the maximal number of labeled diagonals
for each arity, $\Lie$ can be built through the clique construction as
the suboperad of $\Cli\Dbb_0$ containing all the $\Dbb_0$-cliques that
are images by $\phi_{\Grav}$ of such maximal gravity chord diagrams.
\medbreak

Besides, this alternative construction of $\Grav$ leads to the following
generalization for any unitary magma $\Mca$ of the gravity operad. Let
$\Grav_\Mca$ be the linear span of all $\Mca$-cliques satisfying the
gravity condition. It follows from the definition of the partial
composition of $\Cli\Mca$ that $\Grav_\Mca$ is an operad. Moreover,
observe that when $\Mca$ has nontrivial unit divisors, $\Grav_\Mca$ is
not a free operad.
\medbreak

\section*{Conclusion and perspectives}
This work presents and studies the clique construction $\Cli$,
producing operads from unitary magmas. We have seen that $\Cli$ has many
both algebraic and combinatorial properties. Among its most notable
ones, $\Cli\Mca$ admits several quotients involving combinatorial
families of decorated cliques, and contains some already studied
operads. Let us address here some open questions.
\smallbreak

First, we have for the time being no formula to enumerate prime (resp.
white prime, minimal prime) $\Mca$-cliques
(see~\eqref{equ:prime_cliques_numbers_2} (resp.
\eqref{equ:white_prime_cliques_numbers},
\eqref{equ:minimal_prime_cliques_numbers}) for $\#\Mca = 2$). Obtaining
these forms a first combinatorial question.
\smallbreak

When $\Mca$ is a $\Z$-graded unitary magma, a link between $\Cli\Mca$
and the operad of rational functions $\RatFct$ has been
developed in Section~\ref{subsubsec:rational_functions} by means of a
morphism $\Frac_\theta$ between these two operads. We have observed that
$\Frac_\theta$ is not injective (see~\eqref{equ:frac_not_injective_1}
and~\eqref{equ:frac_not_injective_2}). A description of the kernel of
$\Frac_\theta$, even when $\Mca$ is the unitary magma $\Z$, seems not
easy to obtain. Trying to obtain this description is a second
perspective of this work.
\smallbreak

Here is a third perspective. In Section~\ref{sec:quotients_suboperads},
we have defined and briefly studied some quotients and suboperads of
$\Cli\Mca$. In particular, we have considered the quotient $\Deg_1\Mca$
of $\Cli\Mca$, involving $\Mca$-cliques of degree at most $1$. As
mentioned, $\Deg_1\Dbb_0$ is an operad defined on the linear span of
involutions (except the nontrivial involution of $\mathfrak{S}_2$). A
complete study of this operad seems worth considering, including a
description of a minimal generating set, a presentation by generators
and relations, a description of its partial composition on the
$\Hsf$-basis and on the $\Ksf$-basis, and a realization of this operad
in terms of standard Young tableaux.
\medbreak

\bibliographystyle{alpha}
\bibliography{Bibliography}

\end{document}